\newtheorem{theorem}{Theorem}[section]
\newtheorem{lemma}[theorem]{Lemma}
\newtheorem{definition}[theorem]{Definition}
\newtheorem{proposition}[theorem]{Proposition}
\newtheorem{problem}{Problem}
\newtheorem{remark}[theorem]{Remark}
\newtheorem{assumption}{Assumption}
\newtheorem{example}[theorem]{Example}
\DeclareMathAlphabet{\mymathbb}{U}{BOONDOX-ds}{m}{n}
\newcommand{\bx}{{\mathbf{x}}}
\newcommand{\by}{{\mathbf{y}}}
\newcommand{\ba}{{\mathbf{a}}}
\newcommand{\bz}{{\mathbf{z}}}
\newcommand{\bu}{{\mathbf{u}}}
\newcommand{\bq}{{\mathbf{q}}}
\newcommand{\bv}{{\mathbf{v}}}
\newcommand{\bc}{{\mathbf{c}}}
\newcommand{\setdef}[2]{\{#1 : #2\}}
\newcommand{\Kc}{\mathcal{K}}
\newcommand{\Oc}{\mathcal{O}}
\newcommand{\Lc}{\mathcal{L}}
\newcommand{\Nc}{\mathcal{N}}
\newcommand{\Ec}{\mathcal{E}}
\newcommand{\realpos}{\mathbb{R}_{> 0}}
\newcommand{\zero}{\mymathbb{0}}
\newcommand{\real}{\mathbb{R}}
\newcommand{\1}{\mathds{1}}
\newcommand{\Cc}{\mathcal{C}}
\DeclareSymbolFont{bbold}{U}{bbold}{m}{n}
\DeclareSymbolFontAlphabet{\mathbbold}{bbold}
\newcommand{\norm}[1]{\left\lVert#1\right\rVert}
\newcommand\oprocendsymbol{\hbox{$\bullet$}}
\newcommand\oprocend{\relax\ifmmode\else\unskip\hfill\fi\oprocendsymbol}
\newcommand*{\QEDA}{\hfill\ensuremath{\blacksquare}}%
\newcommand\xqed[1]{%
  \leavevmode\unskip\penalty9999 \hbox{}\nobreak\hfill
  \quad\hbox{#1}}
\newcommand\problemfinal{\xqed{$\triangle$}}
\newcounter{countitems}
\newcounter{nextitemizecount}
\newcommand{\setupcountitems}{%
  \stepcounter{nextitemizecount}%
  \setcounter{countitems}{0}%
  \preto\item{\stepcounter{countitems}}%
}
\newcommand{\computecountitems}{%
  \edef\@currentlabel{\number\c@countitems}%
  \label{countitems@\number\numexpr\value{nextitemizecount}-1\relax}%
}
\newcommand{\nextitemizecount}{%
  \getrefnumber{countitems@\number\c@nextitemizecount}%
}
\newcommand{\previtemizecount}{%
  \getrefnumber{countitems@\number\numexpr\value{nextitemizecount}-1\relax}%
}
\computecountitems\ifnumcomp{\previtemizecount}{>}{4}{\end{multicols}}{}}
\newcommand{\longthmtitle}[1]{\mbox{}\emph{(#1):}}
\newcommand{\comment}[1]{} 
\newcommand\blfootnote[1]{%
  \begingroup
  \renewcommand\thefootnote{}\footnote{#1}%
  \addtocounter{footnote}{-1}%
  \endgroup
}
\begin{document}

\title{\bf \large Control Barrier Function-Based Safety Filters: Characterization of Undesired Equilibria, Unbounded Trajectories, and Limit Cycles\thanks{}}

\author*[1]{\fnm{Pol} \sur{Mestres}}\email{pomestre@ucsd.edu}

\author[2]{\fnm{Yiting} \sur{Chen}}\email{yich4684@bu.edu}

\author[2]{\fnm{Emiliano} \sur{Dall'Anese}}\email{edallane@bu.edu}

\author[1]{\fnm{Jorge} \sur{Cort\'{e}s}}\email{cortes@ucsd.edu}

\affil*[1]{\orgdiv{Department of Mechanical and Aerospace Engineering}, \orgname{UC San Diego}}

\affil[2]{\orgdiv{Department of Electrical and Computer Engineering}, \orgname{Boston University}}




\abstract{
This paper focuses on ``safety filters'' designed based on Control Barrier Functions (CBFs): these are modifications of a nominal stabilizing controller  typically utilized in safety-critical control applications to render a given subset of states forward invariant. The paper investigates the dynamical properties of the closed-loop systems, with a focus on characterizing undesirable behaviors that  may emerge due to the use of CBF-based filters. These undesirable behaviors include unbounded trajectories, limit cycles, and undesired equilibria, which can be locally stable and even form a continuum. 
Our analysis offer the following contributions: (i) conditions under which trajectories remain bounded and (ii) conditions under which limit cycles do not exist; (iii) we show that undesired equilibria can be characterized by solving an algebraic equation, and (iv) we provide examples that show that asymptotically stable undesired equilibria can exist for a large class of nominal controllers and design parameters of the safety filter (even for convex safe sets). 
Further, for the specific class of planar systems, (v) we provide explicit formulas for the total number of undesired equilibria and the proportion of saddle points and asymptotically stable equilibria, and (vi) in the case of linear planar systems, we present an exhaustive analysis of their global stability properties. Examples throughout the paper illustrate the results.
}


\keywords{Safety-critical control, control barrier functions, stabilization, safety filters}

\maketitle

\blfootnote{* A preliminary version of this paper appeared as~\cite{YC-PM-EDA-JC:24-cdc} at the IEEE Conference on Decision and Control.}

\section{Introduction}\label{sec:introduction}

Cyber-physical systems and autonomous systems -- from individual and connected self-driving vehicles
to complex infrastructures such as smart grids and transportation networks -- must comply with safety and operational constraints, while ensuring a desired level of performance and efficiency. Control Barrier Functions (CBFs) have emerged as a widely used tool for the design of architectural control frameworks that guarantee the forward invariance of a given set of desirable,  ``safe'' states (termed in what follows as \emph{safe set} of the system)~\cite{PW-FA:07,ADA-SC-ME-GN-KS-PT:19, WX-CGC-CB:23}.  In this context, CBF theory is a workhorse  for the design of \emph{safety filters}, which are utilized to (minimally) adjust the input provided by a nominal controller (typically designed to achieve properties such as stability, optimality, or robustness) to ensure forward invariance of the safe set. However,
recent works~\cite{MFR-APA-PT:21,XT-DVD:24,PM-JC:23-csl,YY-SK-BG-NA:23,GN-SM:22,YC-PM-EDA-JC:24-cdc}
%
%
have shown that when the nominal controller is augmented with a safety filter, the resulting closed-loop system may  not preserve  desired properties such as stability and robustness, and it may in fact lead to undesirable behaviors  such that the emergence of undesired equilibria. This paper provides a detailed characterization of the undesired equilibria and delves deep into these undesirable behaviors, presenting new findings that demonstrate how safety filters can give rise to unbounded trajectories and limit cycles. 

%
%

\textbf{Literature review}. CBFs~\cite{PW-FA:07,ADA-SC-ME-GN-KS-PT:19, WX-CGC-CB:23} are a well-established tool to design controllers that render a given set forward invariant. 
Several works have combined CBFs with other control-theoretic tools, such as control Lyapunov functions (CLFs)~\cite{EDS:98}, in order to obtain controllers with both safety and stability guarantees~\cite{ADA-XX-JWG-PT:17,XT-DVD:24,PM-JC:23-csl}, and robustness guarantees~\cite{MJ:18,FC-JJC-BZ-CJT-KS:21}; CBF-based controllers with input constraints have also been developed in, e.g.,~\cite{DRA-DP:21}.
Within this line of works, this paper focuses on \textit{safety filters}~\cite{LW-ADA-ME:17}.  
Safety filters yield a controller that minimally modifies the nominal one while ensuring forward invariance of a safe set. A critical research question is whether the closed-loop system under the safety filter retains the stability and robustness properties of the dynamical system with the nominal controller only.
A first set of results about the dynamical properties of the closed-loop system with safety filters  was provided in~\cite{WSC-DVD:22-tac}, which estimated the region of attraction of the desirable equilibrium (taken to be the origin w.l.o.g.). However, 
questions remain on the asymptotic behavior of the trajectories with initial condition outside the estimated region of attraction. The emergence of undesired equilibria due to CBF-based controllers which are asymptotically stable was noted in~\cite{MFR-APA-PT:21}, and since then they have been studied profusely~\cite{XT-DVD:24,PM-JC:23-csl,YY-SK-BG-NA:23,GN-SM:22}; here, in the controller and filter design, the CBF is assumed to be given throughout the analysis. 
In previous work~\cite{indep-cbf}, we have shown that the undesired equilibria that emerge in safety filters and their stability properties, whatever they may be, are independent of the choice of CBF, for a broad class of CBFs (related by an appropriate equivalence relation). However,~\cite{indep-cbf} does not study what the actual dynamical properties are, which is precisely the focus of this paper.
%
%
Our recent work~\cite{YC-PM-EDA-JC:24-cdc} provides a characterization of undesired equilibria and, for the special case of linear planar systems and ellipsoidal obstacles, finds such undesired equilibria explicitly and characterizes their stability properties. The results show that for this class of systems and safe sets, if the system is underactuated there always exists a single undesired equilibrium, whereas if the system is fully actuated, the number and stability properties of undesired equilibria depend on the choice of nominal controller.
However, the results of~\cite{YC-PM-EDA-JC:24-cdc} are limited to  linear and planar dynamics, and to safe sets that are the complement of an ellipsoid.
%
%

\textbf{Statement of contributions}. We study the dynamical properties of closed-loop systems under CBF-based safety filters, paying special attention to the emergence of undesired behaviors. The contributions are summarized  as follows:
\begin{enumerate}
    \item We characterize the undesired equilibria that emerge in closed-loop systems with  general control-affine dynamical systems, a stabilizing, locally-Lipschitz nominal controller, and a CBF-based safety filter. We show that finding the undesired equilibria is equivalent to solving an algebraic equation.

    \item We provide an example showing that, in general, the set of undesired equilibria can be a continuum. This motivates our next contribution, which consists in providing conditions under which the equilibria are isolated points.

    \item We show that, in general, the trajectories of the closed-loop system can be unbounded. We then show how, by appropriately selecting some of the parameters of the safety filter, and under mild assumptions, one can ensure that the trajectories of the closed-loop system remain bounded. 
    
    \item In the case of planar systems, we show that by suitably tuning the parameters of the safety filter, the closed-loop system does not contain any limit cycles. This implies that all trajectories of the closed-loop system either converge to the origin or to an undesired equilibrium.
    Therefore, the solutions of the algebraic equation for the undesired equilibria define all the possible limits of trajectories of the closed-loop system.
    Since solving this algebraic equation for general systems is complicated, we also provide qualitative results regarding the structure of the set of undesired equilibria. 
    We show that if the safe set is bounded, the number of undesired equilibria is even, and half of them are saddle points, whereas if the unsafe set is bounded, the number of undesired equilibria is odd, equal to $2l-1$ with $l\in\mathbb{Z}_{>0}$, and $l$ of them are saddle points.

    \vspace{.1cm}

    \item We illustrate the existence of undesired equilibria and their stability properties for linear planar systems in a variety of different cases.
    For underactuated systems and safe sets that are parametrizable in polar coordinates, we show that no undesired equilibria exist. We provide different examples in which asymptotically stable undesired equilibria exist, including a fully actuated system with a 
    convex safe set, an underactuated system with a safe set not parameterizable in polar coordinates, and an underactuated and fully actuated systems with a bounded unsafe set.
    We also provide an example with nonconvex unsafe set where asymptotically stable undesired equilibria exist for any choice of stabilizing nominal controller.
    Finally, for the special case where the unsafe set is an ellipse,
    we provide analytical expressions for the undesired equilibria and their stability properties. We show that if the system is underactuated, there exists exactly one undesired equilibria, which is a saddle point, whereas if the system is fully actuated, the behavior is much richer and includes a variety of different cases.
    \end{enumerate}
    Our contributions highlight the intrincate relationship between the system dynamics, the geometry of the safe set, and the existence of undesired equilibria and their stability properties.
    They also serve as a cautionary note to practitioners, for whom we provide a variety of methods to tune (when possible) their controllers to avoid this plethora of undesirable behaviors.

\section{Preliminaries}\label{sec:preliminaries}

In this section we introduce the notation and provide preliminaries on CBFs and safety filters. The reader familiar with these contents can safely skip this section.

\subsection{Notation}

We denote by $\mathbb{Z}_{>0}$, $\mathbb{Z}_{\geq0}$ and $\real$ the set of positive integers, nonnegative integers, and real numbers, respectively.  
We use bold symbols to represent vectors and non-bold symbols to represent scalar quantities. Let $n\in\mathbb{Z}_{>0}$;
$\mathbf{0}_n$ represents the $n$-dimensional zero vector, $\zero_n$ the $n\times n$-dimensional zero matrix and $\textbf{I}_n$ the $n$-dimensional identity matrix.
We also write $[n] = \{ 1, \hdots, n \}$.
Given $\bx\in\real^{n}$, $\norm{\bx}$ denotes its Euclidean norm. Given a matrix $G\in\real^{n\times n}$, $\norm{\bx}_{G} = \sqrt{\bx^T G \bx}$ and $\text{det}(G)$ denotes its determinant. A function 
$\beta:\real\to\real$ is of extended class $\mathcal{K}_{\infty}$ if $\beta(0)=0$,
$\beta$ is strictly increasing and $\lim\limits_{s\to\pm\infty}\beta(s) = \pm\infty$.
Given a set $S\subset\real^n$, we denote by $\text{Int}(S)$ and $\partial S$ the interior and boundary of $S$, respectively. For a continuously differentiable function  $h:\real^{n}\to\real$, $\nabla h(\bx)$ denotes its gradient at $\bx$.  
Consider the system $\dot{\bx} = f(\bx)$, with $f:\real^n\to\real^n$ locally Lipschitz. 
Then, for any initial condition $\bx_0 \in \real^n$ at time $t_0$, there exists a maximal interval of existence $[t_0,t_1)$ such that $\bx(t;\bx_0)$ is the unique solution to $\dot{\bx} = f(\bx)$ on 
$[t_0,t_1)$, cf.~\cite{EDS:98}.
For $f$  continuously differentiable
and $\bx_*$ an equilibrium point of $f$ (i.e., $f(\bx_*)=\mathbf{0}_n$), $\bx_*$ is \emph{degenerate} if the Jacobian of $f$ evaluated at $\bx_*$ has at least one eigenvalue with real part equal to zero; otherwise, we refer to  $\bx_*$ as \emph{hyperbolic}. Given a hyperbolic equilibrium point with $k\in\mathbb{Z}_{>0}$ eigenvalues with negative real part, the Stable Manifold Theorem~\cite[Section 2.7]{LP:00} ensures that there exists an invariant $k$-dimensional manifold $S$ for which all trajectories with initial conditions lying on $S$ converge to $\bx_*$.
The global stable manifold at $\bx_*$ is defined as $W_s(\bx_*)=\bigcup\limits_{ \{t\leq0, \ \bx_0\in S \} }\bx(t;\bx_0)$.
An equilibrium point $\bx_*$ is \textit{isolated} if there exists an open neighborhood $\mathcal{U}$ of $\bx_*$ such that $\bx_*$ is the only equilibrium point in $\mathcal{U}$. 


\subsection{Control barrier functions and safety filters} 

Consider a control-affine system of the form
\begin{align}\label{eq:control-affine-sys}
  \dot{\bx}=f(\bx)+g(\bx)\bu,
\end{align}
where $f:\real^{n}\to\real^{n}$ and $g:\real^{n}\to\real^{n\times m}$
are locally Lipschitz, $\bx\in\real^{n}$ is the state, and
$\bu\in\real^{m}$ is the input. Suppose that a locally Lipschitz nominal controller $k:\real^n\to\real^m$ is designed so that the system 
$\dot \bx = \tilde{f}(\bx) := f(\bx)+g(\bx)k(\bx)$ has a unique equilibrium, which furthermore is globally asymptotically stable. In the remainder of the paper, we assume without loss of generality that such equilibrium is the origin. In the following, we introduce the notion of safety filter and filtered system. To this end, we first recall the  definition of CBF. 

\vspace{.1cm} 

\begin{definition}[Control Barrier
    Function]\label{def:cbf}
  Let $\Cc\subset\real^n$ be a subset of $\real^n$.
  Let $h:\real^{n}\to\real$ be a continuously differentiable function
  such that
  \begin{subequations}
      \begin{align}
          \Cc &= \setdef{\bx\in\real^n}{h(\bx)\geq0}, \\
          \partial\Cc &= \setdef{\bx\in\real^n}{h(\bx)=0}.
      \end{align}
      \label{eq:safe-set}
  \end{subequations}
  The
  function $h$ is a  \textbf{CBF} of $\Cc$ for the
  system~\eqref{eq:control-affine-sys} if there exists an extended class
  $\mathcal{K}_{\infty}$ function $\alpha$ such that, for all $\bx \in \Cc$, there exists $\bu \in\real^{m}$ satisfying $\nabla h(\bx)^\top (f(\bx)+g(\bx)\bu) + \alpha(h(\bx)) \geq 0$. If this inequality holds strictly, we refer to $h$ as a strict CBF.
\end{definition}

\smallskip

The set $\Cc$ corresponds to the set of \emph{safe} states. With this definition of CBF, hereafter we refer to the \emph{filtered} system as:
\begin{equation}\label{eq:general-system-1}
\dot{\bx}= \tilde{f}(\bx) +g(\bx)v(\bx),
\end{equation}
 where the map
 $v:\real^n\to\real^m$ is defined as the unique solution to the following optimization problem:
\begin{align} \label{eq:v-problem} 
v(\bx)&=\arg\underset{ \boldsymbol{\theta} \in \mathbb{R}^{m} }{\min}  \| \boldsymbol{\theta} \|_{G(\bx)}^2 \\
\notag
&\text { s.t. } \nabla h(\bx)^\top (\tilde{f}(\bx)+g(\bx) \boldsymbol{\theta} )+\alpha( h(\bx)) \geq 0, 
\end{align}
%
%
with $G:\real^n\to\real^{m\times m}$  continuously differentiable and positive definite for all $\bx\in\real^n$. Note that~\eqref{eq:v-problem} is a linearly-constrained convex quadratic program (QP), parametrized by the state vector $\bx$. We  assume the following.

\vspace{.1cm}

\begin{assumption}[Origin in the interior of $\mathcal{C}$]\label{as: interior eq}
     The origin is in the interior of the safe set, i.e., $h(\mathbf{0}_n)>0$.
\end{assumption}
%
%
%
%

\vspace{.1cm}

%
%

\vspace{.1cm}

We note that if $h$ is a strict CBF,~\eqref{eq:v-problem} is strictly feasible for all $\bx\in\Cc$.
Moreover, since the objective function in~\eqref{eq:v-problem} is strongly convex, and the constraints are affine, $v(\bx)$ is unique and well-defined for all $\bx\in\Cc$.
Furthermore, if $h$ is a strict CBF, using arguments similar to~\cite[Lemma III.2]{MA-NA-JC:25-tac}, one can show that $v(\bx)$ is locally Lipschitz. 
If $h$ is a strict CBF, it also follows that 
$\frac{\partial h}{\partial \bx}(\bx) \neq \mathbf{0}_n$ for all $\bx\in\partial\Cc$ and therefore from~\cite[Thm.~2]{ADA-SC-ME-GN-KS-PT:19}, it follows that the set $\Cc$ is forward invariant for the system~\eqref{eq:general-system-1}. Because of this feature, $v$ is typically referred to as the \emph{safety filter} associated with~$k$.

\section{Problem Statement}\label{sec:problem-statement}

We consider a control-affine system as in~\eqref{eq:control-affine-sys} and a safe set $\Cc\subset\real^n$ defined as the $0$-superlevel set of a differentiable function $h:\real^n\to\real$.  We assume that $h$ is a strict CBF of $\Cc$; this function, along with the associated extended class $\Kc_{\infty}$ function $\alpha$ in Definition~\ref{def:cbf}, are assumed to be given.  In our setup, we consider the general setting where: (i) a given controller $k:\real^n\to\real^m$ globally asymptotically stabilizes the origin; (ii) a safety filter is added as in~\eqref{eq:general-system-1} to render $\Cc$ forward invariant; and, (iii) Assumption~\ref{as: interior eq} holds. 
%
%
Figure~\ref{fig:diagram-safety-filters} illustrates the setup considered in this paper.
\begin{figure}
  \centering
  {\includegraphics[width=0.9\textwidth]{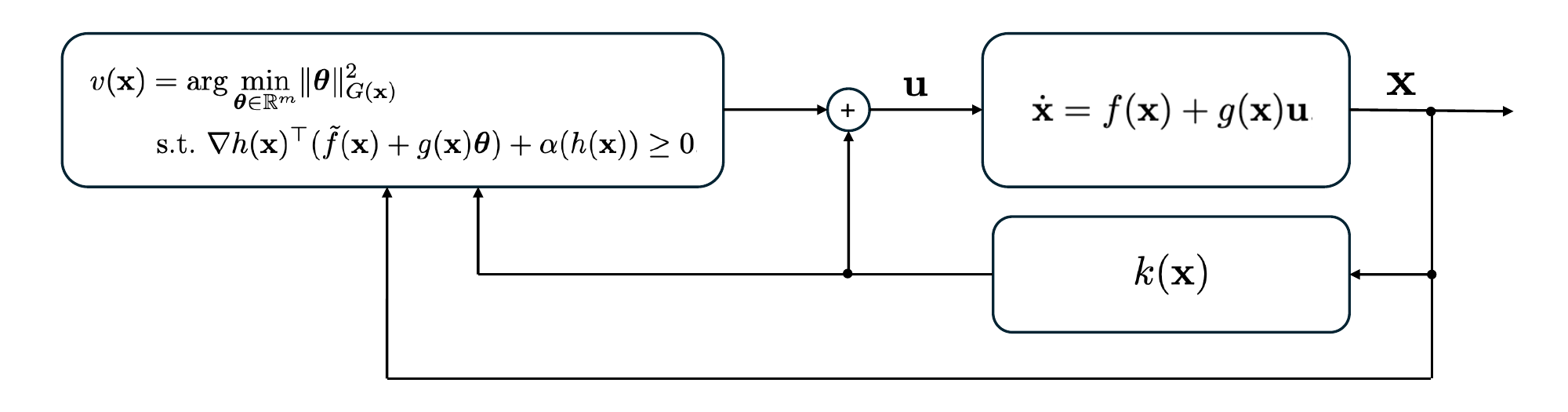}} 
  \caption{Closed-loop system that is the subject of this paper. A nominal controller $k$ is used as input to the safety filter, that finds the controller closest to $k$ that satisfies the CBF condition.}
  \label{fig:diagram-safety-filters}
\end{figure}

Even though the origin is globally asymptotically stable under the nominal controller $k$, and $v$ is designed to minimally modify $k$ while ensuring that $\Cc$ is forward invariant, studying the dynamical behavior of~\eqref{eq:general-system-1} is challenging.
As noted in, e.g.,~\cite{WSC-DVD:22-tac}, the filtered system does not necessarily inherit the global asymptotic stability properties of the controller $k$, and can even have undesired equilibria~\cite{MFR-APA-PT:21,XT-DVD:24,PM-JC:23-csl,YY-SK-BG-NA:23,YC-PM-EDA-JC:24-cdc} (cf. Figure~\ref{fig:4_figures_v3}). 
\begin{figure*}
  \centering
    \subfigure[Unbounded trajectories]{\includegraphics[width=.48\linewidth]{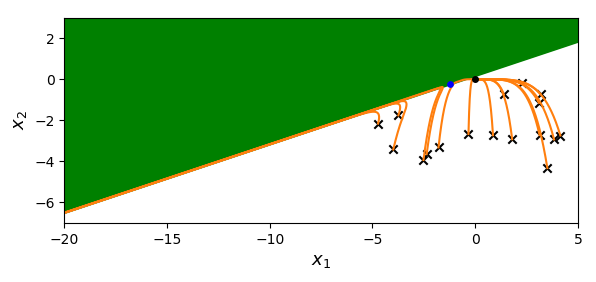}}
   \subfigure[Stable undesired equilibrium]{\includegraphics[width=.48\linewidth]{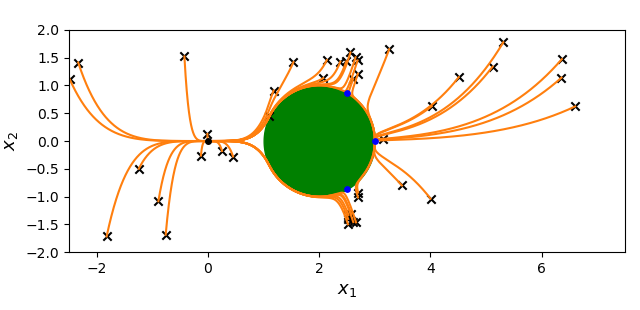}}
  \caption{Control-affine systems with a safety filter with (a) half-plane and (b) circular obstacles. The plots show the trajectories from random initial conditions, the undesired equilibria (colored in blue), and the desired equilibrium (the origin, colored in black).}
  \label{fig:4_figures_v3}
\end{figure*}
Most of these works focus on studying conditions under which such undesired equilibria exist or can be confined to specific regions of interest, but do not study other dynamical properties of the closed-loop system, such as boundedness of trajectories, existence of limit cycles, or regions of attraction. In practical applications, these properties (e.g., ensuring that trajectories are bounded, unexpected limit cycles do not arise, or the region of attraction of undesired equilibria is small) is critical to guarantee a desirable performance of the closed-loop system.
With this premise, the goal of this paper is as follows: 

\smallskip



\begin{problem}\label{problem}
 Given the system~\eqref{eq:control-affine-sys} and the safety filter $v(\bx)$, characterize the boundedness of trajectories, the region of attraction of the origin, existence of undesired equilibria and their regions of attraction, and the existence of limit cycles for the dynamics~\eqref{eq:general-system-1}. 
\end{problem}

\smallskip

%
%

%
%

\section{Undesired Equilibria, Bounded Trajectories, and Limit Cycles}\label{sec:undesired-eq-bounded-trajs-limit-cycles}

In this section, we study the dynamical properties of~\eqref{eq:general-system-1}, including undesired equilibria, (un)boundedness of trajectories, and limit cycles. We start by providing a precise expression for the unique optimal solution $v(\bx)$ of the problem~\eqref{eq:v-problem}. For brevity, define $\eta(\bx) := \nabla h(\bx)^T \tilde{f}(\bx) + \alpha(h(\bx))$. Then,  for any $\bx\in\mathcal{C}$ 
\begin{align}\label{eq:v-expression}
    v(\bx) = \begin{cases}
        \mathbf{0}_m, &\ \text{if} \ \eta(\bx) \geq 0, \\
        \bar{u}(\bx), &\ \text{if} \ \eta(\bx) < 0,
    \end{cases}
\end{align}
where $\bar{u}(\bx):=-\frac{\eta(\bx) G(\bx)^{-1}g(\bx)^\top\nabla h(\bx) }{\| 
 g(\bx)^\top \nabla h(\bx)  \|_{G^{-1}(\bx)}^2}$. We note that since $h$ is a strict CBF, $g(\bx)^\top \nabla h(\bx)\neq \mathbf{0}_m$ if $\eta(\bx) < 0$ and $\bx\in\Cc$. This implies that $\bar{u}$ (and hence $v$) is well defined on~$\Cc$. 

\subsection{Characterization of undesired equilibria}

Our first result leverages expression~\eqref{eq:v-expression} to provide a necessary and sufficient condition for the existence of undesired equilibria of the filtered system~\eqref{eq:general-system-1}. 

\smallskip

%
%
%
\begin{lemma}\longthmtitle{Conditions for undesired equilibria}\label{lem:undesired-eq-characterization}
  Consider system~\eqref{eq:general-system-1}. Let $h$ be a strict CBF and suppose Assumption~\ref{as: interior eq} holds. Let $\bx_* \in \mathbb{R}^n$ be such that $\tilde{f}(\bx_*) \neq \mathbf{0}_n$. Then,  $\bx_*$ is an equilibrium of \eqref{eq:general-system-1}
    if and only if  there exists $\delta<0$ such that
    \vspace{-.1cm}
\begin{subequations}
\label{eq: condition-eq} 
\begin{align}
    ~~~~~~~ & h(\bx_*)=0 ~\text{and} \label{eq: condition-eq1} & \\ 
    ~~~~~~~ & \tilde{f}(\bx_*)=\delta g(\bx_*)G(\bx_*)^{-1}g(\bx_*)^\top\nabla h(\bx_*) \, .  & ~~~~~~~~~ \label{eq: condition-eq2} 
\end{align}
\end{subequations}
Moreover, $\bx_{*}$ is an equilibrium of~\eqref{eq:general-system-1} independently of the choice of $h$ and $\alpha$.
\end{lemma}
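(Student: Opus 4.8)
The plan is to argue both directions directly from the closed-form expression \eqref{eq:v-expression} for the filter, using the fact that the hypothesis $\tilde{f}(\bx_*)\neq\mathbf{0}_n$ forces the filter to be active at $\bx_*$. For necessity, suppose $\bx_*$ is an equilibrium, so that $\tilde{f}(\bx_*)+g(\bx_*)v(\bx_*)=\mathbf{0}_n$. Since $\tilde{f}(\bx_*)\neq\mathbf{0}_n$, we must have $v(\bx_*)\neq\mathbf{0}_m$, which by \eqref{eq:v-expression} excludes the branch $\eta(\bx_*)\geq 0$ and places us in the regime $\eta(\bx_*)<0$ with $v(\bx_*)=\bar{u}(\bx_*)$. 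Substituting the explicit form of $\bar{u}$ into the equilibrium identity and collecting the scalar prefactor $\delta:=\eta(\bx_*)/\norm{g(\bx_*)^\top\nabla h(\bx_*)}_{G^{-1}(\bx_*)}^2$ yields \eqref{eq: condition-eq2}; because $\eta(\bx_*)<0$ while the denominator is strictly positive (as $g^\top\nabla h\neq\mathbf{0}_m$ whenever $\eta<0$ on $\mathcal{C}$, a fact recalled just before the statement, and $\bx_*\in\mathcal{C}$ since the filter is defined only there), we get $\delta<0$. To obtain \eqref{eq: condition-eq1}, I would compute on the active branch that $\nabla h(\bx_*)^\top g(\bx_*)\bar{u}(\bx_*)=-\eta(\bx_*)$, combine this with $g\bar{u}=-\tilde{f}$ to deduce $\nabla h(\bx_*)^\top\tilde{f}(\bx_*)=\eta(\bx_*)$, and conclude $\alpha(h(\bx_*))=0$; since $\alpha$ is extended class $\mathcal{K}_\infty$, this forces $h(\bx_*)=0$.

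For sufficiency, I would start from \eqref{eq: condition-eq1}--\eqref{eq: condition-eq2} with $\delta<0$ and verify that $\bx_*$ sits on the active branch and annihilates the dynamics. From \eqref{eq: condition-eq1} we have $\alpha(h(\bx_*))=0$, hence $\eta(\bx_*)=\nabla h(\bx_*)^\top\tilde{f}(\bx_*)$; inserting \eqref{eq: condition-eq2} gives $\eta(\bx_*)=\delta\,\norm{g(\bx_*)^\top\nabla h(\bx_*)}_{G^{-1}(\bx_*)}^2$. Crucially, the hypothesis $\tilde{f}(\bx_*)\neq\mathbf{0}_n$ together with \eqref{eq: condition-eq2} forces $g(\bx_*)^\top\nabla h(\bx_*)\neq\mathbf{0}_m$, so the norm is positive and $\eta(\bx_*)<0$, placing us on the branch $v=\bar{u}$. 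Substituting $\eta(\bx_*)=\delta\,\norm{g(\bx_*)^\top\nabla h(\bx_*)}_{G^{-1}(\bx_*)}^2$ back into $\bar{u}$ collapses the expression to $g(\bx_*)\bar{u}(\bx_*)=-\delta\,g(\bx_*)G(\bx_*)^{-1}g(\bx_*)^\top\nabla h(\bx_*)=-\tilde{f}(\bx_*)$ by \eqref{eq: condition-eq2}, so $\tilde{f}(\bx_*)+g(\bx_*)v(\bx_*)=\mathbf{0}_n$, as required.

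For the ``moreover'' statement, I would observe first that conditions \eqref{eq: condition-eq1}--\eqref{eq: condition-eq2} make no reference to $\alpha$ at all, giving $\alpha$-independence immediately. Independence of the choice of $h$ follows because any two strict CBFs describing the same safe set $\mathcal{C}$ share the boundary $\partial\mathcal{C}=\{h=0\}$ and have positively proportional gradients there (both point into $\mathcal{C}$); rescaling $\delta$ by this positive factor preserves its sign and maps \eqref{eq: condition-eq2} for one CBF onto the corresponding condition for the other, leaving the equilibrium set unchanged, consistent with~\cite{indep-cbf}.

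I expect the main obstacle to be the careful bookkeeping that establishes $g(\bx_*)^\top\nabla h(\bx_*)\neq\mathbf{0}_m$ in both directions, since the two directions obtain it from different sources: necessity leans on the strict-CBF property via $\eta(\bx_*)<0$, whereas sufficiency must instead extract it from $\tilde{f}(\bx_*)\neq\mathbf{0}_n$ and \eqref{eq: condition-eq2}. Closely tied to this is the sign tracking of $\delta$, since the whole distinction between a genuine equilibrium and a spurious solution of the algebraic system hinges on $\delta<0$.
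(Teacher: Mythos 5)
Your proof is correct and follows essentially the same route as the paper's: both directions are checked directly against the closed-form expression \eqref{eq:v-expression}, with the same computation forcing $\alpha(h(\bx_*))=0$ and hence $h(\bx_*)=0$, and the ``moreover'' part deferred to the CBF-independence result. The only (harmless) difference is that in the sufficiency direction you obtain $g(\bx_*)^\top\nabla h(\bx_*)\neq\mathbf{0}_m$ from $\tilde{f}(\bx_*)\neq\mathbf{0}_n$ together with \eqref{eq: condition-eq2}, whereas the paper derives it from the strict-CBF property; both arguments are valid.
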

\vspace{.1cm}
\begin{proof}  
    Note that if $x_*$ and $\delta < 0$ satisfy~\eqref{eq: condition-eq}, then by multiplying~\eqref{eq: condition-eq2} by $\nabla h(\bx_{*})^\top$ we get 
    $ 
    \|  g(\bx_*)^\top \nabla h(\bx_*)  \|_{G^{-1}(\bx_*)}^2 \delta=\nabla h(\bx_*)^\top \tilde{f}(\bx_*)$. Now, if $g(\bx_*)^\top \nabla h(\bx_*) = 0$, it would follow that $\nabla h(\bx_{*})^\top \tilde{f}(\bx_{*}) = 0$. Since $h(\bx_{*})=0$, this would imply that $\nabla h(\bx_*)^\top ( \tilde{f}(\bx_*) +  g(\bx_*)\bu) + \alpha(h(\bx_*)) = \nabla h(\bx_*)^\top ( f(\bx_*) +  g(\bx_*)\bu) + \alpha(h(\bx_*)) = 0$, for all $\bu\in\real^m$, which contradicts the assumption that $h$ is a strict CBF.
    Therefore, $g(\bx_*)^\top \nabla h(\bx_*) \neq 0$
    and
    $ 
    \delta=\frac{\nabla h(\bx_*)^\top \tilde{f}(\bx_*) }{\|  g(\bx_*)^\top \nabla h(\bx_*)  \|_{G^{-1}(\bx_*)}^2} $.
    %
    %
    Then, it follows that $\eta(\bx_*)<0$ and $\tilde{f}(\bx_*)+g(\bx_*)\bar{u}(\bx_*)= \mathbf{0}_n$. Therefore, $\bx_*$ is an equilibrium of \eqref{eq:general-system-1}. 
    
    Conversely, if $\bx_*$ is an equilibrium of \eqref{eq:general-system-1},  then $\tilde{f}(\bx_*)+g(\bx_*)\bar{u}(\bx_*)=\mathbf{0}_n$. It follows that $0 = \nabla h(\bx_*)^\top ( \tilde{f}(\bx_*)+g(\bx_*)\bar{u}(\bx_*) )= -\alpha (h(\bx_*))$. Since $\alpha(\cdot)$ is an extended class $\mathcal{K}_\infty$ function, it must hold that $ h(\bx_*)=0$. Note also that $\eta(\bx_*) < 0$, since otherwise $\bar{u}(\bx_*) = \mathbf{0}_m$ and hence $\tilde{f}(\bx_*)+g(\bx_*)\bar{u}(\bx_*) = \tilde{f}(\bx_*) = f(\bx_*) + g(\bx_*)k(\bx_*) = \mathbf{0}_n$, which can only hold if $\bx_* = \mathbf{0}_*$, contradicting Assumption~\ref{as: interior eq}.
    Hence, $\eta(\bx_*)<0$
    %
    %
    and one has that $f(\bx_*) =\frac{\eta(\bx_*)    }{\|  g(\bx_*)^\top \nabla h(\bx_*)  \|_{G^{-1}(\bx_*)}^2}  g(\bx_*)G(\bx_*)^{-1}g(\bx_*)^\top\nabla h(\bx_*)$, implying~\eqref{eq: condition-eq2} with $\delta<0$.
    %
    %
    Finally, the fact that $\bx_{*}$ is an equilibrium of~\eqref{eq:general-system-1} independently of $h$ and $\alpha$ follows from~\cite[Corollary 4.5]{indep-cbf}.
\end{proof}

\vspace{.1cm}


Hereafter, given a solution $(\bx_*,\delta_{\bx_*})$ to \eqref{eq: condition-eq}, we refer to  $\delta_{\bx_*}$ as the \emph{indicator} of $\bx_*$.
Lemma~\ref{lem:undesired-eq-characterization} characterizes the undesired equilibria of closed-loop systems obtained from safety filters. We note that related results exist in the literature:~\cite[Theorem 2]{XT-DVD:24} characterizes the undesired equilibria for a CBF-based control design that includes CLF constraints and~\cite[Proposition 5.1]{PM-JC:23-csl} characterizes undesired equilibria for a design that includes such CLF constraints as a penalty term in the objective function. 
%
%
However, both of these designs can introduce undesired equilibria in the interior of the safe set, whereas as shown in Lemma~\ref{lem:undesired-eq-characterization},~\eqref{eq:general-system-1} can only introduce undesired equilibria in the boundary of the safe set. 

Based on Lemma~\ref{lem:undesired-eq-characterization}, we define the sets
\begin{align*}
 \mathcal{E} &:=\{\bx \in \real^n:~\exists~\delta\in\mathbb{R} \ \text{s.t. $(\bx,\delta)$ solves \eqref{eq: condition-eq}} \} ,
 \\
 \hat{\mathcal{E}} &:=\{\bx\in \real^n:~\exists~\delta<0  \ \text{s.t. $(\bx,\delta)$ solves \eqref{eq: condition-eq}}\} .
\end{align*}
We refer to $\mathcal{E}$ and $ \hat{\mathcal{E}}$ as the sets of \emph{potential undesired equilibria} and of \emph{undesired equilibria} of \eqref{eq:general-system-1}, respectively. Note that $\hat{\mathcal{E}} \subset \mathcal{E}$.  By Lemma \ref{lem:undesired-eq-characterization}, determining the equilibrium points of system \eqref{eq:general-system-1} is equivalent to solving \eqref{eq: condition-eq} and checking the sign of $\delta$. 
Under appropriate conditions,~\cite[Proposition 10]{indep-cbf} provides an explicit expression for the Jacobian of $\tilde{f}(\bx) + g(\bx)v(\bx)$ at $\bx\in\hat{\Ec}$ and shows that one of its eigenvalues is $-\alpha^{\prime}(0)$ (provided that $\alpha$ is differentiable), and the rest of the eigenvalues are independent of~$\alpha$.
If $\alpha^\prime(0) > 0$, it follows that the Jacobian evaluated at  $\bx\in\hat{\mathcal{E}}$ always has a negative eigenvalue.


Even if we are able to identify the set of undesired equilibria, characterizing the dynamical properties of the closed-loop system~\eqref{eq:general-system-1} is still challenging. In the following, we provide a variety of results regarding the boundedness of trajectories, the existence of limit cycles, and the stability properties of undesired equilibria. We also investigate the possible asymptotic behaviors of trajectories of the closed-loop system~\eqref{eq:general-system-1}.

\subsection{Boundedness of trajectories}

The following result states a set of general conditions under which the trajectories of~\eqref{eq:general-system-1} are bounded.

\smallskip

\begin{proposition}\longthmtitle{Conditions for boundedness of trajectories}\label{prop:conditions-bounded-trajs}
     Consider system~\eqref{eq:general-system-1} and suppose that $h$ is a strict CBF and Assumption \ref{as: interior eq} holds.
    Let $\partial\Cc$ be bounded.
    %
    %
    Moreover, assume that the extended class $\Kc_{\infty}$ function $\alpha(\cdot)$ is linear, i.e., $\alpha(z) = a z$ for some $a > 0$. Then, for any compact set $\Phi \subset \real^n$ 
    with $\partial\Phi\subset\Cc$, there exists $\tilde{a}_{\Phi}>0$ and a compact set $\tilde{\Phi}$ containing $\Phi$ such that, by taking $a >\tilde{a}_{\Phi}$, 
    $\tilde{\Phi}$ is forward invariant under~\eqref{eq:general-system-1}.
    As a consequence, any trajectory of~\eqref{eq:general-system-1} with initial condition in $\Phi\cap\Cc$ is bounded (because it remains in $\tilde{\Phi}$ at all times).
\end{proposition}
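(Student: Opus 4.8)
The plan is to take for $\tilde{\Phi}$ the intersection of $\Cc$ with a sublevel set of a Lyapunov function for the nominal closed loop, and to exploit the fact that increasing $a$ confines the region where the safety filter is active to an arbitrarily thin shell around $\partial\Cc$. Concretely, recall from~\eqref{eq:v-expression} that the filter acts (i.e.\ $v(\bx)\neq\mathbf{0}_m$) precisely where $\eta(\bx)=\nabla h(\bx)^\top\tilde{f}(\bx)+a\,h(\bx)<0$; for a point with $h(\bx)\geq\varepsilon>0$ this forces $\nabla h(\bx)^\top\tilde{f}(\bx)<-a\varepsilon$, which fails once $a$ is large. Thus on any compact region of the interior of $\Cc$ the closed loop~\eqref{eq:general-system-1} coincides with the nominal field $\tilde{f}$ as soon as $a$ is large enough, and the Lyapunov decrease of $\tilde{f}$ should then control the flow on the outer boundary of $\tilde{\Phi}$.

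First I would invoke a converse Lyapunov theorem (e.g.~\cite{EDS:98}): since $\tilde{f}$ renders the origin globally asymptotically stable, there exists a smooth, radially unbounded $V:\real^n\to\realnonneg$ with $V(\mathbf{0}_n)=0$, $V(\bx)>0$ for $\bx\neq\mathbf{0}_n$, and $\nabla V(\bx)^\top\tilde{f}(\bx)<0$ for all $\bx\neq\mathbf{0}_n$. Because $\partial\Cc$ is compact and $\Phi$ is compact, I would pick $c>0$ with $\Phi\subset\{V\le c\}$ and $\partial\Cc\subset\{V<c\}$, and set $\tilde{\Phi}:=\{V\le c\}\cap\Cc$. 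Radial unboundedness makes $\{V\le c\}$ bounded, so $\tilde{\Phi}$ is compact, and it contains $\Phi\cap\Cc$ (the portion on which the filtered flow is defined, which is what the boundedness claim concerns). Its boundary splits into an inner piece $\partial\Cc\cap\{V\le c\}$ and an outer piece $\Sigma:=\{V=c\}\cap\Cc$; if $\Sigma=\emptyset$ then $\tilde{\Phi}=\Cc$ is already compact and forward invariant and there is nothing more to do.

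The crux is the outer piece $\Sigma$. Since $\partial\Cc\subset\{V<c\}$, the compact set $\Sigma$ is disjoint from $\partial\Cc$, so $h>0$ on $\Sigma$ and $\varepsilon:=\min_{\bx\in\Sigma}h(\bx)>0$. Letting $M:=\max_{\bx\in\Sigma}\big(-\nabla h(\bx)^\top\tilde{f}(\bx)\big)$ (finite by compactness), I would set $\tilde{a}_{\Phi}:=\max\{M,0\}/\varepsilon$ so that for any $a>\tilde{a}_{\Phi}$ one gets $\eta(\bx)\ge -M+a\varepsilon>0$ on $\Sigma$; hence $v(\bx)=\mathbf{0}_m$ there, and consequently $\nabla V(\bx)^\top(\tilde{f}(\bx)+g(\bx)v(\bx))=\nabla V(\bx)^\top\tilde{f}(\bx)<0$ on $\Sigma$ (its points are nonzero, as $V(\mathbf{0}_n)=0<c$). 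To finish, I would argue forward invariance of $\tilde{\Phi}$ by a first-exit-time contradiction: any trajectory starting in $\tilde{\Phi}\subset\Cc$ stays in $\Cc$ forever because $\Cc$ is forward invariant for~\eqref{eq:general-system-1}, so it can escape $\tilde{\Phi}$ only across $\Sigma$; if $t^\ast$ were the first time $V(\bx(t))=c$, then $\bx(t^\ast)\in\Sigma$ and $\tfrac{d}{dt}V(\bx(t))\big|_{t^\ast}<0$, so $V$ is strictly decreasing at $t^\ast$, contradicting $V>c$ just afterwards. Compactness of $\tilde{\Phi}$ then yields boundedness of every trajectory from $\Phi\cap\Cc$.

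The main obstacle is handling the two parts of $\partial\tilde{\Phi}$ simultaneously: escape through the inner boundary $\partial\Cc$ must be ruled out by the \emph{given} forward invariance of $\Cc$, while escape through the outer level set $\Sigma$ must be ruled out by the nominal Lyapunov decrease, which is only available there if the filter is inactive on $\Sigma$. Guaranteeing the latter requires separating $\Sigma$ from $\partial\Cc$ so that $h\ge\varepsilon>0$ on $\Sigma$, and this is exactly where the hypotheses enter: boundedness of $\partial\Cc$ lets a single sublevel set enclose $\partial\Cc$ strictly, and linearity of $\alpha$ lets the single parameter $a$ push the active region into $\{0\le h<\varepsilon\}$ while leaving a clean level set on which~\eqref{eq:general-system-1} agrees with the nominal dynamics. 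Care is also needed in the degenerate cases $\Sigma=\emptyset$ (bounded safe set, handled above) and $M\le 0$ (filter already inactive on $\Sigma$ for every $a>0$), both of which only make the conclusion easier.
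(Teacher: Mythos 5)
Your proposal is correct and follows essentially the same route as the paper: a converse Lyapunov function for the nominal closed loop, a sublevel set enclosing $\Phi$ and the unsafe region with $h>0$ on its outer boundary, a slope $a$ large enough to deactivate the filter there, and forward invariance of both $\Cc$ and the sublevel set. The only differences are cosmetic (you absorb the bounded-$\Cc$ case into the degenerate sub-case $\Sigma=\emptyset$ and intersect the sublevel set with $\Cc$, whereas the paper treats the two cases separately and takes the full sublevel set), and your first-exit-time argument is a slightly more explicit version of the paper's invariance step.
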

%
%
%
%
\begin{proof}
    Since $\partial\Cc$ is bounded, either $\Cc$ is bounded or $\real^n\backslash\Cc$ is bounded.
    
    Case~1: $\Cc$ bounded. 
    %
    %
    In this case, $\Cc$ is actually compact, and the result holds because $\Cc$ is forward invariant under~\eqref{eq:general-system-1}: for any compact $\Phi$ with $\partial\Phi\subset\Cc$,  necessarily $\Phi \subset \Cc$ and we can take $\tilde{\Phi}=\Cc$ and any extended class $\Kc_{\infty}$ function.

    Case~2: $\real^n\backslash\Cc$ is bounded. 
    Since the system $\dot{\bx}=f(\bx)+g(\bx)k(\bx)$ renders the origin  globally asymptotically stable, by~\cite[Theorem 4.17]{HK:02}, there exists a radially unbounded Lyapunov function $V:\real^n\to\real_{\geq0}$.
    Let $\Gamma$ be a Lyapunov sublevel set of $V$ containing $(\real^n\backslash\Cc) \cup \Phi$ and such that $h(\bx)>0$ for all $\bx\in\partial\Gamma$. Note that such $\Gamma$ exists because $\real^n\backslash\Cc$ is bounded and $V$ is radially unbounded.
    Now, since $h(\bx)>0$ for all $\bx\in\partial\Gamma$, there exists 
    $\tilde{a}_\Phi$ sufficiently large such that $\eta(\bx)=\nabla h(\bx)^T (f(\bx) + g(\bx)k(\bx)) + \tilde{a}_\Phi h(\bx) \geq 0$ for all $\bx\in\partial\Gamma$.
    Let $\tilde{\Phi}=\Gamma$.
    Since $\Gamma$ is a Lyapunov sublevel set, this implies that $\Gamma$ is forward invariant for $\dot{\bx} = f(\bx) + g(\bx)k(\bx)$.
    By taking $\alpha(z) = a z$ with $a > \tilde{a}_{\Phi}$, and
    since $\nabla h(\bx)^T (f(\bx) + g(\bx)k(\bx)) + \alpha (h(\bx)) \geq 0$ for all $\bx\in\partial\Gamma$,
    the safety filter is inactive in $\partial\Gamma$ and therefore $\Gamma$ is also forward invariant under~\eqref{eq:general-system-1}.
\end{proof}

Next we show that if the assumptions of Proposition~\ref{prop:conditions-bounded-trajs} do not hold, the trajectories of~\eqref{eq:general-system-1} might not be bounded (as illustrated in Figure~\ref{fig:4_figures_v3}(a)). The following result provides technical conditions under which for linear systems and affine CBFs,~\eqref{eq:general-system-1} has unbounded trajectories.

\smallskip
\begin{proposition}\longthmtitle{Unbounded trajectories}\label{prop:unbounded-trajs-unsafe-sets}
    Let $A\in\real^{n\times n}$, $B\in\real^{n\times m}$, $\ba\in\real^n$, $b\in\real$ and consider the LTI system $\dot{\bx}=A\bx+B\bu$, the function
    $h:\real^n\to\real$ given by $h(\bx) = \ba^T \bx - b$ and the set $\Cc := \setdef{\bx\in\real^n}{h(\bx)\geq0}$.
    Let $h$ be a strict CBF and suppose Assumption \ref{as: interior eq} holds.
    Further assume that there exists $\bc\in\mathbb{R}^n$, $\zeta_1 > 0$, and $\zeta_2\geq0$ satisfying $\bc\neq \mathbf{0}_n$, $\bc^T B=\mathbf{0}_m^T$, and $\bc^T A=\zeta_1 \bc^T +\zeta_2 \ba^T$.  Then, for any locally Lipschitz controller $\hat{\bu}:\real^n\to\real^m$, and any initial condition $\bx_0$ in $\setdef{\bx\in\real^n}{\ba^T \bx\geq b,~\zeta_1 \bc^T \bx + \zeta_2 b > 0}$, the solution of $\dot{\bx} = A \bx + B \hat{\bu}(\bx)$ with initial condition at $\bx_0$ satisfies $\lim_{t\to+\infty} \|\bx(t;\bx_0)\|=+\infty$.
\end{proposition}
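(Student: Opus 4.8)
The plan is to reduce the whole question to the behaviour of the single scalar $w(t):=\bc^\top\bx(t)$ and to show that it diverges. The role of the hypothesis $\bc^\top B=\mathbf{0}_m^\top$ is that the input is invisible to $w$: differentiating along any trajectory of $\dot\bx=A\bx+B\hat\bu(\bx)$ and substituting $\bc^\top A=\zeta_1\bc^\top+\zeta_2\ba^\top$ gives
\begin{equation*}
\dot w=\bc^\top\big(A\bx+B\hat\bu(\bx)\big)=\bc^\top A\bx=\zeta_1\bc^\top\bx+\zeta_2\ba^\top\bx=\zeta_1 w+\zeta_2\,\ba^\top\bx .
\end{equation*}
Thus $w$ obeys a scalar ODE with unstable rate $\zeta_1>0$, forced by $\zeta_2\,\ba^\top\bx$, and completely independent of $\hat\bu$. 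This identity is the crux of the proof and explains why the conclusion is uniform over controllers.

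Next I would use that the trajectory stays in $\Cc$, so $\ba^\top\bx(t)\ge b$ (equivalently $h(\bx(t))\ge 0$) for all $t$. Since $\zeta_2\ge 0$, this turns the identity into the differential inequality $\dot w\ge\zeta_1 w+\zeta_2 b$. Setting $\psi(t):=\zeta_1\bc^\top\bx(t)+\zeta_2 b$ and multiplying by $\zeta_1>0$ yields $\dot\psi=\zeta_1\dot w\ge\zeta_1\psi$. The initial-condition hypothesis is exactly $\psi(0)=\zeta_1\bc^\top\bx_0+\zeta_2 b>0$, so the comparison principle gives $\psi(t)\ge\psi(0)\,e^{\zeta_1 t}$ on the interval of existence. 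Translating back, $\psi(t)\to+\infty$ forces $w(t)=\bc^\top\bx(t)\to+\infty$, and by Cauchy--Schwarz $\|\bx(t)\|\ge|\bc^\top\bx(t)|/\|\bc\|$ with $\bc\ne\mathbf{0}_n$, whence $\|\bx(t;\bx_0)\|\to+\infty$. (A finite escape time would already mean $\|\bx\|$ is unbounded; otherwise the exponential lower bound delivers the limit as $t\to+\infty$.)

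The main obstacle is precisely the step $\ba^\top\bx(t)\ge b$: the comparison argument needs the forcing term $\zeta_2\,\ba^\top\bx$ to be bounded below by $\zeta_2 b$, which says the trajectory never leaves the safe set $\Cc$. When $\zeta_2=0$ this is vacuous and $\dot w=\zeta_1 w$ holds outright, so divergence is unconditional; but when $\zeta_2>0$ the coupling into the $\ba$-direction must be handled, and this is where the safety-filter structure (forward invariance of $\Cc$, recalled in Section~2) is essential — a controller that pushes $\bx$ out of $\Cc$ could keep $w$ bounded. I would therefore make the use of forward invariance explicit, and I would also observe that the trajectory cannot leave the region $\{\psi\ge 0\}$ while it remains in $\Cc$: on $\{\psi=0\}\cap\Cc$ one computes $\dot\psi=\zeta_1\zeta_2\,h(\bx)\ge 0$, so $\psi$ stays nonnegative and the initial strict inequality $\psi(0)>0$ is maintained throughout.
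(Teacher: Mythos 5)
Your proof is correct and follows essentially the same route as the paper's: the key identity $\tfrac{d}{dt}(\bc^\top\bx)=\zeta_1\bc^\top\bx+\zeta_2\,\ba^\top\bx$ (independent of $\hat{\bu}$ because $\bc^\top B=\mathbf{0}_m^\top$), followed by a comparison argument on $\zeta_1\bc^\top\bx+\zeta_2 b$; the paper integrates a constant lower bound to obtain linear growth where you obtain exponential growth, which is only a cosmetic difference. The caveat you flag is, however, genuine and not merely a stylistic point: the paper's very first displayed inequality already invokes $\ba^\top\bx(t)\ge b$ along the whole trajectory, and for $\zeta_2>0$ this is not implied by the stated hypotheses for an \emph{arbitrary} locally Lipschitz $\hat{\bu}$ (e.g., with $\dot x_1=x_1+x_2$, $\dot x_2=u$, $\bc=(1,0)^\top$, $\ba=(0,1)^\top$, $\zeta_1=\zeta_2=1$, a linear stabilizing feedback drives the state out of $\Cc$ and to the origin from admissible initial conditions), so your proposal to make the forward invariance of $\Cc$ (or the restriction to trajectories remaining in $\Cc$) an explicit hypothesis is a needed correction to the statement, not a redundancy.
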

\begin{proof}
We note that
\begin{align*}
    \frac{d}{dt}(\bc^T \bx) \! = \! \bc^T (A\bx \! + \! B\hat{\bu}(\bx)) \! = \! (\zeta_1 \bc \! + \! \zeta_2 \ba)^T \bx \geq \zeta_1 \bc^T \bx \! + \! \zeta_2 b \, .
\end{align*}
This implies that the set $\setdef{\bx\in\real^n}{\zeta_1 \bc^T \bx+\zeta_2 b \geq 0}$ is forward invariant. Additionally, if
$\zeta_1 \bc^T \bx_0 + \zeta_2 b > 0$,
then $\zeta_1 \bc^T \bx(t;\bx_0) + \zeta_2 b \geq \zeta_1 \bc^T \bx_0 + \zeta_2 b$
and therefore $\bc^T \bx(t;\bx_0) \geq \bc^T \bx_0$ for all $t\geq 0$.
%
%
It follows that
\begin{align*}
    \bc^T \bx(t;\bx_0)
    &= \bc^T \bx_0 + \int_0^{+\infty}  \frac{d}{dt} \bc^T \bx(t;\bx_0) ~ dt  \geq  \bc^T \bx_0 +\int_0^{+\infty}  (\zeta_1 \bc^T \bx_0 + \zeta_2 b) dt =+\infty,
\end{align*}
which implies that $\lim_{t\to+\infty} \|\bx(t;\bx_0)\|=+\infty$.
\end{proof}
%
%


\smallskip

\begin{remark}\longthmtitle{Underactuated systems always have unbounded solutions for some safe set}\label{rem:underactuated-unbounded-trajs}
    {\rm In the setting of Proposition~\ref{prop:unbounded-trajs-unsafe-sets}, if $m<n$, $\ker(B)\neq0$ and therefore there exists $\bc\neq\mathbf{0}_n$ such that $\bc^T B = \mathbf{0}_m^T$. Then, for any  $\zeta_1>0$ and $\zeta_2> 0$, by letting $\ba=\frac{1}{\zeta_2}(A^T \bc - \zeta_1 \bc) $ we satisfy the hypothesis of Proposition~\ref{prop:unbounded-trajs-unsafe-sets}.
    %
    %
    Hence, for any underactuated linear system, there exists a safe set for which any controller induces unbounded solutions.
    On the other hand, for fully actuated systems, 
    there does not exist $\bc\in\real^n$ satisfying $\bc^T B \neq \mathbf{0}_m^T$ and therefore the conditions of Proposition~\ref{prop:unbounded-trajs-unsafe-sets} are never met.
    \hfill $\Box$}
\end{remark}


Proposition~\ref{prop:unbounded-trajs-unsafe-sets} and Remark~\ref{rem:underactuated-unbounded-trajs} show that in general, one cannot guarantee that the trajectories of~\eqref{eq:general-system-1} are bounded. 

\subsection{Limit cycles}

Here we turn our attention to limit cycles. The following result ensures that, by taking a linear extended class $\Kc_{\infty}$ function $\alpha(x) = a x$, $a>0$, with sufficiently large slope, closed-loop planar systems~\eqref{eq:general-system-1} do not have limit cycles.

\smallskip

\begin{proposition}\longthmtitle{No limit cycles in planar systems}\label{prop:no-limit-cycles-general}
    Consider system~\eqref{eq:general-system-1} with $n=2$. Let $h$ be a strict CBF with a linear extended class $\Kc_{\infty}$ function $\alpha(\cdot)$, i.e., $\alpha(z) = a z$ for some $a > 0$. Suppose Assumption \ref{as: interior eq} holds and that $\real^2\backslash\Cc$ is comprised of a finite number of connected components.
    Then, there exists $\hat{a}>0$ sufficiently large such that, by taking $a>\hat{a}$, the closed-loop system does not contain any limit cycles in~$\Cc$. Moreover, all bounded trajectories with initial condition in~$\Cc$
    %
    %
    converge to an equilibrium point.
\end{proposition}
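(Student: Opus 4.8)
The plan is to combine a converse Lyapunov function for the nominal dynamics with the Poincar\'e--Bendixson theorem, using the large slope $a$ to confine the filter's action to a thin boundary layer. First I would invoke \cite[Theorem 4.17]{HK:02} to obtain a smooth, radially unbounded Lyapunov function $V$ for $\dot\bx=\tilde f(\bx)$ with $\nabla V(\bx)^\top \tilde f(\bx)<0$ for all $\bx\neq\mathbf{0}_n$. Next I would record the geometry of the \emph{active region} $\mathcal{A}_a:=\setdef{\bx\in\Cc}{\eta(\bx)<0}$, where by \eqref{eq:v-expression} the closed-loop field equals $\tilde f+g\bar u$, while elsewhere it equals $\tilde f$. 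Two facts drive the argument: (i) on any compact $K\subset\Cc$ one has $\mathcal{A}_a\cap K\subseteq\setdef{\bx}{0\le h(\bx)<M_K/a}$, where $M_K$ bounds $|\nabla h^\top\tilde f|$ on $K$, so the active region collapses onto $\partial\Cc$ as $a\to\infty$; and (ii) inside $\mathcal{A}_a$ the constraint is tight, so $\tfrac{d}{dt}h=-a\,h$, a fast contraction toward $\partial\Cc$, while on $\partial\Cc\cap\mathcal{A}_a$ the field is exactly tangent to $\partial\Cc$ and its zeros are, by Lemma~\ref{lem:undesired-eq-characterization}, precisely the undesired equilibria.

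Having set this up, I would reduce the claim to the nonexistence of periodic orbits: by the Poincar\'e--Bendixson theorem \cite{LP:00}, the $\omega$-limit set of any bounded trajectory in $\Cc$ is an equilibrium, a periodic orbit, or a graphic (a cycle of equilibria joined by heteroclinic/homoclinic orbits), so once periodic orbits and graphics are excluded every bounded trajectory must converge to an equilibrium. The easy half is to rule out any periodic orbit $\gamma$ contained in the inactive region $\Cc\setminus\mathcal{A}_a$: there the dynamics are nominal and $\gamma$ avoids the origin, so $\oint_\gamma \nabla V^\top\tilde f\,dt<0$ contradicts periodicity. Hence any periodic orbit must enter the thin layer $\mathcal{A}_a$. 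A second structural ingredient I would establish is that, for a bounded connected component $O$ of $\real^2\setminus\Cc$, the filter cannot be active on the entire boundary $\partial O$: if $\nabla h^\top\tilde f<0$ held on all of $\partial O$, then $\tilde f$ would point into $O$ everywhere on $\partial O$, making $O$ forward invariant for the nominal flow and contradicting global asymptotic stability of the origin (which lies outside $O$). Thus each boundary component carries inactive arcs on which the nominal flow pushes strictly inward.

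The core of the proof is then to exclude periodic orbits (and graphics) that dip into the boundary layer. Here I would exploit the singular structure for large $a$: a would-be periodic orbit visiting $\mathcal{A}_a$ limits, as $a\to\infty$, onto the tangential boundary flow $F_\partial:=(\tilde f+g\bar u)|_{\partial\Cc}$ on the one-dimensional set $\partial\Cc$. On a single boundary component $F_\partial$ is a scalar flow whose equilibria are exactly the undesired equilibria, so it admits a closed orbit only on a component with no undesired equilibria, where $F_\partial$ circulates without zeros; and such a component must, by the previous paragraph, still contain inactive arcs along which the fast transverse dynamics $\dot h=-a\,h$ drive nearby trajectories back into the interior. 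I would convert this into a first-return (Poincar\'e) map on a short transversal anchored at an inactive arc and show that, for $a$ large enough, the map admits no fixed point because $V$ strictly decreases over one loop, so no orbit can close and no graphic can form; combined with the interior estimate this yields $\hat a$ and the absence of limit cycles, after which Poincar\'e--Bendixson delivers convergence of bounded trajectories to equilibria.

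The main obstacle is exactly this boundary-layer analysis. The difficulty is that the filter's contribution to $\dot V$, namely $\nabla V^\top g\bar u=\tfrac{|\eta|}{\|g^\top\nabla h\|_{G^{-1}}^2}\,\nabla V^\top g\,G^{-1}g^\top\nabla h$, does \emph{not} vanish as $a\to\infty$ (on $\partial\Cc$ it is independent of $a$), so $V$ is not a Lyapunov function for the closed loop and a naive energy estimate fails; one genuinely needs the geometric, sliding-mode-type argument above, tracking how trajectories that enter the $O(1/a)$-thick active layer are either funneled toward an undesired equilibrium or expelled inward by the inactive arcs, to show the net change of $V$ over one loop is strictly negative. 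Controlling this return map uniformly in $a$, and ruling out graphics built from saddle-type undesired equilibria, is where the real work lies; the hypothesis that $\real^2\setminus\Cc$ has finitely many components is what makes the boundary-flow analysis finite and lets the local transversals be patched into a global argument.
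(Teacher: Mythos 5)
Your proposal has a genuine gap at its core step. The first half is fine: ruling out periodic orbits contained entirely in the inactive region via $\oint_\gamma \nabla V^\top \tilde f\,dt<0$ is correct, and your observation that the active set collapses to an $O(1/a)$ layer around $\partial\Cc$ is also correct. But the decisive case --- a periodic orbit $\gamma$ that enters the active layer --- is only sketched, and the mechanism you propose for it does not work as stated. You want to show the first-return map has no fixed point ``because $V$ strictly decreases over one loop,'' yet you yourself point out in the final paragraph that the filter's contribution $\nabla V^\top g\bar u$ to $\dot V$ does not vanish as $a\to\infty$ and is $a$-independent on $\partial\Cc$; hence there is no reason $V$ should decrease net over a loop that spends time in the active layer (indeed, near an asymptotically stable undesired equilibrium on $\partial\Cc$ trajectories approach a point with $V>0$, so $\dot V$ is certainly not sign-definite there). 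The ``sliding-mode-type argument'' that is supposed to rescue this is exactly the content that is missing; announcing that ``this is where the real work lies'' does not discharge it. The graphics/polycycle case of Poincar\'e--Bendixson is likewise flagged but never excluded.

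The paper closes this case by a different, purely topological argument that you may want to compare with. Any limit cycle in $\Cc$ must encircle an equilibrium, and by Lemma~\ref{lem:undesired-eq-characterization} all equilibria other than the origin lie on $\partial\Cc$; so one splits into cases according to whether $\gamma$ encircles the origin and/or some bounded connected components of $\real^2\backslash\Cc$. In each nontrivial case one exhibits a specific trajectory of the \emph{nominal} system (forward from a boundary point into $\text{Int}(\Cc)$ when the origin is encircled, or backward from the boundary of an encircled component, using that backward nominal trajectories either exit through a non-encircled component or blow up by Lemma~\ref{lem:trajectories-of-gas-system}) along which $h$ is bounded below away from a compact neighborhood of its start point; choosing $a$ large enough makes $\eta\geq 0$ along that entire arc, so it is also a trajectory of the filtered system, and it must cross $\gamma$, contradicting uniqueness of solutions. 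Finiteness of the number of connected components then gives a uniform $\hat a$. This avoids any return-map or energy-balance analysis in the boundary layer, which is precisely the part your proposal cannot currently complete.
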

\begin{proof}
     We start by noting that, if there exists a limit cycle with at least one point in $\Cc$, then it is contained in $\Cc$ (because the set is forward invariant under~\eqref{eq:general-system-1}).
    Let $\gamma$ be a limit cycle contained in $\Cc$. Note that $\gamma$ corresponds to a maximal trajectory of~\eqref{eq:general-system-1} and cannot contain an equilibrium point.
    We distinguish four different cases:
    \begin{enumerate}
        \item\label{it:limit-cycle-proof-first} If $\gamma$ does not encircle the origin and does not encircle any connected component of $\real^n\backslash\Cc$,
        we reach a contradiction because limit cycles must encircle an equilibrium point~\cite[Corollary 6.26]{JM-book:07}, and all equilibrium points other than the origin of~\eqref{eq:general-system-1} are in $\partial\Cc$ by Lemma~\ref{lem:undesired-eq-characterization}.
        %
        %
        \item\label{it:limit-cycle-proof-second} Suppose that $\gamma$ encircles the origin, but does not encircle any connected component of $\real^2\backslash\Cc$.
        Note that since the origin is globally asymptotically stable for the nominal system, and the origin is contained in $\text{Int}(\Cc)$, there exists $\bq_1\in\partial\Cc$ such that the solution $\by(t;\bq_1)$ of the nominal system satisfies $\by(t;\bq_1)\in \text{Int}(\Cc)$ for all $t>0$. 
        Since $\bq_1\in\partial\Cc$, $\nabla h(\bx)^T (f(\bx)+g(\bx)k(\bx))\rvert_{\bx=\bq_1} > 0$ and since $\nabla h$, $f$, $g$ and $k$ are continuous there exists a neighborhood $\Nc_{\bq_1}$ of $\bq_1$ such that $\nabla h(\bx)^T (f(\bx)+g(\bx)k(\bx))\rvert_{\bx=\bz} > 0$ for all $\bz\in\Nc_{\bq_1}$.
        Now, note that there exists $d_1>0$ such that $h( \by(t;\bq_1) ) > d_1$ for all $t>0$ such that $\by(t;\bq_1)\notin \Nc_{\bq_1}$.
        This means that there exists $a_{1}>0$ such that $\nabla h(\by(t;\bq_1))^T (f(\by(t;\bq_1))+g(\by(t;\bq_1))k(\by(t;\bq_1))) + a_{1} h(\by(t;\bq_1)) > 0$ for all $t\geq0$. Therefore, $\by(t;\bq_1)$ is also a trajectory of~\eqref{eq:general-system-1} by taking $\alpha$ to be an extended class $\Kc_{\infty}$ function with slope greater than $a_{1}$, which means that it intersects with~$\gamma$, violating the existence and uniqueness of solutions of~\eqref{eq:general-system-1}.
        Hence, by taking $a>a_1$, we ensure such $\gamma$ cannot exist.
        \item\label{it:limit-cycle-proof-third} Suppose that $\gamma$ encircles one or more connected components in $\real^2\backslash\Cc$, but not the origin, cf. Figure~\ref{fig:limit-cycle-proof-sketch}.
        Let $S$ (resp., $\bar{S}$) be the union of the connected components encircled (resp., not encircled) by $\gamma$.
        Since the origin is globally asymptotically stable for the nominal system, there exists $\bq_2$ in the boundary of $S$ so that the solution $\by(t;\bq_2)$ of the nominal system 
        satisfies one of the following:
        \begin{enumerate}
            \item there exists $t_1<0$ with $\tilde{\bq}_2=\by(t_1;\bq_2)\in \partial\bar{S}$ and $\by(t;\bq_2)\in\text{Int}(\Cc)$ for all $t\in(t_1,0)$. In this case, there exists $t_1^\prime$ such that $\by(t_1^\prime;\bq_2)\in\gamma$, and
            there exists $a_{S,2}>0$ such that 
            $\{ \by(t;\bq_2) \}_{t\in[t_1,t_1^\prime]}$ is a trajectory of~\eqref{eq:general-system-1} by taking a linear extended class $\Kc_{\infty}$ function with slope greater than $a_{S,2}$.
            This violates the existence and uniqueness of solutions of~\eqref{eq:general-system-1}, because $\{ \by(t;\bq_2) \}_{t\in[t_1,t_1^\prime]}$ is a solution and intersects $\gamma$. 
            \item $\by(t;\bq_2)\in \Cc$ for all $t<0$. 
            By Lemma~\ref{lem:trajectories-of-gas-system}, $\lim\limits_{t\to-\infty} \norm{ \by(t;\bq_2) } = \infty$. This means that there exists
            $\bar{t}_1<0$ such that
            $\by(\bar{t}_1;\bq_2)\in\gamma$;
            %
            %
            %
            This means that 
            there exists $\tilde{a}_{S,2}>0$ such that 
            $\{ \by(t;\bq_2) \}_{t\in[\bar{t}_1,\frac{\bar{t}_1}{2}] }$ is a trajectory of~\eqref{eq:general-system-1} by taking a linear extended class $\Kc_{\infty}$ function with slope greater than $\tilde{a}_{S,2}$.
            However, this solution intersects with $\gamma$, violating the existence and uniqueness of solutions of~\eqref{eq:general-system-1}.
        \end{enumerate} 
        %
        Hence by taking a linear extended class $\Kc_{\infty}$ function with slope greater than $a_{S,2}$ and $\tilde{a}_{S,2}$, we ensure such $\gamma$ cannot exist.
        \item\label{it:limit-cycle-proof-fourth} 
        Suppose that $\gamma$ encircles one or more connected components in $\real^2\backslash\Cc$ and the origin. Let $S^\prime$ (resp., $\bar{S}^\prime$) be the subset of the connected components of $\real^2\backslash\Cc$ encircled (resp., not encircled) by~$\gamma$. Again, since the origin is globally asymptotically stable for the nominal system, there exists $\bq_3$ in the boundary of $S^{\prime}$ so that the solution $\by(t;\bq_3)$ of the nominal system satisfies one of the following:
        \begin{enumerate}
            \item there exists $t_2 < 0$ with $\by(t_2;\bq_3)\in \partial\bar{S}^\prime$ and $\by(t;\bq_3)\in\text{Int}(\Cc)$ for all $t\in(t_2,0)$;
            \item $\by(t;\bq_3)\in\Cc$ for all $t < 0$.
        \end{enumerate}
        By following an argument analogous to case~(iii), there exists $\breve{a}_{S^{\prime}}>0$ sufficiently large such by taking a linear extended class $\Kc_{\infty}$ function with slope greater than $\breve{a}_{S^{\prime}}$, we ensure such $\gamma$ cannot exist.
        %
        %
    \end{enumerate}

    Note that the values of 
    $a_{S,2}$ and $\tilde{a}_{S,2}$ defined in~\ref{it:limit-cycle-proof-third}
    depend on the set of connected components of $\real^2\backslash\Cc$ encircled by the limit cycle. Since there is a finite number of bounded connected components, there exists $a^*$ such that $a^* > a_{S,2}$ and $a^* > \tilde{a}_{S,2}$ for all possible sets of connected components $S$ of $\real^2\backslash\Cc$.
    %
    %
    Similarly, the value $\breve{a}_{S^\prime}$ defined in~\ref{it:limit-cycle-proof-fourth} depends on $S^\prime$, but there exists $\breve{a}^*$ such that $\breve{a}^* > \breve{a}_{S^{\prime}}$ for all possible $S^{\prime}$.
    By taking $a>\hat{a}:=\max\{ a^*, \breve{a}^* \}$, it follows that the closed-loop system does not contain any limit cycles in~$\Cc$. Finally, since no limit cycles exist in $\Cc$ for $a>\hat{a}$,
    %
    %
    by the Poincar\'e-Bendixson Theorem~\cite[Chapter 7, Thm. 4.1]{PH:02} all bounded trajectories with initial condition in  $\Cc$ converge to an equilibrium point.
\end{proof}
%
%

\begin{figure*}
  \centering
    \subfigure[Subcase (a) of item~\ref{it:limit-cycle-proof-third}]{\includegraphics[width=.48\linewidth]{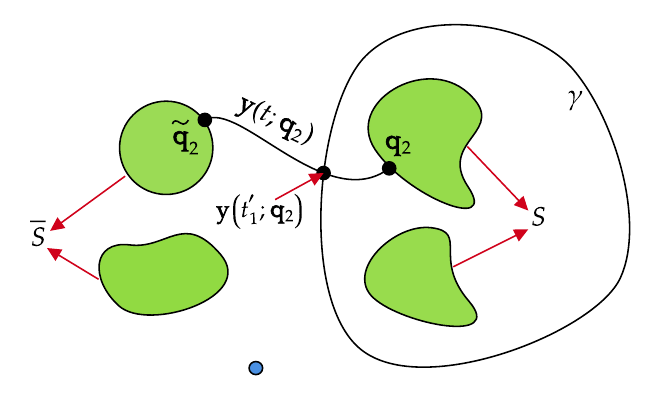}}
   \subfigure[Subcase (b) of item~\ref{it:limit-cycle-proof-third}]{\includegraphics[width=.48\linewidth]{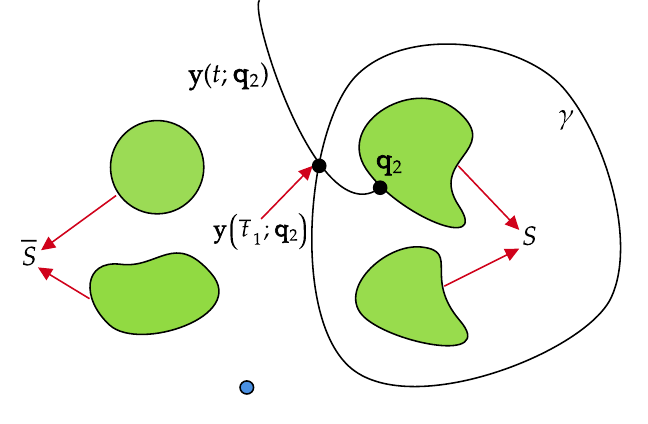}}
  \caption{Sketch of the setting considered in item~\ref{it:limit-cycle-proof-third} of the proof of Proposition~\ref{prop:no-limit-cycles-general}. The connected components comprising $\real^2\backslash\Cc$ are depicted in green, whereas the origin is represented by the blue dot.}
  \label{fig:limit-cycle-proof-sketch}
\end{figure*}

The following example shows that limit cycles can exist for systems of the form~\eqref{eq:general-system-1} in dimension $n\geq 3$.
%
%

\smallskip

\begin{example}\longthmtitle{Existence of limit cycles in higher dimensions}\label{ex:existence-limit-cycle-higher-dimensions}
{\rm
Consider the safe set $\Cc:=\setdef{\bx\in\real^3}{ \|\bx-[x_c,0,0]^T \|^2-r^2\geq 0 }$.
Let $B\in\real^{3\times 3}$ invertible, 
$G(\bx)=B^\top B$, $K=\zero_{3}$ and nominal controller $k(\bx) = K\bx \equiv\mathbf{0}_3$ .
Next, let $0<p_1<\frac{r}{r+x_c}p_2$, $p_2>0$, $p_3>0$, and define
\begin{align*}
    A:=\begin{bmatrix}
        -p_1 & 0 &0 \\
        0 & -p_2  & p_3\\
        0 & -p_3 & -p_2
    \end{bmatrix}.
\end{align*}
Consider the closed-loop system~\eqref{eq:general-system-1} obtained with $f(\bx) = A\bx$, $g(\bx) = B$, $h(\bx) = \|\bx-[x_c,0,0]^T \|^2-r^2$ and $k(\bx) \equiv 0$.

Define also $\hat{p}:=\frac{x_c p_1}{p_2-p_1}$ and $\hat{q}:=\sqrt{r^2-\hat{p}^2}$, then $0<\hat{p}<r$ and $p_1=\frac{\hat{p} }{\hat{p}+x_c}p_2$. Consider
\begin{align*}
    \bx_0 := \begin{bmatrix}
        \hat{p} \\
        \hat{q} \\
        \hat{q}
    \end{bmatrix}, \quad
   \hat{\bx}(t;\bx_0):=\begin{bmatrix}
        \hat{x}_1(t) \\ \hat{x}_2(t) \\ \hat{x}_3(t)
    \end{bmatrix}=\begin{bmatrix}
        x_c+\hat{p} \\ \hat{q} \sin (p_3 t)\\
        \hat{q} \cos (p_3 t)
    \end{bmatrix}.
\end{align*}

\begin{figure}[htb]
  \centering
  {\includegraphics[width=0.7\linewidth]{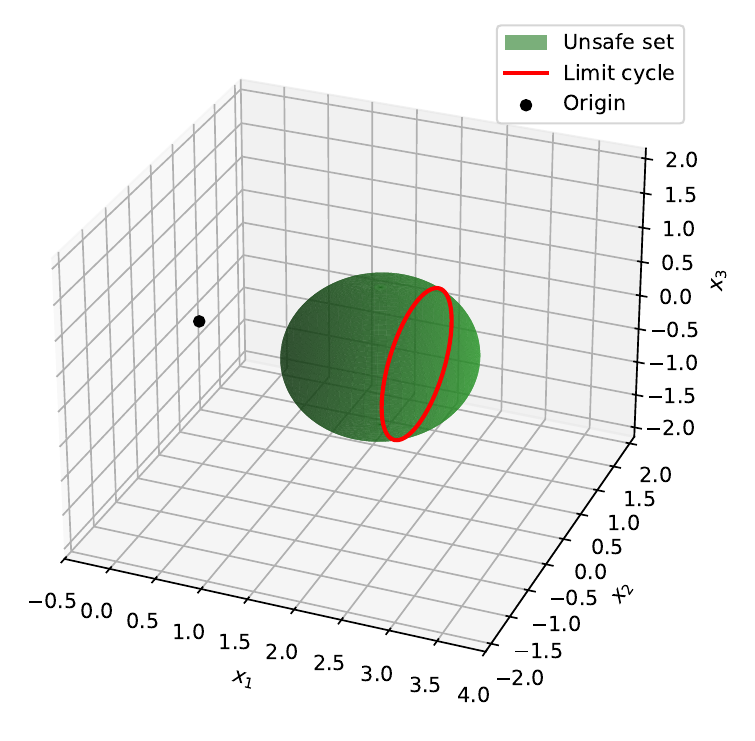}} 
  \caption{Depiction of the setting considered in Example~\ref{ex:existence-limit-cycle-higher-dimensions}, with parameters $x_c = 2$, $r=1$, $p_1 = 1$, $p_2=6$, $p_3=1$. The obstacle is depicted in green, the limit cycle $\hat{\bx}$ in red, and the origin in black.}
  \label{fig:limit-cycle-3d}
\end{figure}

Note that for any $t$,
\begin{align*}
    &\nabla h(\hat{\bx}(t;\bx_0))^T (A+BK)\hat{\bx}(t;\bx_0)= -2 \! \! \begin{bmatrix}
        \hat{p} \\
        \hat{q} \sin (p_3 t) \\
        \hat{q} \cos (p_3 t)
    \end{bmatrix}^T \! \!
    \begin{bmatrix}
        \frac{\hat{p}}{\hat{p}+x_c}p_2 \! & 0 \! &0 \\
        0 \! & \! \! p_2 \!  & \! \!  \ \ -p_3\\
        0 \! & \! \! p_3 \! & \! \! p_2
    \end{bmatrix} \! \hat{\bx}(t;\bx_0) \! = \! -2 r^2 p_2 \! < \! 0.
\end{align*}
Moreover, let 
\begin{align*}
    \kappa(t) = \frac{\nabla h(\hat{\bx}(t;\bx_0))^T (A+BK)\hat{\bx}(t;\bx_0)+\alpha(h(\hat{\bx}(t;\bx_0)))}{\nabla h(\hat{\bx}(t;\bx_0))^T B G^{-1} B^T \nabla h(\hat{\bx}(t;\bx_0))}
\end{align*}
and note that since $K=\zero_3$,
\begin{align*}
    &(A+BK)\hat{\bx}(t;\bx_0)-\kappa(t) B G^{-1} B^T \nabla h(\hat{\bx}(t;\bx_0))\\
    =&A\hat{\bx}(t;\bx_0)-\frac{\nabla h(\hat{\bx}(t;\bx_0))^T A\hat{\bx}(t;\bx_0) }{\nabla h(\hat{\bx}(t;\bx_0))^T \nabla h(\hat{\bx}(t;\bx_0))}  \nabla h(\hat{\bx}(t;\bx_0))\\
    =&\begin{bmatrix}
         -\hat{p} p_2 \\ -  p_2\hat{q} \sin (p_3 t) + p_3 \hat{q} \cos (p_3 t) \\
        -p_2 \hat{q} \sin (p_3 t) - p_3\hat{q} \cos (p_3 t) 
    \end{bmatrix}+p_2\begin{bmatrix}
       \hat{p}_1 \\ \hat{q} \sin (p_3 t)\\
        \hat{q} \cos (p_3 t)
    \end{bmatrix}\\
    =&\begin{bmatrix}
      0  \\ p_3\hat{q} \cos (p_3 t)\\
        -p_3 \hat{q} \sin (p_3 t)
    \end{bmatrix}=\frac{d}{dt}\begin{bmatrix}
        x_c+\hat{p} \\ \hat{q} \sin (p_3 t)\\
        \hat{q} \cos (p_3 t)
    \end{bmatrix} =\frac{d}{dt}\begin{bmatrix}
        \hat{x}_1(t) \\ \hat{x}_2(t) \\ \hat{x}_3(t)
    \end{bmatrix}
\end{align*}
Hence $\hat{\bx}(t;\bx_0)$ is a valid trajectory of the closed-loop system~\eqref{eq:general-system-1} and it is a limit cycle.
Note also that since $\hat{\bx}(t)\in\partial\Cc$ for all $t\geq0$, 
by~\cite[Corollary 4.5]{indep-cbf} this trajectory exists for any choice of $h$ and $\alpha$.
Figure~\ref{fig:limit-cycle-3d} depicts the limit cycle $\hat{\bx}$.
\problemfinal
}
\end{example}

\subsection{Structure of the set of undesired equilibria}
In this section, we investigate the set of undesired equilibria of~\eqref{eq:general-system-1}. The following example shows that in general, this set can be a continuum.
%
%


\smallskip

\begin{figure}[htb]
  \centering
  {\includegraphics[width=0.7\textwidth]{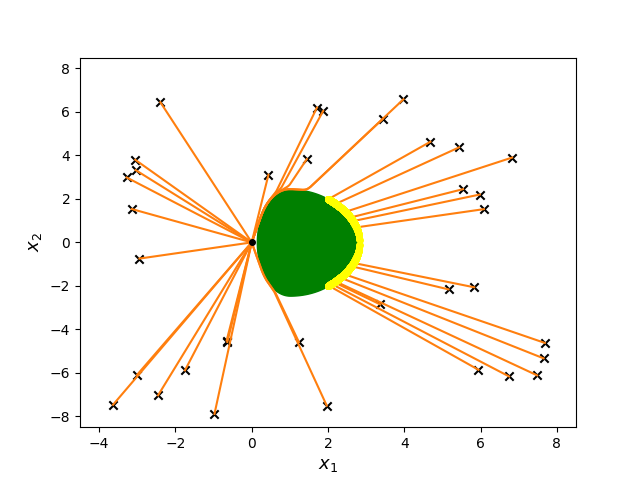}} 
  \caption{Plot of different trajectories of~\eqref{eq:v-linear-system}. Trajectories are depicted in orange. The unsafe set is colored in green. Black crosses denote initial conditions, the black dot denotes the origin, and the yellow region denotes a continuum of undesired equilibria.}
  \label{fig:continuum-eq-underactuated}
\end{figure}

\smallskip
\begin{example}\longthmtitle{Continuum of undesired equilibria}\label{ex:continuum-eq}
{\rm
Let $n=2$, $m=1$ and consider~\eqref{eq:general-system-1} with
\begin{align*}
    f(\bx) = \begin{pmatrix}
        0 & -1 \\
        1 & -2
    \end{pmatrix} \bx, 
    \ \
    g(\bx) = B = \begin{pmatrix}
        0 \\
        1
    \end{pmatrix}, \ \ 
    h(\bx)=\begin{cases}
        (x_1+1)^2+x_2, \quad x_1\geq -1,\\
        x_2, \quad \quad \qquad \qquad -2<x_1<-1,\\
        (x_1+2)^2+x_2, \quad x_1\leq -2,
    \end{cases}
\end{align*}
and $\alpha(s)=10s$.
Note that $h$ is continuously differentiable. Moreover,
\begin{align*}
    &\nabla h(\bx)^T B
    =\begin{cases}
       2\begin{bmatrix}
           x_1+1 & 1
       \end{bmatrix}\begin{bmatrix}
         0 \\1
       \end{bmatrix}\neq 0, \quad x_1\geq -1, \\
        2\begin{bmatrix}
           0 & 1
       \end{bmatrix} \begin{bmatrix}
         0 \\1
       \end{bmatrix}\neq 0, \quad \qquad -2<x_1<-1, \\
         2\begin{bmatrix}
           x_1+2 & 1
       \end{bmatrix}\begin{bmatrix}
         0 \\1
       \end{bmatrix}\neq 0, \quad x_1\leq -2,
    \end{cases}
\end{align*}
Therefore, $h$ is a strict CBF.
Next, we show that the set $\{t(1,0): -2\leq t\leq -1 \}$ is contained in the set of undesired equilibria for any linear stabilizing controller $k(\bx)=k_1 x_1 + k_2 x_2$, $G:\real^2\to\real$ and extended class $\mathcal{K}_{\infty}$ function $\alpha$.
Since $k$ is a stabilizing controller, it follows that $f(\bx) + Bk(\bx)$ is Hurwitz and therefore $-2+k_2<0$, $1+k_1>0$.
For any $\sigma \in[-2,-1]$, the point $\bx_{\sigma} = (\sigma,0)\in\real^2$ satisfies~\eqref{eq: condition-eq} with associated indicator equal to $\frac{G(\bx_{\sigma})}{(1+k_1)\sigma}<0$. Hence, the set $\{\sigma(1,0): -2\leq \sigma\leq -1 \}$ is contained in the set of undesired equilibria for any linear stabilizing controller $k(\bx)=k_1 x_1 + k_2 x_2$, $G:\real^2\to\real$ and extended class $\mathcal{K}_{\infty}$ function $\alpha$. Figure~\ref{fig:continuum-eq-underactuated} shows some of the trajectories for the corresponding closed-loop system~\eqref{eq:general-system-1}. Since the undesired equilibria are not isolated, the study of their stability properties requires using the notion of \textit{semistability}~\cite{SPB-DSB:03}.
} \problemfinal
\end{example}

\smallskip

The following result provides conditions under which a continuum of undesired equilibria of~\eqref{eq:general-system-1} does not exist.

\vspace{.1cm}

\begin{lemma}\longthmtitle{Sufficient conditions for isolated equilibria}\label{lem:suff-cond-isolated-eq}
    An undesired equilibrium~$\bx_*$ is isolated if the Jacobian of~\eqref{eq:general-system-1} evaluated at~$\bx_*$ does not have imaginary eigenvalues. 
    If $\partial\Cc$ is bounded, then each undesired equilibria of~\eqref{eq:general-system-1} is isolated if and only if 
    $|\hat{\Ec}| < \infty$.
\end{lemma}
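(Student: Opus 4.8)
The plan is to treat the two assertions separately. For the first, the key preliminary observation is that any undesired equilibrium $\bx_*$ lies in the region $\{\eta<0\}$: the proof of Lemma~\ref{lem:undesired-eq-characterization} already establishes $\eta(\bx_*)<0$, and since $\eta$ is continuous there is a neighborhood of $\bx_*$ on which $\eta<0$, so that on this neighborhood the closed-loop field~\eqref{eq:general-system-1} coincides with the single branch $F(\bx):=\tilde{f}(\bx)+g(\bx)\bar{u}(\bx)$. This $F$ is $C^1$ near $\bx_*$, because its denominator $\|g(\bx)^\top\nabla h(\bx)\|_{G^{-1}(\bx)}^2$ is continuous and nonzero at $\bx_*$ (as $h$ is a strict CBF), so the Jacobian appearing in the statement is genuinely the derivative of a smooth vector field.

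First I would prove the sufficient condition. If $DF(\bx_*)$ has no eigenvalue on the imaginary axis, then in particular $0$ is not an eigenvalue, so $DF(\bx_*)$ is invertible. By the inverse function theorem $F$ is a local diffeomorphism near $\bx_*$, whence $F^{-1}(\mathbf{0}_n)$ reduces to $\{\bx_*\}$ on a small neighborhood. Since on this neighborhood the equilibria of~\eqref{eq:general-system-1} are exactly the zeros of $F$, the point $\bx_*$ is isolated.

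For the second assertion, the direction $|\hat{\Ec}|<\infty\Rightarrow$ isolated is immediate: a finite set is discrete, so around each of its points one can pick a neighborhood avoiding the remaining (finitely many) equilibria and the origin. The substantive direction is the converse, and this is where the hypothesis that $\partial\Cc$ is bounded enters. Since every undesired equilibrium satisfies $h(\bx_*)=0$ by Lemma~\ref{lem:undesired-eq-characterization}, we have $\hat{\Ec}\subset\partial\Cc$, so $\hat{\Ec}$ is bounded. I would then show $\hat{\Ec}$ is closed: taking $\bx_k\in\hat{\Ec}$ with $\bx_k\to\bx^*$, continuity of $h$ gives $h(\bx^*)=0$, and the formula $\delta_{\bx_k}=\nabla h(\bx_k)^\top\tilde{f}(\bx_k)/\|g(\bx_k)^\top\nabla h(\bx_k)\|_{G^{-1}(\bx_k)}^2$ derived in Lemma~\ref{lem:undesired-eq-characterization} passes to the limit with a well-defined limit $\delta_{\bx^*}\le 0$. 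The strict negativity of $\delta_{\bx^*}$ is the crucial point: if the limit were $0$, then $\tilde{f}(\bx^*)=\mathbf{0}_n$, forcing $\bx^*=\mathbf{0}_n$ (the unique equilibrium of the nominal system), which is impossible since $\bx^*\in\partial\Cc$ while $\mathbf{0}_n\in\Int(\Cc)$ by Assumption~\ref{as: interior eq}. Hence $(\bx^*,\delta_{\bx^*})$ solves~\eqref{eq: condition-eq} with $\delta_{\bx^*}<0$, so $\bx^*\in\hat{\Ec}$ and $\hat{\Ec}$ is compact.

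To conclude I would invoke the elementary fact that a compact set all of whose points are isolated must be finite: an infinite compact set contains a sequence of distinct points with a convergent subsequence whose limit lies in the set and is a limit point of it, contradicting isolation. I expect the closedness of $\hat{\Ec}$ to be the only genuinely delicate step, precisely because it hinges on excluding the degenerate limit $\delta_{\bx^*}=0$; the separation of the origin from $\partial\Cc$ guaranteed by Assumption~\ref{as: interior eq} is exactly what rules this out.
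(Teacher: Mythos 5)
Your proposal is correct, and it diverges from the paper's argument in an instructive way on the first assertion while essentially reproducing it on the second. For the first claim, the paper invokes the Hartman--Grobman theorem: hyperbolicity of $\bx_*$ gives a local topological conjugacy with the linearization, whose only equilibrium is the origin, so $\bx_*$ is isolated. You instead observe that the absence of eigenvalues on the imaginary axis in particular makes $DF(\bx_*)$ invertible and apply the inverse function theorem to the single active branch $F=\tilde{f}+g\bar{u}$ (having first, correctly, noted that $\eta<0$ persists on a neighborhood of $\bx_*$ so the closed-loop field coincides with this smooth branch there). Your route is more elementary and actually proves a slightly stronger statement, since it only uses that $0$ is not an eigenvalue rather than full hyperbolicity; both arguments need the same $C^1$ regularity of the branch near $\bx_*$. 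For the second claim, both proofs hinge on compactness of $\partial\Cc$ and a limit-point argument. The paper extracts a convergent subsequence from an infinite family of undesired equilibria and uses continuity of the closed-loop field to conclude the limit is a non-isolated equilibrium; you instead establish that $\hat{\Ec}$ is closed and bounded and invoke the fact that a compact set with only isolated points is finite. Your version is marginally more careful in one respect the paper glosses over: you verify that the accumulation point actually belongs to $\hat{\Ec}$ by excluding the degenerate limit $\delta_{\bx^*}=0$ via $\tilde{f}(\bx^*)=\mathbf{0}_n\Rightarrow\bx^*=\mathbf{0}_n$, which contradicts $\bx^*\in\partial\Cc$ under Assumption~\ref{as: interior eq}. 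The only point you state without full justification is that the denominator $\|g(\bx_k)^\top\nabla h(\bx_k)\|_{G^{-1}(\bx_k)}^2$ stays bounded away from zero along the sequence; this does follow from the strict CBF property (if $g(\bx^*)^\top\nabla h(\bx^*)=\mathbf{0}_m$ then $\nabla h(\bx^*)^\top\tilde{f}(\bx^*)>0$, forcing $\delta_{\bx_k}\to+\infty$ and contradicting $\delta_{\bx_k}<0$), so the gap is cosmetic rather than substantive.
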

\begin{proof}
   Given an undesired equilibrium~$\bx_*$, if the Jacobian of~\eqref{eq:general-system-1} evaluated at $\bx_*$ does not have imaginary eigenvalue, then there exists a neighborhood of $\bx_*$ such that the linearization of~\eqref{eq:general-system-1} around $\bx_*$ does not contain any equilibrium point other than itself. By the Hartman-Grobman Theorem~\cite[Section 2.8]{LP:00}, there also exists a neighborhood of $\bx_*$ for which~\eqref{eq:general-system-1} does not contain any undesired equilibrium and hence $\bx_*$ is isolated.

   Consider the case when $\partial\Cc$ is bounded. Clearly, if $|\hat{\Ec}|<\infty$ (i.e., the number of undesired equilibria is finite), then each of the undesired equilibria is isolated. Conversely,
   if the number of undesired equilibria is infinite, consider an infinite sequence of undesired equilibria $\{\bx_{*,i}\}_{i=1}^{+\infty}$. Since $\partial\Cc$ is compact, there exists a convergent subsequence $\{\bx_{*,i_k}\}_{k=1}^{+\infty}$ such that $\lim_{k\to\infty} \bx_{*,i_k}=\bq_* $. 
    Since~\eqref{eq:general-system-1} is continuous under the assumption that $h$ is a strict CBF (cf.~\cite[Lemma III.2]{MA-NA-JC:25-tac}), $0 = \lim_{k\to\infty} f(\bx_{*,i_k}) + g(\bx_{*,i_k})v(\bx_{*,i_k}) = f(\bx_*) + g(\bq_*) v(\bx_*)$ and hence $\bx_*$ is an equilibrium, which is non-isolated.
\end{proof}




While Lemma~\ref{lem:suff-cond-isolated-eq} provides sufficient conditions for the existence of isolated undesired equilibria, finding their number is, in general, challenging. However, this is possible for the special case of planar systems, as we show next.

\smallskip
\begin{proposition}\longthmtitle{Number and stability properties of undesired equilibria for a planar system with bounded obstacle}\label{prop:number-of-undes-equilibria-stab-properties-finite-set-of-bounded-obstacles}
    Let $h$ be a strict CBF and suppose Assumption~\ref{as: interior eq} holds. Let  
    $\real^2\backslash\Cc$ be a bounded connected set. Consider~\eqref{eq:general-system-1} with $n=2$ and assume its undesired equilibria $\hat{\Ec} = \{ \bx_*^{(i)} \}_{i=1}^{k} \subset \partial\Cc$
    %
    %
    are either asymptotically stable or saddle points. Then, $k$ is odd, and $\frac{k+1}{2}$ equilibria are saddle points and $\frac{k-1}{2}$ are asymptotically stable.
    %
    %
\end{proposition}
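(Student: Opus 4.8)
The plan is to recast the count as a degree (winding-number) computation on the boundary curve. Since $\real^2\setminus\Cc$ is bounded and connected and $h$ is a strict CBF (so $\nabla h\neq\mathbf{0}_2$ on $\partial\Cc$), the curve $\gamma:=\partial\Cc$ is a smooth, compact, connected $1$-manifold, i.e.\ a Jordan curve bounding the bounded obstacle $O:=\real^2\setminus\Cc$, with the origin lying in the unbounded component $\Int(\Cc)$. I would orient $\gamma$ as the boundary of $O$ and write $n(s)=\nabla h(\gamma(s))/\norm{\nabla h(\gamma(s))}$ and $\tau(s)$ for the unit normal and tangent. First I would record, via \cite{indep-cbf} (cf.\ the discussion after Lemma~\ref{lem:undesired-eq-characterization}), that at every $\bx_*^{(i)}$ the Jacobian of~\eqref{eq:general-system-1} has the transverse eigenvalue $-\alpha'(0)<0$; hence, under the standing hypothesis that each equilibrium is asymptotically stable or a saddle, its stability type is governed entirely by the sign of the remaining (tangential) eigenvalue, and its Poincar\'e index equals $+1$ (asymptotically stable node/focus) or $-1$ (saddle). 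Thus it suffices to show $\sum_{i=1}^{k}\mathrm{ind}(\bx_*^{(i)})=-1$, which together with $n_{\mathrm{as}}+n_{\mathrm{sad}}=k$ forces $k$ odd, $n_{\mathrm{sad}}=(k+1)/2$, and $n_{\mathrm{as}}=(k-1)/2$.

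Next I would rewrite the equilibrium condition of Lemma~\ref{lem:undesired-eq-characterization} directionally. Setting $w(\bx):=g(\bx)G(\bx)^{-1}g(\bx)^\top\nabla h(\bx)$, equation~\eqref{eq: condition-eq2} says that $\bx_*^{(i)}$ is an undesired equilibrium exactly when $\tilde{f}$ is \emph{anti-parallel} to $w$ on $\gamma$ (i.e.\ $\tilde{f}=\delta w$ with $\delta<0$). The key computation is then the winding numbers of $\tilde{f}$ and $w$ along $\gamma$. On one hand, $\tilde{f}=f+gk$ vanishes only at the origin, which lies in $\Int(\Cc)$ and not in $O$; by the Poincar\'e index theorem the winding number of $\tilde{f}$ along $\gamma$ equals the sum of indices of its zeros in $O$, namely $0$. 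On the other hand, $w=M\nabla h$ with $M:=gG^{-1}g^\top\succeq0$, and $w\neq\mathbf{0}_2$ on $\gamma$ (since $g^\top\nabla h\neq\mathbf{0}_m$ there); the homotopy $M_t:=(1-t)M+tI\succ0$, $t\in(0,1]$, keeps $M_t\nabla h$ nonzero and within $\tfrac{\pi}{2}$ of $\nabla h$, so the winding number of $w$ equals that of $\nabla h$, which is $+1$ by the Hopf Umlaufsatz (the Gauss map of the Jordan curve $\gamma$ has degree $+1$).

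Consequently, identifying $\real^2\cong\complex$ and letting $\beta(s)$ denote the angle of $\tilde{f}(\gamma(s))$ relative to $w(\gamma(s))$, the map $s\mapsto e^{i\beta(s)}$ has degree $\tfrac{1}{2\pi}\oint d\beta=\mathrm{wind}(\tilde{f})-\mathrm{wind}(w)=0-1=-1$. The undesired equilibria are precisely the solutions of $\beta(s)\equiv\pi$, which (by the hyperbolicity built into the asymptotically-stable-or-saddle hypothesis) are transversal; hence their signed count equals this degree, $-1$. Matching the sign of $\beta'$ at each crossing with the sign of the tangential eigenvalue shows that a crossing contributes $+1$ at an asymptotically stable equilibrium and $-1$ at a saddle, so the signed count is $n_{\mathrm{as}}-n_{\mathrm{sad}}=-1$, which yields the claim.

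The main obstacle is the final matching step: verifying, with all orientations fixed consistently, that the sign of $\beta'$ at a crossing of $\beta=\pi$ coincides with the Poincar\'e index, equivalently with the sign of the tangential eigenvalue $c'$, where $c(s):=\langle F(\gamma(s)),\tau(s)\rangle$ is the tangential speed of the closed-loop field $F:=\tilde{f}+gv$ on $\gamma$. This requires differentiating the relation $\tilde{f}=\delta w$ along $\gamma$ and comparing with the linearization of the tangential dynamics. A secondary technical point is ensuring $g^\top\nabla h\neq\mathbf{0}_m$ on all of $\gamma$ (it holds at every equilibrium by strictness of the CBF, as in the proof of Lemma~\ref{lem:undesired-eq-characterization}), which is what makes $w$ nonvanishing and the homotopy argument for $\mathrm{wind}(w)=+1$ valid.
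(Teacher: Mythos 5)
Your strategy is genuinely different from the paper's and is an attractive idea, but as written it has two gaps, one of which is fatal in an important regime. The critical problem is the claim that $w:=gG^{-1}g^\top\nabla h$ is nonvanishing on all of $\partial\Cc$ so that $\mathrm{wind}(w)=\mathrm{wind}(\nabla h)=+1$. The strict CBF hypothesis only forces $g(\bx)^\top\nabla h(\bx)\neq\mathbf{0}_m$ at points where the drift alone fails the barrier condition; at boundary points where $\nabla h(\bx)^\top f(\bx)>0$ the strict CBF condition is perfectly compatible with $g(\bx)^\top\nabla h(\bx)=\mathbf{0}_m$, and there $w(\bx)=\mathbf{0}_2$. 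Worse, in the underactuated case $m=1$ this degeneracy is unavoidable: $g^\top\nabla h$ is then a scalar, $w$ is always a scalar multiple of the single column of $g$, and since $\nabla h$ makes a full rotation along the Jordan curve $\partial\Cc$, the scalar $g^\top\nabla h$ must change sign, so $w$ vanishes at (at least) two boundary points. The winding number of $w$ is then undefined and your homotopy $M_t=(1-t)M+tI$ does not bridge $t=0$; yet the underactuated case is precisely where the proposition is used (e.g., the single saddle of Proposition~\ref{prop:undesired-eq-n-2}). The second gap is the one you yourself flag as ``the main obstacle'': the identification of $\mathrm{sign}(\beta')$ at each crossing of $\beta=\pi$ with the sign of the tangential eigenvalue of the closed-loop Jacobian. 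This is the crux of the whole argument — without it you have only shown that a certain signed count of anti-parallelism points equals $-1$, not that $n_{\mathrm{as}}-n_{\mathrm{sad}}=-1$ — and it is not a routine verification, since the closed-loop field $F=\tilde f+gv$ differs from $\tilde f$ and $\partial\Cc$ is not invariant, so the ``tangential dynamics'' must be extracted from the Jacobian formula of~\cite{indep-cbf} rather than read off the boundary restriction. A related loose end: the hypothesis ``asymptotically stable or saddle'' does not by itself exclude a zero tangential eigenvalue (cf.\ the discussion after Proposition~\ref{prop:number-undesired-eq-compact-connected-safe-set}), in which case the crossing of $\beta=\pi$ need not be transversal and the regular-value degree count can miscount.

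For contrast, the paper avoids all index computations. It first invokes Propositions~\ref{prop:conditions-bounded-trajs} and~\ref{prop:no-limit-cycles-general} to obtain a compact forward-invariant set $\tilde\Phi$ with no limit cycles in $\tilde\Phi\cap\Cc$, then takes, for each saddle, the branch of its one-dimensional stable manifold entering $\mathrm{Int}(\Cc)$ (tangent to $\nabla h$ at the equilibrium). These curves partition $\tilde\Phi\cap\Cc$ into $|\Lc|$ regions, and a Poincar\'e--Bendixson argument shows each region's closure contains exactly one asymptotically stable equilibrium, giving $|\Lc|=k-|\Lc|+1$. If you want to salvage the degree-theoretic route, you would need (i) a replacement for the global comparison with $w$ — e.g., working arc-by-arc on $\{\eta<0\}\cap\partial\Cc$, where $w$ is guaranteed nonzero and where all equilibria live, and carefully accounting for the boundary contributions of each arc — and (ii) an explicit proof of the sign-matching identity from the Jacobian expression at boundary equilibria.
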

\begin{proof}
%
%
Let $\Lc \subset [k]$ be the index set of undesired equilibria that are saddle points. We show that $|\Lc|=\frac{k+1}{2}$. Figure~\ref{fig:number-of-equilibria-bounded-obstacle-proof-sketch} serves as visual aid for the different elements employed in the proof.
\begin{figure}[htb]
  \centering    
  {\includegraphics[width=0.7\textwidth]{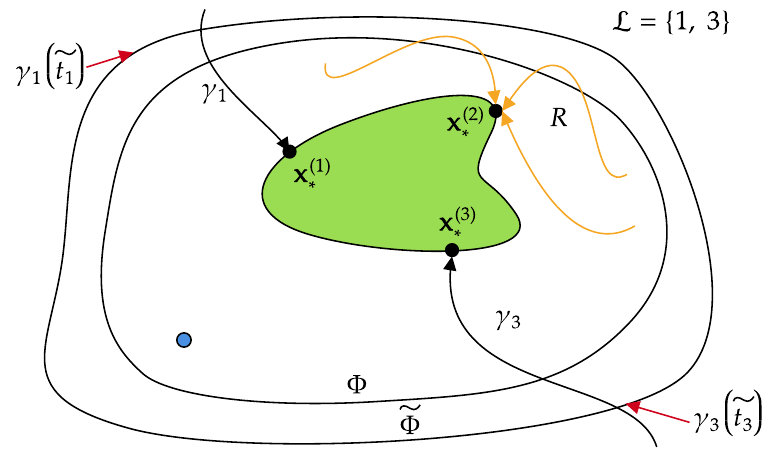}} 
\caption{Sketch of the setting considered in the proof of Proposition~\ref{prop:number-of-undes-equilibria-stab-properties-finite-set-of-bounded-obstacles}. The unsafe set is depicted in green, whereas the origin is represented by the blue dot.}\label{fig:number-of-equilibria-bounded-obstacle-proof-sketch}
\end{figure}
%
%
   Let $\Phi$ be a compact set containing
   the origin and $\real^2\backslash\Cc$ in its interior. This implies that $\partial\Phi\subset\Cc$.
   By Propositions~\ref{prop:conditions-bounded-trajs}
   and~\ref{prop:no-limit-cycles-general}, there exists $a_{\Phi}>0$ and a compact set $\tilde{\Phi}$ containing $\Phi$
   %
   %
   such that~\eqref{eq:general-system-1}, with extended class $\Kc_{\infty}$ function with slope greater than $a_{\Phi}$, makes $\tilde{\Phi}$ forward invariant and $\tilde{\Phi}\cap\Cc$ does not contain any limit cycles.
   Since~\cite[Proposition 11]{indep-cbf} ensures that the stability properties of undesired equilibria are independent of $\alpha$, we can assume without loss of generality that $\alpha$ takes this form.
   %
   %
    Now, for $j\in\Lc$, let $\gamma_j$ be a subset of the one-dimensional local stable manifold of~\eqref{eq:general-system-1} at $\bx_*^{(j)}$ 
    such that:
    \begin{enumerate}
        \item $\gamma_j$ corresponds to a maximal trajectory of~\eqref{eq:general-system-1};
        \item there exists $T>0$ with $\gamma_j(t)\in\text{Int}(\Cc)$ for all $t>T$.
    \end{enumerate}
    Note that such $\gamma_j$ always exists because the stable manifold is a union of trajectories and the stable manifold of~\eqref{eq:general-system-1} at $\bx_*^{(j)}$ is tangent to $\nabla h(\bx_*^{(j)})$.
    The fact that the stable manifold of~\eqref{eq:general-system-1} at $\bx_*^{(j)}$ is tangent to $\nabla h(\bx_*^{(j)})$ follows from the fact that $\nabla h(\bx_*^{(j)})$ is an eigenvector of the Jacobian of~\eqref{eq:general-system-1} evaluated at $\bx_*^{(j)}$ with negative eigenvalue (cf.~\cite[Proposition 6.2]{YC-PM-JC-EDA:24-auto}), and the Stable Manifold Theorem~\cite[Section 2]{LP:00}.
    %
    %
    By Lemma~\ref{lem:existence-stable-manifold-exiting-set-no-limit-cycles}, 
    since $\tilde{\Phi}$ is forward invariant and $\tilde{\Phi}\cap\Cc$ does not contain limit cycles,
    there exists at least one $j\in\Lc$ and $\tilde{t}_j\in\real$ such that
    $\gamma_j(\tilde{t}_j) \in\partial \tilde{\Phi}$ and $\gamma_j(t)\notin \tilde{\Phi}$ for all $t < t_j$.
    
    Moreover by Lemma~\ref{lem:stable-manif-convergence-not-tangent}(i), $\gamma_j$ cannot be tangent to $\Cc$ for any $j\in\Lc$.
    Therefore $\{ \gamma_j \}_{j\in\Lc}$ 
    divide $\tilde{\Phi}\cap\Cc$
    %
    %
    into $|\Lc|$ regions (note that this would not be the case if there was not at least one $j\in\Lc$ and $\tilde{t}_j\in\real$ such that
    $\gamma_j(\tilde{t}_j) \in\partial \tilde{\Phi}$ and $\gamma_j(t)\notin \tilde{\Phi}$ for all $t < t_j$), each of which is an open connected set. Let $R$ be any one of such regions.
    We now show that $\text{Cl}(R)$ contains exactly one asymptotically stable equilibrium point.
    
    Indeed, first suppose that there is no asymptotically stable equilibrium point in~$\text{Cl}(R)$.
    Let $\breve{\gamma}$ be any trajectory with initial condition in $\text{Cl}(R)$. By the Poincar\'e-Bendixson Theorem~\cite[Chapter 7, Thm. 4.1]{PH:02}, since $\tilde{\Phi}$ is forward invariant, $\breve{\gamma}$ converges to either a limit cycle or an equilibrium point. 
    It cannot converge to a limit cycle because $\tilde{\Phi}\cap\Cc$ does not contain any limit cycles.
    %
    %
    Moreover, if $\bar{\bx}_*$ and $\tilde{\bx}_*$ are the equilibrium points
    %
    %
    in $\{ \bx_*^{(j)} \}_{j=1}^k$ whose stable manifolds define the boundary of $R$, $\breve{\gamma}$ cannot converge to an equilibrium other than $\bar{\bx}_*$ or $\tilde{\bx}_*$, since otherwise $\breve{\gamma}$ would intersect the stable manifolds of $\bar{\bx}_*$ or $\tilde{\bx}_*$, which would contradict the uniqueness of solutions of~\eqref{eq:general-system-1}. But  it can also not converge to $\bar{\bx}_*$ or $\tilde{\bx}_*$: if, for example, $\breve{\gamma}$ converged to $\bar{\bx}_*$, there would be two different trajectories converging to $\bar{\bx}_*$ with different tangent vectors at~$\bar{\bx}_*$, which would contradict the fact that $\bar{\bx}_*$ is a saddle point.
    Therefore, $\text{Cl}(R)$ contains at least one asymptotically stable equilibrium.
%
%

%
%
    Next suppose that there are multiple asymptotically stable undesired equilibria in $\text{Cl}(R)$.
    By~\cite[Theorem 8.1]{HK:02}, the boundary of 
    the regions of attraction of asymptotically stable equilibria is formed by trajectories. Let $\tilde{\gamma}$ be one such trajectory with initial condition in $R$. 
    Note that since $R$ is contained in $\tilde{\Phi}$ and $\tilde{\Phi}$ is forward invariant, $\tilde{\gamma}$ is bounded. 
    %
    %
    By the Poincar\'e-Bendixson Theorem~\cite[Chapter 7, Thm. 4.1]{PH:02}, this trajectory must converge to an equilibrium point or a limit cycle. 
    %
    %
    It can not converge to a limit cycle because 
     $\tilde{\Phi}\cap\Cc$ does not contain any limit cycles.
    %
    %
    It can not converge to an equilibrium point because it is not in any region of attraction of an asymptotically stable equilibrium, there are no saddle points in $R$,
    and it can not converge to $\bar{\bx}_*$ (resp. $\tilde{\bx}_*$), because otherwise $\bar{\bx}_*$ (resp. $\tilde{\bx}_*$) would have two trajectories converging to it with different tangent vectors, 
    which would contradict the fact that $\bar{\bx}_*$ (resp. $\tilde{\bx}_*$) is a saddle point.
    
    Therefore, we conclude that in each of the $|\Lc|$ regions formed by $\{ \gamma_j \}_{j\in\Lc}$, there is exactly one asymptotically stable equilibrium in their boundary.
    Since there are $k-|\Lc|$ other undesired equilibria, and since the origin is asymptotically stable,
    this means that $|\Lc| = k - |\Lc| + 1$. Hence, $|\Lc| = \frac{k+1}{2}$.
    Note also that by~\cite[Proposition 10]{indep-cbf}, the stability properties of undesired equilibria are independent of the choice of extended class $\Kc_{\infty}$ function~$\alpha$. 
    Hence, even though in our arguments we have chosen a specific extended class $\Kc_{\infty}$ function $\alpha$, the statement holds for any such~$\alpha$.
\end{proof}

Under the assumptions of Proposition~\ref{prop:number-of-undes-equilibria-stab-properties-finite-set-of-bounded-obstacles}, 
since $\real^2\backslash\Cc$ is bounded,~\cite[Proposition 3]{DEK:87-icra} 
implies that there does not exist a safe globally asymptotically stabilizing controller.
If no limit cycles exist (for example, under the conditions of Proposition~\ref{prop:no-limit-cycles-general}), by the 
Poincar\'e-Bendixson Theorem~\cite[Chapter 7, Thm. 4.1]{PH:02}, this
%
%
implies that there must exist at least one undesired equilibrium, and hence $k\geq 1$.
Note also that as shown in~\cite[Proposition 6.2]{indep-cbf}, under a large class of CBFs, the stability properties of the undesired equilibria remain the same.

Next we give a result similar to Proposition~\ref{prop:number-of-undes-equilibria-stab-properties-finite-set-of-bounded-obstacles} for the case when the safe set $\Cc$ is compact and connected.

\smallskip

\begin{proposition}\longthmtitle{Number of undesired equilibria for compact connected safe set}\label{prop:number-undesired-eq-compact-connected-safe-set}
    Let $h$ be a strict CBF and suppose Assumption~\ref{as: interior eq} holds.
    Let $\Cc$ be a compact connected set.
    %
    %
    Consider~\eqref{eq:general-system-1} with $n=2$ and assume its undesired equilibria $\hat{\Ec} = \{ \bx_*^{(i)} \}_{i=1}^k \subset \partial\Cc$
        %
    %
    are either asymptotically stable or saddle points.
    %
    %
    Then, $k$ is even, and $\frac{k}{2}$ equilibria are saddle points and $\frac{k}{2}$ equilibria are asymptotically stable.
    %
    %
\end{proposition}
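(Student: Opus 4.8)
The plan is to follow the strategy of the proof of Proposition~\ref{prop:number-of-undes-equilibria-stab-properties-finite-set-of-bounded-obstacles}, adapting it to the disk-like topology of a compact connected safe set; the single change in the bookkeeping---the region count---is exactly what flips the parity of $k$ from odd to even. As before, let $\Lc\subset[k]$ index the undesired equilibria that are saddle points, and aim to prove $|\Lc|=k/2$. Since $\Cc$ is compact and forward invariant under~\eqref{eq:general-system-1}, it already plays the role of the invariant region $\tilde{\Phi}$, so Proposition~\ref{prop:conditions-bounded-trajs} is not needed here; I would, however, invoke Proposition~\ref{prop:no-limit-cycles-general} to pick a linear $\alpha$ of sufficiently large slope for which $\Cc$ contains no limit cycles, and by the independence of the stability of the undesired equilibria from $\alpha$ (\cite[Propositions 10 and 11]{indep-cbf}) assume without loss of generality that $\alpha$ has this form.

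Next I would study, for each $j\in\Lc$, the interior branch $\gamma_j$ of the one-dimensional stable manifold of~\eqref{eq:general-system-1} at $\bx_*^{(j)}$. As in Proposition~\ref{prop:number-of-undes-equilibria-stab-properties-finite-set-of-bounded-obstacles}, this branch is tangent to $\nabla h(\bx_*^{(j)})$---the inward normal of $\partial\Cc$---by \cite[Proposition 6.2]{YC-PM-JC-EDA:24-auto} and the Stable Manifold Theorem, and it enters $\Int(\Cc)$ while converging to $\bx_*^{(j)}\in\partial\Cc$ as $t\to+\infty$. The key structural claim is that $\gamma_j\cap\Cc$ is a \emph{chord}: a simple arc with both endpoints on $\partial\Cc$. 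Indeed, traced backward in time, $\gamma_j$ stays in the compact set $\Cc$ and cannot converge to an asymptotically stable equilibrium (neither the origin nor a stable undesired equilibrium admits an unstable direction); since $\Cc$ contains no limit cycles, Poincar\'e--Bendixson then forces $\gamma_j$ to reach $\partial\Cc$, and by Lemma~\ref{lem:stable-manif-convergence-not-tangent} it does so transversally. Moreover, uniqueness of solutions of~\eqref{eq:general-system-1} guarantees that distinct chords $\gamma_j$ are pairwise disjoint in $\Int(\Cc)$.

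Because $\Cc$ is compact, connected, and simply connected (disk-like) with the origin in its interior, $|\Lc|$ pairwise-disjoint chords partition $\Cc$ into exactly $|\Lc|+1$ open connected regions---this is the sole topological departure from Proposition~\ref{prop:number-of-undes-equilibria-stab-properties-finite-set-of-bounded-obstacles}, where the analogous separatrices cut the \emph{annular} set $\tilde{\Phi}\cap\Cc$ into only $|\Lc|$ regions. Repeating the Poincar\'e--Bendixson argument of that proof verbatim (no limit cycles, uniqueness of solutions, and the contradiction that a saddle cannot be the limit of two trajectories with distinct tangent vectors), the closure of each of these $|\Lc|+1$ regions contains exactly one asymptotically stable equilibrium. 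Since the asymptotically stable equilibria of~\eqref{eq:general-system-1} are precisely the origin together with the $k-|\Lc|$ asymptotically stable undesired equilibria, matching regions to stable equilibria yields $|\Lc|+1=1+(k-|\Lc|)$, whence $|\Lc|=k/2$; in particular $k$ is even, and $k/2$ equilibria are saddle points while $k/2$ are asymptotically stable.

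I expect the delicate step to be the chord claim, specifically ruling out that a backward-traced stable manifold converges to an interior-limit configuration instead of exiting $\Cc$. The only admissible $\alpha$-limit inside $\Cc$ would be another saddle point, whose unstable manifold runs along $\partial\Cc$; handling (or excluding) such heteroclinic connections---and confirming via Lemma~\ref{lem:existence-stable-manifold-exiting-set-no-limit-cycles} that the chords genuinely separate $\Cc$, together with the use of the simply-connected topology that drives the extra $+1$ in the region count---is where the argument requires the most care, even though the final counting identity is a one-line modification of the bounded-obstacle case.
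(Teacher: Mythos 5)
Your proposal follows essentially the same route as the paper's proof: pick a large-slope linear $\alpha$ to exclude limit cycles (justified by the $\alpha$-independence of the equilibria and their stability), use the interior branches of the saddle stable manifolds to cut the compact connected set $\Cc$ into $|\Lc|+1$ regions, show each region's closure contains exactly one asymptotically stable equilibrium via Poincar\'e--Bendixson and uniqueness of solutions, and count $|\Lc|+1 = 1+(k-|\Lc|)$ to get $|\Lc|=k/2$. The one point you flag as delicate---backward-time heteroclinic connections to another saddle---is handled in the paper not by exclusion but by simply admitting it as the second alternative in the dichotomy (either $\gamma_j$ exits through $\partial\Cc$ or $\lim_{t\to-\infty}\gamma_j(t)$ is another saddle), with the region count of $|\Lc|+1$ asserted to hold in either case.
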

%
\begin{proof}
    The proof follows a similar argument to that of Proposition~\ref{prop:number-of-undes-equilibria-stab-properties-finite-set-of-bounded-obstacles}. 
    Let $\Lc\subset[k]$ be the set of indices of undesired saddle points. We show that $|\Lc| = \frac{k}{2}$.
    For $j\in\Lc$, let $\gamma_j$ be a subset of the one-dimensional local stable manifold of~\eqref{eq:general-system-1} at $\bx_*^{ (j) }$ such that
    \begin{enumerate}
        \item $\gamma_j$ corresponds to a maximal trajectory of \eqref{eq:general-system-1};
        \item there exists $T>0$ with $\gamma_j(t)\in\text{Int}(\Cc)$ for all $t>T$.
    \end{enumerate}
    Note that for each $j\in\Lc$, either there exists $\tilde{t}_j\in\real$ such that $\gamma_j(\tilde{t}_j)\in\partial\Cc$ and $\gamma_j(t) \notin\Cc$ for $t<\tilde{t}_j$ or $\lim\limits_{t\to-\infty}\gamma_j(t) = \bx_*^{\prime}$, with $\bx_*^{\prime}$ an undesired equilibrium that is a saddle point.
    Indeed, otherwise, since $\Cc$ is bounded, by the Poincar\'e-Bendixson Theorem~\cite[Chapter 7, Thm. 4.1]{PH:02} $\lim\limits_{t\to-\infty}\gamma_j(t)$ would converge to a limit cycle or another equilibrium point. However, by Proposition~\ref{prop:no-limit-cycles-general}, there exists $\hat{a}>0$ such that~\eqref{eq:general-system-1} with linear extended class $\Kc_{\infty}$ function with slope greater than $\hat{a}$ does not have any limit cycles in $\Cc$. Since~\cite[Proposition 11]{indep-cbf} ensures that the stability properties of undesired equilibria are independent of $\alpha$, we can assume without loss of generality that $\lim\limits_{t\to-\infty}\gamma_j(t)$ does not converge to a limit cycle. Moreover, $\lim\limits_{t\to-\infty}\gamma_j(t)$ cannot converge to an asymptotically stable equilibrium, since it belongs to the one-dimensional local stable manifold of $\bx_*^{ (j) }$.
    
    Now we note that, since for all $j\in\Lc$, either there exists $\tilde{t}_j$ such that $\gamma_j(\tilde{t}_j)\in\partial\Cc$ and $\gamma_j(t) \notin\Cc$ for $t<\tilde{t}_j$ or $\lim\limits_{t\to-\infty}\gamma_j(t) = \bx_*^\prime$, with $\bx_*^\prime$ an undesired equilibrium that is a saddle point, the trajectories
    $\{ \gamma_j \}_{j\in\Lc}$ divide $\Cc$ into $|\Lc|+1$ connected sets in~$\Cc$.
    By an argument analogous to the one in the proof of Proposition~\ref{prop:number-of-undes-equilibria-stab-properties-finite-set-of-bounded-obstacles},
    in each of those sets there must exist exactly one asymptotically stable equilibrium. Since the origin is asymptotically stable under~\eqref{eq:general-system-1}, this implies that the number of undesired equilibria that are saddle points is equal to the number of undesired equilibria that are asymptotically stable, proving $|\Lc|=\frac{k}{2}$.
\end{proof}

Note that by~\cite[Proposition 11]{indep-cbf}, the Jacobian of~\eqref{eq:general-system-1} evaluated at an undesired equilibrium has at least one negative eigenvalue. Therefore, the assumption in Propositions~\ref{prop:number-of-undes-equilibria-stab-properties-finite-set-of-bounded-obstacles} and~\ref{prop:number-undesired-eq-compact-connected-safe-set} that all undesired equilibria are either asymptotically stable or saddle points is satisfied if at any undesired equilibrium the other eigenvalue of the Jacobian is nonzero. If the other eigenvalue is zero and the equilibrium is degenerate, 
the point is asymptotically stable if the trajectories in its central manifold converge to it and it is a saddle point if the trajectories in its central manifold diverge from it.

We also note that Propositions~\ref{prop:number-of-undes-equilibria-stab-properties-finite-set-of-bounded-obstacles} and~\ref{prop:number-undesired-eq-compact-connected-safe-set} provide information about the number and the stability properties of undesired equilibria even in the case where the algebraic equations~\eqref{eq: condition-eq} defining the undesired equilibria
are difficult to solve. We also point out that both results require  $\partial\Cc$ to be bounded and hence, by Lemma~\ref{lem:suff-cond-isolated-eq}, the assumption that the number of equilibria is finite is equivalent to each of them being isolated.

We finalize this section by introducing a class of safe sets that do not introduce undesired equilibria.

\begin{proposition}\longthmtitle{Class of safe sets with global asymptotic stability of origin}\label{prop:class-safe-sets-no-undesired-eq}
    Let $V:\real^n:\to\real$ be a  global
    Lyapunov function for the nominal system $\dot{\bx} = f(\bx) + g(\bx)k(\bx)$.
    Let $c>0$ and suppose $h(\bx) := c - V(\bx)$ is a strict CBF of $\Cc = \setdef{ \bx\in\real^n }{ h(\bx) \geq 0 }$.
    Then, the origin is asymptotically stable with region of attraction containing $\Cc$. In particular,~\eqref{eq:general-system-1} does not contain any undesired equilibria.
\end{proposition}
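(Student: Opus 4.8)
The plan is to exploit the special structure $h = c - V$: with this choice the safe set $\Cc$ is precisely the sublevel set $\{V \le c\}$, and the Lie-derivative term appearing in the CBF constraint becomes $-\dot V$ evaluated along the nominal flow, which is nonnegative on $\Cc$. I expect this to force the safety filter to be \emph{inactive} everywhere on $\Cc$, so that the filtered dynamics~\eqref{eq:general-system-1} coincide with the nominal ones on $\Cc$, from which all three conclusions follow immediately.

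First I would record the standard properties of a global Lyapunov function: $V(\mathbf{0}_n)=0$, $V(\bx)>0$ for $\bx\neq\mathbf{0}_n$, and $\dot V(\bx):=\nabla V(\bx)^\top \tilde f(\bx) < 0$ for all $\bx\neq\mathbf{0}_n$. In particular $h(\mathbf{0}_n)=c>0$, so Assumption~\ref{as: interior eq} holds and the origin lies in $\text{Int}(\Cc)$; moreover $\partial\Cc = \{V=c\}$ does not contain the origin. The key computation is to substitute $\nabla h(\bx)=-\nabla V(\bx)$ into the definition of $\eta$, yielding
\begin{equation*}
\eta(\bx)=\nabla h(\bx)^\top \tilde f(\bx)+\alpha(h(\bx))=-\dot V(\bx)+\alpha(c-V(\bx)).
\end{equation*}

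Next I would argue that $\eta(\bx)>0$ for every $\bx\in\Cc$. On $\Cc$ the term $-\dot V(\bx)$ is nonnegative and vanishes only at $\bx=\mathbf{0}_n$, while $\alpha(c-V(\bx))=\alpha(h(\bx))\geq 0$ (since $\alpha$ is extended class $\Kc_\infty$ and $h\geq 0$ on $\Cc$) and vanishes only on $\partial\Cc$. Because $\mathbf{0}_n\notin\partial\Cc$, the two terms never vanish simultaneously, so $\eta>0$ on all of $\Cc$. By the closed-form expression~\eqref{eq:v-expression} this gives $v(\bx)=\mathbf{0}_m$ for every $\bx\in\Cc$, i.e. \eqref{eq:general-system-1} reduces to $\dot\bx=\tilde f(\bx)$ on $\Cc$. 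Since $h$ is a strict CBF, $\Cc$ is forward invariant under~\eqref{eq:general-system-1}, so every trajectory starting in $\Cc$ remains in $\Cc$, obeys the nominal dynamics there, and therefore converges to the origin by global asymptotic stability of the nominal system; this establishes asymptotic stability of the origin with region of attraction containing $\Cc$. For the absence of undesired equilibria I would invoke Lemma~\ref{lem:undesired-eq-characterization}: any undesired equilibrium must lie on $\partial\Cc$ with indicator $\delta<0$, which requires $\eta(\bx_*)<0$, contradicting $\eta>0$ on $\Cc\supseteq\partial\Cc$; equivalently, since $v\equiv\mathbf{0}_m$ on $\Cc$, the only equilibrium in $\Cc$ solves $\tilde f(\bx)=\mathbf{0}_n$, namely the origin.

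The argument is essentially mechanical once the substitution $\nabla h=-\nabla V$ is made, and I do not anticipate a serious obstacle. The only point requiring genuine care is the simultaneous-vanishing step for $\eta$, where one must separately check the origin (there $-\dot V=0$ but $\alpha(c)>0$) and the boundary (there $\alpha(h)=0$ but $-\dot V>0$ because the origin is excluded from $\partial\Cc$). This is precisely where the hypotheses $c>0$ and the strict decrease $\dot V<0$ away from the origin are used.
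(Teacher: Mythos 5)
Your proposal is correct and follows essentially the same route as the paper: substitute $\nabla h=-\nabla V$ to get $\eta(\bx)=-\dot V(\bx)+\alpha(h(\bx))\geq 0$ on $\Cc$, conclude from~\eqref{eq:v-expression} that the filter is inactive so the filtered dynamics reduce to the nominal ones on the forward-invariant set $\Cc$, and deduce convergence to the origin and $\hat{\Ec}=\emptyset$. Your extra care in showing $\eta>0$ strictly (separating the origin from $\partial\Cc$) is sound but not needed, since $\eta\geq 0$ already suffices to make $v\equiv\mathbf{0}_m$ on $\Cc$.
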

\begin{proof}
  Let us first show that the safety filter is inactive at all points in $\Cc$.
  Indeed, for all $\bx\in\Cc$, 
  $\eta(\bx) = \nabla h(\bx)^\top (f(\bx)+g(\bx)k(\bx)) + \alpha(h(\bx)) = -\nabla V(\bx)^\top (f(\bx)+g(\bx)k(\bx)) + \alpha(h(\bx)) \geq 0$.
  Therefore, $v(\bx) = \mathbf{0}_m$ for all $\bx\in\Cc$. This implies that for all $\bx\in\Cc$, $\nabla V(\bx)^\top (f(\bx)+g(\bx)k(\bx)+g(\bx)v(\bx)) < 0$ for all $\bx\in\Cc\backslash\{ \mathbf{0}_n \}$, and therefore all trajectories with initial condition in $\Cc\backslash\{ \mathbf{0}_n \}$ converge to the origin, i.e., the origin is asymptotically stable with region of attraction containing $\Cc$. In particular, this implies that no undesired equilibria exist 
  (i.e., $\hat{\Ec} = \emptyset$), since otherwise trajectories with initial condition in such undesired equilibria do not converge to the origin.
\end{proof}

\section{Dynamical Properties of Safety Filters for Linear Planar Systems}\label{sec:dynamical-properties-safety-filters}
%
%


In this section, we focus on linear planar systems. Due to their simpler structure, solving~\eqref{eq: condition-eq} leads to additional results and insights, compared to the general treatment presented in the previous section. Consider the LTI planar system
\begin{align}\label{eq:2d-linear-system}
        \dot{\bx} = A\bx + B\bu,
\end{align}
where $\bx = [x_1,x_2]^\top \in\real^2$, $\bu\in\real^m$, with $m\in\{1,2\}$, $A\in\real^{2\times 2}$, and  $B\in\real^{2 \times m}$ having full column rank. We make the following assumption.

\smallskip

\begin{assumption}[Stabilizability]\label{as:A-B-stabilizable}
    The system~\eqref{eq:2d-linear-system} is stabilizable. Moreover, $\bu = - K \bx$, $K \in\real^{2\times m}$, is a stabilizing controller such that $\tilde{A} = A-BK$ is Hurwitz.  \hfill $\Box$
\end{assumption}

\vspace{.1cm}

In this setup, the system \eqref{eq:general-system-1} is then customized as follows.
\begin{align}\label{eq:v-linear-system}
    \dot{\bx} = F(\bx) := (A-BK)\bx + B v(\bx),
\end{align}
where the safety filter is given by
\begin{align}\label{eq:v-linear-expression}
v(\bx) = \begin{cases}
            0, &\ \text{if} \ \eta(\bx) \geq 0, \\
            -\frac{\eta(\bx) G(\bx)^{-1} B^T\nabla h(\bx) }{ 
        \norm{B^T \nabla h(\bx)}_{G^{-1}(\bx)}^2 }, &\ \text{if} \ \eta(\bx) <  0,
        \end{cases}
\end{align}
with $\eta(\bx) := \nabla h(\bx)^T (A-BK)\bx + \alpha(h(\bx))$.

As shown in~\cite[Proposition 6.2]{indep-cbf}, the stability properties of the undesired equilibria in the different results of this section hold for a large class of choices of the CBF~$h$ and the function~$\alpha$.

\subsection{Bounded safe set}

Here we discuss various results and examples for the case where the safe set is compact and contains the origin.
We start by showing that for linear, planar underactuated systems and safe sets that are parametrizable in polar coordinates by a continuously differentiable function, the system~\eqref{eq:v-linear-system} does not have undesired equilibria.

\smallskip

\begin{proposition}\longthmtitle{No undesired equilibria for underactuated planar systems and safe sets parametrizable in polar coordinates}\label{prop:safety-filters-work-for-safe-sets-star-shaped}
    Consider~\eqref{eq:2d-linear-system}
     with $m=1$ (i.e., the system is underactuated), and suppose that Assumptions~\ref{as: interior eq} and~\ref{as:A-B-stabilizable} hold.
    Let $r:\real\to\real_{>0}$ be a continuously differentiable, $2\pi$-periodic function,
    %
    %
    and such that
    \begin{align}\label{eq:boundary-parametrization}
        \partial\Cc = \setdef{(r(\theta)\cos(\theta), r(\theta)\sin(\theta) ) }{\theta \in [0,2\pi]},
    \end{align}
    and let $h$ be a strict CBF of $\Cc$.
    Then,~\eqref{eq:v-linear-system} does not have any undesired equilibria for any $K\in\real^2$ and $\alpha \in \realpos$.
\end{proposition}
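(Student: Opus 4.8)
The plan is to localize the equilibria to at most two points of $\partial\Cc$ and then exclude each of them, with the strict CBF property doing the essential work. First I would exploit underactuation: after a linear change of coordinates $\bx \mapsto T^{-1}\bx$ with $T = [\,B^\perp \mid B\,]$ (which fixes the origin, hence preserves star-shapedness of $\Cc$ and its parametrization by a positive smooth $r$, keeps $\tilde A := A-BK$ Hurwitz by similarity, and preserves the strict CBF property), I may assume $B = (0,1)^\top$ and, reflecting $x_1$ if needed, that $b := \tilde A_{12} \ge 0$; write $a,b,c,d$ for the entries of $\tilde A$. In these coordinates $\dot x_1 = a x_1 + b x_2$ is uncontrolled, so any equilibrium of~\eqref{eq:v-linear-system} must satisfy $(\tilde A\bx)_1 = 0$. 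By Lemma~\ref{lem:undesired-eq-characterization} every undesired equilibrium lies on $\partial\Cc$, and since $\Cc$ is star-shaped with the origin in its interior, the line $\{(\tilde A\bx)_1=0\}$ meets $\partial\Cc$ in exactly two antipodal-angle points $\bx_\pm = \bx(\theta_\pm)$, $\theta_- = \theta_+ + \pi$. Thus there are at most two candidate equilibria.

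Next I would characterize when a candidate is genuine. At $\bx_\pm$ one has $\tilde A\bx_\pm = \mu_\pm B$ with $\mu_+ > 0 > \mu_-$ (the sign fixed by $\det\tilde A > 0$), so from~\eqref{eq:v-expression} the filter is active and $F(\bx_\pm)=0$ precisely when $\eta(\bx_\pm) = \mu_\pm\,\partial_{x_2}h(\bx_\pm) < 0$; equivalently, $\bx_\pm$ is an equilibrium iff the nominal field $\tilde A\bx$ points strictly out of $\Cc$ there. Parametrizing $\partial\Cc$ by $\theta$, the sign of $\partial_{x_2}h(\bx(\theta))$ equals that of $\Lambda(\theta) := \tfrac{d}{d\theta}\big(r(\theta)\cos\theta\big)$ (the inward normal's second component), so the equilibrium conditions read $\Lambda(\theta_+) < 0$ and $\Lambda(\theta_-) > 0$. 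The crucial, and only, place I use that $h$ is a strict CBF is this: at any boundary point where $B$ is tangent to $\partial\Cc$, i.e. $\partial_{x_2}h = \nabla h^\top B = 0$ (equivalently $\Lambda = 0$), the control cannot influence $\dot h$, so strict feasibility forces $\nabla h^\top\tilde A\bx > 0$, i.e. the nominal field points strictly \emph{inward}. A short computation turns this into the statement that $g(\theta^*) < 0$ at every zero $\theta^*$ of $\Lambda$, where $g(\theta) := \cos\theta\,(a\cos\theta + b\sin\theta)$.

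I would then finish with a monotonicity argument. The function $g$ has period $\pi$ and vanishes exactly at $\theta_\pm$ and at $\pm\pi/2$; since every critical point of $x_1(\theta)=r(\theta)\cos\theta$ lies in $\{g<0\}$, the map $x_1(\theta)$ is strictly monotone on each arc where $g>0$. Using $b\ge 0$ one checks that the $g>0$ arc abutting $\theta_+$ is $(\theta_+,3\pi/2)$, along which $x_1$ runs from $x_1(\theta_+)<0$ (as $\cos\theta_+<0$) to $x_1(3\pi/2)=0$; monotonicity gives $\Lambda(\theta_+)>0$ (and $\Lambda(\theta_+)\neq 0$, else $g(\theta_+)<0$ would contradict $g(\theta_+)=0$). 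Symmetrically, the $g>0$ arc abutting $\theta_-$ is $(\theta_-,\pi/2)$, along which $x_1$ decreases from $x_1(\theta_-)>0$ to $0$, so $\Lambda(\theta_-)<0$. Both conclusions are the opposite of the equilibrium conditions, so neither $\bx_+$ nor $\bx_-$ is an undesired equilibrium. The degenerate case $b=0$ (then $\theta_\pm=\mp\pi/2$) is handled directly, since $\Lambda(\pm\pi/2)=\mp r(\pm\pi/2)$ already carries the non-equilibrium sign. Independence from $\alpha$ is immediate from Lemma~\ref{lem:undesired-eq-characterization}.

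The main obstacle is recognizing that star-shapedness alone is insufficient and that the strict CBF hypothesis must be invoked precisely at the tangency set $\{\nabla h \perp B\}$: without it one can build a smooth positive $r$ and a Hurwitz $\tilde A$ for which both $\bx_\pm$ satisfy the equilibrium sign conditions. The whole argument therefore hinges on converting ``strict CBF at tangency points'' into the sign constraint $g<0$ at the critical points of $x_1(\theta)$, and then reading off $\operatorname{sign}\Lambda(\theta_\pm)$ from the forced monotonicity of $x_1$ on the adjacent $g>0$ arcs.
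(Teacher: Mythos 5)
Your proof is correct, and it takes a genuinely different route from the paper's. Both arguments share the opening move: underactuation forces any boundary equilibrium onto the line where the uncontrolled component of the nominal field vanishes, which meets $\partial\Cc$ in exactly two antipodal-angle candidates, each of which is a genuine equilibrium precisely when the nominal field points strictly out of $\Cc$ there. The paper, however, never determines the sign of this activation condition at either candidate: it shows instead that the two candidates must share the same activation status and the same (non-degenerate) stability type --- via the explicit Jacobian eigenvalue formula --- and then invokes the global parity result of Proposition~\ref{prop:number-undesired-eq-compact-connected-safe-set} (an even number of undesired equilibria on a compact connected safe set, half saddles and half asymptotically stable) to reach a contradiction unless both candidates are inactive. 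Your argument is local and self-contained: you translate the strict-CBF hypothesis at the tangency set $\{\nabla h^\top B = 0\}$ into the sign constraint $g(\theta^*)<0$ at every critical point of $x_1(\theta)=r(\theta)\cos\theta$, and then read off $\operatorname{sign}\Lambda(\theta_\pm)$ from the forced monotonicity of $x_1$ on the adjacent arcs where $g>0$, directly concluding that the filter is inactive at both candidates. This bypasses the Poincar\'e--Bendixson and stable-manifold machinery on which Proposition~\ref{prop:number-undesired-eq-compact-connected-safe-set} rests, at the price of a more delicate case analysis of the arcs of $\{g>0\}$. Two points you should spell out: (i) the linear change of coordinates preserves the $C^1$ polar parametrization because the boundary curve is everywhere transverse to rays through the origin (the cross product of the radial direction with the tangent equals $r(\theta)>0$) and rays map to rays; and (ii) the identity $\operatorname{sign}(\partial_{x_2}h)=\operatorname{sign}\bigl(\tfrac{d}{d\theta}(r\cos\theta)\bigr)$ presumes the counterclockwise orientation of $\theta$ together with the fact that $\nabla h$ is the inward normal of the superlevel set~$\Cc$.
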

\begin{proof}
    For convenience, denote
    \begin{align*}
      A=\begin{bmatrix}
            a_{11} & a_{12} \\
            a_{21} & a_{22}
        \end{bmatrix},~
    B=\begin{bmatrix}
            b_1 \\
            b_2
    \end{bmatrix}, \ K = \begin{bmatrix}
        k_1 \ k_2
    \end{bmatrix},
    \end{align*}
    in~\eqref{eq:2d-linear-system} and let
    $\beta=a_{11}b_2 - b_1 a_{21}$, $\gamma = a_{22} b_1 - b_2 a_{12}$.
    Further define $R:\real^2\backslash\{ \mathbf{0}_2 \} \to\real$ as
    \begin{align*}
        R(\by) = \begin{cases}
            r(\frac{\pi}{2})^2 \ &\text{if} \ y_1 = 0, \ y_2 > 0,\\
            r(\frac{3\pi}{2})^2 \ &\text{if} \ y_1 = 0, \ y_2 < 0,\\
            r(\arctan(\frac{y_2}{y_1}))^2 \ &\text{otherwise},\\
        \end{cases}
    \end{align*}
    We recall the following three facts:
    \begin{enumerate}
        \item By~\cite[Corollary 4.5]{indep-cbf}, undesired equilibria are independent of the choice of CBF;
        \item By Lemma~\ref{prop:if-one-strict-cbf-exists-all-cbfs-are-strict}, since $h$ is a strict CBF of $\Cc$, any CBF of $\Cc$ is a strict CBF of $\Cc$;
        \item By~\cite[Theorem 3]{ADA-SC-ME-GN-KS-PT:19}, since $\Cc$ is safe, any continuously differentiable function whose $0$-superlevel is $\Cc$ is a CBF of $\Cc$.
    \end{enumerate}
    These facts imply that, without loss of generality, we can assume that $h$ is such that in a neighborhood $\Nc_c$ of $\partial\Cc$, $h(\by) = -\norm{\by}^2 + R(\by)$
    for all $\by\in\Nc_{c}$, and that such $h$ is a strict CBF.
    Note that for all $\bx\in\partial\Cc$ with $x_1\neq0$, we have
    \begin{align}\label{eq:gradient-polar-coordinates}
        \nabla h(\bx) = \begin{pmatrix}
            -2x_1 + 2r( \arctan(\frac{x_2}{x_1}) ) r^{\prime}(\arctan(\frac{x_2}{x_1}))\frac{-x_2}{x_1^2 + x_2^2} \\
            -2x_2 + 2r( \arctan(\frac{x_2}{x_1}) ) r^{\prime}(\arctan(\frac{x_2}{x_1}))\frac{x_1}{x_1^2 + x_2^2}
        \end{pmatrix}.
    \end{align}
    Since $\nabla h$ is continuous, if $x_1=0$ and $x_2>0$, we have
    \begin{align}\label{eq:gradient-polar-coordinates-2}
        \nabla h(0,x_2) = \begin{pmatrix}
            -2x_1 + 2r( \arctan(\frac{\pi}{2})) ) r^{\prime}(\arctan(\frac{\pi}{2}))\frac{-1}{x_2} \\
            -2x_2
        \end{pmatrix},
    \end{align}
    whereas if  $x_1=0$ and $x_2 < 0$,
    \begin{align}\label{eq:gradient-polar-coordinates-3}
        \nabla h(0,x_2) = \begin{pmatrix}
            -2x_1 + 2r( \arctan(\frac{3\pi}{2})) ) r^{\prime}(\arctan(\frac{3\pi}{2}))\frac{-1}{x_2} \\
            -2x_2
        \end{pmatrix}.
    \end{align}
    Therefore,~\eqref{eq:gradient-polar-coordinates} is valid also for $x_1 =0$ by defining $\arctan(\frac{x_2}{x_1}) = \frac{\pi}{2}$ if $x_1 = 0$ and $x_2 > 0$, and 
    $\arctan(\frac{x_2}{x_1}) = \frac{3\pi}{2}$ if $x_1 = 0$ and $x_2 < 0$.
    From~\eqref{eq: condition-eq}, 
    the undesired equilibria lie in $\partial\Cc$, and therefore are of the form $( r(\theta^*)\cos(\theta^*), r(\theta^*)\sin(\theta^*) )$.
    Furthermore, from~\eqref{eq: condition-eq} and~\eqref{eq:gradient-polar-coordinates}, we have
    \begin{align*}
        \begin{pmatrix}
            a_{11} +b_1 k_1 & a_{12}+b_1k_2 \\
            a_{21}+b_2k_1 & a_{22}+b_2k_2
        \end{pmatrix}
        \begin{pmatrix}
            r(\theta^*)\cos(\theta^*) \\
            r(\theta^*)\sin(\theta^*)
        \end{pmatrix}
        = \delta
        \begin{pmatrix}
            b_1 \\
            b_2
        \end{pmatrix}
        \begin{pmatrix}
            b_1 \\
            b_2
        \end{pmatrix}^T
        \nabla h
        \begin{pmatrix}
        r(\theta^*)\cos(\theta^*) \\
        r(\theta^*)\sin(\theta^*)
        \end{pmatrix}.
    \end{align*}
    It follows that $\theta^*$ satisfies
    \begin{align*}
        b_2 \begin{pmatrix}
            a_{11}+b_1 k_1 & a_{12} + b_1 k_2 
        \end{pmatrix}
        \begin{pmatrix}
            r(\theta^*)\cos(\theta^*) \\
            r(\theta^*)\sin(\theta^*)
        \end{pmatrix}
        =
        b_1 \begin{pmatrix}
            a_{21}+b_2 k_1 & a_{22} + b_2 k_2 
        \end{pmatrix}
        \begin{pmatrix}
            r(\theta^*)\cos(\theta^*) \\
            r(\theta^*)\sin(\theta^*)
        \end{pmatrix},
    \end{align*}
    which implies that $\theta^*$ satisfies $\gamma\sin(\theta^*) = \beta\cos(\theta^*)$.
    Note that there exist exactly two values $\theta^*$ in $[0, 2\pi]$ solving $\gamma\sin(\theta^*) = \beta\cos(\theta^*)$.
    %
    %
    Hence, there exist exactly two potential undesired equilibria (i.e., $\Ec$ has cardinality $2$).
    Let $\bx_*^{(1)}$, $\bx_*^{(2)}$ be such undesired equilibria, with $\theta_1^*$ and $\theta_2^*$ being their associated $\theta^*$ values.
    From Lemma~\ref{lem:undesired-eq-characterization}, $\bx_*^{(1)}$ is an undesired equilibria if and only if $\eta(\bx_*^{(1)}) < 0$, or
    equivalently, 
    \begin{align}\label{eq:active-filter-condition-underactuated}
        \notag
        \frac{\partial h}{\partial x_1}(\bx_*^{(1)}) \Big( (a_{11}-b_1 k_1) \cos(\theta_1^*) + (a_{12}-b_1 k_2) \sin(\theta_1^*)  \Big) + \\
        \frac{\partial h}{\partial x_2}(\bx_*^{(1)}) \Big( (a_{21}-b_2 k_1) \cos(\theta_1^*) + (a_{22}-b_2 k_2) \sin(\theta_1^*)  \Big) < 0.
    \end{align}
    If $\gamma\neq0$, 
    $\sin(\theta_1^*) = \frac{\beta}{\gamma}\cos(\theta_1^*)$,
    %
    %
    and~\eqref{eq:active-filter-condition-underactuated} is equivalent to
    \begin{align*}
        &\frac{\partial h}{\partial x_1}(\bx_*^{(1)}) \frac{\cos(\theta_1^*)}{\gamma} \Big( \gamma (a_{11}-b_1 k_1) + \beta (a_{12}-b_1 k_2)  \Big) + \\
        &\frac{\partial h}{\partial x_2}(\bx_*^{(1)}) \frac{\cos(\theta_1^*)}{\gamma} \Big( \gamma (a_{21}-b_2 k_1) + \beta (a_{22}-b_2 k_2)  \Big) < 0.
    \end{align*}
    On the other hand, if $\gamma=0$, $\cos(\theta_1^*)=0$, and~\eqref{eq:active-filter-condition-underactuated} is equivalent to
    \begin{align*}
        \sin(\theta_1^*)\Big( \frac{\partial h}{\partial x_1}(\bx_*^{(1)}) (a_{12}-b_1 k_2) +
        \frac{\partial h}{\partial x_2}(\bx_*^{(1)}) (a_{22}-b_2 k_2) \Big) < 0.
    \end{align*}
    %
    %
    Note also that $\gamma = \beta = 0$ leads to no undesired equilibria.
    Indeed, using the definitions of $\beta$ and $\gamma$,
    \begin{subequations}
    \begin{align}
        \gamma (a_{11} \! - \! b_1 k_1) \! + \! \beta (a_{12} - b_1 k_2) \!  &= \! b_1 (\text{det}(A) \! - \! \gamma k_1 - \! \beta k_2), \\
        \gamma (a_{21} \! - \! b_2 k_1) \! + \! \beta (a_{22} - b_2 k_2) \! &= \! b_2 (\text{det}(A) \! - \gamma k_1 \! - \beta k_2 ),
    \end{align}
    \label{eq:gamma-beta-relation}
    \end{subequations}
    which implies that $b_1 \text{det}(A) = b_2 \text{det}(A) = 0$ if $\gamma = \beta = 0$.
    If $\text{det}(A) = 0$, then $\text{det}(A+BK) = \text{det}(A)-\gamma k_1 -\beta k_2 =0$ for all $K$, which contradicts Assumption~\ref{as:A-B-stabilizable}.
    Hence, $b_1 = b_2 = 0$, in which case~\eqref{eq: condition-eq} implies that $(A-BK)\bx_*^{(1)} = (A-BK)\bx_*^{(2)} = \mathbf{0}_2$, which can only hold if $\bx_*^{(1)} = \bx_*^{(2)} = \mathbf{0}_2$ and they are not undesired equilibrium. Hence the rest of the proof focuses on the case $\beta^2 + \gamma^2 > 0$.
    %
    %
    By the Routh-Hurwitz criterion~\cite{AH:95}, since $A-BK$ is Hurwitz and is a $2\times2$ matrix, $\text{det}(A-BK) = \text{det}(A) - \gamma k_1 - \beta k_2  > 0$.
    %
    %
    Hence, if $\gamma\neq0$, by using~\eqref{eq:gamma-beta-relation},~\eqref{eq:active-filter-condition-underactuated} is equivalent to
    \begin{align*}
        \frac{\cos(\theta_1^*)}{\gamma}\Big( \frac{\partial h}{\partial x_1}(\bx_*^{(1)}) b_1 + \frac{\partial h}{\partial x_2}(\bx_*^{(1)}) b_2 \Big) < 0.
    \end{align*}
    %
    %
    On the other hand, if $\gamma = 0$, by using~\eqref{eq:gamma-beta-relation},~\eqref{eq:active-filter-condition-underactuated} is equivalent to
    \begin{align*}
        \sin(\theta_1^*)\Big( \frac{\partial h}{\partial x_1}(\bx_*^{(1)}) \frac{b_1}{\beta} + \frac{\partial h}{\partial x_2}(\bx_*^{(1)}) \frac{b_2}{\beta}
        \Big) < 0.
    \end{align*}
    %
    %
    Using~\eqref{eq:gradient-polar-coordinates}, we get that
    if $\gamma\neq0$,~\eqref{eq:active-filter-condition-underactuated} is equivalent to
    \begin{align}\label{eq:active-filter-condition-2}
        \Big(\frac{\cos(\theta_1^*)}{\gamma} \Big)^2 \! \Big( \! -r(\theta_1^*)(b_1\gamma + b_2\beta) \! + \! r^{\prime}(\theta_1^*)(-b_1\beta + b_2\gamma)  \Big) \! < \! 0,
    \end{align}
    %
    %
    whereas if $\gamma = 0$, $\cos(\theta_1^*)=0$, which means that $\sin^2(\theta_1^*)=1$,
    %
    %
    and again using~\eqref{eq:gradient-polar-coordinates},~\eqref{eq:active-filter-condition-underactuated} is equivalent to
    \begin{align}\label{eq:active-filter-condition-3}
        \frac{1}{\beta}\Big( -r^{\prime}(\theta_1^*)b_1 - r(\theta_1^*)b_2 \sin^2(\theta_1^*)  \Big)
        =
        \frac{1}{\beta}\Big( -r^{\prime}(\theta_1^*)b_1 - r(\theta_1^*)b_2 \Big)
        \! < \! 0,
    \end{align}
    Note that similar expressions to~\eqref{eq:active-filter-condition-2} and~\eqref{eq:active-filter-condition-3} hold for $\theta_2^*$.
    In particular, this means that if $\gamma\neq 0$, the sign of 
    $-r(\theta_1^*)(b_1\gamma + b_2\beta) + r^{\prime}(\theta_1^*)(-b_1\beta + b_2\gamma)$ and $-r(\theta_2^*)(b_1\gamma + b_2\beta) + r^{\prime}(\theta_2^*)(-b_1\beta + b_2\gamma)$ is the same, and if $\gamma = 0$, the sign of $-r^{\prime}(\theta_1^*)b_1 - r(\theta_1^*)b_2$ and $-r^{\prime}(\theta_2^*)b_1 - r(\theta_2^*)b_2$ is the same.

    Next, we compute the eigenvalues of the Jacobian of~\eqref{eq:general-system-1} at $\bx_*^{(1)}$.
    By~\cite[Proposition 11]{indep-cbf}, one of its eigenvalues is $-\alpha^{\prime}(0)$. By using the expression of the Jacobian of~\eqref{eq:general-system-1} at $\bx_*^{(1)}$ also provided in~\cite[Proposition 11]{indep-cbf},
    and using the fact that the trace is the sum of the eigenvalues,
    we obtain that the other eigenvalue is
    \begin{align*}
        \lambda_{\bx_{\star}^{(1)}} := \frac{\beta \frac{\partial h}{\partial x_2}(\bx_*^{(1)}) + \gamma \frac{\partial h}{\partial x_1}(\bx_*^{(1)}) }{ b_1 \frac{\partial h}{\partial x_1}(\bx_*^{(1)}) + b_2 \frac{\partial h}{\partial x_2}(\bx_*^{(2)}) } .
    \end{align*}
    Using~\eqref{eq:gradient-polar-coordinates}, we get that if $\gamma \neq 0$,
    \begin{align}\label{eq:other-eigenvalue}
        \lambda_{\bx_{\star}^{(1)}} = \frac{ r(\theta_1^*)(\beta^2 + \gamma^2) }{ r(\theta_1^*)(b_1\gamma + b_2 \beta) - r^{\prime}(\theta_1^*)(-b_1 \beta + b_2 \gamma) },
    \end{align}
    whereas if $\gamma = 0$,
    \begin{align}\label{eq:other-eigenvalue-3}
        \lambda_{\bx_{\star}^{(1)}} = \frac{\beta r(\theta_1^*) }{ r^{\prime}(\theta_1^*)b_1 + r(\theta_1^*)b_2  }.
    \end{align}
    Now, since if $\gamma \neq 0$, the sign of 
    $-r(\theta_1^*)(b_1\gamma + b_2\beta) + r^{\prime}(\theta_1^*)(-b_1\beta + b_2\gamma)$ and $-r(\theta_2^*)(b_1\gamma + b_2\beta) + r^{\prime}(\theta_2^*)(-b_1\beta + b_2\gamma)$ is the same, and if $\gamma = 0$ the sign of 
    $r^{\prime}(\theta_1^*)b_1 + r(\theta_1^*)b_2$ and $r^{\prime}(\theta_2^*)b_1 + r(\theta_2^*)b_2$ is the same, 
    $\bx_*^{(1)}$ and $\bx_*^{(2)}$ have the same stability properties.
    Note also that 
    since $r$ is strictly positive by definition,
    $r(\theta_1^*)>0$,
    %
    %
    and since we are discussing the case $\beta^2 + \gamma^2 > 0$,
    $\bx_*^{(1)}$ and $\bx_*^{(2)}$ cannot be degenerate undesired equilibria.
    %
    %
    However, according to Proposition~\ref{prop:number-undesired-eq-compact-connected-safe-set}, since $\Cc$ is compact, connected, and contains the origin, if there exist two undesired equilibria, one must be a saddle point and the other one must be asymptotically stable, which is a contradiction with the fact that $\bx_*^{(1)}$ and $\bx_*^{(2)}$ have the same stability properties.
    This implies that $\bx_*^{(1)}$ and $\bx_*^{(2)}$ cannot be undesired equilibria and therefore~\eqref{eq:v-linear-system} does not have any, {\color{blue}
    i.e., $\hat{\Ec} = \emptyset$.}
    %
\end{proof}

Figure~\ref{fig:polar-parametrization-examples} illustrates different examples of safe sets satisfying the assumptions in Proposition~\ref{prop:safety-filters-work-for-safe-sets-star-shaped}. 
The following example shows that the conclusions of Proposition~\ref{prop:safety-filters-work-for-safe-sets-star-shaped}
do not hold if~\eqref{eq:2d-linear-system} is fully actuated.

\begin{figure}[htb]
  \centering 
  {\includegraphics[width=0.32\textwidth]{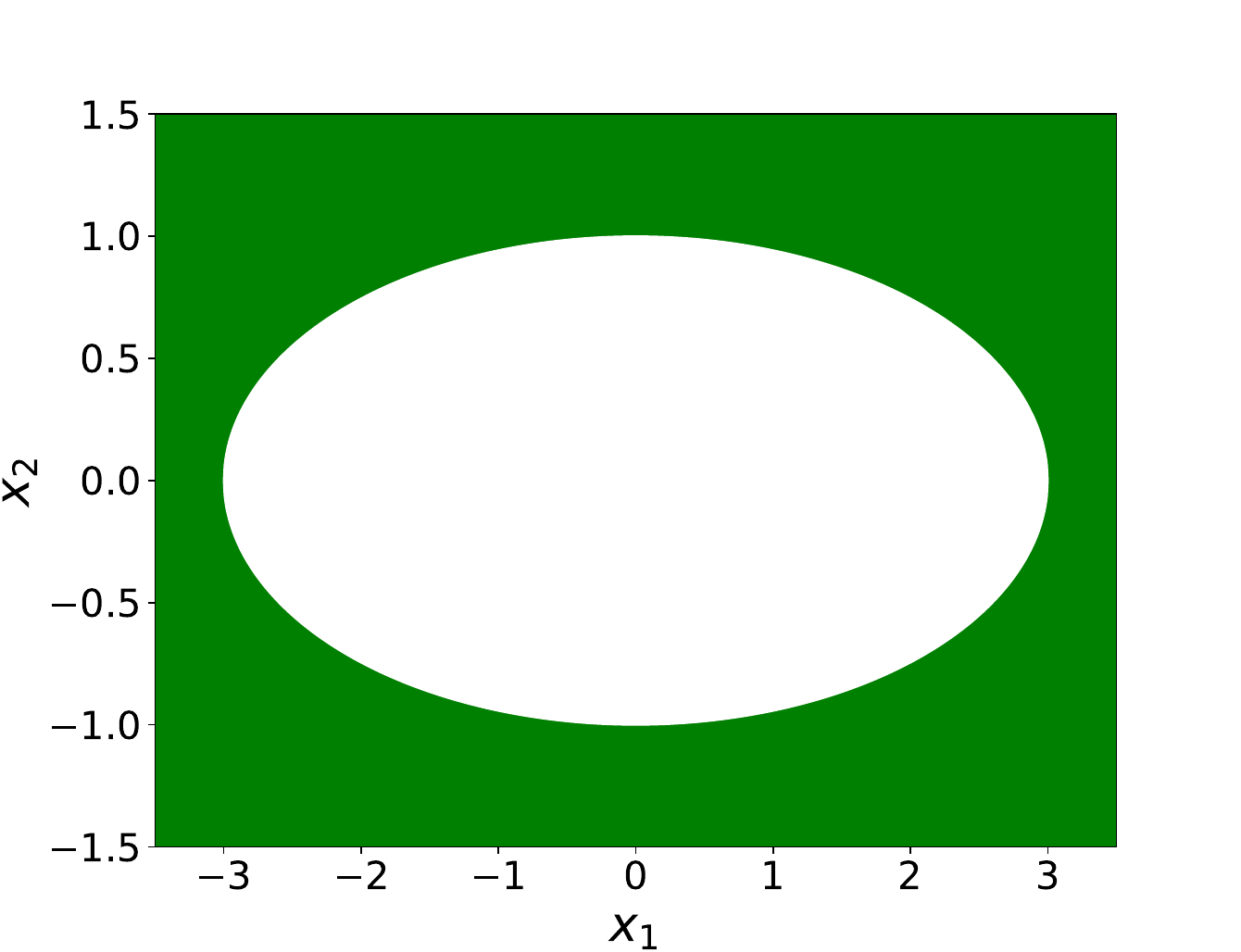}} 
  {\includegraphics[width=0.32\textwidth]{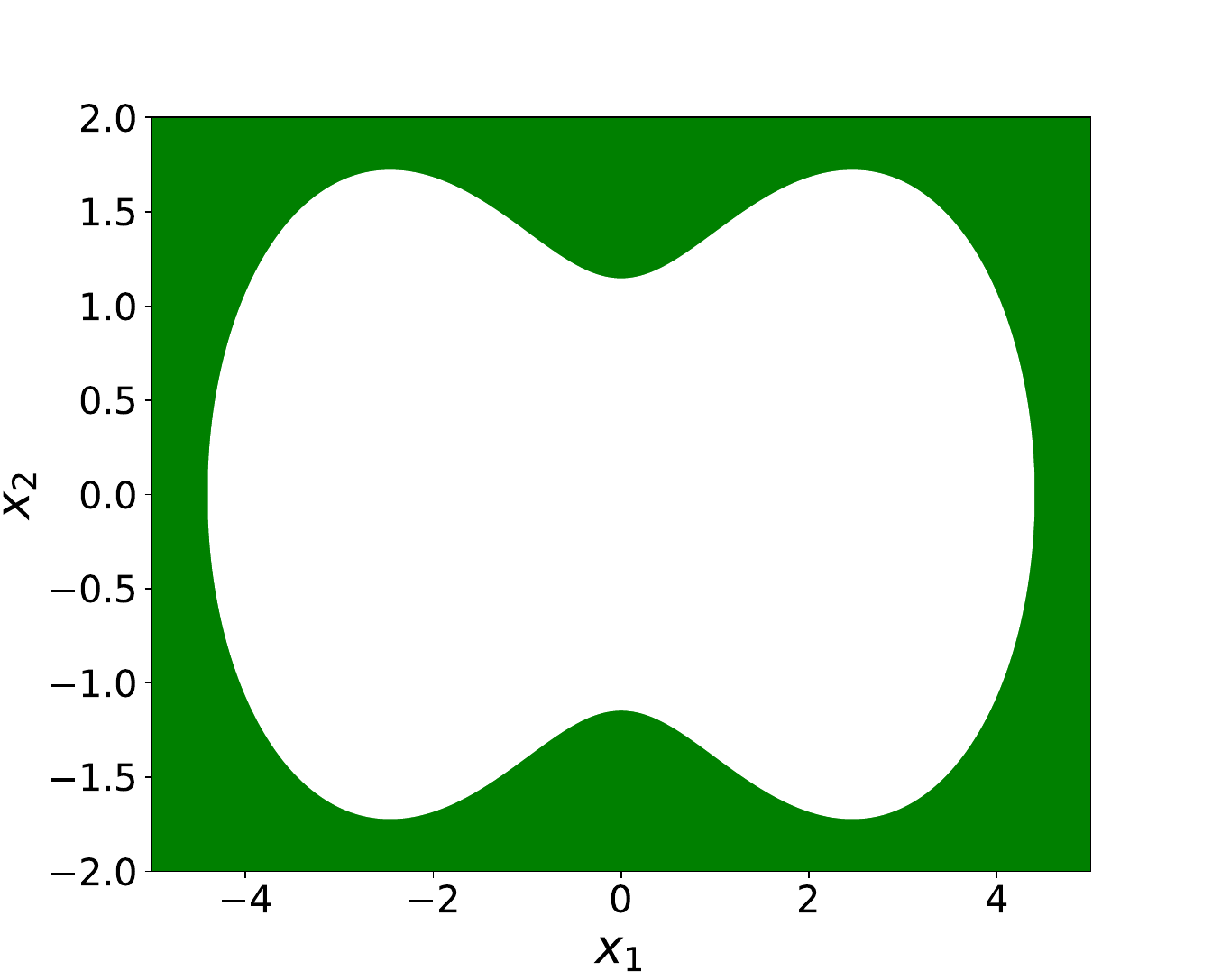}}
  {\includegraphics[width=0.32\textwidth]{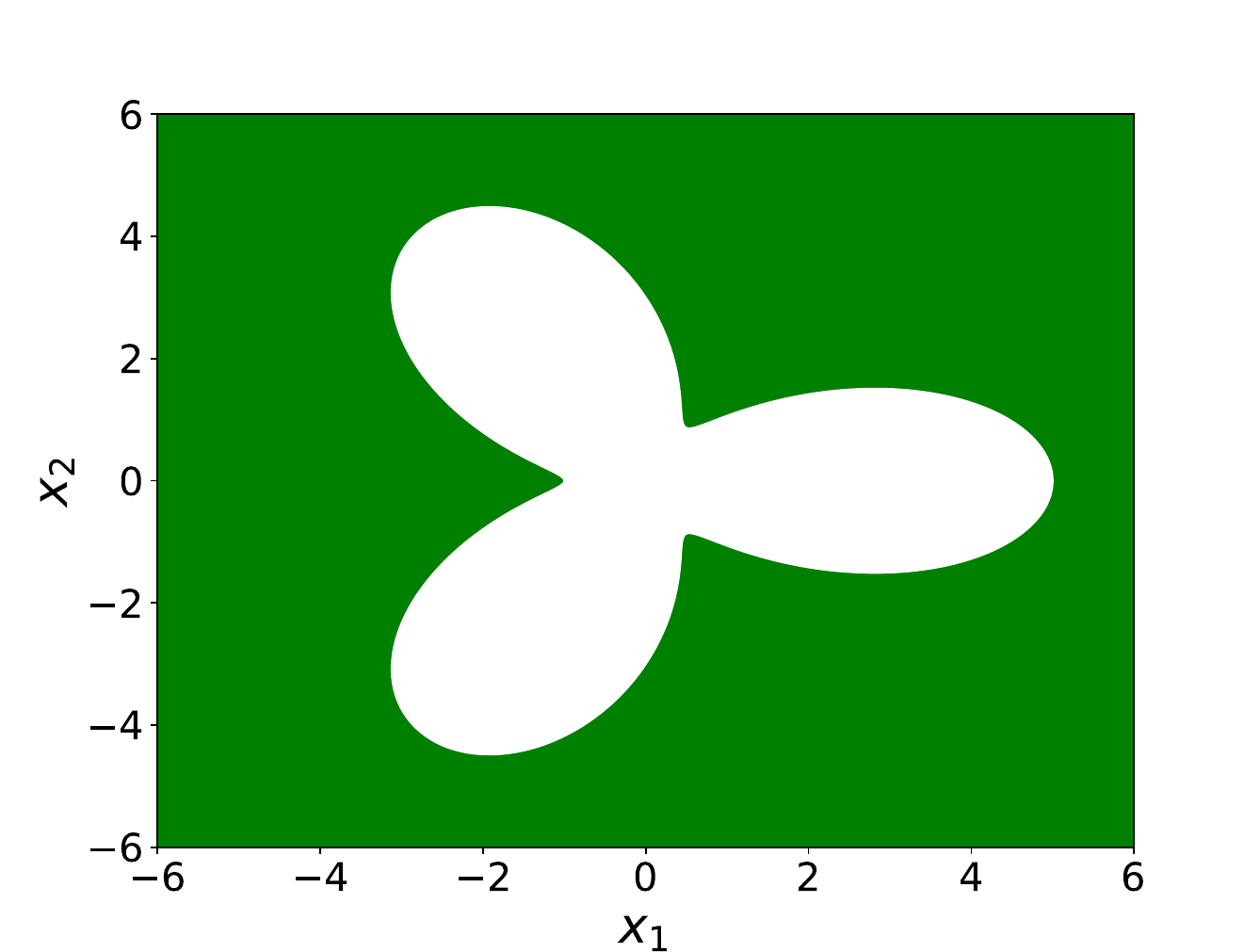}} 
  \caption{Examples of safe sets satisfying the assumptions of Proposition~\ref{prop:safety-filters-work-for-safe-sets-star-shaped} The unsafe region is colored in green and the safe set in white. For the left figure, $r(\theta) = \frac{1}{ \sqrt{3\cos^2(\theta) + \sin^2(\theta)} }$, for the center figure, $r(\theta) = 3\sqrt{ \cos(2\theta) + \sqrt{ (1.03)^4 - \sin^2(\theta) } }$, and for the right figure, $r(\theta) = 3 + 2\cos(3\theta)$.}
  \label{fig:polar-parametrization-examples}
   \vspace{-.5cm}
\end{figure}


\smallskip

\begin{example}\longthmtitle{Undesired equilibrium in convex and bounded safe set, fully actuated system}\label{ex:undesired-eq-convex-bounded}
%
%
{\rm
Consider the planar single integrator, i.e.,~\eqref{eq:v-linear-system} with $A=\zero_2$, $B=\textbf{I}_2$, and let $K = \begin{bmatrix}
    5 & -8 \\
    2 & -3
\end{bmatrix}$, $G(\bx)=\textbf{I}_2$,
$h(\bx) = 10-x_1^2-(x_2-2)^2$, and $\alpha(s) = 50s$.
Note that $-K$ is Hurwitz.
By numerically solving the conditions in~\eqref{eq: condition-eq}, it follows that $(3,3)$ and $(3.161, 2.123)$ are undesired equilibria of~\eqref{eq:v-linear-system}. Moreover, using the expression of the Jacobian in~\cite[Proposition 11]{indep-cbf}, we deduce that $(3,3)$ is asymptotically stable and $(3.161, 2.123)$ is a saddle point. This is illustrated in Figure~\ref{fig:counterexample-fully-actuated}.
}
\problemfinal
\end{example}

\begin{figure}[htb]
  \centering
  {\includegraphics[width=0.7\textwidth]{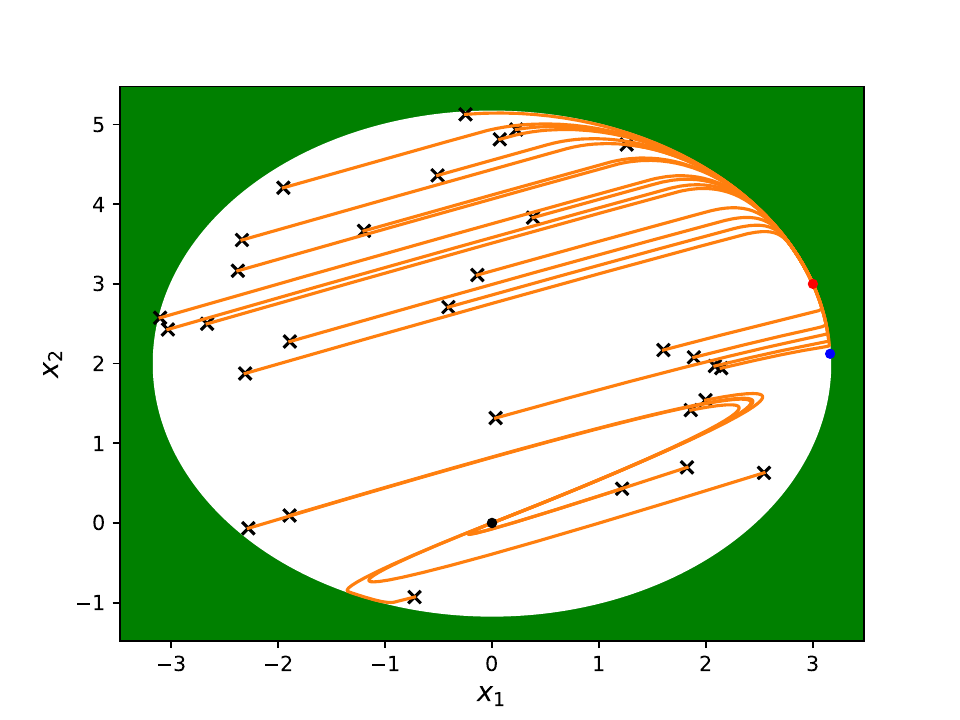}} 
  \caption{Plot of different trajectories (in orange) for Example~\ref{ex:undesired-eq-convex-bounded}. The unsafe set is colored in green. Black crosses denote initial conditions, the black dot denotes the origin, the red dot denotes an asymptotically stable undesired equilibrium, and the blue dot denotes an undesired equilibrium that is a saddle point.}
  \label{fig:counterexample-fully-actuated}
\end{figure}

We also note that a critical assumption in Proposition~\ref{prop:safety-filters-work-for-safe-sets-star-shaped} is that the  boundary of the safe set can be parametrized as in~\eqref{eq:boundary-parametrization}. In the following example, we show that if such a parametrization does not exist, the closed-loop system~\eqref{eq:v-linear-system} can  have undesired equilibria even if the system is underactuated and $h$ is a strict CBF. 

\smallskip

\begin{example}\longthmtitle{Undesired equilibrium in sets not parametrizable in polar coordinates}\label{ex:undesired-no-polar-coords}
{\rm
    Consider~\eqref{eq:v-linear-system} with
    \begin{align*}
        &A = \begin{bmatrix}
            a_{11} & a_{12} \\
            a_{21} & a_{22}
        \end{bmatrix}, \ 
        B = \begin{bmatrix}
            b_1 \\ 
            b_2
        \end{bmatrix}, \
        K = \begin{bmatrix}
            k_1 \\
            k_2
        \end{bmatrix}^T, \\
        &h(\bx) \!=\! b^4 \!-\! \norm{\bx-\begin{bmatrix}
        a-c_1 \\
        c_2
        \end{bmatrix}}^2 \norm{\bx+\begin{bmatrix}
        a+c_1 \\
        -c_2
        \end{bmatrix}}^2, \ \alpha(s) \!=\! 50s,
    \end{align*}
    where $a_{11} = 1.878$, $a_{12} = -6.247$, $a_{21} = -3.189$, $a_{22} = 6.731$, $b_1 = 4.166$, $b_2 = -8.172$, $k_1 = 1.495$, $k_2 = -1.515$,
    $a = 6.587$, $b = 6.591$, $c_1 = -5$, $c_2=0$.
    The boundary of $\Cc$ cannot be described as in~\eqref{eq:boundary-parametrization}.
    Note that since $B$ is a column vector,~\eqref{eq:v-linear-system} is independent of $G$.
    By numerically solving the conditions in~\eqref{eq: condition-eq},
    it follows that $(5.431,0.487)$ and $(4.651, 0.417)$ are undesired equilibria of~\eqref{eq:v-linear-system}. Moreover, using the expression of the Jacobian in~\cite[Proposition 10]{indep-cbf}, it follows that $(5.431,0.487)$ is asymptotically stable and $(4.651, 0.417)$ is a saddle point. This is illustrated in Figure~\ref{fig:cassini-safe-set-zoomout}.
    \begin{figure}[htb]
      \centering
      {\includegraphics[width=0.7\textwidth]{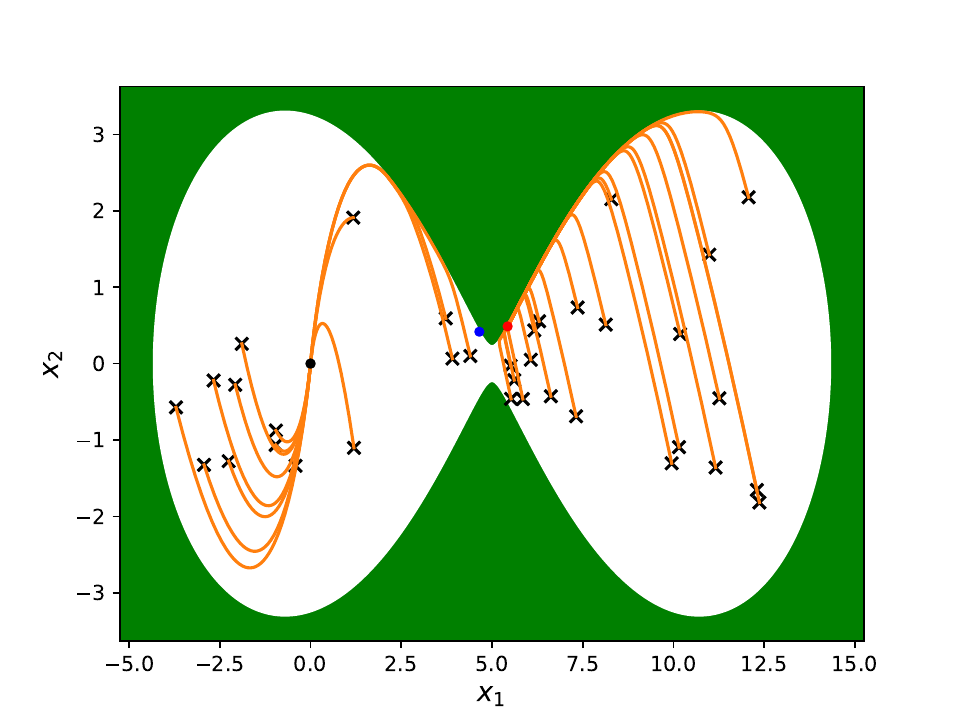}} 
      \caption{Plot of different trajectories (in orange) for Example~\ref{ex:undesired-no-polar-coords}. The unsafe set is colored in green. Black crosses denote initial conditions, the black dot denotes the origin, the red dot denotes an asymptotically stable undesired equilibrium, and the blue dot denotes an undesired equilibrium that is a saddle point.}
      \label{fig:cassini-safe-set-zoomout}
    \end{figure}
    \problemfinal
    } 
\end{example}


\smallskip

The stability properties of undesired equilibria remain the same for a large class of CBFs, cf.~\cite[Proposition 6.2]{indep-cbf}, and therefore the asymptotically stable undesired equilibria defined in 
Examples~\ref{ex:undesired-eq-convex-bounded} and~\ref{ex:undesired-no-polar-coords} exist for a large class of CBFs, not just the ones defined therein.


\subsection{Bounded unsafe set}

Here we study the case where the unsafe set is bounded. In this case, recall (cf. our discussion after Proposition~\ref{prop:number-of-undes-equilibria-stab-properties-finite-set-of-bounded-obstacles}) that~\cite[Proposition 3]{DEK:87-icra} implies that there does not exist a safe globally asymptotically stabilizing controller.
If no limit cycles exist (for example, under the conditions of Proposition~\ref{prop:no-limit-cycles-general}), by the 
Poincar\'e-Bendixson Theorem~\cite[Chapter 7, Thm. 4.1]{PH:02}, this implies that there must exist at least one undesired equilibrium.
We next provide two  examples that illustrate how  system~\eqref{eq:v-linear-system} for linear planar plants can give rise in this case to asymptotically stable undesired equilibria, similarly to Examples~\ref{ex:undesired-eq-convex-bounded} and~\ref{ex:undesired-no-polar-coords}.


\smallskip

\begin{example}\longthmtitle{Nonconvex obstacle with asymptotically stable undesired equilibria}\label{ex:nonconvex-obstacle-asymptotically-stable-undesired-eq}
{\rm
Consider the single integrator in the plane, i.e.,~\eqref{eq:v-linear-system} with $A = \zero_2$, $B=\textbf{I}_2$, and
\begin{align*}
    &K=\begin{bmatrix}
    -k_1 & 0 \\
    0 & -k_2
    \end{bmatrix},
    \ 
    h(\bx) \!=\! -b^4 \!+\! \norm{\bx \! - \! \begin{bmatrix}
        a \\
        c_2
    \end{bmatrix}}^2 \norm{\bx \! + \! \begin{bmatrix}
    a \\
    -c_2
    \end{bmatrix}}^2, \ \alpha(s) \!=\! 10s,
\end{align*}
with $k_1>0$, $k_2>0$, $a>0$, $b>0$, $c_2>0$ and $\frac{b}{a}\in(1,\sqrt{2})$.
It follows that $\bx_*=(0,\sqrt{-a^2+b^2}+c_2)$ and $\delta_{\bx_*} = -\frac{c_2 + \sqrt{-a^2+b^2}}{2\sqrt{-a^2+b^2} b^2 } < 0$ solve~\eqref{eq: condition-eq} and therefore $\bx_*\in\hat{\Ec}$ is an undesired equilibrium.
Moreover, by leveraging~\cite[Proposition 11]{indep-cbf}, the Jacobian of~\eqref{eq:v-linear-system} evaluated at $\bx_*$ is equal to
\begin{align*}
    J({\bx_*}) = \begin{bmatrix}
        k_2 (b^2 - 2a^2)\frac{c_2+\sqrt{-a^2+b^2}}{\sqrt{-a^2+b^2}b^2} & 0 \\
        0 & -\alpha^{\prime}(0)
    \end{bmatrix}.
\end{align*}
Note that $J({\bx_*})$ is a diagonal matrix and since $\alpha^{\prime}(0) > 0$, $k_2>0$, $c_2 > 0$, and $\frac{b}{a}<\sqrt{2}$, all eigenvalues of $J({\bx_*})$ are negative and hence $\bx_*$ is asymptotically stable.
Figure~\ref{fig:cassini-obstacle-zoomin} shows different trajectories of the closed-loop system obtained with $k_1 = 1$, $k_2 = 1$, $a = 3$, $b=1.05a$, $c_2 = 4$.

\begin{figure}[htb]
  \centering 
  {\includegraphics[height=0.35\textwidth]{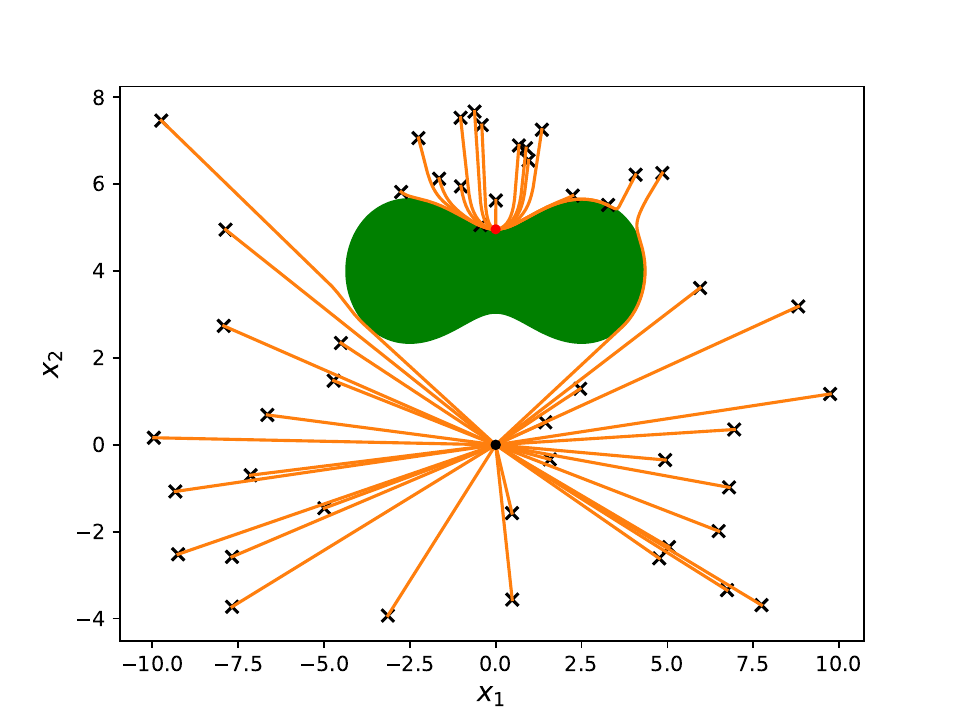}} 
  {\includegraphics[height=0.35\textwidth]{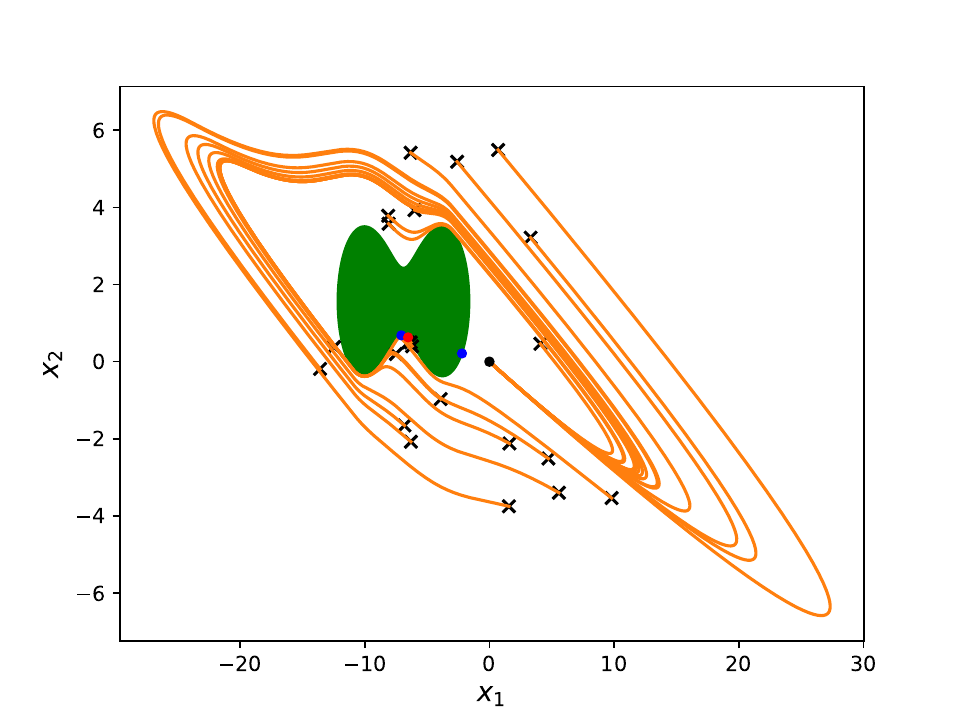}} 
  \caption{Plot of different trajectories of~\eqref{eq:v-linear-system} for the single integrator system (left) and the underactuated system in (right) in Example~\ref{ex:nonconvex-obstacle-asymptotically-stable-undesired-eq}.
  The unsafe set is colored in green. Trajectories are depicted in orange.
  Black crosses denote initial conditions, the black dot denotes the origin, and the red dot denotes the asymptotically stable undesired equilibrium.}
  \label{fig:cassini-obstacle-zoomin}
   \vspace{-.5cm}
\end{figure}
%
%

Next we present a similar example for a linear underactuated system. Consider~\eqref{eq:v-linear-system} with
\begin{align*}
    &A = \begin{bmatrix}
        a_{11} & a_{12} \\
        a_{21} & a_{22}
    \end{bmatrix}, \ 
    B = \begin{bmatrix}
        b_1 \\ b_2
    \end{bmatrix}, \ 
    K = \begin{bmatrix}
        k_1 \\
        k_2
    \end{bmatrix}, \\
    &h(\bx) \!=\! -b^4 \!+\! \norm{\bx \! - \! \begin{bmatrix}
        a-c_1 \\
        c_2
        \end{bmatrix}}^2 \norm{\bx \! + \! \begin{bmatrix}
        a+c_1 \\
        -c_2
        \end{bmatrix}}^2, \ \alpha(s) \!=\! 10s,
\end{align*}
where $a_{11} = 0.268$, $a_{12} = 2.866$, $a_{21} = 0.151$, $a_{22} = 1.526$, $b_1 = 0.350$, $b_2 = -0.151$, $k_1=-41.72$, $k_2=-174.8$, $a = 3.684$, $b = 3.785$, $c_1 = 6.891$, $c_2 = 1.565$. Note that since $B$ is a column vector,~\eqref{eq:v-linear-system} is independent of~$G$. 
By numerically solving the conditions in~\eqref{eq: condition-eq}, we obtain 
three undesired equilibria, two of which, located at $(-7.062,  0.682)$ and $(-2.197,  0.212)$, are saddle points and another, located at $(-6.519, 0.629)$, which is asymptotically stable. Figure~\ref{fig:cassini-obstacle-zoomin} shows that some of the trajectories of~\eqref{eq:v-linear-system} converge to $(-6.519, 0.629)$,
while some others converge to the origin.
\problemfinal
}
\end{example}

%
%

Example~\ref{ex:nonconvex-obstacle-asymptotically-stable-undesired-eq} shows that the closed-loop system~\eqref{eq:v-linear-system} can have asymptotically stable undesired equilibria. A natural question is to ask whether changing the nominal controller would make these equilibria disappear. The following example answers this question in the negative, showing that asymptotically stable undesired equilibria can exist for all nominal stabilizing controllers.

\begin{example}\longthmtitle{Asymptotically stable equilibria can exist for all linear nominal controllers}\label{ex:asymp-eq-exist-all-nominal-controllers}
{\rm
    Consider a linear underactuated system:
    \begin{align*}
        &A = \begin{bmatrix}
            a_{11} & a_{12} \\
            a_{21} & a_{22}
        \end{bmatrix}, \ 
            B = \begin{bmatrix}
            b_1 \\ b_2
        \end{bmatrix}, \ 
            K = \begin{bmatrix}
            k_1 \\
            k_2
        \end{bmatrix}.
    \end{align*}
    Given $c_1 > 0$, $c_2 > 0$, $r_1 > 0$, $r = c_1 - r_1$ and $R = c_1 + r_1$, suppose that $R < c_2$ and $r < c_1$. 
    Consider also the sets
    \begin{align*}
        \Oc_1 &:= \setdef{(x_1,x_2)\in\real^2}{ h_1(x):=-(x_1+c_1)^2 - (x_2-c_2)^2 + r_1^2 > 0, \quad h_2(x) = x_2 - c_2 > 0 }, \\
        \Oc_2 &:= \setdef{(x_1,x_2)\in\real^2}{ h_3(x):=-x_1^2 - (x_2-c_2)^2 + (c_1+r_1)^2 > 0, \\
        &\quad \quad \quad h_4(x) = -x_1^2 - (x_2-c_2)^2 + (c_1-r_1)^2 < 0, \quad h_5(x) = c_2-x_2 > 0}, \\
        \Oc_3 &:= \setdef{(x_1,x_2)\in\real^2}{ h_6(x) = r_1^2 -(x_1-c_1)^2-x_2^2 > 0, \quad h_7(x) = x_2-c_2 > 0 },
    \end{align*}
    and define the safe set $\Cc = \real^2 \backslash (\Oc_1\cup\Oc_2\cup\Oc_3)$, and $h:\real^2\to\real$ be such that $\Cc = \setdef{x\in\real^2}{h(x) \geq 0}$, $\partial\Cc = \setdef{x\in\real^2}{h(x) = 0}$.
    By letting $a_{11} = -0.169$, $a_{21} = -1.989$, $a_{12} = -3$, $a_{22} = -1.4$, $b_1 = -2.355$, $b_2 = -1.707$, $c_1 = 6.1$, $c_2 = 10.027$, $r_1 = 2.129$, it follows that $\bx_* = (3.157, 7.619)$ is an asymptotically stable undesired equilibrium for any nominal stabilizing controller $\bu = -K\bx$.
    We provide a more detailed justification about this in Example~\ref{ex:asymp-eq-exist-all-nominal-controllers-continued} of the Appendix.
    We note also that our argument does not preclude the existence of nonlinear nominal stabilizing controllers for which there does not exist any asymptotically stable undesired equilibrium.
    \begin{figure}[htb]
      \centering 
      {\includegraphics[width=0.32\textwidth]{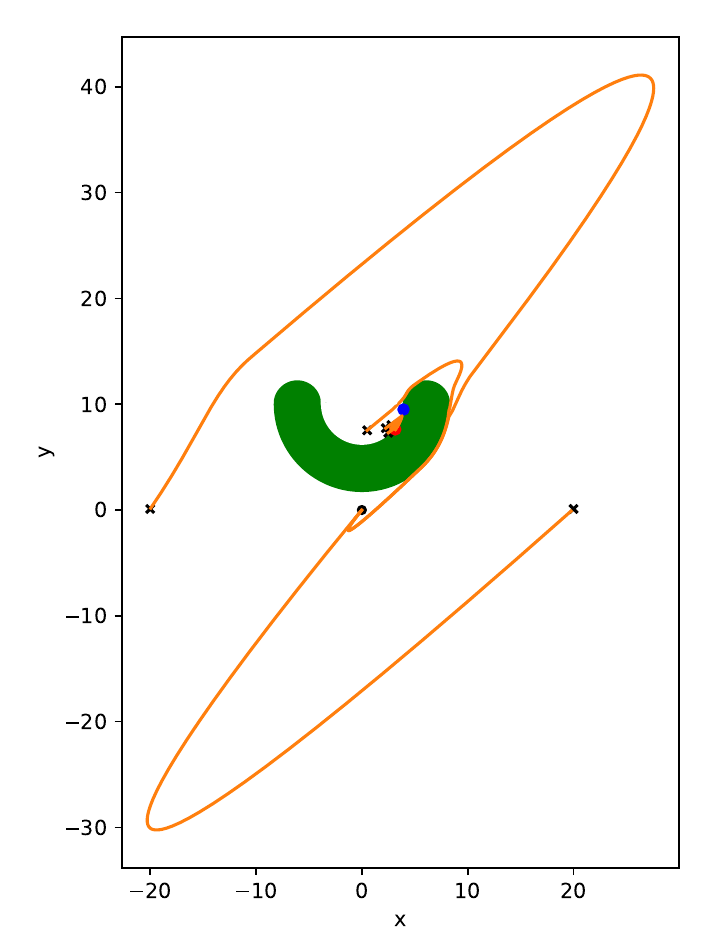}} 
      {\includegraphics[width=0.63\textwidth]{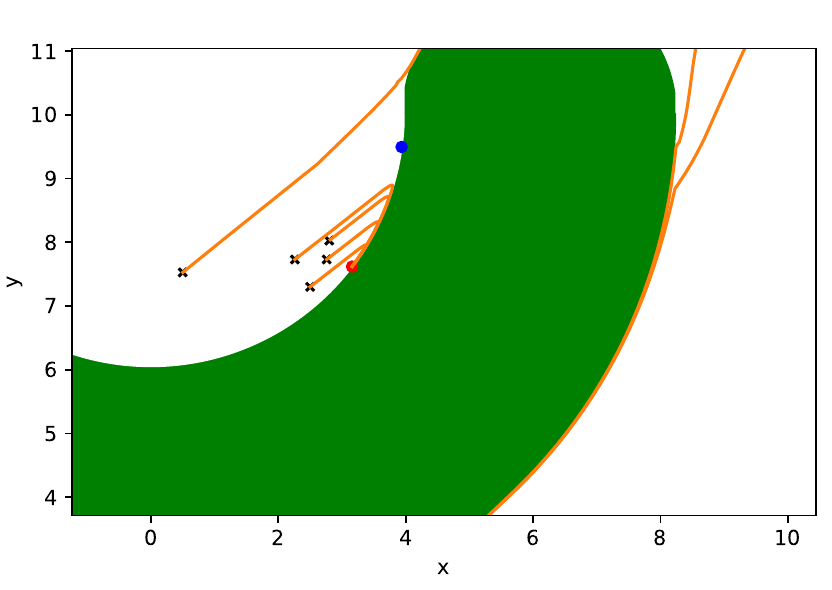}} 
      \caption{Plot of different trajectories of~\eqref{eq:v-linear-system} for the underactuated system in Example~\ref{ex:nonconvex-obstacle-asymptotically-stable-undesired-eq}. 
      The unsafe set is colored in green. Trajectories are depicted in orange.
      Black crosses denote initial conditions, the black dot denotes the origin, the red dot denotes an asymptotically stable undesired equilibrium, and the blue dot denotes an undesired equilibrium that is a saddle point. The plot on the left is a zoomed-in version of the plot on the right.}
      \label{fig:nonconvex-unioncircles}
    \end{figure}
    \problemfinal
    }
\end{example}

\subsection{Ellipsoidal unsafe sets}\label{sec:ellipsoidal-unsafe-sets}

Despite the title of this section, in the following we focus on studying the dynamical properties of safety filters for LTI systems and \emph{circular} obstacles. Accordingly, we consider the circular unsafe set:
\begin{align*}
    \Cc = \setdef{ \bx \in\real^2}{ h(\bx) = \norm{\bx-\bx_c}^2 - r^2 \geq 0},
\end{align*}
with $\bx_c = (x_{c,1}, x_{c,2}) \in\real^2$ the center. This is justified by the following result, which shows that the undesired equilibria of~\eqref{eq:v-linear-system} and their stability properties for general ellipsoidal obstacles are equivalent to those of a system with circular obstacles.

\smallskip

\begin{proposition}\longthmtitle{Safety filters with ellipsoidal and circular obstacles have the same dynamical properties}\label{prop:ellipsoidal-circular-same-properties}
    Let $\bx_c\in\real^2$, $P\in\real^{2 \times 2}$ positive definite, $h(\bx) = (\bx-\bx_c)^T P (\bx-\bx_c) - 1$, and $\Cc:=\setdef{\bx\in\real^n}{h(\bx)\geq0}$. 
    Suppose that $h$ is a strict CBF and Assumption~\ref{as: interior eq} holds.
    Further suppose that $P=E^T E$, with $E\in\real^{2\times 2}$ also positive definite, and define $\hat{\bx}_c = E \bx_c$, 
    $\hat{h}(\hat{\bx}) = (\hat{\bx}-\hat{\bx}_c)^T (\hat{\bx}-\hat{\bx}_c) - 1$ and $\hat{\Cc} = \setdef{\bx\in\real^n}{\hat{h}(\bx)\geq0}$. Moreover, let $\hat{A} = EAE^{-1}$, $\hat{B}=E B$, $\hat{G}(\hat{\bx}) = G(E^{-1}\hat{\bx})$ and $\hat{\eta}(\hat{\bx}) = \nabla \hat{h}(\hat{\bx})^T (\hat{A}-\hat{B}K E^{-1})\hat{\bx}+\alpha(\hat{h}(\hat{\bx}))$.
    Consider the system
    \begin{align}\label{eq:vhat-linear-system}
        \dot{\hat{\bx}} = \hat{F}(\hat{\bx})  :=(\hat{A}-\hat{B}KE^{-1})\hat{\bx} +  \hat{B} \hat{v}(\hat{\bx}),
    \end{align}
    where 
    \begin{align}\label{eq:vhat-linear-expression}
    \hat{v}(\hat{\bx}) = \begin{cases}
                0, &\ \text{if} \ \hat{\eta}(\hat{\bx}) \geq 0, \\
                -\frac{\hat{\eta}(\hat{\bx}) \hat{G}(\hat{\bx})^{-1}(\hat{\bx}) \hat{B}^T \nabla \hat{h}(\hat{\bx}) }{ \norm{
            \hat{B}^T \nabla \hat{h}(\hat{\bx})}_{\hat{G}(\hat{\bx})^{-1}}^2 }, &\ \text{if} \ \hat{\eta}(\hat{\bx}) < 0
            \end{cases}
    \end{align}
   
    Then, 
    \begin{enumerate}
        \item\label{it:ellip-first} $\hat{\Cc}$ is forward invariant under system~\eqref{eq:vhat-linear-system} and $\Cc$ is forward invariant under system~\eqref{eq:v-linear-system};
        \item\label{it:ellip-lipschitz} system~\eqref{eq:vhat-linear-system} is locally Lipschitz and system~\eqref{eq:v-linear-system} is locally Lipschitz;
        \item\label{it:ellip-second} $(A,B)$ is stabilizable if and only if $(\hat{A},\hat{B})$ is stabilizable;
        \item\label{it:ellip-third} $\hat{\bx}_*\in\real^2$ is an undesired equilibrium of~\eqref{eq:vhat-linear-system} if and only if $\bx_* := E^{-1} \hat{\bx}_*$ is an undesired equilibrium of~\eqref{eq:v-linear-system};
        \item\label{it:ellip-fourth} 
        the Jacobian of $\hat{F}$ at $\hat{\bx}_*$ and the Jacobian of $F$ at $\bx_*$ are similar. 
    \end{enumerate}
\end{proposition}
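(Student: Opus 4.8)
The plan is to exhibit the invertible linear change of coordinates $\hat{\bx} = E\bx$ as a \emph{conjugacy} between the two closed-loop vector fields, establishing the single identity $\hat{F}(E\bx) = E\,F(\bx)$ for all $\bx\in\real^2$; from this all five claims drop out. First I would record the elementary identities induced by the coordinate change. From $\hat{\bx}_c = E\bx_c$ one has $\hat{\bx}-\hat{\bx}_c = E(\bx-\bx_c)$, so $\hat{h}(E\bx) = (\bx-\bx_c)^T E^T E(\bx-\bx_c) - 1 = (\bx-\bx_c)^T P(\bx-\bx_c)-1 = h(\bx)$; in particular $\hat{\Cc} = E\Cc$. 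Differentiating $\hat{h}\circ E = h$ yields $\nabla\hat{h}(E\bx) = E^{-T}\nabla h(\bx)$, and the key cancellation is $\hat{B}^T\nabla\hat{h}(E\bx) = B^T E^T E^{-T}\nabla h(\bx) = B^T\nabla h(\bx)$. Together with $(\hat{A}-\hat{B}KE^{-1})(E\bx) = E(A-BK)\bx$ and the definitional $\hat{G}(E\bx) = G(\bx)$, these give $\hat{\eta}(E\bx) = \eta(\bx)$.

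Next I would deduce the filter-preservation relation $\hat{v}(E\bx) = v(\bx)$. The activation regions coincide because $\hat{\eta}(E\bx)=\eta(\bx)$; on the inactive branch both sides vanish; and on the active branch the substitutions above make the numerator $\hat{\eta}\,\hat{G}^{-1}\hat{B}^T\nabla\hat{h}$ and the denominator $\norm{\hat{B}^T\nabla\hat{h}}_{\hat{G}^{-1}}^2$ of \eqref{eq:vhat-linear-expression} reduce verbatim to those of \eqref{eq:v-linear-expression} (this also confirms the denominator is nonzero, since $h$ being a strict CBF forces $B^T\nabla h(\bx)\neq\mathbf{0}_m$ whenever $\eta(\bx)<0$). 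Substituting $\hat{v}(E\bx)=v(\bx)$ into \eqref{eq:vhat-linear-system} then gives the conjugacy $\hat{F}(E\bx) = E(A-BK)\bx + EB\,v(\bx) = E\,F(\bx)$.

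With the conjugacy in hand each claim is short. For \ref{it:ellip-first} I would use that $\Cc$ is forward invariant under \eqref{eq:v-linear-system} ($h$ is a strict CBF) and that $\hat{\bx}(t)=E\bx(t)$ solves \eqref{eq:vhat-linear-system} exactly when $\bx(t)$ solves \eqref{eq:v-linear-system}; since $\hat{\Cc}=E\Cc$, invariance transfers. For \ref{it:ellip-lipschitz}, $v$ is locally Lipschitz because $h$ is a strict CBF, hence so is $F$, and $\hat{v}=v\circ E^{-1}$ is locally Lipschitz as a composition with a linear map, so $\hat{F}$ is. Claim \ref{it:ellip-second} I would settle via PBH: $[\,\lambda I-\hat{A}\mid\hat{B}\,] = E\,[\,(\lambda I-A)E^{-1}\mid B\,]$ has the same rank as $[\,\lambda I-A\mid B\,]$ for each $\lambda$, and $\hat{A}=EAE^{-1}$ shares eigenvalues with $A$. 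For \ref{it:ellip-third}, the conjugacy gives $F(\bx_*)=\mathbf{0}_2\iff\hat{F}(E\bx_*)=\mathbf{0}_2$; the origin maps to the origin and is an equilibrium of both systems (the filter is inactive there as $h(\mathbf{0}_2)>0$), so the nonzero equilibria—equivalently, by Lemma~\ref{lem:undesired-eq-characterization}, the undesired equilibria—correspond bijectively through $E$. For \ref{it:ellip-fourth}, at an undesired equilibrium $\eta<0$, so $F$ and $\hat{F}$ agree with their smooth active branches on a neighborhood; differentiating $\hat{F}\circ E=E\circ F$ there and evaluating at $\bx_*=E^{-1}\hat{\bx}_*$ yields $D\hat{F}(\hat{\bx}_*)=E\,DF(\bx_*)\,E^{-1}$, so the Jacobians are similar.

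The hard part will be the differentiability step underlying \ref{it:ellip-fourth}: since $F$ and $\hat{F}$ are only piecewise smooth, the chain rule cannot be applied to the conjugacy at an arbitrary point. The resolution I would emphasize is that $\eta(\bx_*)<0$ at any undesired equilibrium, so by continuity the active branch alone governs the dynamics on an open neighborhood, making both fields $C^1$ there and legitimizing the differentiation.
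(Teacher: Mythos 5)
Your proposal is correct, and its core mechanism -- the linear conjugacy $\hat{F}(E\bx)=E\,F(\bx)$ induced by $\hat{\bx}=E\bx$ -- is exactly what the paper exploits (the paper records it as $F(E^{-1}\hat{\bx})=E^{-1}\hat{F}(\hat{\bx})$ and uses it, together with the chain rule and the observation that $\eta(\bx_*)<0$ forces the active branch on a neighborhood, to get parts \ref{it:ellip-third}--\ref{it:ellip-fourth} just as you do). The difference is in how you dispatch parts \ref{it:ellip-first} and \ref{it:ellip-lipschitz}: the paper verifies these intrinsically, checking the CBF inequality $\nabla\hat{h}(\hat{\bx})^T\hat{F}(\hat{\bx})+\alpha(\hat{h}(\hat{\bx}))\geq 0$ and re-deriving strict feasibility of the transformed constraint (via $(EB)^\top(\hat{\bx}-\hat{\bx}_c)=0\Rightarrow B^\top P(\bx-\bx_c)=0$) so that the cited Lipschitz lemma applies directly to the hat-system; you instead transport invariance and Lipschitzness through the conjugacy ($\hat{\Cc}=E\Cc$, $\hat{v}=v\circ E^{-1}$). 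Your route is more economical and makes the structural reason for the equivalence transparent; the paper's route has the minor advantage that part \ref{it:ellip-first} does not lean on uniqueness of solutions (your invariance-transfer argument implicitly needs part \ref{it:ellip-lipschitz} to identify $E\bx(t)$ as \emph{the} solution of the hat-system, so you should either prove \ref{it:ellip-lipschitz} first or note the reordering). You also settle part \ref{it:ellip-second} by the PBH test, whereas the paper argues via similarity of the closed-loop matrices $E(A-BK)E^{-1}$; both are complete, and your handling of the piecewise-smoothness issue in part \ref{it:ellip-fourth} matches the paper's.
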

\begin{proof}
Re: \emph{\ref{it:ellip-first}}, by construction, system~\eqref{eq:vhat-linear-system} satisfies
    $\nabla \hat{h}(\hat{\bx})^T \hat{F}(\hat{\bx}) + \alpha(\hat{h}(\hat{\bx})) \geq 0$ for all $\hat{\bx}\in\hat{\Cc}$; similarly, system~\eqref{eq:v-linear-system} satisfies $\nabla h(\bx)^T F(\bx) + \alpha(h(\bx)) \geq 0$ for all $\bx\in\Cc$. These two conditions imply, respectively, that $\hat{\Cc}$ is forward invariant under system~\eqref{eq:vhat-linear-system} and $\Cc$ is forward invariant under system~\eqref{eq:v-linear-system}~\cite[Theorem~2]{ADA-SC-ME-GN-KS-PT:19}.

    Re: \emph{\ref{it:ellip-lipschitz}}, let us prove that for all $\hat{\bx}\in\hat{\Cc}$, there exists $\hat{\bu}\in\real^m$ such that $\nabla \hat{h}(\hat{\bx})^T ( \hat{A}\hat{\bx}+\hat{B}\bu ) + \alpha(h(\hat{\bx})) > 0$, which implies by~\cite[Lemma III.2]{MA-NA-JC:25-tac} that~\eqref{eq:vhat-linear-system} is locally Lipschitz.
    Indeed, suppose that $(EB)^\top (\hat{\bx}-\hat{\bx}_c)=0$. 
    Since for any $\hat{\bx}\in\real^n$, there exists $\bx\in\real^n$ such that $\hat{\bx} = E\bx$ and $h$ is a strict CBF, we have 
    $0=(EB)^\top (\hat{\bx}-\hat{\bx}_c)=B^\top E^\top E(\bx-\bx_c)=BP(\bx-\bx_c)=0$, which means that $(\hat{\bx}-\hat{\bx}_c)^\top  E A E^{-1} \hat{\bx}=(\bx-\bx_c)^\top E^\top E A\bx=  (\bx-\bx_c)^\top P^\top A\bx>0$. Hence for all $\hat{\bx}\in\hat{\Cc}$, there exists $\hat{\bu}\in\real^m$ such that $\nabla \hat{h}(\hat{\bx})^T ( \hat{A}\hat{\bx}+\hat{B}\bu ) + \alpha(h(\hat{\bx})) > 0$, from which it follows that~\eqref{eq:vhat-linear-system} is locally Lipschitz.

    Re: \emph{\ref{it:ellip-second}}, this follows from the observation that if $A-BK$ is Hurwitz then $\hat{A}-\hat{B}K E^{-1} = E(A-BK) E^{-1}$ is also Hurwitz. 
    This is because $E$ is nonsingular and similar matrices have the same eigenvalues~\cite[Corollary 1.3.4]{RAH-CRJ:12}.
    %
    
    Re: \emph{\ref{it:ellip-third}}, note that $\hat{\bx}_*$ satisfies the conditions in Lemma~\ref{lem:undesired-eq-characterization} for~\eqref{eq:vhat-linear-system} if and only if $\bx_*$ satisfies the conditions in Lemma~\ref{lem:undesired-eq-characterization} for~\eqref{eq:v-linear-system}.

    Re: \emph{\ref{it:ellip-fourth}}, we note that $F(E^{-1}\hat{\bx}) = E^{-1} \hat{F}(\hat{\bx})$ for any $\hat{\bx}\in\real^2$.
    Since the safety filter is active at undesired equilibria, $\eta(\bx_*)<0$.
    Now, let $J$ be the Jacobian of~\eqref{eq:v-linear-system} at $\bx_*$, and let $\hat{J}$ be the Jacobian of~\eqref{eq:vhat-linear-system} at $\hat{\bx}_*$.
    By the chain rule, $\hat{J} = E J E^{-1}$, which implies that $J$ and $\hat{J}$ are similar.
\end{proof}

\subsubsection{Underactuated LTI planar systems}

In the under-actuated case, we write
\begin{align}\label{eq:linear-2d-underactuated-system}
  A=\begin{bmatrix}
        a_{11} & a_{12} \\
        a_{21} & a_{22}
    \end{bmatrix},~
    B=
    \begin{bmatrix}
        b_1 \\
        b_2
\end{bmatrix},~K=\begin{bmatrix}
        k_{1} \\ k_{2}
    \end{bmatrix}^T.
\end{align}
Throughout the section, we let $\beta=a_{11}b_2 - b_1 a_{21}$, $\gamma = a_{22} b_1 - b_2 a_{12}$, and $T_3 = -\gamma x_{c,2} + \beta x_{c,1}$. We note also that since in this case $G$ is a scalar,~\eqref{eq:v-linear-system} is independent of~$G$. The following results give conditions on $h$ and system~\eqref{eq:linear-2d-underactuated-system} that ensure that Assumption~\ref{as: interior eq} holds and $h$ is a strict CBF.

\smallskip

\begin{lemma}[Conditions for Assumption~\ref{as: interior eq}]\label{lem:interior}
    Assumption~\ref{as: interior eq} holds if and only if
$\norm{\bx_c}^2 > r^2$.
\end{lemma}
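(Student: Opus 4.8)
The plan is to observe that this is a direct evaluation, since Assumption~\ref{as: interior eq} is \emph{by definition} the single scalar inequality $h(\mathbf{0}_2) > 0$. So the entire content of the lemma is to compute $h$ at the origin using the explicit circular form $h(\bx) = \norm{\bx - \bx_c}^2 - r^2$ and read off when the result is positive. No auxiliary machinery (the CBF property, stabilizability, the structure of $v$) is needed here; this is purely a statement about the geometry of $\Cc$ relative to the origin.

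First I would recall that Assumption~\ref{as: interior eq} states that the origin lies in $\Int(\Cc)$, which for the superlevel-set description $\Cc = \setdef{\bx\in\real^2}{h(\bx)\geq 0}$ is equivalent to the strict inequality $h(\mathbf{0}_2) > 0$. Then I would substitute $\bx = \mathbf{0}_2$ into the explicit expression for $h$, obtaining $h(\mathbf{0}_2) = \norm{\mathbf{0}_2 - \bx_c}^2 - r^2 = \norm{\bx_c}^2 - r^2$. The chain of equivalences then closes immediately: $h(\mathbf{0}_2) > 0$ holds if and only if $\norm{\bx_c}^2 - r^2 > 0$, i.e. if and only if $\norm{\bx_c}^2 > r^2$, which is precisely the claimed condition.

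There is essentially no obstacle to overcome: the only point requiring care is the bookkeeping that $\Int(\Cc)$ corresponds to the \emph{strict} inequality $h > 0$ (consistent with Definition~\ref{def:cbf}, where $\partial\Cc = \setdef{\bx}{h(\bx)=0}$), so that the ``if and only if'' is genuinely an equivalence rather than merely a sufficient condition. With that identification in place the statement follows by a one-line substitution, and the proof is complete.
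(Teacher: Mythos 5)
Your proof is correct and is exactly the direct substitution the paper has in mind (the paper omits the argument, stating only that it is straightforward): since Assumption~\ref{as: interior eq} is by definition $h(\mathbf{0}_2)>0$ and here $h(\bx)=\norm{\bx-\bx_c}^2-r^2$, the equivalence with $\norm{\bx_c}^2>r^2$ is immediate. Nothing further is needed.
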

%
%

\smallskip

The proof of this result is straightforward. 

\smallskip

\begin{proposition}[Conditions for $h$ to be a strict CBF]\label{prop:condition-h-strict-cbf-underactuated-lti-planar}
\label{prop:cbf}
    Let $\alpha_0>0$, $T_1 := b_2 \beta + b_1 \gamma + \frac{1}{2} \alpha_0 (b_2^2 + b_1^2)$, and $T_2 := (\beta x_{c,1} - \gamma x_{c,2})^2 + 2\alpha_0 r^2 T_1$.
    Suppose that $r>0$, $b_1^2 + b_2^2 > 0$, $T_1>0$, and
    \begin{align*}
        \frac{r}{ \sqrt{b_2^2 + b_1^2} } > \frac{  |T_3| + \sqrt{ T_2 } }{ 2 T_1 }.
    \end{align*}
    Then, $h$ is a strict CBF with the linear extended class $\Kc_{\infty}$ function $\alpha(s)=\alpha_0 s$.
\end{proposition}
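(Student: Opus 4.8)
The plan is to apply the definition of strict CBF directly. I must show that for every $\bx\in\Cc$ (i.e.\ $\|\bx-\bx_c\|^2\ge r^2$) there is a scalar input $\bu\in\real$ satisfying $\nabla h(\bx)^\top(A\bx+B\bu)+\alpha_0 h(\bx)>0$, where $\nabla h(\bx)=2(\bx-\bx_c)$. The first observation is that the input enters only through $\nabla h(\bx)^\top B\,\bu = 2(\bx-\bx_c)^\top B\,\bu$. Whenever $(\bx-\bx_c)^\top B\neq 0$, the scalar $\bu$ can be taken with appropriate sign and large magnitude to make this term as large as desired, so the strict inequality holds trivially. Hence the only points requiring work are those on the line $\Lc:=\setdef{\bx}{(\bx-\bx_c)^\top B=0}$, where the input is powerless and the condition collapses to the pure drift inequality $\nabla h(\bx)^\top A\bx+\alpha_0 h(\bx)>0$.

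Second, I would parametrize $\Lc$ as $\bx=\bx_c+s\bw$ with $\bw:=[-b_2,b_1]^\top$ (so that $\bw\perp B$) and $s\in\real$. Then $\|\bx-\bx_c\|^2=s^2(b_1^2+b_2^2)$, so the safety constraint $\bx\in\Cc$ becomes exactly $|s|\ge s_0$ with $s_0:=r/\sqrt{b_1^2+b_2^2}$. A direct computation using the definitions of $\beta$ and $\gamma$ yields the two identities $\bw^\top A\bw=b_2\beta+b_1\gamma$ and $\bw^\top A\bx_c=-(\beta x_{c,1}-\gamma x_{c,2})=-T_3$. Substituting $\bx=\bx_c+s\bw$ into the drift expression then produces the scalar quadratic $q(s):=2T_1 s^2-2T_3 s-\alpha_0 r^2$, where the $s^2$-coefficient is precisely $2\bw^\top A\bw+\alpha_0(b_1^2+b_2^2)=2T_1$ and the constant term is $-\alpha_0 r^2$. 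Thus the whole problem reduces to showing $q(s)>0$ for all $|s|\ge s_0$.

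Finally, since $T_1>0$, $q$ is an upward parabola. Its discriminant equals $4T_2$ after using $T_2=T_3^2+2\alpha_0 r^2 T_1$, so the roots are $s_\pm=(T_3\pm\sqrt{T_2})/(2T_1)$; because $\alpha_0,r,T_1>0$ give $T_2>T_3^2$, they are real with $s_-<0<s_+$, and $q<0$ holds exactly on $(s_-,s_+)$. Requiring $q>0$ for every $|s|\ge s_0$ is therefore equivalent to the containment $[s_-,s_+]\subset(-s_0,s_0)$, i.e.\ to $s_+<s_0$ and $-s_-<s_0$ simultaneously. These two one-sided conditions merge into the single inequality $(|T_3|+\sqrt{T_2})/(2T_1)<s_0=r/\sqrt{b_1^2+b_2^2}$, which is exactly the stated hypothesis, thereby establishing the drift inequality on $\Lc$ and completing the proof.

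I expect the only delicate points to be bookkeeping: verifying the algebraic identities $\bw^\top A\bw=b_2\beta+b_1\gamma$ and $\bw^\top A\bx_c=-T_3$ (and hence that the coefficients of $q$ line up with $T_1$, $T_2$, $T_3$), and confirming that the two root conditions combine into one inequality involving $|T_3|$. The conceptual core—reducing the strict CBF condition to the drift along the line $\Lc$ orthogonal to $B$, where the single actuator cannot act, and then a sign analysis of a scalar parabola—is otherwise straightforward.
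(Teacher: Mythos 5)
Your proof is correct and follows essentially the same route as the paper's: restrict to the line $(\bx-\bx_c)^\top B = 0$ where the input is powerless, reduce the drift condition there to an upward scalar parabola with coefficients $2T_1$, $-2T_3$, $-\alpha_0 r^2$, and verify that both roots lie strictly inside $(-r/\sqrt{b_1^2+b_2^2},\,r/\sqrt{b_1^2+b_2^2})$. The only difference is cosmetic: your parametrization along $\bw=[-b_2,b_1]^\top$ treats $b_1=0$ and $b_1\neq 0$ uniformly, whereas the paper divides by $b_1$ and defers $b_1=0$ to ``a similar argument.''
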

\begin{proof}
    We need to ensure that all $\bx\in\real^2$ such that $h(\bx)\geq0$ and $B^T (\bx-\bx_c)=0$, satisfy 
    $2(\bx-\bx_c)A\bx + \alpha(h(\bx)) > 0$.
    First suppose $b_1\neq0$.
    Equivalently, we need to ensure that
    \begin{align}\label{eq:quadratic-equation-cbf}
        \notag
        &(x_2-x_{c,2})^2 \Big( (a_{11}+\frac{\alpha_0}{2})\frac{b_2^2}{b_1^2}-\frac{b_2}{b_1}(a_{12}+a_{21})+ a_{22} + \frac{\alpha_0}{2}  \Big) \\
        &+\!(x_2-x_{c,2}) \Big( \!  a_{22}x_{c,2}-\frac{b_2}{b_1}a_{11}x_{c,1} \! - \frac{b_2}{b_1}a_{12}x_{c,2} \! + \! a_{21}x_{c,1}   \Big) -\frac{1}{2}\alpha_0 r^2 \! > \! 0
    \end{align}
    whenever $(x_2-x_{c,2})^2 \geq  r^2 / ( (b_2^2/b_1^2) + 1)$.
    This follows by assumption. The condition $T_1>0$ ensures that the coefficient of $x_2-x_{c,2}$ of~\eqref{eq:quadratic-equation-cbf} is positive, and the condition $T_2>0$ ensures that the discriminant of~\eqref{eq:quadratic-equation-cbf} is positive. Now, by calculating the roots of the quadratic equation in $x_2-x_{c,2}$ we observe that the rest of the conditions in the statement ensure that~\eqref{eq:quadratic-equation-cbf} holds whenever $(x_2-x_{c,2})^2 \geq  r^2 /((b_2^2/b_1^2) + 1)$. 
    The case $b_1=0$ follows by a similar argument.
\end{proof}
%
%

\smallskip

The following result shows that for circular obstacles and linear planar underactuated systems,~\eqref{eq: condition-eq} can be solved explicitly, characterizing the undesired equilibria of the closed-loop system.

\smallskip

\begin{proposition}\longthmtitle{Equilibria in Underactuated Systems with Circular Obstacles}\label{prop:undesired-eq-n-2}
    Suppose that Assumptions \ref{as: interior eq}, \ref{as:A-B-stabilizable} and the conditions in Proposition~\ref{prop:cbf} hold. Define  $\bx_{*,+} := ( \gamma z_{+}, \beta z_{+} )$ and $\bx_{*,-} := ( \gamma z_{-}, \beta z_{-} )$, where
    \begin{align*}
        &z_{+} = \frac{ \gamma x_{c,1}+ \beta x_{c,2} + \sqrt{ r^2 (\gamma^2 + \beta^2) - T_3^2 } }{ \gamma^2 + \beta^2 }, \\
        &z_{-} = \frac{ \gamma x_{c,1}+ \beta x_{c,2} - \sqrt{ r^2 (\gamma^2 + \beta^2) - T_3^2 } }{ \gamma^2 + \beta^2 }.
    \end{align*}
    Then, $\gamma x_{c,1} + \beta x_{c,2} \neq 0$, and
    \begin{enumerate}
        \item if $\gamma x_{c,1} + \beta x_{c,2} < 0$, $\hat{\Ec} = \{ \bx_{*,+} \}$ is the only undesired equilibrium of the closed-loop system~\eqref{eq:v-linear-system}. Moreover, $\bx_{*,+}$ is a saddle point;
        \item if $\gamma x_{c,1} + \beta x_{c,2} > 0$, $\hat{\Ec} = \{ \bx_{*,-} \}$ is the only undesired equilibrium of the closed-loop system~\eqref{eq:v-linear-system}. Moreover, $\bx_{*,-}$ is a saddle point.
    \end{enumerate}
\end{proposition}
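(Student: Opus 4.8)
The plan is to reduce the problem to the algebraic conditions \eqref{eq: condition-eq} of Lemma~\ref{lem:undesired-eq-characterization} and then read off the sign of the indicator $\delta$ at each candidate point. Since $m=1$, condition~\eqref{eq: condition-eq2} merely states that $\tilde f(\bx_*)=(A-BK)\bx_*$ is parallel to $B$. Writing this collinearity as $b_2(\tilde A\bx_*)_1=b_1(\tilde A\bx_*)_2$ and expanding $\tilde A=A-BK$, the $K$-dependent terms cancel and the condition becomes the $K$-independent line $\beta x_{*,1}=\gamma x_{*,2}$, so every equilibrium has the form $\bx_*=(\gamma z,\beta z)$. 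Substituting into $h(\bx_*)=0$ yields the quadratic $(\gamma^2+\beta^2)z^2-2(\gamma x_{c,1}+\beta x_{c,2})z+(\norm{\bx_c}^2-r^2)=0$, whose discriminant is exactly $r^2(\gamma^2+\beta^2)-T_3^2$ and whose roots are $z_{\pm}$; hence the set $\Ec$ of potential equilibria is contained in $\{\bx_{*,+},\bx_{*,-}\}$.

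The linchpin simplification I would then establish is the identity $\tilde A\,(\gamma,\beta)^\top=\det(\tilde A)\,B$. It follows from $(\gamma,\beta)^\top=\mathrm{adj}(A)B$ together with $(A-BK)\mathrm{adj}(A)B=\det(A)B-B\,(K\,\mathrm{adj}(A)B)=(\det(A)-\gamma k_1-\beta k_2)B=\det(\tilde A)B$, where $\det(\tilde A)=\det(A)-\gamma k_1-\beta k_2>0$ because $\tilde A$ is Hurwitz. Consequently, on the line $\bx=(\gamma z,\beta z)$ one has $\tilde A\bx=\det(\tilde A)\,z\,B$, and since at an equilibrium $h=0$, the indicator satisfies $\mathrm{sign}(\delta_{\bx_*})=\mathrm{sign}(\eta(\bx_*))=\mathrm{sign}\big(\nabla h(\bx_*)^\top\tilde A\bx_*\big)=\mathrm{sign}\big(z(z\sigma-\tau)\big)$, with $\sigma:=b_1\gamma+b_2\beta$ and $\tau:=b_1 x_{c,1}+b_2 x_{c,2}$, using $\tfrac12\nabla h=\bx_*-\bx_c$. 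So the entire claim reduces to the signs of $z_{\pm}$ and of $z_{\pm}\sigma-\tau$.

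The key and most delicate step is to extract the required sign information from the hypothesis that $h$ is a \emph{strict} CBF, by evaluating its defining inequality at the two tangency points $\bx^{\pm}:=\bx_c\pm r B^{\perp}/\norm{B}$, where $B^{\perp}=(-b_2,b_1)$ and $B^\top\nabla h(\bx^{\pm})=0$. At these points the input is powerless, so strictness forces $\nabla h(\bx^{\pm})^\top A\bx^{\pm}>0$; a short computation gives $(B^{\perp})^\top A B^{\perp}=\sigma$ and $(B^{\perp})^\top A\bx_c=-T_3$, and adding/subtracting the two inequalities yields both $\sigma>0$ and $r\sigma>|T_3|\norm{B}$, i.e.\ $\sigma^2 r^2>T_3^2\norm{B}^2$. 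By Cauchy--Schwarz $\sigma^2\le(\gamma^2+\beta^2)\norm{B}^2$, so this forces $r^2(\gamma^2+\beta^2)-T_3^2>0$ (two distinct real candidates); combined with $\norm{\bx_c}^2>r^2$ (Assumption~\ref{as: interior eq} via Lemma~\ref{lem:interior}) and $S^2+T_3^2=\norm{\bx_c}^2(\gamma^2+\beta^2)$, where $S:=\gamma x_{c,1}+\beta x_{c,2}$, it gives $S^2>0$, i.e.\ $\gamma x_{c,1}+\beta x_{c,2}\neq0$. Vieta then gives $z_+z_-=(\norm{\bx_c}^2-r^2)/(\gamma^2+\beta^2)>0$, so $z_{\pm}$ share the sign of $S$, while a Lagrange-identity computation gives $(z_+\sigma-\tau)(z_-\sigma-\tau)=(T_3^2\norm{B}^2-\sigma^2 r^2)/(\gamma^2+\beta^2)<0$; since $\sigma>0$ and $z_+>z_-$ this orders them as $z_-\sigma-\tau<0<z_+\sigma-\tau$. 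Feeding this into $\mathrm{sign}\big(z(z\sigma-\tau)\big)$ shows exactly one candidate has $\delta<0$: it is $\bx_{*,+}$ when $S<0$ (then $z_{\pm}<0$ and $z_+\sigma-\tau>0$ makes the product negative) and $\bx_{*,-}$ when $S>0$ (then $z_{\pm}>0$ and $z_-\sigma-\tau<0$), establishing cases (i)--(ii) and that $\hat\Ec$ is a singleton.

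Finally, for the saddle-point claim I would use the Jacobian spectrum from the eigenvalue computation in the proof of Proposition~\ref{prop:safety-filters-work-for-safe-sets-star-shaped} (cf.~\cite{indep-cbf}): one eigenvalue is $-\alpha'(0)<0$ and the other is $\lambda_{\bx_*}=\big(\gamma\,\partial_{x_1}h+\beta\,\partial_{x_2}h\big)/\big(b_1\,\partial_{x_1}h+b_2\,\partial_{x_2}h\big)$. With $\nabla h=2(\bx_*-\bx_c)$ the numerator equals $2\big((\gamma^2+\beta^2)z-S\big)=\phi'(z)$ and the denominator equals $2(z\sigma-\tau)$; the condition $\delta<0$ forces $z$ and $z\sigma-\tau$ to have opposite signs, while $\phi'(z_+)>0$ and $\phi'(z_-)<0$, so checking the two cases $S<0$ and $S>0$ gives $\lambda_{\bx_*}>0$ in both, whence the unique undesired equilibrium is a nondegenerate saddle. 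The main obstacle is the strict-CBF/tangency-point extraction of the third paragraph, from which both uniqueness (sign of $T_3^2\norm{B}^2-\sigma^2 r^2$) and the selection between $\bx_{*,+}$ and $\bx_{*,-}$ (sign of $\sigma$) emerge; everything else is bookkeeping on the quadratic.
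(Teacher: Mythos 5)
Your proof is correct, and its skeleton coincides with the paper's: reduce via Lemma~\ref{lem:undesired-eq-characterization} to the line $\beta x_{*,1}=\gamma x_{*,2}$, solve the resulting quadratic to get $z_{\pm}$, and decide membership in $\hat{\Ec}$ by the sign of $z(z\sigma-\tau)$ with $\sigma=b_1\gamma+b_2\beta$, $\tau=B^\top\bx_c$ (your identity $\tilde A(\gamma,\beta)^\top=\det(\tilde A)B$ is exactly the paper's factor $T_4=\det(\tilde A)>0$). Where you genuinely diverge is in three sub-arguments, all of which hold up. First, you obtain the pivotal inequality $|T_3|<\sigma r/\norm{B}$ (the paper's \eqref{eq:conditions-T3}, proved in Lemma~\ref{lem:discriminant-is-positive-if-cbf} by algebraic manipulation of the explicit conditions of Proposition~\ref{prop:cbf}) by instead evaluating the strict-CBF inequality at the two tangency points $\bx_c\pm rB^{\perp}/\norm{B}$; this is cleaner and shows the conclusion depends only on strictness of $h$, not on the particular sufficient conditions — and since the hypotheses of the proposition include those conditions, which imply strictness, your route is legitimate. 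Second, you settle the signs of $z_{\pm}\sigma-\tau$ at once via the product identity $(z_+\sigma-\tau)(z_-\sigma-\tau)=(T_3^2\norm{B}^2-\sigma^2r^2)/(\gamma^2+\beta^2)<0$ together with $\sigma>0$, whereas the paper argues by contradiction for $\bx_{*,+}$ alone; I verified your 2D Lagrange identity and it is correct. Third, and most substantively, you prove the saddle property by directly computing the second Jacobian eigenvalue $\lambda_{\bx_*}=\phi'(z)/(2(z\sigma-\tau))>0$, whereas the paper invokes the global parity result (Proposition~\ref{prop:number-of-undes-equilibria-stab-properties-finite-set-of-bounded-obstacles}), which presupposes the equilibria are nondegenerate saddles or sinks; your direct computation both avoids that presupposition and establishes nondegeneracy, so it is arguably the more self-contained argument.
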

%
%
\begin{proof}
    The fact that $\gamma x_{c,1}+\beta x_{c,2} \neq 0$ is shown in Lemma~\ref{lem:discriminant-is-positive-if-cbf}. The same result also implies that the expressions for $\bx_{*,+}$ and $\bx_{*,-}$ are well defined (note that if $\gamma^2 + \beta^2 = 0$, then Assumption \ref{as:A-B-stabilizable} would not hold).
    Moreover, it follows from Lemma~\ref{lem:undesired-eq-characterization}
    that $\Ec = \{ \bx_{*,+},\bx_{*,-} \}$ 
    is the set of potential undesired equilibria for system~\eqref{eq:v-linear-system} when the system data is of the form~\eqref{eq:linear-2d-underactuated-system}.
    In order to ensure that $\bx_{*,+}$ is an undesired equilibrium of the closed-loop system, the condition $(\bx-\bx_c)^T (A-BK)\bx \rvert_{\bx = \bx_{*,+}} < 0$
    should hold.
    By using the expression of $\bx_{*,+}$, the condition is equivalent to
    \begin{align}\label{eq:extra-condition-pplus-2}
        z_{+} T_4 \Big( b_1(\gamma z_{+}-x_{c,1}) + b_2(\beta z_{+}-x_{c,2}) \Big) < 0,
    \end{align}
    where $T_4 = a_{11}a_{22}-a_{12}a_{21} - k_1\gamma - k_2\beta$.
    Since $A-BK$ is Hurwitz, $a_{11}a_{22} - a_{12}a_{21} - \gamma k_1 - \beta k_2 > 0$.
    This implies that $T_4 > 0$ and therefore~\eqref{eq:extra-condition-pplus-2} is equivalent to
    \begin{align}\label{eq:extra-condition-pplus-3}
        z_{+}(b_1 (\gamma z_{+}-x_{c,1}) + b_2 (\beta z_{+}-x_{c,2}) ) < 0.
    \end{align}
    Now, let us show that $b_1 (\gamma z_{+}-x_{c,1}) + b_2 (\beta z_{+}-x_{c,2}) > 0$. Indeed, this is equivalent to
    \begin{align*}
        T_3(\gamma b_2 - \beta b_1) + (\gamma b_1 + \beta b_2) \sqrt{ r^2 (\gamma^2 + \beta^2) - T_3^2 } > 0,
    \end{align*}
    and since $\gamma b_1 + \beta b_2 > 0$ as argued in the proof of Lemma~\ref{lem:discriminant-is-positive-if-cbf},
    this could only not hold if $T_3(\gamma b_2 - \beta b_1) < 0$ and $(\gamma b_1 + \beta b_2)^2 ( r^2 (\gamma^2 + \beta^2) - T_3^2 ) \leq T_3^2 (\gamma b_2 - \beta b_1)^2$. However, this last condition is in contradiction with~\eqref{eq:conditions-T3} in the proof of Lemma~\ref{lem:discriminant-is-positive-if-cbf}.
    Therefore,~\eqref{eq:extra-condition-pplus-3} holds if and only if $z_{+} < 0$, which is equivalent to: $\gamma x_{c,1} + \beta x_{c,2} < 0$ and
    \begin{align}\label{eq:last-conditions}
        & |\gamma x_{c,1} + \beta x_{c,2}| > \sqrt{r^2 (\gamma^2 + \beta^2) - T_3^2}.
    \end{align}
    Note that since $r^2 (\gamma^2 + \beta^2) -T_3^2 = (x_{c,1} \gamma + x_{c,2} \beta)^2 - (\gamma^2 + \beta^2)(x_{c,1}^2 + x_{c,2}^2 - r^2) < (x_{c,1} \gamma + x_{c,2} \beta)^2$ (where in the last inequality we have used the fact that $x_{c,1}^2 + x_{c,2}^2 > r^2$), it follows that the last of the inequalities in~\eqref{eq:last-conditions} always holds.
    Therefore, $\bx_{*,+}$ is an undesired equilibrium of the closed-loop system if and only if $\gamma x_{c,1} + \beta x_{c,2} < 0$.
    The fact that $\bx_{*,+}$ is the unique undesired equilibrium and is a saddle point follows from Proposition~\ref{prop:number-of-undes-equilibria-stab-properties-finite-set-of-bounded-obstacles}. An analogous argument shows that $\bx_{*,-}$ is the unique undesired equilibrium if and only if $\gamma x_{c,1} + \beta x_{c,2} > 0$, in which case it is a saddle point.
\end{proof}

\smallskip

Note that the statement of Proposition~\ref{prop:undesired-eq-n-2} is consistent with Proposition~\ref{prop:number-of-undes-equilibria-stab-properties-finite-set-of-bounded-obstacles}, since it also states that, provided that all the undesired equilibria are not degenerate, their number is odd.

\smallskip

\begin{remark}\longthmtitle{Almost global asymptotic stability}\label{rem:almost-global-asymptotic-stability}
{\rm
The Stable Manifold Theorem~\cite[Ch. 2.7]{LP:00} ensures that if $\bx_*$ is a saddle point in $\real^2$, the local stable manifold is $1$-dimensional.
Therefore, it has measure zero. 
Moreover, the global stable manifold
must also have measure zero.
If this were not the case, solutions would have to intersect. However this is not possible 
due to the uniqueness of solutions. Hence the global stable manifold is exactly equal to $\setdef{\bx_0\in\real^n}{\lim\limits_{t\to\infty}\bx(t;\bx_0) = \mathbf{0}_n}$.
It follows that the set of initial conditions whose associated trajectory converges to $\bx_*$ has measure zero. Hence, by appropriately tuning the class $\Kc_{\infty}$ function to rule out limit cycles (cf. Proposition~\ref{prop:no-limit-cycles-general}), the Poincar\'e-Bendixson Theorem~\cite[Chapter 7, Thm. 4.1]{PH:02} implies that the origin is almost globally asymptotically stable (i.e., asymptotically stable with a region of attraction equal to $\real^2$ minus a set of measure zero).
\hfill $\Box$}
\end{remark}
%
%

\smallskip

\subsubsection{Fully actuated LTI planar systems}\label{sec:ellips-fully-actuated-LTI}

Here we consider the system~\eqref{eq:v-linear-system} and assume that $n=2$, $m=2$,
and $B\in\real^{2 \times 2}$ is invertible; in this case $h$ is a strict CBF and Assumption~\ref{as:A-B-stabilizable} is satisfied. Throughout the section, $\tilde{A}=A-BK$.
The following result summarizes the different possible undesired equilibria of~\eqref{eq:v-linear-system} in the special case where $\bx_c$, the center of the circular unsafe set, is an eigenvector of $\tilde{A}$.

\smallskip


\begin{proposition}
\longthmtitle{Characterization of undesired equilibria}\label{prop: m=2, x0 is  eigen}
    Suppose Assumption~\ref{as: interior eq} is satisfied, $B$ is invertible, and $\tilde{A}$ is Hurwitz. 
    Suppose also that $\bx_c$ is an eigenvector of $\tilde{A}$. Then one of the following is true:
    %
    %
      \begin{enumerate}
        \item  $|\mathcal{E}|=2$, $|\hat{\mathcal{E}}|=1$,  and
        $\hat{\mathcal{E}}$ consists of a degenerate equilibrium;
        %
        %

        \item $|\mathcal{E}|=2$, $|\hat{\mathcal{E}}|=1$, and 
        $\hat{\mathcal{E}}$ consists of a saddle point;

        \item  $|\mathcal{E}|=3$, $|\hat{\mathcal{E}}|=2$, and
        $\hat{\mathcal{E}}$ consists of  a saddle point and a degenerate equilibrium;
        %
        %
        \item $|\mathcal{E}|=4$, $|\hat{\mathcal{E}}|=3$, and
      $\hat{\mathcal{E}}$ consists of an asymptotically stable equilibrium and two saddle points.
    \end{enumerate}
\end{proposition}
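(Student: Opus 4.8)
The plan is to convert the two vector conditions of Lemma~\ref{lem:undesired-eq-characterization} into a single scalar equation in the indicator $\delta$. For the circular obstacle, $\nabla h(\bx)=2(\bx-\bx_c)$, so an undesired equilibrium $\bx_*$ must satisfy $\|\bx_*-\bx_c\|=r$ together with $\tilde{A}\bx_*=2\delta M(\bx_*-\bx_c)$ for some $\delta<0$, where $M:=BG^{-1}B^\top\succ0$. Because $\bx_c$ is a real eigenvector of the real Hurwitz matrix $\tilde{A}$, its eigenvalue $\lambda$ is real and negative, $\tilde{A}\bx_c=\lambda\bx_c$. Writing $\by:=\bx_*-\bx_c$ and $s:=2\delta$, the second condition becomes $(\tilde{A}-sM)\by=-\lambda\bx_c$, so wherever $\det(\tilde{A}-sM)\neq0$ it has the unique solution $\by(s)=-\lambda(\tilde{A}-sM)^{-1}\bx_c$. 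Imposing $\|\by\|=r$ then gives the scalar equation $\phi(s)=r^2$ with $\phi(s):=\lambda^2\|(\tilde{A}-sM)^{-1}\bx_c\|^2$, and $s\mapsto\bx_*=\bx_c+\by(s)$ is a bijection carrying the real (resp.\ negative) roots of $\phi(s)=r^2$ onto $\Ec$ (resp.\ $\hat{\Ec}$). The eigenvector hypothesis is exactly what makes the right-hand side $-\lambda\bx_c$ a scalar multiple of $\bx_c$, which is what renders $\phi$ explicit.

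Next I would count the roots of $\phi(s)=r^2$. Since $\det(\tilde{A}-sM)$ and $N_0(s):=\|\mathrm{adj}(\tilde{A}-sM)\bx_c\|^2$ have degree $2$, clearing denominators turns $\phi(s)=r^2$ into a quartic, so $|\Ec|\le4$; moreover the numerator of $\phi'$ is a cubic, so $\phi$ has at most three critical points. The relevant boundary data are $\phi\ge0$, $\phi(\pm\infty)=0$, and $\phi(0)=\lambda^2\|\tilde{A}^{-1}\bx_c\|^2=\|\bx_c\|^2>r^2$, the last inequality being Assumption~\ref{as: interior eq} (cf.\ Lemma~\ref{lem:interior}). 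The poles of $\phi$ are the generalized eigenvalues of $\tilde{A}$ relative to $M$; since $\det(M^{-1}\tilde{A})=\det\tilde{A}/\det M>0$, they are either a complex pair or two reals of the same sign. From $\phi(0)>r^2>0=\phi(+\infty)$ there is at least one positive root, and from $\phi(-\infty)=0<r^2<\phi(0)$ at least one negative root, so $2\le|\Ec|\le4$. I would then use the three-critical-point bound, together with the boundary values and the equal-sign pole structure, to show there is exactly one positive root (hence $|\Ec|-|\hat{\Ec}|=1$ in every case) and that the negative roots number one, two, or three.

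To assign types I would use the Jacobian formula at an undesired equilibrium from~\cite{indep-cbf}: the Jacobian of~\eqref{eq:v-linear-system} has eigenvalues $-\alpha'(0)<0$ and a second real eigenvalue $\lambda_2$, so the equilibrium is a saddle if $\lambda_2>0$, asymptotically stable if $\lambda_2<0$, and degenerate if $\lambda_2=0$. The crux is to establish $\operatorname{sign}(\lambda_2)=\operatorname{sign}(\phi'(s_*))$ at each root $s_*$, i.e.\ a transversal up-crossing of $\phi$ through $r^2$ gives a saddle, a down-crossing an asymptotically stable point, and a tangency or inflection a degenerate one. Granting this, the monotonicity pattern forced by $\phi(-\infty)=0<r^2<\phi(0)$ (crossings on $(-\infty,0)$ alternate up/down and both start and end with an up-crossing) leaves exactly the four listed possibilities on the negative axis: a single transversal up-crossing (one saddle, case~(ii)); a single inflection up-crossing, a triple root of the quartic (one degenerate equilibrium, case~(i)); an up-crossing plus a tangency, a simple and a double root (a saddle and a degenerate equilibrium, case~(iii)); and three transversal crossings up-down-up (two saddles and one asymptotically stable point, case~(iv)). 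The two non-degenerate cases are exactly the odd-count conclusion of Proposition~\ref{prop:number-of-undes-equilibria-stab-properties-finite-set-of-bounded-obstacles}.

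The main obstacle is the sign identity $\operatorname{sign}(\lambda_2)=\operatorname{sign}(\phi'(s_*))$: it requires substituting the explicit equilibrium $\bx_*(s)=\bx_c-\lambda(\tilde{A}-sM)^{-1}\bx_c$ into the Jacobian expression of~\cite{indep-cbf} and checking that the resulting scalar shares the sign of $\phi'$, and this is where the eigenvector hypothesis again does the heavy lifting by keeping $\bx_*(s)$ in closed form. Two secondary difficulties remain. First, proving that there is exactly one positive root (equivalently, that the ``outward'' alignment $\delta>0$ is unique), which I expect to follow from locating the at most three critical points of $\phi$ and evaluating $\phi$ there; note that a naive index count only shows the positive-root count is odd, so the critical-point bound is genuinely needed to rule out three. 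Second, the bookkeeping for the degenerate configurations: showing that a degenerate equilibrium is precisely a multiple root of the quartic, that the non-degenerate companion in case~(iii) must be the forced up-crossing and hence a saddle (never ``stable plus degenerate''), and that no non-degenerate even configuration survives, the last point being secured by the parity constraint of Proposition~\ref{prop:number-of-undes-equilibria-stab-properties-finite-set-of-bounded-obstacles}.
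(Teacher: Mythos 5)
There is a genuine gap, and it sits at the foundation of your reduction. You claim that $s\mapsto \bx_*=\bx_c+\by(s)$ with $\by(s)=-\lambda(\tilde{A}-sM)^{-1}\bx_c$ is a bijection from the real roots of $\phi(s)=r^2$ onto $\Ec$. This fails exactly at the values of $s$ where $\tilde{A}-sM$ is singular, and the eigenvector hypothesis guarantees that these values matter. Take $G=B^\top B$ so $M=\mathbf{I}$ (the setting the paper's Jacobian lemma uses), $\tilde{A}$ diagonalizable with $\tilde{A}\bx_c=\lambda_1\bx_c$, and $s=\lambda_2$. The system $(\tilde{A}-\lambda_2\mathbf{I})\by=-\lambda_1\bx_c$ is singular but \emph{consistent} (the right-hand side is a multiple of $\bv_1$, which lies in the range of $\tilde{A}-\lambda_2\mathbf{I}$), so its solution set is a full affine line $\by_0+t\bv_2$, which meets the circle $\|\by\|=r$ in $0$, $1$, or $2$ points. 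Your parametrization sees at most the single point obtained as the limit of $\by(s)$ as $s\to\lambda_2$ (a removable singularity of $\phi$), and in fact under the eigenvector hypothesis the numerator of $\phi$ degenerates so that $\phi(s)=\lambda_1^2\|\bx_c\|^2/(\lambda_1-s)^2$ and $\phi(s)=r^2$ has exactly two roots, always. Your route therefore concludes $|\Ec|=2$ in every case and cannot produce cases (iii) and (iv) of the proposition, where $|\Ec|=3$ or $4$. The extra equilibria that distinguish the four cases live precisely at the indicator $\delta=\lambda_2/2$ (resp.\ $\delta=\lambda/2$ in the non-diagonalizable case), where the linear system is rank-deficient; their number is governed by the discriminant of the quadratic obtained by intersecting the solution line with the circle, and that discriminant is what generates the case table.

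This is in fact exactly how the paper proceeds: it first solves the regular case $\delta\notin\{\lambda_1/2,\lambda_2/2\}$, obtaining the two collinear points $\bx_{*,\pm}=(1\mp r/\|\bx_c\|)\bx_c$ (one with $\delta<0$, one with $\delta>0$), and then separately analyzes the singular indicator values, where a quadratic in the free coordinate along the kernel yields $0$, $1$, or $2$ additional equilibria; stability is then read off from explicit eigenvalue formulas for the Jacobian in the eigenbasis. Patching your argument would require adding precisely this singular-value analysis, at which point it collapses into the paper's proof. A secondary concern is that the sign identity $\operatorname{sign}(\lambda_2^{\mathrm{Jac}})=\operatorname{sign}(\phi'(s_*))$, which you correctly identify as the crux of the classification, is asserted rather than established; but the primary issue is that the scalar equation $\phi(s)=r^2$ does not enumerate $\Ec$ in the first place.
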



The proof of Proposition~\ref{prop: m=2, x0 is  eigen} is provided in the Appendix. Table~\ref{tab:cases} outlines the majority of the cases discussed in the proof of Proposition~\ref{prop: m=2, x0 is  eigen}.
%
%
%
%
For different numerical examples illustrating the different cases outlined in Table~\ref{tab:cases}, we refer the reader to~\cite[Figure 1]{YC-PM-EDA-JC:24-cdc}.
\begin{table}[t!]
\vspace{-0.2cm}
    \centering
    \parbox[t]{.45\linewidth}{
    \begin{tabular}{|l|c|c|c|}
    \hline
    \rule{0pt}{3ex} $\tilde{A}$ diagonalizable,  &  \text{SP}  & \text{DE} & \text{ASE} \\ \hline 
    $ (\bv_i^T \bv_j)^2<   1-\frac{(\lambda_i-\lambda_j)^2 r^2}{\lambda_i^2\|\bx_c\|^2}$   &  1 & 0  &0\\
    $(\bv_i^T \bv_j)^2=   1-\frac{(\lambda_i-\lambda_j)^2 r^2}{\lambda_i^2\|\bx_c\|^2}$ &  1 & 1  &0\\
      $ (\bv_i^T \bv_j)^2>   1-\frac{(\lambda_i-\lambda_j)^2 r^2}{\lambda_i^2\|\bx_c\|^2}$   &  2 & 0  &1\\
      \hline
    \end{tabular}
    }
    \hfill 
    \parbox[t]{.45\linewidth}{
    \centering
    \begin{tabular}{|l|c|c|c|}
    \hline
     \rule{0pt}{3ex} $\tilde{A}$ non-diagonalizable &  \text{SP}  & \text{DE} & \text{ASE} \\  \hline
    \rule{0pt}{3.4ex} $(\bv_1^T \bv_2)^2<   1-\frac{ r^2}{{\lambda^2\|\bx_c\|^2}} $  &  1 & 0  &0 \\

   \rule{0pt}{3.4ex}  $(\bv_1^T \bv_2)^2=   1-\frac{ r^2}{{\lambda^2\|\bx_c\|^2}}$ &  1 & 1  &0\\

    \rule{0pt}{3.4ex}  $ (\bv_1^T \bv_2)^2>   1-\frac{ r^2}{{\lambda^2\|\bx_c\|^2}} $  &  2 & 0  &1\\
      \hline
    \end{tabular}
    }\vspace{0.1cm}
    (a)~~~~~~~~~~~~~~~~~~~~~~~~~~~~~~~~~~~~~~~~~~~~~~~~~~~~~~~~~~~~~~~~~~~~(b)
    \vspace{0.5cm}
    \caption{Characterization of undesired equilibria (SP: saddle point, DE: degenerate equilibrium, ASE: asymptotically stable equilibrium) when $\bx_c$ is an eigenvector of $\tilde{A}$. In (a), $\tilde{A}$ is diagonalizable, i.e., $\tilde{A}\bx_c=\lambda_i \bx_c$, $ \bv_i=\frac{\bx_c}{\|\bx_c\|}$, $\tilde{A} \bv_j=\lambda_j \bv_j$,  $\|\bv_j\|=1$, $i,j=\{1,2\}$, and $\{\bv_i, \bv_j\}$  linearly independent. We assume that $\frac{(\lambda_i-\lambda_j)^2 r^2}{\lambda_i^2\|\bx_c\|^2}\neq 1$. In (b), $\tilde{A}$ is not diagonalizable, i.e., $\tilde{A} \bv_2=\lambda \bv_2+ \bv_1$, $ \bv_1=\frac{\bx_c}{\|\bx_c\|}$, $\tilde{A}\bx_c=\lambda \bx_c$, and $\|\bv_2\|=1$.}\label{tab:cases}
\end{table}
%
%
We build on this result to show that the eigenvalues of $\tilde{A}$ do not determine the stability properties of undesired equilibria.

\smallskip

\begin{proposition}\longthmtitle{Spectrum of $\tilde{A}$ does not determine stability properties of undesired equilibria}\label{corollary: ambiguity}
  Suppose Assumption~\ref{as: interior eq} is satisfied and $B$ is invertible. Then for any given negative $\lambda_1$ and $\lambda_2$, there exists $K_1\in\real^{2\times 2}$ and $K_2\in\real^{2\times 2}$ in the set $\{K\in\real^{2\times 2}: \lambda_1,\lambda_2 = \textrm{spec}(A-BK) \}$,
  %
  %
  such that~\eqref{eq:v-linear-system} with $K = K_1$ has an undesired asymptotically stable equilibrium and~\eqref{eq:v-linear-system} with $K = K_2$, has a single undesired equilibrium, which is a saddle point.
\end{proposition}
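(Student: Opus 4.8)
The plan is to exploit the freedom afforded by the invertibility of $B$: since the map $K \mapsto \tilde{A} = A - BK$ is a bijection with inverse $K = B^{-1}(A - \tilde{A})$, prescribing the two gains $K_1, K_2$ is equivalent to prescribing two closed-loop matrices $\tilde{A}_1, \tilde{A}_2$, each with spectrum $\{\lambda_1, \lambda_2\}$. I will construct both so that $\bx_c$ is an eigenvector of $\tilde{A}_i$; this is exactly the hypothesis that lets me invoke the complete classification of undesired equilibria in Proposition~\ref{prop: m=2, x0 is  eigen} and Table~\ref{tab:cases}. The two designs will be made to fall in different rows of that table, producing qualitatively different stability behaviors from identical spectra. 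Note that strict-CBF and stabilizability requirements hold automatically throughout, since $B$ is invertible, so once the $\tilde{A}_i$ are found, setting $K_i = B^{-1}(A - \tilde{A}_i)$ finishes the construction.

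Concretely, assume first $\lambda_1 \neq \lambda_2$ and order them so that $|\lambda_1| \ge |\lambda_2|$; take $\lambda_i = \lambda_1$, $\lambda_j = \lambda_2$, and $\bv_i = \bx_c/\|\bx_c\|$. For a unit vector $\bv_j$ making angle $\phi$ with $\bv_i$ (so $(\bv_i^T \bv_j)^2 = \cos^2\phi$, with $\{\bv_i,\bv_j\}$ independent whenever $\phi \notin \{0,\pi\}$), the matrix $\tilde{A} = V \diag(\lambda_i, \lambda_j) V^{-1}$ with $V = [\bv_i \ \bv_j]$ is real, diagonalizable, Hurwitz, and has $\bx_c$ as an eigenvector. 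By Table~\ref{tab:cases}(a), the stability type is decided by comparing $\cos^2\phi$ with the threshold $\tau := 1 - (\lambda_i - \lambda_j)^2 r^2 /(\lambda_i^2 \|\bx_c\|^2)$. Taking $\cos^2\phi$ close to $1$ lands in the ``$>$'' row, where an asymptotically stable undesired equilibrium appears (this yields $K_1$), while taking $\cos^2\phi$ close to $0$ lands in the ``$<$'' row, where there is a single undesired equilibrium that is a saddle point (this yields $K_2$).

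The crux is verifying that both sides of $\tau$ are attainable by a valid, linearly independent eigenvector pair, and this is where I expect the only real work. Since $\lambda_1 \neq \lambda_2$, the subtracted term in $\tau$ is strictly positive, so $\tau < 1$ and $\cos^2\phi$ can be taken above it, settling $K_1$. For $K_2$ I need $\tau > 0$, i.e.\ $|\lambda_i - \lambda_j|\, r < |\lambda_i|\, \|\bx_c\|$; with $\lambda_i$ chosen as the eigenvalue of larger magnitude one has $|\lambda_i - \lambda_j|/|\lambda_i| < 1$, while Assumption~\ref{as: interior eq} (equivalently Lemma~\ref{lem:interior}) gives $\|\bx_c\| > r$, so the product $\bigl(|\lambda_i - \lambda_j|/|\lambda_i|\bigr)\bigl(r/\|\bx_c\|\bigr) < 1$ and hence $\tau > 0$. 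Thus the interior hypothesis on $\bx_c$ is precisely what guarantees the saddle-only regime is reachable.

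Finally, the degenerate case $\lambda_1 = \lambda_2 = \lambda$ is handled analogously through Table~\ref{tab:cases}(b). The scalar matrix $\tilde{A} = \lambda I$ is diagonalizable with any orthonormal eigenbasis; choosing $\bv_i = \bx_c/\|\bx_c\|$ gives $(\bv_i^T\bv_j)^2 = 0 < 1 = \tau$, placing us in the first row (single saddle), which provides $K_2$. For $K_1$ I instead use a genuine Jordan block with $\bv_1 = \bx_c/\|\bx_c\|$ and $\bv_2$ chosen so that $(\bv_1^T\bv_2)^2$ is close to $1$; since the threshold $1 - r^2/(\lambda^2\|\bx_c\|^2)$ in Table~\ref{tab:cases}(b) is strictly below $1$, this lands in the last row and produces the asymptotically stable undesired equilibrium. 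In all cases both gains lie in $\{K : \lambda_1,\lambda_2 = \mathrm{spec}(A-BK)\}$ by construction, completing the argument.
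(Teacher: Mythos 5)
Your proposal is correct and follows essentially the same route as the paper's proof: both reduce the problem via $K = B^{-1}(A-\tilde A)$ to choosing closed-loop matrices with $\bx_c$ as an eigenvector, then select the angle between eigenvectors (or a Jordan block when $\lambda_1=\lambda_2$) to land in the first versus third row of Table~\ref{tab:cases}, with the key feasibility check $0 < 1-\tfrac{(\lambda_i-\lambda_j)^2 r^2}{\lambda_i^2\|\bx_c\|^2} < 1$ following from $\|\bx_c\|>r$ and taking $\lambda_i$ of larger magnitude. The only cosmetic difference is that the paper writes explicit angles $\theta=\arccos\sqrt{Q}$ where you argue with $\cos^2\phi$ ``close to $1$'' or ``close to $0$'', which is equivalent.
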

\begin{proof}
First let us construct $K_1$.
Start by supposing that $\lambda_1 = \lambda_2$. Let $\bv_1=\frac{\bx_c}{\|\bx_c\|}$ and 
$\theta=\arccos \sqrt{Q_1},~Q_1:= \max \{0,1-\frac{r^2}{2\lambda_1^2 \|\bx_c\|}\}$. Then, $\bv_1^T \bv_2 = \cos(\theta) \norm{\bv_1}^2 = \cos(\theta)$. We let
\begin{align*}
    K_1:=B^{-1}\Big(A-\begin{bmatrix}
        \bv_1 & \bv_2
    \end{bmatrix} \begin{bmatrix}
        \lambda_1 & 1\\ 0 & \lambda_1
    \end{bmatrix} \begin{bmatrix}
        \bv_1 & \bv_2
    \end{bmatrix}^{-1}  \Big).
\end{align*}
By the third row of  Table~\ref{tab:cases}(b),~\eqref{eq:v-linear-system} has an undesired asymptotically stable equilibrium.
%
%
Next, suppose that $\lambda_1\neq\lambda_2$.
Let $\bv_1=\frac{\bx_c}{\|\bx_c\|}$, $\bv_2=\begin{bmatrix}
    \cos \theta & -\sin \theta\\
    \sin \theta & \cos \theta
\end{bmatrix}\bv_1$,  where $\theta=\arccos \sqrt{Q_2},~Q_2:= \max \{0,1-\frac{(\lambda_1-\lambda_2)^2 r^2}{2\lambda_1^2\|\bx_c\|^2}\}$. Then  $\bv_1^T \bv_2=\cos \theta \|\bv_1\|_2^2=\cos \theta$ and
\begin{align*}
    K_1:=B^{-1}\Big(A-\begin{bmatrix}
        \bv_1 & \bv_2
    \end{bmatrix} \begin{bmatrix}
        \lambda_1 & 0\\ 0 & \lambda_2
    \end{bmatrix} \begin{bmatrix}
        \bv_1 & \bv_2
    \end{bmatrix}^{-1}  \Big).
\end{align*}
By the third row of Table~\ref{tab:cases}(a), ~\eqref{eq:v-linear-system} has an undesired asymptotically stable equilibrium.

To construct  $K_2$, we assume without loss of generality that $\lambda_1\leq \lambda_2$.
Note that since both $\lambda_1$ and $\lambda_2$ are negative, 
$1-\frac{(\lambda_i-\lambda_j)^2 r^2}{\lambda_i^2\|\bx_c\|^2}>0$. Let  $\bv_i=\frac{\bx_c}{\|\bx_c\|}$, $\bv_j=\begin{bmatrix}
    0 & -1\\
    1 & 0
\end{bmatrix}\bv_i$, 
so that $\bv_i^T \bv_j = 0$
and define
\begin{align*}
    K_2:=B^{-1}\Big(A-\begin{bmatrix}
        \bv_i & \bv_j
    \end{bmatrix} \begin{bmatrix}
        \lambda_i & 0\\ 0 & \lambda_j
    \end{bmatrix} \begin{bmatrix}
        \bv_i & \bv_j
    \end{bmatrix}^{-1}  \Big).
\end{align*}
By the first row of Table~\ref{tab:cases}(a),~\eqref{eq:v-linear-system} has a single undesired equilibrium and it is a saddle point.
\end{proof}

\smallskip

Interestingly, even though one can characterize the global stability properties of the origin based on the eigenvalues of $\tilde{A}$ for the system without a safety filter, this is no longer the case
for the system with a safety filter.
%
%
On the other hand, as a consequence of Proposition~\ref{corollary: ambiguity}, we deduce that it is always possible to choose a nominal controller $\bu=-K\bx$ such that $\tilde{A}$ has negative eigenvalues and the set of trajectories of~\eqref{eq:v-linear-system} that do not converge to the origin has measure zero (cf. Remark~\ref{rem:almost-global-asymptotic-stability}).
Note that, as shown in~\cite[Proposition 10]{indep-cbf} and Table~\ref{tab:cases}, the extended class $\Kc_{\infty}$ function only affects the rate of decay in the stable manifold of the undesired equilibria and it does not affect the existence and stability of undesired equilibria. Therefore, the choice of nominal controller $\bu=-K\bx$ determines in which of the cases we fall into. Ideally, such nominal controller should be designed so that there exists only one undesired equilibrium and it is a saddle point.

We conclude this section studying the case when $\bx_c$ is not an eigenvector of $\tilde{A}$, which requires a more involved technical analysis.
The following result characterizes the number of undesired equilibria under appropriate sufficient conditions.

\smallskip

\begin{proposition}\longthmtitle{Number of undesired equilibria when $\bx_c$ is not an eigenvector}\label{prop: number-of equilibrium-xc-no-eigenvec}
 Suppose Assumption~\ref{as: interior eq} is satisfied, $B$ is invertible, $G(\bx)=B^\top B$, $\tilde{A}$ is Hurwitz and $\bx_c$ is not an eigenvector of $\tilde{A}$.
 Then  $ 1 \leq |\hat{\mathcal{E}}|\leq 3$ and $ |\mathcal{E}\setminus \hat{\mathcal{E}}|\geq 1$.
 In addition, if $\lambda_1\leq \lambda_2$, there exists $\bx_*\in\mathcal{\hat{E}}$ with indicator $\delta <\frac{\lambda_1}{2}$.
\end{proposition}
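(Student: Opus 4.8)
The plan is to reduce the existence and counting of undesired equilibria to the analysis of the real roots of a single quartic polynomial in the indicator variable. First I would exploit the fully-actuated structure: since $B$ is invertible and $G=B^\top B$, one has $B G^{-1} B^\top = B(B^\top B)^{-1}B^\top = \textbf{I}_2$, and for the circular set $\nabla h(\bx)=2(\bx-\bx_c)$. Substituting into the characterization~\eqref{eq: condition-eq} of Lemma~\ref{lem:undesired-eq-characterization}, a point $\bx_*$ with indicator $\delta$ is a potential undesired equilibrium iff $\|\bx_*-\bx_c\|=r$ and $\tilde{A}\bx_* = 2\delta(\bx_*-\bx_c)$. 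Writing $\by=\bx_*-\bx_c$ and $s=2\delta$, this reads $(\tilde{A}-s\textbf{I}_2)\by = -\tilde{A}\bx_c$, so whenever $s\notin\mathrm{spec}(\tilde{A})$ we get $\by=-(\tilde{A}-s\textbf{I}_2)^{-1}\tilde{A}\bx_c$ and the scalar equation $\phi(s):=\|(\tilde{A}-s\textbf{I}_2)^{-1}\tilde{A}\bx_c\|^2=r^2$. The map $s\mapsto\bx_*$ is injective (given $\bx_*$, the relation $\tilde{A}\bx_*=s(\bx_*-\bx_c)$ with $\bx_*\neq\bx_c$ determines $s$), so $|\Ec|$, $|\hat{\Ec}|$ and $|\Ec\setminus\hat{\Ec}|$ equal the number of real solutions of $\phi(s)=r^2$ that are, respectively, arbitrary, negative, and positive; note $s=0$ is never a solution since $\phi(0)=\|\bx_c\|^2>r^2$.

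Second, I would convert $\phi(s)=r^2$ into a polynomial identity. Using $(\tilde{A}-s\textbf{I}_2)^{-1}=\det(\tilde{A}-s\textbf{I}_2)^{-1}\,\mathrm{adj}(\tilde{A}-s\textbf{I}_2)$, the equation is equivalent to $r^2\det(\tilde{A}-s\textbf{I}_2)^2 = \|\mathrm{adj}(\tilde{A}-s\textbf{I}_2)\tilde{A}\bx_c\|^2$. The left-hand side has degree $4$ with leading coefficient $r^2\neq0$, while the right-hand side has degree at most $2$; hence this is a genuine quartic, giving $|\Ec|\leq 4$. I would also record the analytic facts driving the count: $\phi(0)=\|\bx_c\|^2>r^2$ by Assumption~\ref{as: interior eq}, $\lim_{s\to\pm\infty}\phi(s)=0$, and, when $\tilde{A}$ has a real eigenvalue $\lambda$, $\phi(s)\to+\infty$ as $s\to\lambda$ — the latter precisely because $\bx_c$ is not an eigenvector, so $\tilde{A}\bx_c$ has a nonzero component along the (generalized) eigenspace producing the pole.

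Third, I would run intermediate-value arguments, splitting into the two Hurwitz cases. Since $\tilde{A}$ is Hurwitz, any real pole is negative, so on $(0,+\infty)$ the function $\phi$ is continuous with $\phi(0)>r^2$ and $\lim_{s\to+\infty}\phi(s)=0$, producing at least one solution with $s>0$; this yields $|\Ec\setminus\hat{\Ec}|\geq1$. For a negative solution: if $\tilde{A}$ has real eigenvalues $\lambda_1\leq\lambda_2<0$ (the non-diagonalizable repeated case is included, and $\bx_c$ not being an eigenvector rules out $\tilde{A}=\lambda\textbf{I}_2$), then on $(-\infty,\lambda_1)$ the continuous $\phi$ runs from $0$ to $+\infty$, yielding a solution $s^*<\lambda_1$, i.e. $\delta^*=s^*/2<\lambda_1/2$, which simultaneously gives $|\hat{\Ec}|\geq1$ and the ``in addition'' claim; if $\tilde{A}$ has complex eigenvalues there is no real pole, and continuity together with $\phi(0)>r^2$ and $\lim_{s\to-\infty}\phi(s)=0$ forces a solution in $(-\infty,0)$, giving $|\hat{\Ec}|\geq1$. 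Finally, combining $|\Ec|\leq4$, the fact that $s=0$ is not a solution, and the existence of at least one positive solution yields $|\hat{\Ec}|\leq 4-1=3$.

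The main obstacle I anticipate is the pole analysis underlying the lower bound on $|\hat{\Ec}|$ and the sharper $\delta<\lambda_1/2$ statement: one must verify that $\phi(s)\to+\infty$ as $s\to\lambda_1$ rather than staying bounded, which is exactly where the hypothesis that $\bx_c$ is not an eigenvector of $\tilde{A}$ enters, and this verification must be carried out uniformly across the diagonalizable, non-diagonalizable (Jordan), and complex-eigenvalue cases. The remaining bookkeeping — the degree count of the quartic, injectivity of $s\mapsto\bx_*$, and checking that the values $s\in\mathrm{spec}(\tilde{A})$ are not roots of the cleared polynomial — is routine.
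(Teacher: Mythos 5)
Your proposal is correct and follows essentially the same route as the paper: reduce \eqref{eq: condition-eq} to a single quartic in the indicator, then count real roots by sign analysis at $0$, at $\pm\infty$, and at the (halved) eigenvalues, with the hypothesis that $\bx_c$ is not an eigenvector guaranteeing the relevant pole is genuine. The only difference is presentational — you work coordinate-free with the resolvent $\phi(s)=\|(\tilde{A}-s\mathbf{I})^{-1}\tilde{A}\bx_c\|^2$ and its adjugate-cleared quartic, whereas the paper expands $\bx_c$ and $\bx_*$ in (generalized) eigencoordinates and writes the same quartic explicitly as $F_1$ (diagonalizable case) and $F_2$ (Jordan case), with $F_i(\lambda_1/2)>0$ playing the role of your pole condition.
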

%
%

\vspace{.1cm}

The proof of Proposition~\ref{prop: number-of equilibrium-xc-no-eigenvec} is given in the Appendix.


The following result establishes the stability properties of undesired equilibria in the case where $\bx_c$ is not an eigenvector under some additional assumptions.

\smallskip

\begin{proposition}\longthmtitle{Stability properties of undesired equilibria when $\bx_c$ is not an eigenvector}\label{prop: m=2, x0 is not eigen, stab properties}
 Suppose Assumption~\ref{as: interior eq} is satisfied, $B$ is invertible, $G(\bx)=B^\top B$, $\tilde{A}$ is Hurwitz with two real eigenvalues $\lambda_1<\lambda_2$ and  $\bx_c$ is not an eigenvector of $\tilde{A}$.
 Then there is no undesired equilibrium with indicator $\delta\in\{\frac{\lambda_1}{2}, \frac{\lambda_2}{2}\}$ .
 Moreover, if $\bv_1$ and $\bv_2$ are the eigenvectors associated with $\lambda_1$ and $\lambda_2$, respectively, and $\bv_1^T \bv_2\geq 0$, $\|\bv_1\|=\|\bv_2\|=1$, and $\bx_c=\beta_1 \bv_1+\beta_2 \bv_2$, the following holds.
 \begin{itemize}
     \item[(i)] If $\beta_1^2+\beta_1 \beta_2 \bv_1^\top \bv_2  \geq  0$, then for any undesired equilibrium $\bx_*$ with indicator $\delta_{\bx_*}$ such that $\delta<\frac{\lambda_1}{2}$,  $\bx_*$ is a saddle point.

  \item[(ii)] If $\beta_1\beta_2
 \bv_2^\top \bv_1+\beta_2^2  \geq 0$, then for any undesired equilibrium $\bx_*$ with indicator $\delta_{\bx_*}$ such that $\frac{\lambda_2}{2}<\delta<0$,  $\bx_*$ is asymptotically stable.
     
     \item[(iii)] Define $F_1:\mathbb{R}\to \mathbb{R}$ as:
\begin{align} \label{eq: F_1}
    F_1(\delta)&:=-|\lambda_1-2\delta|^2 |\lambda_2-2\delta|^2 r^2+|\lambda_1\beta_1|^2 |\lambda_2-2\delta|^2+|\lambda_2\beta_2|^2 |\lambda_1-2\delta|^2
    \\
    &+\lambda_1^*\beta_1^*\lambda_2\beta_2(\lambda_2-2\delta)^*(\lambda_1-2\delta)  \bv_1^* \bv_2
    +\lambda_1\beta_1\lambda_2^*\beta_2^*(\lambda_2-2\delta)(\lambda_1-2\delta)^*  \bv_2^* \bv_1 . 
    \notag
\end{align}
If the third-order polynomial $\frac{d F_1(\delta)}{d \delta}$  has only one real root\footnote{
    For third-order polynomial $ax^3+bx^2+cx+d$, its discriminant is defined as $18 a b c d-4 b^3 d+b^2 c^2-4 a c^3-27 a^2 d^2$. If $a\neq 0$ and the discriminant is negative, the third-order polynomial only has one real root. } and $\beta_1^2+\beta_1 \beta_2 \bv_1^\top \bv_2\geq 0$, then there exists only one undesired equilibrium and it is a saddle point.
 \end{itemize}
\end{proposition}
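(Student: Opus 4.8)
The plan is to reduce the equilibrium conditions to a scalar equation in the indicator $\delta$, obtain a closed form for the stability-determining Jacobian eigenvalue, and then read off its sign in each $\delta$-regime. Since $B$ is invertible and $G=B^\top B$, we have $B G^{-1}B^\top = I$, so \eqref{eq: condition-eq2} collapses to $\tilde{A}\bx_* = 2\delta(\bx_*-\bx_c)$ together with $\|\bx_*-\bx_c\|=r$. Writing $\bz_*:=\bx_*-\bx_c$, and as long as $2\delta$ is not an eigenvalue of $\tilde{A}$, we get $\bz_* = -(\tilde{A}-2\delta I)^{-1}\tilde{A}\bx_c$; expanding $\bx_c=\beta_1\bv_1+\beta_2\bv_2$ and imposing $\|\bz_*\|=r$ gives exactly $F_1(\delta)=0$ (the conjugates in \eqref{eq: F_1} being trivial since $\lambda_1,\lambda_2$ are real), because $F_1(\delta)=(\|\bz_*(\delta)\|^2-r^2)(\lambda_1-2\delta)^2(\lambda_2-2\delta)^2$. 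If instead $2\delta=\lambda_i$, then $\tilde{A}-2\delta I$ is singular and solvability of $(\tilde{A}-2\delta I)\bx_*=-2\delta\bx_c$ forces $\bx_c\in\mathrm{span}(\bv_j)$, $j\neq i$, contradicting that $\bx_c$ is not an eigenvector; this proves the first claim that no undesired equilibrium has indicator in $\{\lambda_1/2,\lambda_2/2\}$.

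Next I would pin down the relevant eigenvalue. By \cite[Proposition 11]{indep-cbf} the Jacobian of $F$ at an undesired equilibrium has $-\alpha'(0)<0$ as one eigenvalue; since the filter is active there, $F(\bx)=\tilde{A}\bx-\tfrac{\eta(\bx)}{\|\nabla h(\bx)\|^2}\nabla h(\bx)$, and a trace computation then yields the other eigenvalue $\lambda_{\bx_*}=N/r^2$, where, in the $\tilde{A}$-eigenbasis with $p:=\lambda_1-2\delta$, $q:=\lambda_2-2\delta$, $s:=\bv_1^\top\bv_2$ and $c_i:=-\lambda_i\beta_i/(\lambda_i-2\delta)$, one has $N = c_1^2 q + c_2^2 p + s\,c_1 c_2 (p+q)$. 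Thus $\bx_*$ is a saddle iff $N>0$ and asymptotically stable iff $N<0$, and parts (i)--(ii) reduce to sign analysis of $N$.

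The engine for (i)--(ii) is the identity obtained by substituting the constraint $r^2=\|\bz_*\|^2=c_1^2+c_2^2+2s c_1 c_2$, namely
\[
N=(\lambda_1-2\delta)\,r^2+(\lambda_2-\lambda_1)\,c_1(c_1+s c_2)=(\lambda_2-2\delta)\,r^2-(\lambda_2-\lambda_1)\,c_2(c_2+s c_1).
\]
For (i), $\delta<\lambda_1/2$ makes $p=\lambda_1-2\delta>0$, and I would show that the hypothesis $\beta_1(\beta_1+s\beta_2)\geq0$, combined with the weight inequality $|\lambda_1|\,q>|\lambda_2|\,p$ (which is equivalent to $-2\delta(\lambda_2-\lambda_1)>0$, true since $\delta<0$ and using $\operatorname{sign}(c_i)=\operatorname{sign}(\beta_i)$ in this regime), forces $c_1(c_1+s c_2)\geq0$; hence $N>0$ and $\bx_*$ is a saddle. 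Part (ii) is the mirror image: $\lambda_2/2<\delta<0$ makes $q=\lambda_2-2\delta<0$, and $\beta_2(\beta_2+s\beta_1)\geq0$ together with the same weight inequality gives $c_2(c_2+s c_1)\geq0$, so $N<0$ and $\bx_*$ is asymptotically stable. I expect the cross-term bookkeeping here — reducing the $\beta$-conditions to the sign of $c_1(c_1+sc_2)$ via the regime-dependent signs of $c_i$ and the weight inequality — to be the main obstacle.

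Finally, for (iii) I would exploit the shape of $F_1$: it is a quartic with leading coefficient $-16 r^2<0$, and $F_1(\lambda_1/2)=(\lambda_1\beta_1)^2(\lambda_2-\lambda_1)^2>0$, $F_1(\lambda_2/2)=(\lambda_2\beta_2)^2(\lambda_2-\lambda_1)^2>0$ (both positive since $\beta_1,\beta_2\neq0$). If $dF_1/d\delta$ has a unique real root, then $F_1$ has a single critical point, necessarily a positive global maximum, so $F_1$ has exactly two real roots; since $F_1>0$ on $[\lambda_1/2,\lambda_2/2]$ they straddle this interval, giving $\delta_a<\lambda_1/2$ and $\delta_b>\lambda_2/2$. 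By Proposition~\ref{prop: number-of equilibrium-xc-no-eigenvec}, $|\mathcal{E}\setminus\hat{\mathcal{E}}|\geq1$, so some root satisfies $\delta\geq0$; as $\delta_a<0$ this must be $\delta_b$, which is therefore not an undesired equilibrium. Hence the unique undesired equilibrium has indicator $\delta_a<\lambda_1/2$, and under the assumed hypothesis $\beta_1^2+\beta_1\beta_2\bv_1^\top\bv_2\geq0$ part (i) makes it a saddle point.
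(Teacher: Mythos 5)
Your proposal is correct and follows essentially the same route as the paper: reduce \eqref{eq: condition-eq} in the eigenbasis of $\tilde A$ to the scalar equation $F_1(\delta)=0$ (ruling out $\delta\in\{\lambda_1/2,\lambda_2/2\}$ via $\beta_1,\beta_2\neq 0$), extract the second Jacobian eigenvalue by a trace computation — your $N/r^2$ with the two identities is exactly the paper's Lemma on the ``other eigenvalue'' in its two equivalent forms — and conclude via the same weight inequality $\tfrac{-\lambda_1}{\lambda_1-2\delta}\gtrless\tfrac{-\lambda_2}{\lambda_2-2\delta}$ (equivalently $2\delta(\lambda_1-\lambda_2)>0$) for parts (i)--(ii), and the unimodality of the quartic $F_1$ plus the existence of a root with $\delta>0$ for part (iii). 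No gaps.
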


\smallskip

The proof of Proposition~\ref{prop: m=2, x0 is not eigen, stab properties} is given in the Appendix.




\vspace{.1cm}



\section{Conclusions}\label{sec:conclusions}

This work has characterized different dynamical properties of CBF-based safety filters. We have
provided a characterization of the undesired equilibria in the corresponding closed-loop system
through the solution of an algebraic equation.
Next, we have shown that by appropriately designing the parameters of the safety filter, the closed-loop trajectories are bounded and in the planar case, no limit cycles exist.
We have shown through a counterexample that in dimension greater than $2$, limit cycles can exist and may not be removed by changing the parameters of the safety filter. 
Moreover, for general planar systems under general assumptions, we have characterized the parity of the number of undesired equilibria, as well as the number of such undesired equilibria that are saddle points.
Finally, for linear planar systems, we have shown that if the system is underactuated and the safe set is parametrizable in polar coordinates, undesired equilibria do not exist, but if any of these conditions fail, undesired equilibria may exist and might even be asymptotically stable.
In the special case of ellipsoidal obstacles, we have 
provided explicit expressions for the undesired equilibria, and characterized their stability properties.
Future work will focus on
%
%
characterizing more explicitly the regions of attraction of the origin and the different undesired equilibria by using other techniques like normal forms, utilizing Euler indices and other methods from algebraic topology to improve our understanding of undesired equilibria,
%
%
and performing a similar analysis for other CBF-based control designs from the literature, such as ones leveraging control Lyapunov functions.
%
%

%
%
\section{Acknowledgements}
This work was supported by AFOSR Award FA9550-23-1-0740.

\bibliography{bib/alias,bib/JC,bib/Main-add,bib/Main,bib/New,bib/references}

\section{Appendix}

\subsection{Auxiliary results for Section~\ref{sec:undesired-eq-bounded-trajs-limit-cycles}}

This section provides a number of supporting results for the technical treatment of Section~\ref{sec:undesired-eq-bounded-trajs-limit-cycles}.

%
%

\begin{lemma}\longthmtitle{Trajectories of globally asymptotically stable system}\label{lem:trajectories-of-gas-system}
    Let $F:\real^n\to\real^n$ be locally Lipschitz, and suppose that the origin is globally asymptotically stable for the differential equation $\dot{\bx}=F(\bx)$. Then, for any $\bx_0\in\real^n\backslash\{ \mathbf{0}_n \}$, it holds that $\lim\limits_{t\to-\infty}\norm{\bx(t;\bx_0)}=\infty$.
\end{lemma}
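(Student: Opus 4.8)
The plan is to argue by contradiction, using only the defining properties of global asymptotic stability (Lyapunov stability together with global attractivity) and continuous dependence on initial conditions, rather than invoking a converse Lyapunov function. Fix $\bx_0\in\real^n\backslash\{\mathbf{0}_n\}$ and suppose the solution is defined on $(-\infty,0]$ but that $\norm{\bx(t;\bx_0)}$ does not diverge as $t\to-\infty$. Then there is a sequence $t_k\to-\infty$ and a radius $R>0$ with $\norm{\bx(t_k;\bx_0)}\le R$ for all $k$; by compactness of the closed ball of radius $R$ I may pass to a subsequence so that $\bx(t_k;\bx_0)\to q$ for some $q\in\real^n$. The goal is to show that such a bounded backward-time limit point $q$ cannot exist.

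The core is then a three-step ``squeeze''. First, I would invoke Lyapunov stability of the origin with $\epsilon:=\norm{\bx_0}/2$ to obtain $\delta>0$ such that $\norm{y}<\delta$ implies $\norm{\bx(s;y)}<\epsilon$ for all $s\ge0$. Second, by global attractivity the forward orbit of $q$ converges to the origin, so there is $T\ge0$ with $\norm{\bx(T;q)}<\delta/2$; continuous dependence of solutions on initial conditions over the compact interval $[0,T]$ then yields, for all large $k$, that $\norm{\bx(T;\bx(t_k;\bx_0))}<\delta$, i.e. $\norm{\bx(t_k+T;\bx_0)}<\delta$ by the semigroup property $\bx(t_k+T;\bx_0)=\bx(T;\bx(t_k;\bx_0))$. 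Third, applying the stability estimate to the initial condition $y=\bx(t_k+T;\bx_0)$ gives $\norm{\bx(s;y)}<\epsilon$ for every $s\ge0$; since $t_k\to-\infty$, for large $k$ the choice $s:=-(t_k+T)>0$ is admissible and satisfies $\bx(s;y)=\bx(0;\bx_0)=\bx_0$, whence $\norm{\bx_0}<\epsilon=\norm{\bx_0}/2$, a contradiction. This shows no bounded backward-time subsequence can occur, so $\norm{\bx(t;\bx_0)}\to\infty$ as $t\to-\infty$.

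The main subtlety to flag is that, for a general globally asymptotically stable system, a nonzero trajectory may have a finite backward escape time (e.g.\ $\dot{x}=-x^3$ escapes to $\pm\infty$ in finite negative time), so the statement must be read on the maximal interval of existence: forward completeness follows from global attractivity, and what is actually established is that $\norm{\bx(t;\bx_0)}\to\infty$ as $t$ decreases to the left endpoint $t_{\min}$ of that interval, which coincides with the stated limit precisely when $t_{\min}=-\infty$ (the case occurring in every application of the lemma in the paper, where the backward trajectory is assumed to remain in $\Cc$ for all $t<0$ and is therefore backward complete). The only points requiring care in the write-up are the invocation of continuous dependence over $[0,T]$ (legitimate because the solution from $q$ exists on $[0,T]$ by forward completeness) and the bookkeeping of the semigroup identity at the negative times $t_k+T$; both are routine. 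As an alternative that sidesteps the escape-time discussion, one could instead take a radially unbounded converse Lyapunov function $V$ (Khalil, Theorem~4.17, already cited in the excerpt), observe that $t\mapsto V(\bx(t;\bx_0))$ is strictly decreasing, and rule out a finite backward limit $L=\lim_{t\to-\infty}V(\bx(t;\bx_0))$ by a LaSalle-type invariance argument showing that the $\alpha$-limit set would have to be $\{\mathbf{0}_n\}$, forcing $L=V(\mathbf{0}_n)=0<V(\bx_0)$.
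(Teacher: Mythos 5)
Your argument is correct, but it takes a genuinely different route from the paper. The paper's proof invokes the converse Lyapunov theorem (\cite[Theorem 4.17]{HK:02}) to obtain a radially unbounded $V$ with $\nabla V^\top F\leq -W$, observes that along the time-reversed trajectory $\frac{d}{dt}V(\bx(-t;\bx_0))\geq W(\bx(-t;\bx_0))\geq \bar w>0$ on the superlevel set $\{V\geq V(\bx_0)\}$, and concludes $V(\bx(-t;\bx_0))\to\infty$, hence $\norm{\bx(-t;\bx_0)}\to\infty$ by radial unboundedness; this is essentially the alternative you sketch in your closing paragraph. Your main argument instead works directly from the $\epsilon$--$\delta$ definition of Lyapunov stability, global attractivity, continuous dependence on initial conditions over a compact time interval, and the semigroup identity, ruling out any bounded backward-time limit point $q$ by flowing forward from $q$ into the stability neighborhood and then deriving $\norm{\bx_0}<\norm{\bx_0}/2$. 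What your approach buys is that it avoids the converse Lyapunov machinery entirely and, as a bonus, sidesteps a small rough edge in the paper's proof (the quantity $\bar w=\min_{\{V(\by)\geq V(\bx_0)\}}W(\by)$ is taken over an unbounded set on which a merely positive definite $W$ need not be bounded away from zero; the paper's argument is repairable but as written this step needs care). What the paper's Lyapunov route buys is brevity and a monotone quantity that makes the divergence quantitative. You are also right to flag that the stated limit must be interpreted on the maximal backward interval of existence (finite backward escape time is possible, as in $\dot x=-x^3$); the paper does not address this, though in every application the backward trajectory is assumed to remain defined for all $t<0$, so the distinction is harmless there.
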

\begin{proof}
    Since $F$ is locally Lipschitz and the origin is globally asymptotically stable, by~\cite[Theorem 4.17]{HK:02} there exists a smooth, positive definite, and radially unbounded function $V:\real^n\to\real$ and a continuous positive definite function $W:\real^n\to\real$ 
    such that $\nabla V(\bx)^T f(\bx)\leq-W(\bx)$ for all $\bx\in\real^n$.
    Now, let $t\geq0$ and
    note that $\frac{d}{dt}V( \bx(-t;\bx_0) ) = -\nabla V( \bx(-t;\bx_0) )^\top f( \bx(-t;\bx_0) ) \geq W( \bx(-t;\bx_0) ) \geq 0$.
    Hence, $V( \bx(-t;\bx_0) ) \geq V( \bx_0 )$. By letting $\bar{w} = \min\limits_{ \setdef{ \by\in\real^n }{ V(\by) \geq V(\bx_0) } } W(\by)$, it follows that $\frac{d}{dt}V( \bx(-t;\bx_0) ) \geq \bar{w} > 0$ for all $t\geq0$.
    This implies that $\lim\limits_{t\to\infty} V( \bx(-t;\bx_0) ) = \infty$ and since $V$ is radially unbounded, the result follows.
\end{proof}

\begin{lemma}\longthmtitle{Convergence and tangency properties of stable manifold}\label{lem:stable-manif-convergence-not-tangent}
%
%
    Let $h$ be a strict CBF and suppose Assumption~\ref{as: interior eq} holds. Let  
    $\real^2\backslash\Cc$ be a bounded connected set. Consider~\eqref{eq:general-system-1} with $n=2$
    and let $\bx_*$ be an undesired equilibrium of~\eqref{eq:general-system-1} which is a saddle point.
    %
    %
    Let $\gamma$ be a subset of the one-dimensional local stable manifold of $\bx_*$
    such that $\gamma$ corresponds to a maximal trajectory of~\eqref{eq:general-system-1}
    and there exists $T>0$ with $\gamma(t)\in\text{Int}(\Cc)$ for all $t>T$.
    Then, 
    \begin{enumerate}
        \item\label{it:stab-manif-properties-first} $\gamma$ is not tangent to $\partial\Cc$ at any point;
        \item\label{it:stab-manif-properties-second} if $(\omega_{-},\infty)$ is the interval of definition of $\gamma$, then
        \begin{enumerate}
            \item\label{it:item-a} if $\omega_{-} = -\infty$, then either 
            $\lim\limits_{t\to -\infty}\norm{\gamma(t)} = \infty$,
            %
            %
            $\lim\limits_{t\to-\infty}\norm{\gamma(t)} = \bx_{*}^\prime$, with $\bx_{*}^\prime$ another equilibrium of~\eqref{eq:general-system-1},
            or $\lim\limits_{t\to-\infty}\norm{\gamma(t)}$ converges to a limit cycle;
            \item\label{it:item-b} if $\omega_{-} > -\infty$, then $\lim\limits_{t\to\omega_{-}}\gamma(t)\notin\Cc$.
        \end{enumerate}
    \end{enumerate}
\end{lemma}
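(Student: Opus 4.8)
The plan rests on one structural identity for the filtered field. Writing $F = \tilde{f} + gv$ and recalling the expression~\eqref{eq:v-expression}, a direct computation shows that on the \emph{active region} $\{\eta < 0\}$ one has $\nabla h(\bx)^\top F(\bx) = -\alpha(h(\bx))$: indeed, by the definition of $\bar{u}$ the cross term satisfies $\nabla h^\top g\bar{u} = -\eta$, so $\nabla h^\top F = \nabla h^\top \tilde{f} - \eta = -\alpha(h)$. Hence along any arc of $\gamma$ contained in the active region the scalar $\phi(t) := h(\gamma(t))$ obeys the autonomous scalar ODE $\dot\phi = -\alpha(\phi)$. Since $\alpha$ is extended class $\Kc_{\infty}$ (and linear here), $\phi \equiv 0$ is the unique solution through $\phi = 0$, so the active flow can neither cross nor reach $\partial\Cc$ in finite time and the sign of $\phi$ is preserved. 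Because $\eta(\bx_*)<0$ (the filter is active at every undesired equilibrium by Lemma~\ref{lem:undesired-eq-characterization}), a neighborhood of $\bx_*$ lies in the active region, and there $\phi>0$ decays to $0$ as $t\to\infty$; thus $\gamma$ approaches $\bx_*$ from $\text{Int}(\Cc)$ and never contacts $\partial\Cc$ while active.

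For part~\ref{it:stab-manif-properties-first}, this reduces every contact of $\gamma$ with $\partial\Cc$ to the closed inactive region $\{\eta\ge 0\}$, where $F=\tilde{f}$ and, on $\partial\Cc$, $\dot\phi = \nabla h^\top\tilde{f} = \eta \ge 0$. A contact is therefore transversal unless $\eta=0$ there. This borderline case -- a tangential (grazing) contact at a point of the active/inactive interface -- is the main obstacle. I would exclude it by combining a second-order analysis of $\phi$ (a grazing contact from the interior forces $\dot\phi(t_0)=0$ and $\ddot\phi(t_0)\ge 0$) with the stable-manifold geometry: the stable manifold of the saddle $\bx_*$ enters along $\nabla h(\bx_*)$, i.e.\ transversally to $\partial\Cc$ (cf.~\cite[Proposition 6.2]{YC-PM-JC-EDA:24-auto} and the Stable Manifold Theorem~\cite[Section 2.7]{LP:00}), so a tangential grazing is incompatible with $\gamma$ being the single interior branch of this manifold; the $\alpha$-independence of the stability data~\cite[Proposition 11]{indep-cbf} further allows reduction to a nondegenerate configuration.

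For part~\ref{it:item-a}, assume $\omega_-=-\infty$. Forward invariance of $\Cc$ under~\eqref{eq:general-system-1} forbids $\gamma$ from re-entering $\Cc$ as $t$ decreases once it has left (a backward re-entry is a forward exit), so either $\gamma(t)\in\Cc$ for all $t$ or $\gamma$ is eventually confined to the bounded set $\real^2\setminus\Cc$. In both cases the backward semiorbit lies in a region with finitely many isolated equilibria, finiteness following from Lemma~\ref{lem:suff-cond-isolated-eq} since $\partial\Cc$ is bounded. If this semiorbit is bounded, its $\alpha$-limit set is nonempty, compact, connected and invariant, so by the Poincar\'e--Bendixson Theorem~\cite[Chapter 7, Thm. 4.1]{PH:02} it is either a single equilibrium $\bx_*'$ or a limit cycle; a non-constant orbit cannot have the asymptotically stable origin as an $\alpha$-limit, so $\bx_*'$ is a non-sink equilibrium, consistent with the second alternative. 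If the semiorbit is unbounded, then, being planar with empty $\alpha$-limit set, $\lim_{t\to-\infty}\norm{\gamma(t)}=\infty$, matching the first alternative (cf.\ Lemma~\ref{lem:trajectories-of-gas-system} for the model case).

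Finally, for part~\ref{it:item-b}, suppose $\omega_->-\infty$. A maximal solution with finite left endpoint must leave every compact subset of the domain of $F$ as $t\to\omega_-^{+}$. Since $\gamma(t)\in\text{Int}(\Cc)$ for $t>T$ and, by part~\ref{it:stab-manif-properties-first}, $\gamma$ meets $\partial\Cc$ only transversally, going backward $\gamma$ must cross $\partial\Cc$ into $\real^2\setminus\Cc$; forward invariance then prevents any later re-crossing, so $\gamma(t)\notin\Cc$ for all $t$ below the crossing time. Because $\real^2\setminus\Cc$ is bounded, the trajectory stays bounded after exiting, so the finite $\omega_-$ corresponds to reaching the boundary of the domain of $F$ inside $\real^2\setminus\Cc$; hence $\lim_{t\to\omega_-}\gamma(t)\notin\Cc$, establishing the claim. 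Throughout, the only genuinely delicate point is the tangency exclusion of part~\ref{it:stab-manif-properties-first}; the boundedness of $\real^2\setminus\Cc$ together with forward invariance renders the remaining topological bookkeeping routine.
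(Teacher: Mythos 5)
Your treatment of part~\ref{it:stab-manif-properties-second} is essentially sound and close to the paper's: item~\ref{it:item-a} is the Poincar\'e--Bendixson trichotomy for a planar backward semiorbit, and item~\ref{it:item-b} ultimately rests on the fact that a maximal solution with finite left endpoint cannot have its limit inside the region where the vector field is defined and locally Lipschitz (the paper states this directly: if $\lim_{t\to\omega_-}\gamma(t)\in\Cc$ one could extend the solution, contradicting maximality; your detour through transversal crossing and boundedness of $\real^2\setminus\Cc$ is unnecessary but not wrong). The identity $\nabla h^\top F=-\alpha(h)$ on the active region is a nice observation, though note that uniqueness of the solution $\phi\equiv 0$ of $\dot\phi=-\alpha(\phi)$ requires more than $\alpha$ being extended class $\Kc_\infty$ (e.g.\ local Lipschitzness or an Osgood condition); the lemma as stated does not assume $\alpha$ linear.

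The genuine gap is in part~\ref{it:stab-manif-properties-first}, and you acknowledge it yourself: after reducing the problem to a grazing contact at a boundary point $\bar{\bq}$ with $\eta(\bar{\bq})=0$, you only gesture at how to exclude it. None of the three ingredients you invoke closes the case. The second-order condition $\ddot\phi(t_0)\ge 0$ at a grazing from the interior is consistent with a tangency, not contradictory to it. The transversality of the stable manifold at $\bx_*$ (tangent to $\nabla h(\bx_*)$) controls the approach to $\bx_*$ only and says nothing about a tangency at a point $\bar{\bq}$ elsewhere along $\gamma$. And the $\alpha$-independence of the Jacobian spectrum at undesired equilibria does not let you ``reduce to a nondegenerate configuration'' at a non-equilibrium grazing point. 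The paper's proof of~\ref{it:stab-manif-properties-first} uses an idea absent from your sketch: assuming tangency at $\bar{\bq}$, one constructs a small arc $\zeta$ transversal to the flow so that the thin lens-shaped region bounded by $\zeta$, $\partial\Cc$, and $\gamma$ near $\bar{\bq}$ traps forward trajectories (they cannot cross $\partial\Cc$ by forward invariance of $\Cc$, nor cross $\gamma$ by uniqueness of solutions); the Poincar\'e--Bendixson Theorem then forces an equilibrium or a limit cycle inside this region, which is impossible since $\bar{\bq}$ is not an equilibrium, all undesired equilibria lie on $\partial\Cc$, and a limit cycle there would encircle only interior points of $\Cc$ and hence no equilibrium. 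You would need to supply this (or an equivalent) topological trapping argument to complete part~\ref{it:stab-manif-properties-first}.
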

\begin{proof}
    To show~\ref{it:stab-manif-properties-first}, we reason by contradiction. 
    Figure~\ref{fig:non-tangent-diagram} serves as visual aid for the different elements defined in the proof.
    Suppose that $\gamma$ is tangent to $\partial\Cc$ at a point~$\bar{\bq}$.
    Then,
    %
    %
    $\bar{\bq}=\gamma(t_{\bar{\bq}})$ for some $t_{\bar{\bq}} \in \real$, and there exists $T_{\bar{\bq}}>0$ small enough with $\gamma(t) \in \text{Int}(\Cc)$ for all $t\in ( t_{\bar{\bq}}-T_{\bar{\bq}}, t_{\bar{\bq}} )$.
    Now, note that there exists a sufficiently small neighborhood $\Nc_{\bar{\bq}}$ of $\bar{\bq}$ such that one of the arches $\zeta$
    between $\partial\Cc$ and $\gamma$ defined by $\Nc_{\bar{\bq}}$
    is such that all trajectories of~\eqref{eq:general-system-1} with initial condition in $\zeta$ stay between $\partial\Cc$ (because $\Cc$ is forward invariant) and $\gamma$ (because of uniqueness of solutions), and stay inside $\Nc_{\bar{\bq}}$ at all times (by continuity of the vector field~\eqref{eq:general-system-1}, $\gamma^\prime(t_{\bar{\bq}})\neq \mathbf{0}_n$, and because $\Nc_{\bar{\bq}}$ is sufficiently small).
    %
    %
    However, it is not possible that such trajectories stay 
    inside the region defined by $\zeta$, $\partial\Cc$ and $\gamma$ by the  Poincar\'e-Bendixson Theorem~\cite[Chapter 7, Thm. 4.1]{PH:02}, because $\bar{\bq}$ is not an equilibrium point and 
    the region defined by $\zeta$, $\partial\Cc$ and $\gamma$
    does not contain limit cycles, because such limit cycle would encircle only  points in the interior
    of $\Cc$~\cite[Corollary 6.26]{JM-book:07}.
    %
    %
    Hence, we have reached a contradiction, which means that $\gamma$ is not tangent to $\partial\Cc$ at any point.
    %
    %
    Let us now show~\ref{it:stab-manif-properties-second}.
    Part (a) follows
    by the Poincar\'e-Bendixson Theorem~\cite[Chapter 7, Thm. 4.1]{PH:02},
    whereas part (b) follows from the fact that if $\lim\limits_{t\to\omega_{-}}\gamma(t)\in\Cc$,
    since~\eqref{eq:general-system-1} is well-defined at all points in the safe set,
    then $\gamma$ is well-defined as $t\to\omega_{-}$, and the interval of definition of $\gamma$ can be increased, contradicting the assumption that $\gamma$ is a maximal solution.
    %
    %
\end{proof}

\begin{figure}[htb]
  \centering    {\includegraphics[width=0.5\textwidth]{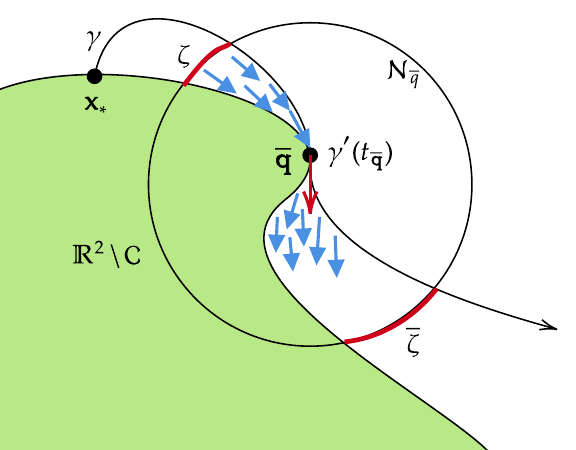}} 
  \caption{Sketch of the setting considered in the proof of Lemma~\ref{lem:stable-manif-convergence-not-tangent}. The unsafe set is depicted in green, whereas the closed-loop vector field at the point of tangency is the red arrow, the blue arrows denote the vector field elsewhere, and the arches $\zeta$ and $\bar{\zeta}$ are depicted in red.}\label{fig:non-tangent-diagram}
\end{figure}

\begin{lemma}\longthmtitle{Existence of stable manifold exiting set with no limit cycles}\label{lem:existence-stable-manifold-exiting-set-no-limit-cycles}
 Let $h$ be a strict CBF and suppose Assumption~\ref{as: interior eq} holds. Let  
    $\real^2\backslash\Cc$ be a bounded connected set. Consider~\eqref{eq:general-system-1} with $n=2$ and let  $\{ \bx_*^{(i)} \}_{i=1}^{k} \subset \partial\Cc$ 
   be its set of undesired equilibria. Denote by  $\Lc\subset[k]$ the index set of undesired equilibria that are saddle points and let $\tilde{\Phi}$ be a compact forward invariant set containing the origin and $\real^2\backslash\Cc$ and such that $\tilde{\Phi}\cap\Cc$ does not contain any limit cycles. 
   %
   %
   %
   For each $j\in\Lc$, let $\gamma_j$ be a subset of the one-dimensional local stable manifold of~\eqref{eq:general-system-1} at $\bx_*^{(j)}$ such that $\gamma_j$ corresponds to a maximal trajectory and there exists $T>0$ with $\gamma_j(t)\in\text{Int}(\Cc)$ for all $t>T$.
   Then, there exists at least one $j\in\Lc$ and $\tilde{t}_j\in\real$ such that $\gamma_j(\tilde{t}_j)\in\partial\tilde{\Phi}$ and $\gamma_j(t)\notin\tilde{\Phi}$ for all $t<t_j$.
\end{lemma}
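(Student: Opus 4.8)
The plan is to argue by contradiction: I will assume the conclusion fails for \emph{every} $j\in\Lc$ and show that this forces each $\gamma_j$ to remain inside $\tilde{\Phi}$ for all backward time, which is ultimately incompatible with the absence of limit cycles in $\tilde{\Phi}\cap\Cc$ and the fact that every undesired equilibrium is either a saddle or asymptotically stable.

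First I would carry out the reduction to ``all $\gamma_j$ stay in $\tilde{\Phi}$''. Since the obstacle $\real^2\setminus\Cc$ lies in the interior of $\tilde{\Phi}$ by construction, we have $\partial\Cc\subset\Int(\tilde{\Phi})$, so $\bx_*^{(j)}\in\Int(\tilde{\Phi})$ and, as $\gamma_j(t)\to\bx_*^{(j)}$ when $t\to+\infty$, the set $\{t:\gamma_j(t)\in\tilde{\Phi}\}$ is nonempty. By forward invariance of $\tilde{\Phi}$ this set is an up-set, hence equal either to $[\tilde{t}_j,\infty)$ or to the whole interval of definition of $\gamma_j$. In the first case continuity and minimality give $\gamma_j(\tilde{t}_j)\in\partial\tilde{\Phi}$ and $\gamma_j(t)\notin\tilde{\Phi}$ for all $t<\tilde{t}_j$, which is exactly the claim for that index. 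Therefore, if the claim fails for all $j\in\Lc$, then $\gamma_j(t)\in\tilde{\Phi}$ for all $t$ in the domain of each $\gamma_j$. This step is routine.

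Next I would analyze the backward asymptotics of each now-bounded $\gamma_j$. Since $\gamma_j$ stays in the compact set $\tilde{\Phi}\cap\Cc$, on which the closed-loop vector field is locally Lipschitz and bounded, $\gamma_j$ has no finite backward escape time, so its interval of definition is unbounded below (in the notation of Lemma~\ref{lem:stable-manif-convergence-not-tangent}, $\omega_-=-\infty$, which rules out case~\ref{it:item-b}). By Lemma~\ref{lem:stable-manif-convergence-not-tangent}\ref{it:item-a}, as $t\to-\infty$ the orbit $\gamma_j$ either diverges in norm, converges to an equilibrium, or converges to a limit cycle; divergence is impossible because $\gamma_j\subset\tilde{\Phi}$, and the limit-cycle alternative is excluded because $\tilde{\Phi}\cap\Cc$ contains none. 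Hence $\gamma_j$ backward-converges to an equilibrium $\bx_*'$. This $\bx_*'$ cannot be asymptotically stable: neither the origin nor an asymptotically stable undesired equilibrium can be the $\alpha$-limit of a nonconstant orbit, since an asymptotically stable point attracts a full neighborhood forward in time and so cannot be approached backward by an orbit whose forward limit is the distinct point $\bx_*^{(j)}$. As every undesired equilibrium is a saddle or asymptotically stable by hypothesis, $\bx_*'=\bx_*^{(\sigma(j))}$ for some $\sigma(j)\in\Lc$, and $\gamma_j$ must coincide with the unique branch of the unstable manifold of $\bx_*^{(\sigma(j))}$ entering $\Cc$. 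Because each such unstable branch is a single orbit, $j\mapsto\sigma(j)$ is injective and hence a permutation of $\Lc$.

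Finally I would extract a contradiction from the resulting heteroclinic cycle. The permutation $\sigma$ contains a cyclic orbit $j_1,\dots,j_p$, and the orbits $\gamma_{j_1},\dots,\gamma_{j_p}$ together with the saddles $\bx_*^{(j_1)},\dots,\bx_*^{(j_p)}$ form a closed, flow-invariant curve $\Gamma\subset\Cc$ which, by uniqueness of solutions, is simple and therefore bounds a region $D$ (Jordan curve theorem). I would then run a Poincar\'e index argument in the spirit of the fact that a closed orbit must encircle equilibria of total index $+1$~\cite[Corollary 6.26]{JM-book:07}: the winding number of the field along $\Gamma$ is $+1$, so the enclosed equilibria at which the field is defined must have indices summing to $+1$; since all undesired equilibria lie on $\partial\Cc$ and the saddles forming $\Gamma$ have index $-1$, the only admissible interior contribution is the origin, and analyzing whether the single connected obstacle $\real^2\setminus\Cc$ is enclosed by $\Gamma$ (so that the field is undefined there and the count must be carried out on the annular region between $\Gamma$ and $\partial\Cc$) shows that no consistent balance is possible. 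This contradiction establishes the lemma. The main obstacle is precisely this last step: making the index bookkeeping rigorous when the heteroclinic cycle may encircle the obstacle, where the vector field is undefined, and when the corners of $\Gamma$ sit at the zeros of the field on $\partial\Cc$. The cleanest route is to perturb $\Gamma$ slightly into $\Cc$ off the saddles and to evaluate the inner-boundary contribution using the inward-pointing (forward-invariant) field along $\partial\Cc$.
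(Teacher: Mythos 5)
Your opening reduction is fine, but the step from ``$\gamma_j(t)\in\tilde{\Phi}$ for all $t$ in the domain'' to ``$\gamma_j$ stays in the compact set $\tilde{\Phi}\cap\Cc$'' is a genuine gap. Forward invariance of $\Cc$ does not prevent a trajectory from \emph{entering} $\Cc$ across $\partial\Cc$; equivalently, in backward time $\gamma_j$ may cross $\partial\Cc$ transversally and plunge into the obstacle $\real^2\backslash\Cc$, which lies in $\Int(\tilde{\Phi})$. Such a $\gamma_j$ never reaches $\partial\tilde{\Phi}$ (so the conclusion genuinely fails for that index), yet it also does not remain in the region where the closed-loop field is guaranteed well defined and locally Lipschitz, so your inference that $\omega_-=-\infty$, and the subsequent backward Poincar\'e--Bendixson analysis, both collapse for it. This is exactly case~\ref{it:item-b} of Lemma~\ref{lem:stable-manif-convergence-not-tangent}\ref{it:stab-manif-properties-second} and the paper's case~\ref{it:cases-first}; the paper handles it by letting arcs of $\partial(\real^2\backslash\Cc)$ absorb the backward exit points of those trajectories when assembling the boundary of the compact region $\tilde{\Oc}$. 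Your dichotomy silently discards this alternative.

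Second, the closing contradiction is not carried out, and it is not a routine completion. The paper does not use index theory at all: once the $\gamma_j$'s (together with boundary arcs of the obstacle) enclose a compact set $\tilde{\Oc}$, the contradiction comes from the topological obstruction of~\cite[Proposition 3]{DEK:87-icra} --- no continuous flow can render forward invariant a set with compact nonempty complement and have that set be the region of attraction of an asymptotically stable equilibrium --- supplemented by~\cite[Theorem 8.1]{HK:02} and Poincar\'e--Bendixson when several basins coexist in $\tilde{\Phi}\backslash\tilde{\Oc}$. Your alternative, an index balance along a heteroclinic cycle that may encircle the obstacle (where the field is undefined) with corners at the saddles on $\partial\Cc$, is exactly the step you yourself flag as ``the main obstacle,'' and you provide no computation showing that ``no consistent balance is possible''; one would have to define and evaluate the contribution of the obstacle from the behavior of the filtered field on $\partial\Cc$, which is the hard part. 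Two smaller issues: the injectivity of $\sigma$ is not justified (a planar saddle has two unstable branches, so two distinct $\gamma_j$'s can share the same $\alpha$-limit; a cycle of the functional graph still exists, but not for the reason you give), and the lemma does not actually hypothesize that every undesired equilibrium is a saddle or asymptotically stable --- you are importing that assumption from Proposition~\ref{prop:number-of-undes-equilibria-stab-properties-finite-set-of-bounded-obstacles}. As written, the proposal is not a proof.
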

\begin{proof}
    Let us reason by contradiction.
    If the statement does not hold, then using Lemma~\ref{lem:stable-manif-convergence-not-tangent}(ii),
    we deduce that for all $j\in\Lc$,
    one of the following holds:
    \begin{enumerate}
        \item\label{it:cases-first} there exists $t_j>0$ such that $\lim\limits_{t\to-t_j}\gamma_j(t) \in \real^2\backslash\Cc$;
        \item\label{it:cases-second} $\gamma_j(t)$ converges to a limit cycle in $\tilde{\Phi}\cap\Cc$;
        \item\label{it:cases-third} $\lim\limits_{t\to-\infty}\gamma_j(t) = \bx_*^{(\bar{j})} $ for some $\bar{j}\in[k]$.
    \end{enumerate}
    Since $\tilde{\Phi}\cap\Cc$ does not contain any limit cycles, case~\ref{it:cases-second} is not possible. If for all $j\in\Lc$, either~\ref{it:cases-first} or~\ref{it:cases-third} hold,
    there exists a compact set $\tilde{\Oc}$, whose boundary is comprised of
    $\partial(\real^2\backslash\Cc)$ and the union of the trajectories $\gamma_j$ for all $j\in\Lc$.
    %
    %
    %
    If all trajectories with initial condition in $\tilde{\Phi}\backslash\tilde{\Oc}$ converge to the origin, that contradicts~\cite[Proposition 3]{DEK:87-icra}, which shows that there cannot exist a continuous dynamical system, forward invariant in a set whose complement is compact, and with such set being the region of attraction of an asymptotically stable equilibrium.
    Note also that if $\tilde{\Phi}\backslash\tilde{\Oc}$ contains trajectories belonging to the regions of attraction of different asymptotically stable equilibria, by~\cite[Theorem 8.1]{HK:02}, there exists a trajectory of~\eqref{eq:general-system-1} in the boundary of these regions of attraction.
    By definition, this trajectory does not belong to any region of attraction and by the Poincar\'e-Bendixson Theorem~\cite[Chapter 7, Thm. 4.1]{PH:02}, this trajectory can only converge to a limit cycle. However, $\tilde{\Phi}$ is forward invariant and $\tilde{\Phi}\cap\Cc$ does not contain any limit cycle, hence reaching a contradiction.
\end{proof}

\subsection{Auxiliary results for Section~\ref{sec:dynamical-properties-safety-filters}}

This section provides a number of supporting results for the technical treatment of Section~\ref{sec:dynamical-properties-safety-filters}.
We start with an auxiliary result used in the proof of Proposition~\ref{prop:safety-filters-work-for-safe-sets-star-shaped}.

\smallskip
\begin{lemma}\longthmtitle{If a strict CBF exists, all CBFs are strict}\label{prop:if-one-strict-cbf-exists-all-cbfs-are-strict}
    Let $\Cc$ be a compact 
    set and assume  $h$ is a strict CBF of $\Cc$. Then any other CBF $\tilde{h}$ of $\Cc$ is also strict.
\end{lemma}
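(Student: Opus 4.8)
The plan is to exhibit, for $\tilde h$, an extended class $\Kc_\infty$ function $\alpha$ for which the strict inequality $\nabla \tilde h(\bx)^\top (f(\bx) + g(\bx)\bu) + \alpha(\tilde h(\bx)) > 0$ is feasible in $\bu$ at every $\bx \in \Cc$. The enabling geometric observation is that on $\partial\Cc$ the gradients of $h$ and $\tilde h$ are positively proportional. Indeed, both functions are $C^1$ with $\Cc = \{h \geq 0\} = \{\tilde h \geq 0\}$ and $\partial\Cc = \{h = 0\} = \{\tilde h = 0\}$; since $h$ is a strict CBF we already know $\nabla h \neq \mathbf{0}_n$ on $\partial\Cc$, and $\nabla\tilde h \neq \mathbf{0}_n$ there because $\partial\Cc$ is a regular level set of the CBF $\tilde h$. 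Both gradients are therefore normal to the hypersurface $\partial\Cc$ and point into $\Int(\Cc) = \{h>0\} = \{\tilde h > 0\}$, so $\nabla \tilde h(\bx) = \lambda(\bx)\,\nabla h(\bx)$ with $\lambda(\bx) > 0$ for every $\bx \in \partial\Cc$.

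First I would dispatch the boundary. At $\bx \in \partial\Cc$ we have $\tilde h(\bx) = 0$, so the $\alpha$-term vanishes and the requirement reduces to finding $\bu$ with $\nabla \tilde h(\bx)^\top(f(\bx) + g(\bx)\bu) > 0$. Taking the control $\bu$ that strictly satisfies the condition for $h$ and using $\nabla\tilde h = \lambda \nabla h$ with $\lambda>0$ gives the result immediately, for any choice of $\alpha$. I would also record here the degenerate subcase $g(\bx)^\top \nabla \tilde h(\bx) = \mathbf{0}_m$ (equivalently $g(\bx)^\top\nabla h(\bx) = \mathbf{0}_m$, by proportionality): there the control is ineffective, and strictness of $h$ forces $\nabla h(\bx)^\top f(\bx) > 0$, whence $\nabla\tilde h(\bx)^\top f(\bx) = \lambda(\bx)\,\nabla h(\bx)^\top f(\bx) > 0$.

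The substantive part is the interior, and this is where compactness of $\Cc$ is used. Partition $\Cc$ according to whether $g(\bx)^\top\nabla\tilde h(\bx)$ vanishes, writing $N := \{\bx\in\Cc : g(\bx)^\top\nabla\tilde h(\bx) = \mathbf{0}_m\}$. On $\Cc \setminus N$ one can drive $\nabla\tilde h(\bx)^\top g(\bx)\bu$ to $+\infty$ by scaling $\bu$, so strictness holds for any $\alpha$. On $N$ the control drops out and I must make $\psi(\bx) := \nabla\tilde h(\bx)^\top f(\bx)$ satisfy $\psi(\bx) + \alpha(\tilde h(\bx)) > 0$. The set $N$ is closed, hence compact, and by the boundary analysis $\psi > 0$ on $N\cap\partial\Cc$; therefore the compact set $N_- := \{\bx\in N : \psi(\bx) \leq 0\}$ is disjoint from $\partial\Cc$, so $\tilde h$ attains a positive minimum $m>0$ on $N_-$ while $-\psi$ attains a finite maximum $M$ there. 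Choosing any extended class $\Kc_\infty$ function $\alpha$ (e.g.\ linear with slope exceeding $M/m$) with $\alpha(m) > M$ yields $\alpha(\tilde h(\bx)) \geq \alpha(m) > M \geq -\psi(\bx)$ on $N_-$, i.e.\ $\psi + \alpha(\tilde h) > 0$; on $N\setminus N_-$ we have $\psi > 0$ and $\alpha(\tilde h) \geq 0$, so the inequality also holds. Assembling the three regions shows $\tilde h$ is a strict CBF.

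I expect the main obstacle to be this interior step: producing a \emph{single} $\alpha$ that works uniformly over all points where the input is ineffective. This is exactly what compactness buys --- finiteness of the bound $M$ and positivity of $m$ --- and it is the reason the hypothesis requires $\Cc$ compact. A secondary point to handle carefully is the co-orientation $\lambda(\bx) > 0$ (rather than mere proportionality), which I would justify by differentiating $h$ and $\tilde h$ along a curve entering $\Int(\Cc)$ and noting both derivatives are nonnegative.
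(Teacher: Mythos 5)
Your proof is correct and rests on the same three ingredients as the paper's: positive proportionality of $\nabla\tilde h$ and $\nabla h$ on $\partial\Cc$, transfer of strictness from $h$ to $\tilde h$ at boundary points, and compactness of $\Cc$ to extract a single linear $\alpha$ dominating the bounded drift wherever $\tilde h$ is bounded away from zero. The bookkeeping differs slightly: the paper covers $\partial\Cc$ by neighborhoods on which the fixed controls $\bu_{\bx}$ keep the inequality strict and then takes a large slope on the compact complement $\Cc\backslash\Nc$, whereas you split the interior by whether $g^\top\nabla\tilde h$ vanishes and, on that uncontrollable set, by the sign of the drift $\psi$; both reduce to the same min/max bounds on a compact set disjoint from $\partial\Cc$, and your handling of $N_-$ is sound. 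The one soft spot is the proportionality step: the paper imports $\nabla\tilde h=\zeta\nabla h$, $\zeta>0$, from a cited lemma, while you derive it geometrically and justify $\nabla\tilde h\neq\mathbf{0}_n$ on $\partial\Cc$ by calling $\partial\Cc$ a ``regular level set of the CBF $\tilde h$'' --- this is not automatic from Definition~\ref{def:cbf}, since at a boundary point with vanishing gradient the CBF inequality reads $\alpha(0)\geq 0$ and holds trivially, so the nonvanishing of $\nabla\tilde h$ on $\partial\Cc$ must be assumed or argued separately rather than asserted.
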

\begin{proof}
    By~\cite[Lemma 2.2]{indep-cbf}, there exists a function $\zeta:\partial\Cc\to\real_{>0}$
    %
    %
    such that 
    $\nabla \tilde{h}(\bx) = \zeta(\bx) \nabla h(\bx)$ for all $\bx$ in $\Cc$.
    Since $h$ is strict, for all $\bx\in\partial\Cc$, there exist $\bu_{\bx}\in\real^m$ such that 
    $\nabla h(\bx)^\top (f(\bx)+g(\bx)\bu_{\bx}) > 0$.
    This implies that 
    $\nabla \tilde{h}(\bx)^\top (f(\bx)+g(\bx)\bu_{\bx}) = \zeta(\bx) \nabla h(\bx)^\top (f(\bx)+g(\bx)\bu_{\bx}) > 0$ for all $\bx\in\partial\Cc$.
    %
    %
    Now, since $\nabla\tilde{h}$, $f$, and $g$ are continuous, there exists a neighborhood $\Nc_{\bx}$ of each $\bx\in\partial\Cc$  such that $\nabla\tilde{h}(\by)^\top (f(\by)+g(\by)\bu_{\bx} )> 0$ for all $\by\in\Nc_{\bx}$. Therefore, there exists a neighborhood $\Nc$ of $\partial\Cc$ where the CBF condition for $\tilde{h}$ is strictly feasible.
    %
    %
    Now, since $\Cc$ is compact, we can choose $\alpha$ as a linear function with a sufficiently large slope to ensure that 
    $\nabla \tilde{h}(\bx)^\top (f(\bx)+g(\bx)\bu_{\bx}) + \alpha( \tilde{h}(\bx) ) > 0$ holds for all $\bx\in\Cc\backslash\Nc$
    %
    %
    and hence for all $\bx\in\Cc$, making
    $\tilde{h}$ a strict CBF.
\end{proof}

We next give a technical result used in the proof of Proposition~\ref{prop:undesired-eq-n-2}.  We use the same notation.

\smallskip

\begin{lemma}[Conditions for 
$\beta$ and $\gamma$]\label{lem:discriminant-is-positive-if-cbf}
    Let Assumption~\ref{as:A-B-stabilizable} hold.
    Furthermore, suppose that the conditions in Proposition~\ref{prop:cbf} hold. Then, $r^2 (\gamma^2 + \beta^2) - T_3^2 > 0$, and in particular $\gamma^2 + \beta^2 > 0$.
    Moreover, if Assumption~\ref{as: interior eq}
    holds, then $\gamma x_{c,1} + \beta x_{c,2} \neq 0$.
\end{lemma}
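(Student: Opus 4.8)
The plan is to reduce everything to a single scalar inequality obtained by testing the strict CBF property at two carefully chosen boundary points, and then to exploit an algebraic identity that ties the discriminant $r^2(\gamma^2+\beta^2)-T_3^2$ to the geometry of the disk. Throughout I would write $s:=b_1^2+b_2^2>0$ (positive by the hypotheses of Proposition~\ref{prop:cbf}), $p:=\gamma b_1+\beta b_2$, and $q:=\gamma b_2-\beta b_1$. The first thing I would record is the Lagrange-type identity $p^2+q^2=(\gamma^2+\beta^2)s$, a one-line expansion; it lets me trade $\gamma^2+\beta^2$ for $p^2+q^2$ and shows that $p^2\le s(\gamma^2+\beta^2)$ with the exact defect $q^2$.

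The core step is to produce the inequality $r^2p^2>sT_3^2$ together with $p>0$. Since the conditions in Proposition~\ref{prop:cbf} guarantee that $h$ is a strict CBF, the CBF inequality must hold strictly at every point of $\partial\Cc$ where the input is ineffective, i.e.\ where $B^\top\nabla h=0$; at such points $\nabla h^\top A\bx+\alpha(h)>0$ must hold for the drift alone. I would apply this at the two endpoints of the diameter of $\partial\Cc$ orthogonal to $B$, namely $\bx_\pm:=\bx_c\pm r\hat{\mathbf n}$ with $\hat{\mathbf n}:=\tfrac{1}{\sqrt s}(b_2,-b_1)^\top$, which satisfy $h(\bx_\pm)=0$ and $B^\top\nabla h(\bx_\pm)=0$. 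A short computation gives $\hat{\mathbf n}^\top A=\tfrac{1}{\sqrt s}(\beta,-\gamma)$, hence $\hat{\mathbf n}^\top A\bx_c=T_3/\sqrt s$ and $\hat{\mathbf n}^\top A\hat{\mathbf n}=p/s$. Substituting into $(\bx_\pm-\bx_c)^\top A\bx_\pm>0$ (with $\alpha(0)=0$) yields the pair $\tfrac{rp}{\sqrt s}>T_3$ and $\tfrac{rp}{\sqrt s}>-T_3$, i.e.\ $\tfrac{rp}{\sqrt s}>|T_3|$. This forces $p>0$ and, upon squaring, $r^2p^2>sT_3^2$; these are exactly the inequalities I would keep for later use in the proof of Proposition~\ref{prop:undesired-eq-n-2}.

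Claim~1 then follows immediately: by the Lagrange identity, $r^2(\gamma^2+\beta^2)=\tfrac{r^2(p^2+q^2)}{s}\ge\tfrac{r^2p^2}{s}>T_3^2$, so $r^2(\gamma^2+\beta^2)-T_3^2>0$; in particular $\gamma^2+\beta^2=0$ is impossible, since it would force $T_3=0$ and collapse the strict inequality to $0>0$, whence $\gamma^2+\beta^2>0$. For the final assertion I would use the discriminant identity $r^2(\gamma^2+\beta^2)-T_3^2=(\gamma x_{c,1}+\beta x_{c,2})^2-(\gamma^2+\beta^2)(\norm{\bx_c}^2-r^2)$, verified by expanding both sides. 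Arguing by contradiction, if $\gamma x_{c,1}+\beta x_{c,2}=0$ the right-hand side equals $-(\gamma^2+\beta^2)(\norm{\bx_c}^2-r^2)$, which is strictly negative since $\gamma^2+\beta^2>0$ and, by Assumption~\ref{as: interior eq} (equivalently $\norm{\bx_c}^2>r^2$ via Lemma~\ref{lem:interior}), $\norm{\bx_c}^2-r^2>0$; this contradicts Claim~1, so $\gamma x_{c,1}+\beta x_{c,2}\neq0$.

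The step I expect to be the main obstacle is the identification and evaluation at the two ``input-blind'' boundary points: recognizing that strictness of the CBF is precisely the implication $B^\top\nabla h=0\Rightarrow\nabla h^\top A\bx>0$ on $\partial\Cc$, locating these points as $\bx_c\pm r\hat{\mathbf n}$, and pushing the gradient computation through so that the abstract strictness condition becomes the clean scalar bound $r^2p^2>sT_3^2$. Once that bound is in hand, both the discriminant positivity ($\gamma^2+\beta^2>0$ included) and the final nondegeneracy claim are routine consequences of the Lagrange and discriminant identities.
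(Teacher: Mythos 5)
Your proof is correct, and it reaches the paper's key intermediate inequality by a genuinely different route. The paper obtains $|T_3| < \frac{(b_2\beta+b_1\gamma)r}{\sqrt{b_1^2+b_2^2}}$ (your $rp/\sqrt{s}>|T_3|$, which also forces $p>0$) by purely algebraic manipulation of the explicit sufficient condition in Proposition~\ref{prop:cbf}: it squares $\frac{r}{\sqrt{b_1^2+b_2^2}}>\frac{|T_3|+\sqrt{T_2}}{2T_1}$ and cancels terms. You instead pass through the \emph{conclusion} of Proposition~\ref{prop:cbf} --- that $h$ is a strict CBF --- and extract the same scalar bound by evaluating the strictness condition at the two boundary points $\bx_c\pm r\hat{\mathbf n}$ where $B^\top\nabla h$ vanishes and $\alpha(h)=0$; your computations $\hat{\mathbf n}^\top A=\tfrac{1}{\sqrt{s}}(\beta,-\gamma)$, $\hat{\mathbf n}^\top A\bx_c=T_3/\sqrt{s}$, $\hat{\mathbf n}^\top A\hat{\mathbf n}=p/s$ are correct. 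From that point on the two arguments coincide: the paper's Cauchy--Schwarz step is your Lagrange identity $p^2+q^2=(\gamma^2+\beta^2)s$, and the final nondegeneracy claim uses the identical identity $T_3^2=(\gamma^2+\beta^2)\norm{\bx_c}^2$ when $\gamma x_{c,1}+\beta x_{c,2}=0$ together with $\norm{\bx_c}^2>r^2$. The trade-off: your derivation is more conceptual and reveals the inequality as a \emph{necessary} condition for strictness at the input-blind boundary points (so it would survive any other hypothesis guaranteeing a strict CBF), but it logically leans on the correctness of Proposition~\ref{prop:cbf}'s proof, whereas the paper's version of this lemma is self-contained given only the algebraic hypotheses. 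Both are valid under the lemma's stated assumptions.
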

\begin{proof}
    Let us show that $r^2 (\gamma^2 + \beta^2) - T_3^2 > 0$, which implies that $\gamma^2 + \beta^2 > 0$.
    By noting that $|T_3|+\sqrt{T_2} > 0$, and
    squaring both sides of the condition $\frac{r}{ \sqrt{b_1^2+b_2^2} } > \frac{|T_3|+\sqrt{T_2}}{2T_1}$ in Proposition~\ref{prop:cbf}, we get:
    $\Big( \frac{2T_1 r}{\sqrt{b_1^2+b_2^2}} - |T_3| \Big)^2 > T_2$, which is equivalent to
    $\frac{4T_1^2 r^2}{ b_1^2 + b_2^2 } + T_3^2 - \frac{4 T_1 r |T_3|}{ \sqrt{b_1^2 + b_2^2} } > T_3^2 + 2\alpha_0 r^2 T_1$. Rearranging terms, this yields
    \begin{align}\label{eq:conditions-T3}
        & |T_3| < \frac{(b_2\beta +b_1\gamma)r }{\sqrt{b_1^2 + b_2^2}}.
    \end{align}
    Note that~\eqref{eq:conditions-T3} requires $b_2\beta + b_1\gamma > 0$ since otherwise the conditions in~\eqref{eq:conditions-T3} would not be feasible for any $T_3$.
    %
    %
    Now, by using condition~\eqref{eq:conditions-T3} and applying the Cauchy-Schwartz inequality, we get $T_3 > -\sqrt{b_1^2+b_2^2}r$, $T_3 < \sqrt{b_1^2+b_2^2}r$,
    from which it follows that $r^2 (\gamma^2 + \beta^2) - T_3^2 > 0$. 
    Finally suppose that $\norm{\bx_c}^2 > r^2$ and $\gamma x_{c,1} + \beta x_{c,2} = 0$. Note that $T_3^2 = (-\gamma x_{c,2} + \beta x_{c,1})^2 = (-\gamma x_{c,2} + \beta x_{c,1})^2 + (\gamma x_{c,1} + \beta x_{c,2})^2 = (\gamma^2 + \beta^2)\norm{\bx_c}^2$.
    Since $\norm{\bx_c}^2 > r^2$, this implies that $r^2(\gamma^2+\beta^2)-T_3^2 < 0$, which is a contradiction.
\end{proof}

Next we add details to the Example~\ref{ex:asymp-eq-exist-all-nominal-controllers}. In particular, we elaborate further on the stability properties of undesired equilibria.

\smallskip
\begin{example}\longthmtitle{Example~\ref{ex:asymp-eq-exist-all-nominal-controllers} continued}\label{ex:asymp-eq-exist-all-nominal-controllers-continued}
{\rm
    First, since the boundary of $\Cc$ is given by a union of semicircles, by following an argument similar to that of the proof of Proposition~\ref{prop:condition-h-strict-cbf-underactuated-lti-planar}, one has that 
    the following are sufficient conditions for $h$ to be a strict CBF:
    \begin{subequations}
    \begin{align}
        T_1 > 0, & \ b_1^2 + b_2^2, \\
        \frac{r_1}{ \sqrt{b_2^2 + b_1^2} } &> \frac{  \frac{b_1}{|b_1|}(-\gamma c_2 -\beta c_1) + \sqrt{ (\gamma c_2 +\beta c_1)^2 + 2\alpha_0 r_1^2 T_1 } }{ 2 T_1 },~\label{eq:first-condition} \\
        -\frac{r}{\sqrt{b_2^2 + b_1^2}} &> \frac{ \frac{b_1}{|b_1|}(-\gamma c_2) - \sqrt{\gamma^2 c_2^2 + 2\alpha_0 r^2 T_1} }{ 2T_1 }~\label{eq:second-condition}, \\
        \frac{r_1}{ \sqrt{b_2^2 + b_1^2} } &> \frac{  \frac{b_1}{|b_1|}(-\gamma c_2 + \beta c_1) + \sqrt{ (-\gamma c_2 +\beta c_1)^2 + 2\alpha_0 r_1^2 T_1 } }{ 2 T_1 }~\label{eq:third-condition}, \\
        -\frac{R}{\sqrt{b_2^2 + b_1^2}} &< \frac{ \frac{b_1}{|b_1|}(-\gamma c_2) - \sqrt{\gamma^2 c_2^2 + 2\alpha_0 R^2 T_1} }{ 2T_1 }~\label{eq:fourth-condition}
    \end{align}
    \label{eq:feasibility-conditions-unioncircles-example}
    \end{subequations}
    where  $\beta = a_{11}b_2 - b_1 a_{21}$, $\gamma = a_{22} b_1 - b_2 a_{12}$, and $T_1 := b_2 \beta + b_1 \gamma + \frac{1}{2} \alpha_0 (b_2^2 + b_1^2)$. Further, suppose that $r^2(\gamma^2 + \beta^2) - \gamma^2 c_2^2 \geq 0$, and let 
    \begin{align*}
        z_{+,2} &= \frac{ \beta c_2 + \sqrt{ r^2 (\gamma^2 + \beta^2) - \gamma^2 c_2^2 } }{ \gamma^2 + \beta^2 }.
    \end{align*}
    It follows that the point $\bx_{*,+,2}=(\gamma z_{+,2},\beta z_{+,2})$ is in~$\partial\Cc$ and satisfies~\eqref{eq: condition-eq} for some $\delta\in\real$; this, in turn, means that $\bx_{*,+,2}\in\Ec$.
    To show whether $\bx_{*,+,2}$ is an undesired equilibrium, we need to check if that $-(\bx_{*,+,2}-\bx_c)^\top(A-BK)\bx_{*,+,2} < 0$. 
    By using the expression of $\bx_{*,+,2}$, this condition is equivalent to
    \begin{align}\label{eq:active-filter-condition-appendix}
        z_{+,2} T_4 \big( b_1\gamma z_{+,2} + b_2(\beta z_{+,2}-c_2) \big) > 0,
    \end{align}
    where $T_4 = a_{11}a_{22}-a_{12}a_{21}-k_1\gamma-k_2\beta$. Since $A-BK$ is Hurwitz, $T_4>0$, and therefore~\eqref{eq:active-filter-condition-appendix} is independent of $K$ and equivalent to 
    \begin{align}\label{eq:active-filter-condition-appendix-2}
        z_{+,2} \Big( b_1\gamma z_{+,2} + b_2(\beta z_{+,2}-c_2) \Big) > 0,
    \end{align}
    Now, note that Example~\ref{ex:asymp-eq-exist-all-nominal-controllers} satisfies~\eqref{eq:active-filter-condition-appendix-2}.  Therefore, $\bx_{*,+,2} = (3.157,7.619)$ is an undesired equilibrium for any $K$.
    To show that it is asymptotically stable, note that Example~\ref{ex:asymp-eq-exist-all-nominal-controllers} satisfies
    \begin{align}
        (-\gamma c_2) (\gamma b_2-\beta b_1) + (\gamma b_1 + \beta b_2) \sqrt{r^2 (\gamma^2 + \beta^2) - \gamma^2 c_2^2 } < 0.
        \label{eq:asymptotically-stable-conditions-unioncircles-example}
    \end{align}
    Then, by following the same argument as in the proof of~\cite[Proposition 4]{YC-PM-EDA-JC:24-cdc}, 
    the Jacobian of~\eqref{eq:v-linear-system} at $\bx_{*,+,2}$ is
    \begin{align*}
        J({\bx_{*,+,2}}) = A - \frac{B}{ (\bx_c-\bx_{*,+,2})^\top B } ( (\bx_{*,+,2}-\bx_c)^\top (A+\alpha_0 \textbf{I}_n ),
    \end{align*}
    and~\eqref{eq:asymptotically-stable-conditions-unioncircles-example} implies that $J$ has two negative eigenvalues. Moreover, since $J$ is independent of $K$, this implies that $\bx_{*,+,2} = (3.157,7.619)$ in Example~\ref{ex:asymp-eq-exist-all-nominal-controllers} is asymptotically stable for any choice of linear stabilizing nominal controller.
    \problemfinal
    }
\end{example}

\smallskip

The following results concern Section~\ref{sec:ellips-fully-actuated-LTI}, i.e., the case when the LTI system is fully actuated. We employ the same notation. 
We start by stating two auxiliary results that determine the eigenvalue other than $-\alpha^\prime(0)$ of the Jacobian.

\smallskip

\begin{lemma}\longthmtitle{Eigenvalue of the Jacobian when $\tilde{A}$ is not diagonalizable}\label{lemma: compute eigen for m=2, case 1}
    Assume that $\lambda_1=\lambda_2$ and $\tilde{A}$ is not diagonalizable. Then, there exists $\bv_1$, $\bv_2\in\mathbb{R}^2$ such that $\|\bv_1\|_2=\|\bv_2\|_2=1$, $\tilde{A}\bv_1=\lambda_1\bv_1$ and $\tilde{A}\bv_2=\lambda_2\bv_2+\bv_1$. For any $\bx_*\in\hat{\mathcal{E}}$,  if  the associated indicator $\delta_{\bx_*}\neq \frac{\lambda_1}{2}$,  $\bx_c=\beta_1 \bv_1+\beta_2\bv_2$ and $\bx_*=\beta_3 \bv_1+\beta_4 \bv_2$, then it holds that $\beta_3-\beta_1=\frac{-\lambda_1\beta_1}{\lambda_1-2\delta_{\bx_*}} +\frac{2\delta_{\bx_*} \beta_2}{(\lambda_1-2\delta_{\bx_*})^2}  $, $\beta_4-\beta_2=\frac{-\lambda_1\beta_2}{\lambda_1-2\delta_{\bx_*}} $ and the eigenvalue other than $-\alpha^\prime(0)$ of the Jacobian of~\eqref{eq:general-system-1} at $\bx_*$ is 
    \begin{align*}
  \lambda_1-2\delta_{\bx_*}-\frac{(\beta_4-\beta_2)}{r^2}((\beta_3-\beta_1)  +(\beta_4-\beta_2) \bv_2^\top \bv_1 ).
    \end{align*}
\end{lemma}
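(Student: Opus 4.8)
The plan is to exploit the invertibility of $B$ together with the choice $G=B^\top B$ to collapse the equilibrium condition of Lemma~\ref{lem:undesired-eq-characterization} into a simple linear relation, and then to read off the two coordinate formulas directly in the Jordan basis $\{\bv_1,\bv_2\}$. Since $\nabla h(\bx)=2(\bx-\bx_c)$ and $B$ is invertible, one has $B(B^\top B)^{-1}B^\top=\textbf{I}_2$, so~\eqref{eq: condition-eq2} reduces to $\tilde{A}\bx_*=2\delta_{\bx_*}(\bx_*-\bx_c)$, equivalently $(\tilde{A}-2\delta_{\bx_*}\textbf{I}_2)\bx_*=-2\delta_{\bx_*}\bx_c$. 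Writing this in the Jordan basis, where $\tilde{A}$ acts as $\left[\begin{smallmatrix}\lambda_1 & 1\\ 0 & \lambda_1\end{smallmatrix}\right]$, and substituting $\bx_c=\beta_1\bv_1+\beta_2\bv_2$, $\bx_*=\beta_3\bv_1+\beta_4\bv_2$, yields the scalar equations $(\lambda_1-2\delta_{\bx_*})\beta_4=-2\delta_{\bx_*}\beta_2$ and $(\lambda_1-2\delta_{\bx_*})\beta_3+\beta_4=-2\delta_{\bx_*}\beta_1$. Because $\delta_{\bx_*}\neq\lambda_1/2$ by hypothesis, I can divide by $\lambda_1-2\delta_{\bx_*}$, solve for $\beta_3,\beta_4$, and rearrange to obtain exactly the stated expressions for $\beta_4-\beta_2$ and $\beta_3-\beta_1$.

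For the eigenvalue, I would first make the filter explicit: with $G=B^\top B$ and $B(B^\top B)^{-1}B^\top=\textbf{I}_2$, the active branch of~\eqref{eq:v-linear-expression} collapses so that the closed-loop field becomes $F(\bx)=\tilde{A}\bx-\tfrac12\,\eta(\bx)\,(\bx-\bx_c)/\|\bx-\bx_c\|^2$. By~\cite[Proposition 11]{indep-cbf}, one eigenvalue of the Jacobian $J$ at $\bx_*$ is $-\alpha^\prime(0)$, so the other equals $\operatorname{trace}(J)+\alpha^\prime(0)$, and it suffices to compute the trace. Differentiating the radial term $u\,p$ with $u=\tfrac12\eta$ and $p(\bx)=(\bx-\bx_c)/\|\bx-\bx_c\|^2$ gives $p(\nabla u)^\top+u\,Dp$; the crucial simplification is that in dimension $n=2$ the radial field is divergence-free, i.e. $\operatorname{trace}(Dp)|_{\bx_*}=0$, so only $\operatorname{trace}(\tilde{A})-p(\bx_*)^\top\nabla u(\bx_*)$ survives. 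Using $\eta(\bx_*)=4\delta_{\bx_*}r^2$ (which needs $\alpha(0)=0$ and the equilibrium relation), $\|\bx_*-\bx_c\|^2=r^2$, and $\nabla u(\bx_*)=(\tilde{A}+\tilde{A}^\top)\bx_*-\tilde{A}^\top\bx_c+\alpha^\prime(0)(\bx_*-\bx_c)$, the inner product collapses via $\tilde{A}\bx_*=2\delta_{\bx_*}(\bx_*-\bx_c)$ to $p(\bx_*)^\top\nabla u(\bx_*)=2\delta_{\bx_*}+\alpha^\prime(0)+\bw^\top\tilde{A}\bw/r^2$, where $\bw:=\bx_*-\bx_c$.

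The last computation is the main obstacle: evaluating the quadratic form $\bw^\top\tilde{A}\bw$ cleanly in the non-orthonormal Jordan basis. Writing $\bw=(\beta_3-\beta_1)\bv_1+(\beta_4-\beta_2)\bv_2$ and applying $\tilde{A}\bv_1=\lambda_1\bv_1$, $\tilde{A}\bv_2=\lambda_1\bv_2+\bv_1$, the cross terms reorganize into $\bw^\top\tilde{A}\bw=\lambda_1\|\bw\|^2+(\beta_4-\beta_2)\big[(\beta_3-\beta_1)+(\beta_4-\beta_2)\bv_2^\top\bv_1\big]$, after which $\|\bw\|^2=r^2$ absorbs the first summand. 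Collecting all contributions gives $\operatorname{trace}(J)=\lambda_1-2\delta_{\bx_*}-\tfrac{1}{r^2}(\beta_4-\beta_2)\big[(\beta_3-\beta_1)+(\beta_4-\beta_2)\bv_2^\top\bv_1\big]-\alpha^\prime(0)$, and adding $\alpha^\prime(0)$ yields precisely the claimed eigenvalue. I expect the only delicate point to be the careful bookkeeping of $\bv_1^\top\bv_2\neq 0$ throughout the quadratic-form manipulations, since the Jordan basis is normalized but not orthogonal.
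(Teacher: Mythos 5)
Your proposal is correct: the coordinate formulas for $\beta_3-\beta_1$ and $\beta_4-\beta_2$ follow exactly as you derive them from $(\tilde{A}-2\delta_{\bx_*}\mathbf{I})\bx_*=-2\delta_{\bx_*}\bx_c$ in the Jordan basis, and your final eigenvalue expression matches the paper's. The overall skeleton is the same as the paper's proof (one eigenvalue is $-\alpha^\prime(0)$ by \cite[Proposition 11]{indep-cbf}, so the other is $\operatorname{trace}(J)+\alpha^\prime(0)$, and the trace is then reduced via the equilibrium relations and $\|\bx_*-\bx_c\|^2=r^2$), but the trace computation itself is carried out by a genuinely different route. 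The paper plugs into the explicit Jacobian formula from \cite[Proposition 11]{indep-cbf}, expands $J(\bx_*)\bv_1$ and $J(\bx_*)\bv_2$ in the (non-orthogonal) Jordan basis to get $d_{11}$ and $d_{22}$, and sums; you instead write the active-branch closed-loop field as $F(\bx)=\tilde{A}\bx-\tfrac12\eta(\bx)\,(\bx-\bx_c)/\|\bx-\bx_c\|^2$ (valid since $G=B^\top B$ and $B$ invertible give $B(B^\top B)^{-1}B^\top=\mathbf{I}_2$) and compute $\operatorname{div}F$ directly, using that the planar field $(\bx-\bx_c)/\|\bx-\bx_c\|^2$ is divergence-free so that only $\operatorname{trace}(\tilde{A})-p(\bx_*)^\top\nabla u(\bx_*)$ survives. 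Your route is more self-contained (it does not need the explicit Jacobian formula, only the fact that $-\alpha^\prime(0)$ is an eigenvalue) and the divergence identity neatly replaces the entrywise bookkeeping; the paper's route has the advantage of reusing machinery it needs elsewhere anyway. Two small points worth making explicit in a write-up: the filter is active in a neighborhood of $\bx_*$ because $\eta(\bx_*)=4\delta_{\bx_*}r^2<0$, so $F$ is indeed given by the displayed formula and is differentiable there; and the hypotheses $B$ invertible, $G=B^\top B$ are the standing assumptions of the fully actuated subsection and should be invoked, as you do.
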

\begin{proof}
Let $J(\bx)$ be the Jacobian of~\eqref{eq:general-system-1} evaluated at $\bx$.
If we write $J({\bx_*}) \bv_1=d_{11}\bv_1+d_{21}\bv_2$ and $J({\bx_*}) \bv_2=d_{12}\bv_1+d_{22}\bv_2$, then the other eigenvalue of $J({\bx_*})$ is equal to $d_{11}+d_{22}+\alpha^\prime(0)$.

Using the expression for the Jacobian in~\cite[Proposition 11]{indep-cbf},
\begin{align*}
&d_{11}=-\frac{(\beta_3-\beta_1)\alpha^\prime(0)}{r^2}( (\beta_3-\beta_1)  +(\beta_4-\beta_2) \bv_2^\top \bv_1 )+\\
&\qquad \lambda_1 \! - \! 2\delta_{\bx_*} \! - \! \frac{(\beta_3-\beta_1)(\lambda_1-2\delta_{\bx_*})}{r^2}( (\beta_3-\beta_1) \! + \! (\beta_4-\beta_2)
 \bv_2^\top \bv_1 ), \\
&d_{22}=-\frac{(\beta_4-\beta_2)\alpha^\prime(0)}{r^2}( (\beta_3-\beta_1)\bv_1^\top \bv_2  +(\beta_4-\beta_2)  )\\
&\qquad -\frac{(\beta_4-\beta_2)(\lambda_1-2\delta_{\bx_*})}{r^2}( (\beta_3-\beta_1)\bv_1^\top \bv_2  +(\beta_4-\beta_2)  )\\
&\qquad +\lambda_1-2\delta_{\bx_*}-\frac{(\beta_4-\beta_2)}{r^2}((\beta_3-\beta_1)  +(\beta_4-\beta_2) \bv_2^\top \bv_1 )
\end{align*}
Since $(\bx_*,\delta_{\bx_*})$ is an solution of~\eqref{eq: condition-eq}, it follows that $\beta_3-\beta_1=\frac{-\lambda_1\beta_1}{\lambda_1-2\delta_{\bx_*}} +\frac{2\delta_{\bx_*} \beta_2}{(\lambda_1-2\delta_{\bx_*})^2}  $ and $\beta_4-\beta_2=\frac{-\lambda_2\beta_2}{\lambda_2-2\delta_{\bx_*}} $, by \eqref{eq: condition-eq1}; additionally, by~\eqref{eq: condition-eq2}, one has that
%
%
 \[ \left\|\left(\frac{-\lambda_1\beta_1}{\lambda_1-2\delta_{\bx_*}} +\frac{2\delta_{\bx_*} \beta_2}{(\lambda_1-2\delta_{\bx_*})^2} \right)  \bv_1+ \frac{-\lambda_2\beta_2}{\lambda_2-2\delta_{\bx_*}} \bv_2\right\|^2-r^2=0.\]
Thus, it follows that: 
\begin{align*}
&d_{11}+d_{22}+\alpha^\prime(0)=\lambda_1-2\delta_{\bx_*}-\frac{(\beta_4-\beta_2)}{r^2}((\beta_3-\beta_1)  +(\beta_4-\beta_2) \bv_2^\top \bv_1 ).
\end{align*}
\end{proof}

\smallskip

\begin{lemma}\longthmtitle{The other eigenvalue of Jacobian when $\tilde{A}$ is diagonalizable}\label{lemma: compute eigen for m=2, case 2}
    Assume that $\lambda_1\neq \lambda_2$. Then, there exists $\bv_1$, $\bv_2\in\mathbb{C}^2$ such that $\|\bv_1\|_2=\|\bv_2\|_2=1$, $\tilde{A}\bv_1=\lambda_1\bv_1$ and $\tilde{A}\bv_2=\lambda_2\bv_2$. Additionally, for any $\bx_*\in\hat{\mathcal{E}}$, if  the associated indicator $\delta_{\bx_*}\notin\{\frac{\lambda_1}{2},\frac{\lambda_2}{2}\}$, $\bx_c=\beta_1 \bv_1+\beta_2\bv_2$ and $\bx_*=\beta_3 \bv_1+\beta_4 \bv_2$, it holds that $\beta_3-\beta_1=\frac{-\lambda_1\beta_1}{\lambda_1-2\delta_{\bx_*}} $, $\beta_4-\beta_2=\frac{-\lambda_2\beta_2}{\lambda_2-2\delta_{\bx_*}} $ and the eigenvalues other than $-\alpha^\prime(0)$ of the Jacobian of~\eqref{eq:general-system-1} at $\bx_*$ is 
    $$ \lambda_1+\lambda_2-2\delta_{\bx_*}-\frac{(\beta_3-\beta_1)\lambda_1}{r^2}\Delta_1 -\frac{(\beta_4-\beta_2)\lambda_2}{r^2}\Delta_2,$$
where $\Delta_1:= (\beta_3-\beta_1)^*  +(\beta_4-\beta_2)^*
 \bv_2^* \bv_1$, $\Delta_2:=(\beta_3-\beta_1)^*\bv_1^* \bv_2  +(\beta_4-\beta_2)^*$.
  Equivalently, the eigenvalue other than $\alpha^\prime(0)$  can be expressed as
$\lambda_1-2\delta_{\bx_*}- \frac{(\beta_3-\beta_1)(\lambda_1-\lambda_2)}{r^2}\Delta_1 $ and
$\lambda_2-2\delta_{\bx_*} -\frac{(\beta_4-\beta_2)(\lambda_2-\lambda_1)}{r^2}\Delta_2$.
\end{lemma}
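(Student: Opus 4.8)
The plan is to mirror the argument of Lemma~\ref{lemma: compute eigen for m=2, case 1}, replacing the Jordan chain used there with a genuine eigenbasis. Since $\lambda_1\neq\lambda_2$, the matrix $\tilde{A}$ is diagonalizable and admits two linearly independent (possibly complex) unit eigenvectors $\bv_1,\bv_2$ with $\tilde{A}\bv_i=\lambda_i\bv_i$; I would fix these and write $\bx_c=\beta_1\bv_1+\beta_2\bv_2$ and $\bx_*=\beta_3\bv_1+\beta_4\bv_2$. Because $B$ is invertible and $G=B^\top B$, one has $B G^{-1}B^\top=\textbf{I}_2$, so condition~\eqref{eq: condition-eq2} of Lemma~\ref{lem:undesired-eq-characterization} (with $\nabla h(\bx_*)=2(\bx_*-\bx_c)$) collapses to $\tilde{A}\bx_*=2\delta_{\bx_*}(\bx_*-\bx_c)$. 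Projecting this identity onto the eigenbasis and solving componentwise—which is legitimate precisely because $\delta_{\bx_*}\notin\{\tfrac{\lambda_1}{2},\tfrac{\lambda_2}{2}\}$—yields exactly $\beta_3-\beta_1=\frac{-\lambda_1\beta_1}{\lambda_1-2\delta_{\bx_*}}$ and $\beta_4-\beta_2=\frac{-\lambda_2\beta_2}{\lambda_2-2\delta_{\bx_*}}$, which are the first two claimed identities.

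Next I would invoke the explicit Jacobian formula of~\cite[Proposition 11]{indep-cbf}, which guarantees that one eigenvalue of the Jacobian $J:=J(\bx_*)$ of~\eqref{eq:general-system-1} is $-\alpha^\prime(0)$. Writing $J\bv_1=d_{11}\bv_1+d_{21}\bv_2$ and $J\bv_2=d_{12}\bv_1+d_{22}\bv_2$ and using that the trace is basis-independent, the eigenvalue other than $-\alpha^\prime(0)$ equals $d_{11}+d_{22}+\alpha^\prime(0)$. I would then read off $d_{11}$ and $d_{22}$ directly from the Jacobian expression. Because $\tilde{A}\bv_i=\lambda_i\bv_i$ has no off-diagonal coupling (unlike the Jordan case of Lemma~\ref{lemma: compute eigen for m=2, case 1}), these diagonal entries are cleaner, taking the form $d_{ii}=\lambda_i-2\delta_{\bx_*}-\frac{(\beta_3-\beta_1)\alpha^\prime(0)\,\Delta_i'+(\lambda_i-2\delta_{\bx_*})(\cdots)}{r^2}$, with the Hermitian products $\bv_2^*\bv_1$ and $\bv_1^*\bv_2$ appearing in $\Delta_1,\Delta_2$ because the eigenvectors are complex.

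The key simplification uses condition~\eqref{eq: condition-eq1}, i.e.\ $h(\bx_*)=0$, which in the eigenbasis reads $\|(\beta_3-\beta_1)\bv_1+(\beta_4-\beta_2)\bv_2\|^2=r^2$. Expanding this Hermitian norm and using $\|\bv_i\|=1$ produces precisely the relation $(\beta_3-\beta_1)\Delta_1+(\beta_4-\beta_2)\Delta_2=r^2$, which lets the $\alpha^\prime(0)$ contributions cancel when I add $d_{11}+d_{22}+\alpha^\prime(0)$; after cancellation the eigenvalue reduces to the stated symmetric form $\lambda_1+\lambda_2-2\delta_{\bx_*}-\frac{(\beta_3-\beta_1)\lambda_1}{r^2}\Delta_1-\frac{(\beta_4-\beta_2)\lambda_2}{r^2}\Delta_2$. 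To obtain the two equivalent single-term expressions, I would again invoke $(\beta_3-\beta_1)\Delta_1+(\beta_4-\beta_2)\Delta_2=r^2$ to trade one $\Delta$-term for the other: subtracting $\lambda_1-2\delta_{\bx_*}$ from the symmetric form and dividing by $\lambda_2$ (nonzero since $\tilde{A}$ is Hurwitz) reduces the required identity to exactly this constraint, giving $\lambda_1-2\delta_{\bx_*}-\frac{(\beta_3-\beta_1)(\lambda_1-\lambda_2)}{r^2}\Delta_1$, and the third form follows symmetrically.

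The main obstacle I anticipate is the bookkeeping in this last step: handling the Hermitian rather than symmetric inner products correctly when $\lambda_1,\lambda_2$ are complex conjugates, and verifying that the single constraint $(\beta_3-\beta_1)\Delta_1+(\beta_4-\beta_2)\Delta_2=r^2$ converts the symmetric two-term expression into each of the asymmetric one-term expressions. This is mechanical but error-prone; every other step follows the template already established for the non-diagonalizable case in Lemma~\ref{lemma: compute eigen for m=2, case 1}.
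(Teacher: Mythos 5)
Your proposal is correct and follows essentially the same route as the paper: derive the $\beta$-relations from the equilibrium conditions in the eigenbasis, express the other eigenvalue as $d_{11}+d_{22}+\alpha^{\prime}(0)$ via the trace of the Jacobian from~\cite[Proposition 11]{indep-cbf}, and use the constraint $h(\bx_*)=0$, i.e.\ $(\beta_3-\beta_1)\Delta_1+(\beta_4-\beta_2)\Delta_2=r^2$, to cancel the $\alpha^{\prime}(0)$ terms and to pass between the symmetric and the two one-term expressions. The cancellations you anticipate do go through exactly as described, so no gap remains.
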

%
%
\begin{proof}
If we write $J({\bx_*}) \bv_1=d_{11}\bv_1+d_{21}\bv_2$ and $J({\bx_*}) \bv_2=d_{12}\bv_1+d_{22}\bv_2$, then the other eigenvalue of $J({\bx_*})$ is equal to $d_{11}+d_{22}+\alpha^\prime(0)$.
Using the expression for the Jacobian in~\cite[Proposition 11]{indep-cbf},
\begin{align*}
&d_{11}=\lambda_1-2\delta_{\bx_*}-\frac{(\beta_3-\beta_1)\alpha^\prime(0)}{r^2}( (\beta_3-\beta_1)^*  +(\beta_4-\beta_2)^* \bv_2^* \bv_1 )\\
&\qquad -\frac{(\beta_3-\beta_1)(\lambda_1-2\delta_{\bx_*})}{r^2}( (\beta_3-\beta_1)^*  +(\beta_4-\beta_2)^*
 \bv_2^* \bv_1 ),
\end{align*}
and
\begin{align*}
&d_{22}=\lambda_2-2\delta_{\bx_*}-\frac{(\beta_4-\beta_2)\alpha^\prime(0)}{r^2}( (\beta_3-\beta_1)^*\bv_1^* \bv_2  +(\beta_4-\beta_2)^*  )\\
&\qquad -\frac{(\beta_4-\beta_2)(\lambda_2-2\delta_{\bx_*})}{r^2}( (\beta_3-\beta_1)^*\bv_1^* \bv_2  +(\beta_4-\beta_2)^*  ).
\end{align*}

Note that since $(\bx_*,\delta_{\bx_*})$ is a solution of~\eqref{eq: condition-eq}, it follows that $\beta_3-\beta_1=\frac{-\lambda_1\beta_1}{\lambda_1-2\delta_{\bx_*}} $, $\beta_4-\beta_2=\frac{-\lambda_2\beta_2}{\lambda_2-2\delta_{\bx_*}}$, and
 \[ \left\|\frac{-\lambda_1\beta_1}{\lambda_1-2\delta_{\bx_*}} \bv_1+ \frac{-\lambda_2\beta_2}{\lambda_2-2\delta_{\bx_*}} \bv_2\right\|^2-r^2=0.\]

Then,
\begin{align*}
d_{11}+d_{22}+\alpha^\prime(0)&=\lambda_1+\lambda_2-2\delta_{\bx_*}-\frac{(\beta_3-\beta_1)\lambda_1}{r^2}( (\beta_3-\beta_1)^*  +(\beta_4-\beta_2)^*
 \bv_2^* \bv_1 ) \\
&\qquad -\frac{(\beta_4-\beta_2)\lambda_2}{r^2}( (\beta_3-\beta_1)^*\bv_1^* \bv_2  +(\beta_4-\beta_2)^*  ).
\end{align*}

By leveraging $\beta_3-\beta_1=\frac{-\lambda_1\beta_1}{\lambda_1-2\delta_{\bx_*}} $ and $\beta_4-\beta_2=\frac{-\lambda_2\beta_2}{\lambda_2-2\delta_{\bx_*}}$, we get the two remaining expressions.
\end{proof}

\vspace{0.5cm}

To prove Proposition~\ref{prop: m=2, x0 is eigen}, we need to determine the Jacobian evaluated at the undesired equilibrium and analyze its spectrum. Applying~\cite[Proposition 6.2]{indep-cbf} and  Lemma~\ref{lem:undesired-eq-characterization} to system~\eqref{eq:v-linear-system}, we have the following result.

\begin{lemma}\longthmtitle{The Jacobian evaluated at undesired equilibria}\label{lemma: Jacobian at undesirable eq}
  Suppose Assumption~\ref{as: interior eq} is satisfied, $B$ is invertible, and $G=B^\top B$. The Jacobian of~\eqref{eq:v-linear-system} at  any undesired equilibrium  $\bx_*$ is 
\[J({\bx_{*}})=\Tilde{A}-2\delta_{\bx_*}\mathbf{I} -\frac{ (\bx_*-\bx_c )(\bx_*-\bx_c )^\top}{\| \bx_*-\bx_c \|^2}(\Tilde{A}-(2\delta_{\bx_*}-\alpha^\prime(0))\mathbf{I}).
\]  

\end{lemma}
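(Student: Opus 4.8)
The plan is to exploit the fact that at any undesired equilibrium the safety filter is \emph{active}, so that near $\bx_*$ the closed-loop vector field coincides with the smooth branch of~\eqref{eq:v-linear-expression} and can be differentiated directly. By Lemma~\ref{lem:undesired-eq-characterization} we have $\eta(\bx_*)<0$, hence there is a neighborhood of $\bx_*$ on which $\eta<0$ and $F(\bx)=\tilde{A}\bx-\frac{\eta(\bx)}{\norm{B^\top\nabla h(\bx)}_{G^{-1}}^2}BG^{-1}B^\top\nabla h(\bx)$. The first simplification I would make is to use the standing hypothesis $G=B^\top B$ with $B$ invertible: then $BG^{-1}B^\top=\mathbf{I}$ and $\norm{B^\top\nabla h(\bx)}_{G^{-1}}^2=\nabla h(\bx)^\top\nabla h(\bx)$, so the active dynamics collapse to the obstacle-independent form
\[
F(\bx)=\tilde{A}\bx-\frac{\eta(\bx)}{\norm{\nabla h(\bx)}^2}\,\nabla h(\bx),\qquad \eta(\bx)=\nabla h(\bx)^\top\tilde{A}\bx+\alpha(h(\bx)).
\]

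Writing $p(\bx):=\nabla h(\bx)=2(\bx-\bx_c)$ and $s(\bx):=\eta(\bx)/\norm{p(\bx)}^2$, the field is $F=\tilde A\bx-s\,p$, and the product rule gives $J(\bx)=\tilde A-s\,Dp-p\,(Ds)$, where $Dp=2\mathbf{I}$ and $Ds=\frac{D\eta}{\norm{p}^2}-\frac{\eta}{\norm{p}^4}D(\norm{p}^2)$. The second ingredient is to evaluate these quantities at $\bx_*$ using the two identities supplied by Lemma~\ref{lem:undesired-eq-characterization}: first $h(\bx_*)=0$ (so $\alpha(h(\bx_*))=0$ and $\alpha^\prime$ is read off at $0$), and second the equilibrium relation~\eqref{eq: condition-eq2}, which under $BG^{-1}B^\top=\mathbf{I}$ reads $\tilde A\bx_*=\delta_{\bx_*}\,p(\bx_*)$. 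These immediately yield $\eta(\bx_*)=\delta_{\bx_*}\norm{p(\bx_*)}^2$, hence $s(\bx_*)=\delta_{\bx_*}$, so the $-s\,Dp$ term contributes exactly $-2\delta_{\bx_*}\mathbf{I}$.

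What remains is to assemble the rank-one term $p\,(Ds)|_{\bx_*}$. I would compute $D\eta|_{\bx_*}=p^\top\tilde A+(2\delta_{\bx_*}+\alpha^\prime(0))p^\top$ (using $\tilde A\bx_*=\delta_{\bx_*}p$ in $\nabla\eta=2\tilde A\bx+\tilde A^\top p+\alpha^\prime(h)p$) together with $D(\norm{p}^2)=4p^\top$, and substitute $\eta(\bx_*)=\delta_{\bx_*}\norm{p}^2$. The main bookkeeping obstacle is the coefficient of $p^\top$: the $-4\delta_{\bx_*}$ coming from differentiating $\norm{p}^{-2}$ must combine with the $2\delta_{\bx_*}+\alpha^\prime(0)$ coming from $\nabla\eta$, and the cancellation has to leave exactly $\alpha^\prime(0)-2\delta_{\bx_*}$. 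Carrying this out gives $Ds|_{\bx_*}=\frac{p^\top(\tilde A-(2\delta_{\bx_*}-\alpha^\prime(0))\mathbf{I})}{\norm{p}^2}$, and since $\frac{p\,p^\top}{\norm{p}^2}=\frac{(\bx_*-\bx_c)(\bx_*-\bx_c)^\top}{\norm{\bx_*-\bx_c}^2}$ (the factors of $2$ cancel), collecting the three contributions produces precisely
\[
J(\bx_*)=\tilde A-2\delta_{\bx_*}\mathbf{I}-\frac{(\bx_*-\bx_c)(\bx_*-\bx_c)^\top}{\norm{\bx_*-\bx_c}^2}\bigl(\tilde A-(2\delta_{\bx_*}-\alpha^\prime(0))\mathbf{I}\bigr).
\]
Alternatively, one could shortcut the differentiation by specializing the general Jacobian expression of~\cite[Proposition 11]{indep-cbf} (invoked in the lemma's preamble) to $h(\bx)=\norm{\bx-\bx_c}^2-r^2$ and $G=B^\top B$; I would keep the direct computation as the primary route, since it is self-contained and the only nontrivial point is the coefficient cancellation just described.
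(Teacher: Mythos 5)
Your computation is correct, but it takes a genuinely different route from the paper: the paper proves this lemma in one line by specializing the general Jacobian formula of~\cite[Proposition 11]{indep-cbf} (cited in the surrounding text) to the circular obstacle $h(\bx)=\norm{\bx-\bx_c}^2-r^2$ together with Lemma~\ref{lem:undesired-eq-characterization}, whereas you differentiate the active branch of the filter directly. Your argument is sound at every step: $\eta(\bx_*)=\delta_{\bx_*}\norm{\nabla h(\bx_*)}^2<0$ guarantees the closed-loop field agrees with the smooth active branch on a neighborhood of $\bx_*$, the reduction $BG^{-1}B^\top=\mathbf{I}$ and $\norm{B^\top\nabla h}_{G^{-1}}^2=\norm{\nabla h}^2$ under $G=B^\top B$ is exactly right, and the coefficient bookkeeping ($2\delta_{\bx_*}+\alpha^\prime(0)$ from $\nabla\eta$ combining with $-4\delta_{\bx_*}$ from differentiating $\norm{p}^{-2}$ to leave $\alpha^\prime(0)-2\delta_{\bx_*}$) checks out, as does the cancellation of the factors of $2$ in $pp^\top/\norm{p}^2$. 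What your approach buys is a self-contained verification that does not lean on the companion paper; what it costs is generality, since the cited proposition covers arbitrary CBFs and weight matrices while your derivation is tied to the circular $h$ and $G=B^\top B$ — which is all the lemma asks for. The only point worth making explicit is that you implicitly use $\nabla h(\bx_*)\neq\mathbf{0}_n$ (immediate here since $\norm{\bx_*-\bx_c}=r>0$ on $\partial\Cc$) and differentiability of $\alpha$ at $0$, both of which hold in the paper's setting.
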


\vspace{0.5cm}

\emph{Proof of Proposition~\ref{prop: m=2, x0 is eigen}:}

Denote $\lambda$ the eigenvalue associated with $\bx_c$. Then $\lambda=\lambda_i$, $i=1$ or $2$; and both $\lambda_1$ and $\lambda_2$ are real. We first determine the solution for \eqref{eq: condition-eq} with $\delta\notin \{\frac{\lambda_1}{2},~\frac{\lambda_2}{2}\}$ .
Since $\delta\notin \{\frac{\lambda_1}{2},~\frac{\lambda_2}{2}\}$, by the first equation in~\eqref{eq: condition-eq}, it follows that $\bx_*=\frac{2\delta}{2\delta-\lambda}\bx_c$. Plugging this in the second equation in~\eqref{eq: condition-eq}, we can solve for $\delta$. This leads to the potential undesired equilibria $\bx_{*,-}:=(1+\frac{r}{\|\bx_c\|})\bx_c)$, with associated value of $\delta$ equal to $\frac{\lambda}{2} + \frac{\lambda \norm{x_c}}{2r}$,
and $\bx_{*,+}:= (1-\frac{r}{\|\bx_c\|})\bx_c)$, with associated value of $\delta$ equal to $\frac{\lambda}{2} - \frac{\lambda \norm{x_c} }{2r}$.
We note that the value of $\delta$ associated with $\bx_{*,-}$ is negative and the value of $\delta$ associated with $\bx_{*,-}$ is positive, so $\bx_{*,-}$ is an undesired equilibrium while $\bx_{*,+}$ is not an undesired equilibrium. By Lemma~\ref{lemma: Jacobian at undesirable eq} ,
the Jacobian at $\bx_{*,-}$ is 
\begin{align*}
J({\bx_{*,-}})=\Tilde{A}-2\delta \mathbf{I} -\frac{ \bx_c\bx_c^\top}{\| \bx_c \|^2}(\Tilde{A}-(2\delta-\alpha^\prime(0))\mathbf{I}).
\end{align*}
where $\delta=\frac{\lambda_1}{2}+\frac{\lambda_1 \|\bx_c \|}{2 r} $.

In the following, we determine if there exist solutions of \eqref{eq: condition-eq} with $\delta\in \{\frac{\lambda_1}{2},~\frac{\lambda_2}{2}\}$ and discuss the stability of the corresponding undesired equilibria case by case.




\textbf{Case 1:}  $\tilde{A}$ is not diagonalizable.

In this case, we first show that $\bx_{*,-}$ is always a saddle point. We note that we must have $\lambda_1=\lambda_2$. Let $\bv_1=\frac{\bx_c}{\|\bx_c\|}$, $\bv_2$ be a vector such that $\|\bv_2\|=1$, $\tilde{A} \bv_2=\lambda_1 \bv_2+ \bv_1$. If we write $\bx_c=\beta_1 \bv_1+\beta_2 \bv_2$, then $\beta_1=\|\bx_c\|$ and $\beta_2=0$. By Lemma \ref{lemma: compute eigen for m=2, case 1}, it follows that  the Jacobian at  $\bx_{*,-}$ has an eigenvalue equal to $\lambda_2-2\delta_{\bx_{*,-}}= \lambda_1-\lambda_1-\frac{\lambda_1 \|\bx_c \|}{ r}>0$, implying that   $\bx_{*,-}$ is a saddle point.

Next, we determine if there exists a solution with $\delta=\frac{\lambda_1}{2}$. We write $\bx_*=\beta_3 \bv_1+\beta_4 \bv_2$. Hence the first equation of \eqref{eq: condition-eq} with $\delta=\frac{\lambda_1}{2}$ can  be rewritten as $\beta_4=-\lambda_1\|\bx_c\|$.
Plugging the value of $\beta_4$ into the second equation of \eqref{eq: condition-eq}, and defining $\hat{\beta}_3:=\beta_3-\|\bx_c\|$ and $\tau_1:=\lambda_1\|\bx_c\|$, it follows that
\begin{equation}\label{eq: quadratic equation for case 2}
\hat{\beta}_3^2-2 \tau_1 \bv_1^\top \bv_2  \hat{\beta}_3+\tau_1^2-r^2=0.
\end{equation}
Note that the discriminant of the quadratic equation \eqref{eq: quadratic equation for case 2} is
\begin{align*}
\Delta:=&4\left(\tau_1^2 (\bv_1^\top \bv_2)^2-  \tau_1^2+r^2\right)=4\left(\tau_1^2 ((\bv_1^\top \bv_2)^2-1)  +r^2\right)
\end{align*}
This leads to the following three subcases.

\textbf{Case 1.1} if $(\bv_1^\top \bv_2)^2< 1-r^2/ \tau_1^2=  1-\frac{ r^2}{\lambda_1^2\|\bx_c\|^2}  $, there does not exist a solution associated with $\delta=\frac{\lambda_1}{2}$.

\textbf{Case 1.2} if $(\bv_1^\top \bv_2)^2=1-r^2/ \tau_1^2 =  1-\frac{ r^2}{\lambda_1^2\|\bx_c\|^2} $, then there exists one solution $\bx_{*,1} = (\tau_1 \bv_1^\top \bv_2 +\|\bx_c\|)\bv_1- \tau_1 \bv_2$
with $\delta_{\bx_{*,1}}=\frac{\lambda_1}{2}$, equal to
\begin{align*}
    \bx_{*,1} = (\tau_1 \bv_1^\top \bv_2 +\|\bx_c\|)\bv_1- \tau_1 \bv_2
\end{align*} 
Since $(\bx_{*,1}-\bx_c)^\top \bv_1=0$ and $(\Tilde{A}-2\delta_{\bx_{*,1}}\mathbf{I})\bv_1=0$, it follows that  $J({\bx_{*,1}})^\top \bv_1=0$, by Lemma~\ref{lemma: Jacobian at undesirable eq}.
Therefore in this case, there is another undesired equilibrium $\bx_{*,1}$, at which the Jacobian has a negative eigenvalue and a zero eigenvalue.


\textbf{Case 1.3} if $(\bv_1^\top \bv_2)^2>1-r^2/ \tau_1^2=  1-\frac{ r^2}{\lambda_1^2\|\bx_c\|^2}  $, there exist two solutions $\hat{\beta}_3=\hat{\beta}_3^{(1)}$ and $\hat{\beta}_3=\hat{\beta}_3^{(2)}$ for \eqref{eq: quadratic equation for case 2}. This implies that there exist two extra 
undesired equilibria given by 
$\bx_{*,2} = (\hat{\beta}_3^{(1)}+\|\bx_c\|)\bv_1- \tau_1 \bv_2$ and 
$\bx_{*,3} = (\hat{\beta}_3^{(2)}+\|\bx_c\|)\bv_1- \tau_1 \bv_2$.

Notice that in this sub-case, $\hat{\beta}_3^{(1)}+\hat{\beta}_3^{(2)}=2\tau_1 \bv_1^\top \bv_2$, we can assume that $\hat{\beta}_3^{(1)}<\tau_1 \bv_1^\top \bv_2$ and $\hat{\beta}_3^{(1)}>\tau_1 \bv_1^\top \bv_2$.

Using the same technique in the proof of Lemma \ref{lemma: compute eigen for m=2, case 2}, we can show that $J({\bx_{*,2}})$, has an eigenvalue $\frac{\tau_1}{r^2}(\hat{\beta}_3^{(1)}-\tau_1 \bv_1^\top \bv_2)>0$; and $J({\bx_{*,3}})$ has an eigenvalue $\frac{\tau_1}{r^2}(\hat{\beta}_3^{(2)}-\tau_1 \bv_1^\top \bv_2)<0$.

Hence in this case, there are another two undesired equilibria, one of which is stable and the other one is saddle point.



\textbf{Case 2:} $\tilde{A}$ is diagonalizable and $\lambda_1\leq\lambda_2<0$, $\tilde{A}\bx_c=\lambda_1 \bx_c$ 

In this case, we first show that $\bx_{*,-}$ is always a saddle point. Let $\bv_1=\frac{\bx_c}{\|\bx_c\|}$, $\bv_2$ be an eigenvector associated with $\lambda_2$ satisfying $\|\bv_2\|=1$, $\bv_1^\top \bv_2\geq 0$. If we write $\bx_c=\beta_1 \bv_1+\beta_2 \bv_2$, then $\beta_1=\|\bx_c\|$ and $\beta_2=0$. By Lemma \ref{lemma: compute eigen for m=2, case 2}, it follows that  the Jacobian at  $\bx_{*,-}$ has an eigenvalue equal to
$\lambda_2-\lambda_1-\frac{\lambda_1 \|\bx_c \|}{ r}>0$, implying that   $\bx_{*,-}$ is a saddle point.
Next, we determine if there exists a solution with $\delta\in \{\frac{\lambda_1}{2},~\frac{\lambda_2}{2}\}$. We write $\bx_*=\beta_3 \bv_1+\beta_4 \bv_2$ and then the first equation of \eqref{eq: condition-eq} can  be rewritten as
\begin{equation}\label{eq: equation for case 3}
\begin{aligned}  
   & (\lambda_1-2\delta)(\beta_3-\|\bx_c\|)=-\lambda_1\|\bx_c\|\\
    &(\lambda_2-2\delta)\beta_4=0
\end{aligned}    
\end{equation}
from which it follows that $\delta \neq \frac{\lambda_1}{2}$.

If $\delta= \frac{\lambda_2}{2}$, from the first equation of~\eqref{eq: equation for case 3} it follows that $\beta_3=\frac{-\lambda_2\|\bx_c\|}{\lambda_1-\lambda_2}$. Plugging the value of $\beta_3$ into the equation $h(\bx) = 0$ from~\eqref{eq: condition-eq}, and by defining $\tau_2:=\frac{\lambda_1 \|\bx_c\|}{\lambda_1-\lambda_2}$,
it follows that
\begin{equation}\label{eq: quadratic equation for case 3}    
\beta_4^2-2\tau_2 \bv_1^\top \bv_2 \beta_4+ \tau_2^2-r^2=0.
\end{equation}

Note that  the discriminant of quadratic equation \eqref{eq: quadratic equation for case 3} is
\begin{align*}
\Delta:=&4\left(\tau_2^2 (\bv_1^\top \bv_2)^2-  \tau_2^2+r^2\right)=4\left(\tau_2^2 ((\bv_1^\top \bv_2)^2-1)  +r^2\right),
\end{align*}
which leads to the following three subcases.

\textbf{Case 2.1} if $(\bv_1^\top \bv_2)^2< 1-r^2/ \tau_2^2=  1-\frac{(\lambda_1-\lambda_2)^2 r^2}{\lambda_1^2\|\bx_c\|^2}  $, there does not exist a solution associated with $\delta=\frac{\lambda_2}{2}$.

\textbf{Case 2.2} if $(\bv_1^\top \bv_2)^2=1-r^2/ \tau_2^2=  1-\frac{(\lambda_1-\lambda_2)^2 r^2}{\lambda_1^2\|\bx_c\|^2}  $, then there exists an undesired equilibrium 
\begin{align*}
    \bx_{*,4} = \frac{-\lambda_2\|\bx_c\|}{\lambda_1-\lambda_2} \bv_1+ \frac{\lambda_1\|\bx_c\|}{\lambda_1-\lambda_2} \bv_1^\top \bv_2 \bv_2
\end{align*}
with corresponding $\delta$ equal to $\frac{\lambda_2}{2}$. 
We note that $(\bx_{*,4} -\bx_c)^\top \bv_2=0$ and $(\tilde{A}-2\delta_{\bx_{*,4}}\mathbf{I})\bv_2=0$.  Hence, by Lemma~\ref{lemma: Jacobian at undesirable eq}, we have $J({\bx_{*,4}}) \bv_2=0$.
Thus in this case, the Jacobian evaluated at $\bx_{*,4}$ has a negative eigenvalue and a zero eigenvalue.


\textbf{Case 2.3}  if $(\bv_1^\top \bv_2)^2>1-r^2/ \tau_2^2=  1-\frac{(\lambda_1-\lambda_2)^2 r^2}{\lambda_1^2\|\bx_c\|^2}  $, there exist two solutions $\beta_4=\beta_4^{(1)}$ and $\beta_4=\beta_4^{(2)}$ for \eqref{eq: quadratic equation for case 2}. Then there exist two undesired equilibria $\bx_{*,5} = \frac{-\lambda_2\|\bx_c\|}{\lambda_1-\lambda_2} \bv_1+ \beta_4^{(1)} \bv_2$ and $\bx_{*,6} = \frac{-\lambda_2\|\bx_c\|}{\lambda_1-\lambda_2} \bv_1+ \beta_4^{(2)} \bv_2$ with associated value of $\delta$ equal to $\frac{\lambda_2}{2}$.
Notice that in this sub-case, $\beta_4^{(1)}+\beta_4^{(2)}=2\tau_2 \bv_1^\top \bv_2>0$ and $\beta_4^{(1)}\beta_4^{(2)}=\tau_2^2-r^2>0$, we can assume that $0<\beta_4^{(1)}<\tau_2 \bv_1^\top \bv_2$ and $\beta_4^{(2)}>\tau_2 \bv_1^\top \bv_2$. It follows that $-\tau_2 \bv_1^\top \bv_2 \beta_4^{(1)}+ \tau_2^2-r^2=-\beta_4^{(1)}\beta_4^{(1)}+\tau_2 \bv_1^\top \bv_2 \beta_4^{(1)}>0$ and $-\tau_2 \bv_1^\top \bv_2 \beta_4^{(2)}+ \tau_2^2-r^2=-\beta_4^{(2)}\beta_4^{(2)}+\tau_2 \bv_1^\top \bv_2 \beta_4^{(2)}<0$.
Using the same technique in the proof of Lemma \ref{lemma: compute eigen for m=2, case 1}, we can show that $J({\bx_{*,4}})$ has an eigenvalue $\frac{\lambda_2-\lambda_1}{r^2}(\tau_2^2-\tau_2^2 \bv_1^\top \bv_2 \beta_4^{(1)}-r^2)>0$, and $J({\bx_{*,5}})$ has an eigenvalue $\frac{\lambda_2-\lambda_1}{r^2}(\tau_2^2-\tau_2^2 \bv_1^\top \bv_2 \beta_4^{(2)}-r^2)<0$.
Hence in this case, there are two extra undesired equilibria, one of which is stable and the other one is a saddle point.




\textbf{Case 3:} $\tilde{A}$ diagonalizable, $\lambda_1<\lambda_2<0$, $\tilde{A}\bx_c = \lambda_2 \bx_c$.

Let $\bv_2=\frac{\bx_c}{\|\bx_c\|}$, $\bv_1$ be an eigenvector associated with $\lambda_1$ and $\|\bv_1\|=1$, $\bv_1^\top \bv_2\geq 0$. If we write $\bx_c=\beta_1 \bv_1+\beta_2 \bv_2$, then $\beta_2=\|\bx_c\|$ and $\beta_1=0$. By Lemma \ref{lemma: compute eigen for m=2, case 2}, it follows that  the Jacobian at  $\bx_{*,-}$ has an eigenvalue equal to
$\lambda_1-\lambda_2-\frac{\lambda_2 \|\bx_c \|}{ r}$.
We determine the sign of $\lambda_1-\lambda_2-\frac{\lambda_2 \|\bx_c \|}{ r}$ later.
First, let us determine if there exists a solution with $\delta\in \{\frac{\lambda_1}{2},~\frac{\lambda_2}{2}\}$. We write $\bx_c=\|\bx_c\| \bv_2$, $\bx=\beta_3 \bv_1+\beta_4 \bv_2$ and then from~\eqref{eq: condition-eq} it follows that
\begin{equation}\label{eq: equation for case 4}
\begin{aligned}  
   & (\lambda_1-2\delta)\beta_3=0\\
    &(\lambda_2-2\delta)(\beta_4-\|\bx_c\|)=-\lambda_2\|\bx_c\|,
\end{aligned}    
\end{equation}
from which it follows that $\delta\neq \frac{\lambda_2}{2}$.
If $\delta=\frac{\lambda_1}{2}$, it follows from~\eqref{eq: equation for case 4} that $\beta_4=\frac{-\lambda_1\|\bx_c\|}{\lambda_2-\lambda_1}$. 
Plugging the value of $\beta_4$ into 
the equation $h(\bx) = 0$ from~\eqref{eq: condition-eq}, and by letting $\tau_3:=\frac{\lambda_2 \|\bx_c\|}{\lambda_2-\lambda_1}$, it follows that
\begin{equation}\label{eq: quadratic equation for case 4}    
\beta_3^2-2\tau_3 \bv_1^\top \bv_2 \beta_3+ \tau_3^2-r^2=0. 
\end{equation}
Note that the discriminant of quadratic equation \eqref{eq: quadratic equation for case 4} is
\begin{align*}
\Delta:=&4\left(\tau_3^2 (\bv_1^\top \bv_2)^2-  \tau_3^2+r^2\right)=4\left(\tau_3^2 ((\bv_1^\top \bv_2)^2-1)  +r^2\right),
\end{align*}
which leads to the following three subcases.

\textbf{Case 3.1} if $(\bv_1^\top \bv_2)^2< 1-r^2/ \tau_3^2=  1-\frac{(\lambda_1-\lambda_2)^2 r^2}{\lambda_2^2\|\bx_c\|^2}  $, there does not exist a solution associated with $\delta=\frac{\lambda_1}{2}$.
Recall also that the eigenvalue (other than $-\alpha^\prime(0)$) of Jacobian at  $\bx_{*,-}$ is $\lambda_1-\lambda_2-\frac{\lambda_2 \|\bx_c \|}{ r}$. In this subcase, we have $  1-\frac{(\lambda_1-\lambda_2)^2 r^2}{\lambda_2^2\|\bx_c\|^2} >(\bv_1^\top \bv_2)^2\geq 0 $, which implies that $\lambda_1-\lambda_2-\frac{\lambda_2 \|\bx_c \|}{ r}>0$. Hence in this case, we only have one undesired equilibrium $\bx_{*,-}$, which is a saddle point.

\textbf{Case 3.2} if $ (\bv_1^\top \bv_2)^2= 1-r^2/ \tau_3^2=  1-\frac{(\lambda_1-\lambda_2)^2 r^2}{\lambda_2^2\|\bx_c\|^2} \neq 0$,
there exists an undesired equilibrium equal to 
\begin{align*}
    \bx_{*,7} = \frac{\lambda_2\|\bx_c\|}{\lambda_2-\lambda_1} \bv_1^T \bv_2 \bv_1- \frac{\lambda_1\|\bx_c\|}{\lambda_2-\lambda_1} \bv_2,
\end{align*}
with associated $\delta$ equal to $\delta=\frac{\lambda_1}{2}$.
We note that $(\bx_{*,7}-\bx_c)^\top \bv_1=0$ and $(\tilde{A}-2\delta_{\bx_{*,7}}\mathbf{I}) \bv_2=0$. Hence, by Lemma~\ref{lemma: Jacobian at undesirable eq}, we get that  $J({\bx_{*,7}}) \bv_1=0$. Therefore, $\bx_{*,7}$ is an undersirable equilibrium and a degenerate equilibrium.
Recall that the eigenvalue (other than $-\alpha^\prime(0)$) of the Jacobian at  $\bx_{*,-}$ is $\lambda_1-\lambda_2-\frac{\lambda_2 \|\bx_c \|}{ r}$. 
In this subcase, we still have $  1-\frac{(\lambda_1-\lambda_2)^2 r^2}{\lambda_2^2\|\bx_c\|^2} =(\bv_1^\top \bv_2)^2> 0 $, which implies that $\lambda_1-\lambda_2-\frac{\lambda_2 \|\bx_c \|}{ r}>0$. Hence in this subcase, there are two undesired equilibria $\bx_{*,-}$ and $\bx_{*,7}$, where  $\bx_{*,-}$ is an saddle point and $\bx_{*,7}$ is a degenerate equilibrium.

\textbf{Case 3.3} if $(\bv_1^\top \bv_2)^2= 1-r^2/ \tau_3^2=  1-\frac{(\lambda_1-\lambda_2)^2 r^2}{\lambda_2^2\|\bx_c\|^2} =0$ , there exists one solution associated with $\delta=\frac{\lambda_1}{2}$, which is
\begin{align*}
    \bx_{*,8} = -\frac{\lambda_1\|\bx_c\|}{\lambda_2-\lambda_1} \bv_2.
\end{align*}

Notice that $1-\frac{(\lambda_1-\lambda_2)^2 r^2}{\lambda_2^2\|\bx_c\|^2}=0$ implies that $\frac{-\lambda_2}{\lambda_2-\lambda_1}=\frac{r}{\|\bx_c\|}$, from which it follows that
\[ \bx_{*,8} =- \frac{\lambda_1\|\bx_c\|}{\lambda_2-\lambda_1} \bv_2=(1- \frac{\lambda_2}{\lambda_2-\lambda_1}) \bx_c=(1+\frac{r}{\|\bx_c\|})\bx_c=\bx_{*,-}, \]
and $\lambda_1-\lambda_2-\frac{\lambda_2 \|\bx_c \|}{ r}=0$. Thus in this case, there is only one undesired equilibrium  $\bx_{*,-}$, which is a degenerate equilibrium.

\textbf{Case 3.4} if $(\bv_1^\top \bv_2)^2> 1-r^2/ \tau_3^2=  1-\frac{(\lambda_1-\lambda_2)^2 r^2}{\lambda_2^2\|\bx_c\|^2}  $,  there exist two solutions $\beta_3=\beta_3^{(1)}$ and $\beta_3=\beta_3^{(2)}$ for \eqref{eq: quadratic equation for case 4}. Then there exist two extra undesired equilibria: $\bx_{*,9} = \beta_3^{(1)} \bv_1-\frac{\lambda_1\|\bx_c\|}{\lambda_2-\lambda_1} \bv_2$ and $\bx_{*,10} = \beta_3^{(2)} \bv_1-\frac{\lambda_1\|\bx_c\|}{\lambda_2-\lambda_1} \bv_2$.
Notice that in this sub-case, we have $\beta_3^{(1)}+\beta_3^{(2)}=2\tau_3 \bv_1^\top \bv_2<0$.  Then we can assume that $\beta_3^{(1)}<\tau_3 \bv_1^\top \bv_2$ and $\beta_3^{(2)}>\tau_2 \bv_1^\top \bv_2$.
Using the same technique in the proof of Lemma \ref{lemma: compute eigen for m=2, case 1}, we can show that the $J({\bx_{*,9}})$ has an eigenvalue
\[  \frac{\lambda_1-\lambda_2}{r^2}(\tau_3^2-\tau_3 \bv_1^\top \bv_2\beta_3^{(1)}-r^2 ); \]
 and $J({\bx_{*,10}})$ has an eigenvalue
\[  \frac{\lambda_1-\lambda_2}{r^2}(\tau_3^2-\tau_3 \bv_1^\top \bv_2\beta_3^{(2)}-r^2 ). \]
Recall that the Jacobian evaluated at $\bx_{*,-}$
has an eigenvalue
$\lambda_1-\lambda_2-\frac{\lambda_2 \|\bx_c \|}{ r}$, and then we only need to determine the sign of these three eigenvalues case by case.

\textbf{Case 3.4.1} 
If $0<\tau_3^2-r^2=\frac{\lambda_2^2 \|\bx_c\|^2}{(\lambda_2-\lambda_1)^2}-r^2$, it is easy to check that $\lambda_1-\lambda_2-\frac{\lambda_2 \|\bx_c \|}{ r}>0$. In addition, similar to \textbf{Case 3.3}, we can show that $\{\frac{\lambda_1-\lambda_2}{r^2}(\tau_3^2-\tau_3 \bv_1^\top \bv_2\beta_3^{(i)}-r^2 ):~i=1,2 \}$ contains one positive number and one negative number.
Thus in this case, there are three undesired equilibria in total, two of which are saddle points and one of which is asymptotically stable.

\textbf{Case 3.4.2} If $0=\tau_3^2-r^2=\frac{\lambda_2^2 \|\bx_c\|^2}{(\lambda_2-\lambda_1)^2}-r^2$, it follows that $\lambda_1-\lambda_2-\frac{\lambda_2 \|\bx_c \|}{ r}=0$. In addition, we have $\beta_3^{(2)}=0$ and the point $\bx_{*,10} =\beta_3^{(2)} \bv_1-\frac{\lambda_1\|\bx_c\|}{\lambda_2-\lambda_1} \bv_2$ is equal to $\bx_{*,-}$.
The point  $\bx_{*,9}=\beta_3^{(1)} \bv_1-\frac{\lambda_1\|\bx_c\|}{\lambda_2-\lambda_1} \bv_2$ is a saddle point since the eigenvalue
\begin{align*}
&\frac{\lambda_1-\lambda_2}{r^2}(\tau_3^2-\tau_3 \bv_1^\top \bv_2\beta_3^{(1)}-r^2 )=2\frac{\lambda_2-\lambda_1}{r^2}\tau_3^2 (\bv_1^\top \bv_2 )^2\\
&> 2\frac{\lambda_2-\lambda_1}{r^2}\tau_3^2 \left(1-\frac{(\lambda_1-\lambda_2)^2 r^2}{\lambda_2^2\|\bx_c\|^2} \right)>0 .
\end{align*}  
Thus in this case, there are two undesired equilibria $\bx_{*,-}$ and $\bx_{*,9}$, where  $\bx_{*,-}$ is a degenerate equilibrium  and $\bx_{*,9}$ is an saddle point .

\textbf{Case 3.4.3} If $0>\tau_3^2-r^2=\frac{\lambda_2^2 \|\bx_c\|^2}{(\lambda_2-\lambda_1)^2}-r^2$, it is easy to check that $\lambda_1-\lambda_2-\frac{\lambda_2 \|\bx_c \|}{ r}<0$, which implies that $\bx_{*,-}$ is asymptotically stable.
By $\beta_3^{(1)}\beta_3^{(2)}=\tau_3^2-r^2<0$ and
$\beta_3^{(1)}<\tau_3 \bv_1^\top \bv_2<0$, it follows that $\beta_3^{(2)}>0$.
Using the fact that $\beta_3^{(1)}<\tau_3 \bv_1^\top \bv_2<0$, we can show that \[  \frac{\lambda_1-\lambda_2}{r^2}(\tau_3^2-\tau_3 \bv_1^\top \bv_2\beta_3^{(1)}-r^2 )>0. \]
On the other hand, using the fact that $\beta_3^{(1)}>0>\tau_3 \bv_1^\top \bv_2$, we can show that \[  \frac{\lambda_1-\lambda_2}{r^2}(\tau_3^2-\tau_3 \bv_1^\top \bv_2\beta_3^{(2)}-r^2 )>0. \]
Thus in this case, there are three undesired equilibria in total, two of which are saddle points and one of which is asymptotically stable.

Table~\ref{tab:cases} summarizes  the cases discussed in the proof, except for \textbf{Case 3.3} and \textbf{Case 3.4.2}. \hfill $\square$

\vspace{0.5cm}

\emph{Proof of Proposition \ref{prop: number-of equilibrium-xc-no-eigenvec} }

Denote the eigenvalues of $\tilde{A}$ as $\lambda_1$, $\lambda_2\in\mathbb{C}$. We note that the conditions in~\eqref{eq: condition-eq} can be rewritten as follows:
\begin{align}
    ~~~~~~~ & (\Tilde{A}-2\delta\mathbf{I}_{2\times 2}) ( \bx-\bx_c)=-\Tilde{A}\bx_c ~\text{and,} \label{eq: equations for undesired eq} & \\ 
    ~~~~~~~ &  \|\bx-\bx_c \|^2-r^2=0 \, . \nonumber & ~~~~~~~~~
\end{align}
Next, we consider two cases. 
%
%

$\bullet$ \emph{Case} $\#1$ ($\tilde{A}$ is diagonalizable): Recall that $\bx_c$ is not an eigenvector, so it holds that $\lambda_1\neq \lambda_2$. Let $\bv_1$, $\bv_2\in \mathbb{C}^2$ be eigenvectors such that $\tilde{A}\bv_1=\lambda_1 \bv_1$, $\tilde{A}\bv_2=\lambda_2 \bv_2$, $\|\bv_1\|=\|\bv_2\|=1$. Write $\bx_c$ as $\bx_c=\beta_1 \bv_1+\beta_2 \bv_2$ and $\bx=\beta_3 \bv_1+\beta_4 \bv_2$. Hence, the first equation in \eqref{eq: equations for undesired eq} can  be rewritten as:
\begin{equation}
\begin{aligned}  
   & (\lambda_1-2\delta)(\beta_3-\beta_1)=-\lambda_1\beta_1\\
    &(\lambda_2-2\delta)(\beta_4-\beta_2)=-\lambda_2\beta_2 \, .
\end{aligned}    
\end{equation}
 Note that $\beta_1\neq 0$ and $\beta_2\neq 0$ as $\bx_c$ is not an eigenvector of $A-BK$; it follows that there is no solution with  $\delta\in \{\frac{\lambda_1}{2},   \frac{\lambda_2}{2}\}$. For any solution $(\bx,\delta_{\bx})$ of \eqref{eq: equations for undesired eq}, we have
  that $\beta_3-\beta_1=\frac{-\lambda_1\beta_1}{\lambda_1-2\delta_\bx}$, $\beta_4-\beta_2=\frac{-\lambda_2\beta_2}{\lambda_2-2\delta_\bx} $ and $\left\|\frac{-\lambda_1\beta_1}{\lambda_1-2\delta_\bx} \bv_1+ \frac{-\lambda_2\beta_2}{\lambda_2-2\delta_\bx} \bv_2\right\|^2-r^2=0$,
which is equivalent to $F_1(\delta)=0$, where $F_1(\delta)$ is defined in \eqref{eq: F_1}.

We first note that $F_1(\delta)=0$ can have at most $4$ solutions. Therefore, there are at most four solutions for  \eqref{eq: equations for undesired eq}. In addition, notice that $F_1(-\infty)<0$, $F_1(+\infty)<0$ and $F_1(0)=(\|\bx_c\|^2-r^2)\|\lambda_1\lambda_2\|^2>0$, it follows that there exists at least one solution of \eqref{eq: equations for undesired eq} with positive $\delta$ and at least one solution with negative $\delta$.
If $\lambda_1\leq \lambda_2$, we have $F_1(-\infty)<0$, and $F_1(\frac{\lambda_1}{2})>0$ and there exists at least one solution for \eqref{eq: equations for undesired eq} with $\delta<\frac{\lambda_1}{2}$.

$\bullet$ \emph{Case} $\#2$ ($\tilde{A}$ is not diagonalizable):  In this case, we have $\lambda_1= \lambda_2$. Note that both eigenvalues are negative and $\bx_c$ is not an eigenvector of $\tilde{A}$. Let $\bv_1$, $\bv_2\in \mathbb{R}^2$ be vectors of length $1$, such that $\tilde{A}\bv_1=\lambda_1 \bv_1$, $\tilde{A}\bv_2=\lambda_1 \bv_2+\bv_1$. We write $\bx_c=\beta_1 \bv_1+\beta_2 \bv_2$ and $\bx_*=\beta_3 \bv_1+\beta_4 \bv_2$.
Hence, the first equation in \eqref{eq: equations for undesired eq} can  be rewritten as
\begin{equation}
\begin{aligned}  
   & (\lambda_1-2\delta)(\beta_3-\beta_1)+(\beta_4-\beta_2)=-\lambda_1\beta_1-\beta_2\\
    &(\lambda_2-2\delta)(\beta_4-\beta_2)=-\lambda_2\beta_2 \, .
\end{aligned}    
\end{equation}

Note that $\beta_2\neq 0$  as $\bx_c$ is not an eigenvector of $\tilde{A}$; it follows that there is no solution with  $2\delta= \lambda_1$. For any solution $(\bx_*,\delta_{\bx_*})$ of equation \eqref{eq: equations for undesired eq}, we have
$  
\beta_3-\beta_1=\frac{-\lambda_1\beta_1}{\lambda_1-2\delta_{\bx_*}} +\frac{2\delta_{\bx_*} \beta_2}{(\lambda_1-2\delta_{\bx_*})^2}$, $\beta_4-\beta_2=\frac{-\lambda_2\beta_2}{\lambda_2-2\delta_{\bx_*}} $
and 
$ \left\|\left(\frac{-\lambda_1\beta_1}{\lambda_1-2\delta_{\bx_*}} +\frac{2\delta_\bx \beta_2}{(\lambda_1-2\delta_{\bx_*})^2} \right)  \bv_1+ \frac{-\lambda_2\beta_2}{\lambda_2-2\delta_{\bx_*}} \bv_2\right\|^2-r^2=0,$
which is equivalent to $F_2(\delta_{\bx_*})=0$, where $F_2$ is defined as 
\begin{align*} 
    &F_2(\delta):=-(\lambda_1-2\delta)^4 r^2  +(\lambda_1\beta_2)^2 (\lambda_1-2\delta)^2\\
    &+2(\lambda_1\beta_1(\lambda_1-2\delta)^2- 2\delta(\lambda_1-2\delta)\beta_2)
  \lambda_1\beta_2 \bv_1^\top \bv_2 \\
   &+( 2\delta\beta_2-\lambda_1(\lambda_1-2\delta)\beta_1  )^2.
\end{align*}

We first note that $F_2(\delta)=0$ can have at most $4$ solutions. Therefore, there are four solutions at most for  \eqref{eq: equations for undesired eq}. In addition, notice that  $F_2(+\infty)<0$ and $F_2(0)=(\|\bx_c\|^2-r^2)\lambda_1^4>0$; it follows that there exists at least a solution for \eqref{eq: equations for undesired eq} with positive $\delta$.
Similarly, we have that $\frac{1}{(\lambda_1-2\delta)^4}F_2(\delta)<0$ as $\delta\to -\infty$, and $\frac{1}{(\lambda_1-2\delta)^4} F_2(\delta)\to +\infty $ as $\delta\to \frac{\lambda_1^{-}}{2}$;
then. there exists at least one solution for \eqref{eq: equations for undesired eq}  with negative $\delta<\frac{\lambda_1}{2}$.
\hfill $\square$

\vspace{0.5cm}

\emph{Proof of Proposition \ref{prop: m=2, x0 is not eigen, stab properties}}

Let $\bx_*\in\hat{\mathcal{E}}$ with indicator $\delta_{\bx_*}<\frac{\lambda_1}{2}$, and write $\bx_*=\beta_3 \bv_1+\beta_4 \bv_2$; then, it follows  by Lemma~\ref{lemma: compute eigen for m=2, case 2} that the Jacobian evaluated at $\bx_*$ has an eigenvalue  greater than $\frac{(\lambda_2-\lambda_1)}{r^2} \frac{-\lambda_1}{\lambda_1-2\delta_{\bx_*}} ( (\beta_3-\beta_1)\beta_1  +(\beta_4-\beta_2)\beta_1
 \bv_2^\top \bv_1 )$.
Notice that $\frac{(\lambda_2-\lambda_1)}{r^2} \frac{-\lambda_1}{\lambda_1-2\delta_{\bx_*}}>0$ and
\begin{align*}
    (\beta_3-\beta_1)\beta_1  +(\beta_4-\beta_2)\beta_1
 \bv_2^\top \bv_1 &= \frac{-\lambda_1}{\lambda_1-2\delta_{\bx_*}}\beta_1^2+\frac{-\lambda_2}{\lambda_2-2\delta_{\bx_*}}\beta_1\beta_2
 \bv_2^\top \bv_1\\
 &\geq \frac{-\lambda_2}{\lambda_2-2\delta_{\bx_*}}(\beta_1^2+\beta_1\beta_2
 \bv_2^\top \bv_1)  \geq 0 \,.
\end{align*}
Hence, the Jacobian evaluated at $\bx_*$ has a positive eigenvalue and, thus, $\bx_*$ is a saddle point.
On the other hand, for any $\bx_*\in\hat{\mathcal{E}}$ with indicator $\frac{\lambda_2}{2}<\delta_{\bx_*}<0$, write $\bx_*=\beta_3 \bv_1+\beta_4 \bv_2$; then, by Lemma \ref{lemma: compute eigen for m=2, case 1}, it follows that the Jacobian evaluated at $\bx_*$ has an eigenvalue less than $\frac{(\lambda_2-\lambda_1)}{r^2} \frac{2\lambda_2}{\lambda_2-2\delta_{\bx_*}} ( (\beta_3-\beta_1)\beta_2 \bv_1^\top \bv_2  +(\beta_4-\beta_2)\beta_2 )$.
Notice that $\frac{(\lambda_2-\lambda_1)}{r^2} \frac{2\lambda_2}{\lambda_1-2\delta_{\bx_*}}>0$ and
\begin{align*}
    (\beta_3-\beta_1)\beta_2 \bv_2^\top \bv_1  +(\beta_4-\beta_2)\beta_2 &= \frac{-\lambda_1}{\lambda_1-2\delta_{\bx_*}}\beta_1\beta_2
 \bv_2^\top \bv_1+\frac{-\lambda_2}{\lambda_2-2\delta_{\bx_*}}\beta_2^2
 \\
 &\leq \frac{-\lambda_1}{\lambda_1-2\delta_{\bx_*}}(\beta_1\beta_2
 \bv_2^\top \bv_1+\beta_2^2)   \leq 0.
\end{align*}
Besides, by~\cite[Proposition 10]{indep-cbf}, $-\alpha^{\prime}(0)$ is another eigenvalue. Hence, all the eigenvalues of the Jacobian evaluated at $\bx_*$ are negative, which means that $\bx_*$ is an undesired asymptotically stable equilibrium.

To prove the last claim, let $\delta_0$ denote the only real root of the third-order polynomial $\frac{d F_1(\delta)}{d \delta}$. It follows that $F_1(\delta)$ is monotonically increasing on $(-\infty,\delta_0)$ and monotonically decreasing on $(\delta_0,+\infty)$; this  implies that $F_1(\delta)=0$ only has two solutions. By Lemma \ref{lemma: compute eigen for m=2, case 1}, there is only one undesired equilibrium and its indicator satisfies $\delta<\frac{\lambda_1}{2}$. Since $\beta_1^2+\beta_1 \beta_2 \bv_1^\top \bv_2\geq 0$,  there is only one undesired equilibrium and it is a saddle point. \hfill $\square$

\vspace{0.5cm}

\smallskip
%
%
\begin{remark}\longthmtitle{Connection between the eigenvector-eigenvalue structure}
{\rm Propositions~\ref{prop: m=2, x0 is eigen} and~\ref{prop: m=2, x0 is not eigen, stab properties} describe the number and  stability properties of undesired equilibria based on the specific eigenvector-eigenvalue structure. Proposition~\ref{prop: m=2, x0 is eigen}  (cf. Table \ref{tab:cases}) consists of the cases where $\bx_c$ is an eigenvector of $\tilde{A}$, while Proposition \ref{prop: m=2, x0 is not eigen, stab properties} focuses on the cases where $\bx_c$ is not. We clarify the relationship between them here.
  Proposition \ref{prop: m=2, x0 is not eigen, stab properties}(i) and (ii) correspond to
  %
  %
  the first row  and last row of Table \ref{tab:cases}(a), respectively, but apply to cases where 
$\bx_c$ is not an eigenvector.
When the two eigenvalues are ``highly distinct'', $\lambda_1\ll \lambda_2$, Proposition \ref{prop: m=2, x0 is not eigen, stab properties}(i)  aligns with the first row  of Table \ref{tab:cases}(a).  Specifically,  suppose that $\tilde{A}\bx_c=\lambda_i\bx_c$ and $ (\bv_i^T \bv_j)^2<   1-\frac{(\lambda_i-\lambda_j)^2 r^2}{\lambda_i^2\|\bx_c\|^2}$, then we have $|\beta_1|=0$ or $ |\beta_2|=0$. However, $|\beta_1|=0$ cannot hold with $\lambda_1\ll \lambda_2$, since this implies that $1-\frac{(\lambda_2-\lambda_1)^2 r^2}{\lambda_2^2\|\bx_c\|^2}<0$, contradicting $(\bv_1^T \bv_2)^2<1-\frac{(\lambda_2-\lambda_1)^2 r^2}{\lambda_2^2\|\bx_c\|^2}$.
%
%
Hence if $\lambda_1\ll \lambda_2$, the condition of the first row of Table \ref{tab:cases}(a) says that  $ |\beta_2|=0$ and $\bv_1^\top \bv_2$ is small enough.  If $|\beta_1/\beta_2|\geq1$ (i.e., $\bx_c$ is ``essentially'' an eigenvector associated with $\lambda_1$), then  $\beta_1^2+\beta_1 \beta_2 \bv_1^\top \bv_2\geq |\beta_1|(|\beta_1|- |\beta_2| | \bv_1^\top \bv_2|)\geq|\beta_1|(|\beta_1|- |\beta_2|) \geq 0$. Thus the condition  $\beta_1^2+\beta_1 \beta_2 \bv_1^\top \bv_2\geq 0$ can be viewed as a counterpart to the first row of Table \ref{tab:cases}(a),  when $\lambda_1\ll \lambda_2$.

When the two eigenvectors $\bv_1$ and $\bv_2$ are ``highly distinct'', i.e., $\bv_1^\top\bv_2$ is small enough,  Proposition \ref{prop: m=2, x0 is not eigen, stab properties}(ii) corresponds to the last row of Table \ref{tab:cases}(a). Similarly, we have  $|\beta_1|=0$ or $ |\beta_2|=0$ if $\bx_c$ is an eigenvector. However, if $|\beta_2|=0$, one has that  $1-\frac{(\lambda_1-\lambda_2)^2 r^2}{\lambda_1^2\|\bx_c\|^2}>0$ together with small enough $\bv_1^\top\bv_2$ violate the inequality  $(\bv_1^T \bv_2)^2>1-\frac{(\lambda_1-\lambda_2)^2 r^2}{\lambda_1^2\|\bx_c\|^2}$.
%
%
Hence if $\bv_1^\top\bv_2$ is small enough, $\tilde{A}\bx_c=\lambda_i\bx_c$ and $ (\bv_i^T \bv_j)^2>   1-\frac{(\lambda_i-\lambda_j)^2 r^2}{\lambda_i^2\|\bx_c\|^2}$  imply that  $ |\beta_1|=0$ and $\frac{|\lambda_1|}{|\lambda_2|}>\frac{\| \bx_c\|}{r}\sqrt{1-(\bv_1^T \bv_2)^2}+1$, i.e.,  $\lambda_1\ll \lambda_2$. If $|\beta_2/\beta_1|\geq1$ (i.e., $\bx_c$ is ``essentially'' an eigenvector associated with $\lambda_2$), then  $\beta_2^2+\beta_1 \beta_2 \bv_2^\top \bv_1\geq |\beta_2|(|\beta_2|- |\beta_1| | \bv_1^\top \bv_2|)\geq  |\beta_2|(|\beta_2|- |\beta_1| ) \geq 0$.  Hence, provided that $\bv_1^\top\bv_2$ is small enough, the condition  $\beta_1\beta_2
 \bv_2^\top \bv_1+\beta_2^2  \geq  0$ can be viewed as a counterpart to the last row of Table \ref{tab:cases}(a).   
 \hfill $\Box$
 }
\end{remark}
%

\end{document}